\newtheorem{thm}{Theorem}[section]
\newtheorem{claim}[thm]{Claim}
\newtheorem{cor}[thm]{Corollary}
\newtheorem{lem}[thm]{Lemma}
\newtheorem{prop}[thm]{Proposition}
\numberwithin{equation}{section}
\newcommand{\bQ}{{\mathbb{Q}}}
\newcommand{\R}{{\mathbb{R}}}
\newcommand{\bZ}{{\mathbb{Z}}}
\newcommand{\bK}{{\mathbb{K}}}
  \newcommand{\M}{{\mathcal{M}}}
\newcommand{\rank}{\operatorname{ rank}}
\newcommand{\coker}{\operatorname{coker}}
\newcommand{\sm}{\setminus}
\newcommand{\dl}{{[d-1]}}
\tikzset{roundnode/.style={circle,draw=black!50,fill=black!20,inner sep=1.2pt}}
\tikzset{every loop/.style={}}
\begin{document}

\title{Global rigidity of 2-dimensional linearly constrained frameworks}

\author[Hakan Guler]{Hakan Guler}
\address{Department of Mathematics, Faculty of Arts \& Sciences, Kastamonu University, Kastamonu, Turkey}
\email{hakanguler19@gmail.com}
\author[Bill Jackson]{Bill Jackson}
\address{School of Mathematical Sciences, Queen Mary
University of London, Mile End Road, London E1 4NS, UK. }
\email{b.jackson@qmul.ac.uk}
\author[Anthony Nixon]{Anthony Nixon}
\address{Department of Mathematics and Statistics\\ Lancaster University\\
LA1 4YF \\ U.K. }
\email{a.nixon@lancaster.ac.uk}
\date{\today}

\begin{abstract}
A linearly constrained framework in $\R^d$ is a point configuration together with a system of constraints which fixes the distances between some pairs of points and additionally  restricts some of the points to lie in given affine subspaces. It is globally rigid if the configuration is uniquely defined by the constraint system, and is rigid if it is uniquely defined within some small open neighbourhood.  Streinu and Theran characterised generic rigidity of
linearly constrained frameworks in $\R^2$ in 2010.
We obtain an analagous characterisation for generic global rigidity in $\R^2$. More
precisely we show that a generic linearly constrained  framework in
$\R^2$ is globally rigid if and only if it is redundantly rigid and
`balanced'. For generic frameworks which are not balanced, we determine the precise number of solutions to the constraint system whenever the underlying  rigidity matroid of the given framework is connected.
We also 
obtain a stress matrix sufficient condition 
and a Hendrickson type necessary condition for a generic linearly constrained framework to be globally rigid in $\R^d$.
\end{abstract}

\keywords{rigidity, global rigidity, stress matrix, sliders, linearly constrained framework, count matroid}
\subjclass[2010]{52C25, 05C10 \and 53A05}

\maketitle

\section{Introduction}\label{introduction}

A (bar-joint) framework $(G,p)$ in $\mathbb{R}^d$ is the combination of a
finite, simple graph $G=(V,E)$ and a  realisation $p:V\rightarrow
\mathbb{R}^d$. The framework $(G,p)$ is rigid if every edge-length
preserving continuous motion of the vertices arises as a congruence
of $\mathbb{R}^d$. Moreover $(G,p)$ is globally rigid if every framework $(G,q)$ with the same edge lengths as $(G,p)$ arises from a congruence of $\mathbb{R}^d$.

In general it is an NP-hard problem to determine the global rigidity of a given framework \cite{Sax}. The problem becomes more tractable, however, if we consider generic frameworks i.e.\ frameworks in which the set of coordinates of the points is algebraically independent over $\bQ$. Hendrickson \cite{Hen} obtained two necessary conditions for a generic framework $(G,p)$  in $\mathbb{R}^d$ to be globally rigid: the graph $G$ should be $(d+1)$-connected, and the framework $(G,p)$ should be redundantly rigid i.e.\ it remains rigid after deleting any edge. While Hendrickson's conditions are insufficient to imply generic global rigidity 
when $d\geq 3$  \cite{C91,JKT}, they are sufficient when $d=1,2$. In particular, we have the following theorem of Jackson and Jord\'{a}n \cite{J&J} when $d=2$.

\begin{thm}\label{thm:bar-joint}
A generic framework $(G,p)$ in $\mathbb{R}^2$ is globally rigid if and only if $G$ is either a complete graph on at most three vertices or $G$ is 3-connected and redundantly rigid.
\end{thm}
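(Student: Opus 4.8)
The plan is to prove the two directions separately; `only if' is short, and `if' --- essentially the argument of \cite{J&J} --- carries all the work. For necessity, suppose the generic framework $(G,p)$ in $\R^2$ is globally rigid and set $n=|V|$. It is in particular rigid, and (an easy check) a rigid framework in $\R^2$ on at most three vertices has complete underlying graph, so $n\le 3$ forces $G\in\{K_1,K_2,K_3\}$; if $n\ge 4$, then Hendrickson's necessary conditions \cite{Hen} make $G$ both $3$-connected and redundantly rigid. The graph $K_3$, being only $2$-connected, is a genuine exception, which is why the small complete graphs are singled out.

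For sufficiency one first notes that $K_1,K_2,K_3$ are globally rigid by inspection ($K_1,K_2$ trivially, $K_3$ because a triangle is determined up to congruence by its side lengths), so assume $G$ is $3$-connected and redundantly rigid with $n\ge 4$. I would then assemble three tools. (1) Connelly's stress-matrix criterion: a generic framework $(H,q)$ in $\R^2$ is globally rigid whenever it admits an equilibrium stress whose stress matrix has rank $|V(H)|-3$, and whether such a stress exists depends on $H$ alone; this starts the induction, since a generic $K_4$ has a one-dimensional stress space whose nonzero stress matrix has rank $1=|V(K_4)|-3$. (2) Edge addition preserves generic global rigidity, trivially, since any realisation of $(G,q)$ equivalent to $(G,p)$ is also an equivalent realisation of $(G-e,q)$. (3) The $1$-extension lemma: the plane $1$-extension move --- delete an edge $xy$ and add a new vertex adjacent to $x$, $y$ and one further vertex $z$ --- preserves generic global rigidity in $\R^2$. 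Fact (3) is the main technical ingredient; one proves it by studying how the new vertex can be positioned in an equivalent realisation of the $1$-extension and reducing, via a linearisation step, to the global rigidity of the smaller framework $(G,p)$.

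It then remains, combinatorially, to show that every $3$-connected redundantly rigid graph on at least four vertices arises from $K_4$ by a sequence of edge additions and $1$-extensions; tools (1)--(3) finish the proof. Since neither move need preserve `$3$-connected and redundantly rigid', I would run the induction with the more robust invariant of $M$-connectivity --- connectedness of the generic rigidity matroid $\mathcal{R}_2(G)$ --- in two parts. (i) Every $M$-connected graph on at least five vertices either has an edge $e$ with $G-e$ still $M$-connected, or admits an inverse $1$-extension yielding an $M$-connected graph, with $K_4$ the unique $M$-connected graph on four vertices; this is a matroid-connectivity statement, proved using submodularity of the rank function of $\mathcal{R}_2$, the structure of its circuits, and an ear-decomposition of a connected matroid. (ii) A $3$-connected redundantly rigid graph whose rigidity matroid is disconnected is nevertheless generically globally rigid; here one studies its decomposition into $M$-components --- $3$-connectivity forces any two of them to intersect only in a restricted way --- and shows global rigidity is inherited from the $M$-connected pieces handled in (i).

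The hardest points will be fact (3), the proof that $1$-extension preserves generic global rigidity in the plane, and part (ii) of the combinatorial step --- the structural analysis of $3$-connected, redundantly rigid graphs whose rigidity matroid is disconnected, which has to control the $\mathcal{R}_2$-circuit structure and the vertex $3$-cuts simultaneously.
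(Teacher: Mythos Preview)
Your outline is the original Jackson--Jord\'an argument from \cite{J&J}, and in broad strokes it is correct, but two of your steps are misdescribed.

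For (3), the direct reduction you sketch does not work. If $G'$ is the $1$-extension of $G$ obtained by deleting $xy$ and adding a new vertex $v$ adjacent to $x,y,z$, and $(G',q')$ is equivalent to a generic $(G',p')$, then restricting to $V(G)$ gives a realisation $(G-xy,q)$ equivalent to $(G-xy,p)$, but there is no reason $\|q(x)-q(y)\|=\|p(x)-p(y)\|$, so you cannot invoke the global rigidity of $G$. The actual argument stays inside the stress-matrix framework throughout: one shows that $1$-extension preserves the property of having an equilibrium stress with stress matrix of rank $n-3$ (Connelly's collinear-triangle construction, the simple analogue of what is done in Theorem~\ref{thm:stress_ext} above), and applies criterion (1) only once at the end. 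Since you already have (1), simply run the induction on that property.

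Your step (ii) rests on a misconception. A key lemma of Jackson and Jord\'an is that every $3$-connected redundantly rigid graph in $\R^2$ is automatically $M$-connected; the case you propose to analyse is empty. The substantial combinatorial work in \cite{J&J} is exactly proving that implication (by analysing how $M$-components can overlap), not patching together global rigidity across $M$-components --- which in any case is not something one can do in general.

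As for the paper: Theorem~\ref{thm:bar-joint} is quoted as a known result, but the final paragraph of Section~\ref{sec:count} observes that it can alternatively be deduced from Theorem~\ref{thm:num}. Given a simple graph $G$ with an edge $uv$, form $G^*$ by attaching two loops at each of $u$ and $v$; generic linearly constrained realisations of $G^*$ then correspond two-to-one with congruence classes of bar-joint realisations of $G$, and the count $2^{b(G^*)}$ from Theorem~\ref{thm:num} recovers the characterisation. That route is entirely different from yours --- it transplants the problem into the linearly constrained setting and applies the machinery of Sections~\ref{sec:circuits}--\ref{sec:count} --- whereas you work directly with bar-joint frameworks throughout, as in the original proof.
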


A linearly constrained framework is a bar-joint framework 
 in which certain vertices are constrained to lie in given affine subspaces, in addition to the usual distance constraints between pairs of vertices. Linearly constrained frameworks are motivated by numerous 
practical applications, notably in mechanical engineering and biophysics, see for example \cite{EJNSTW, Tetal}. Streinu and Theran \cite{ST} give a characterisation for generic rigidity of
linearly constrained frameworks in $\R^2$. Together with Cruickshank \cite{CGJN}, we recently obtained an analogous  characterisation for generic rigidity of
linearly constrained frameworks in $\R^d$ as long as the dimensions of the  affine subspaces at each vertex are sufficiently small  (compared to $d$).
In this article we consider global rigidity for linearly constrained frameworks. Global rigidity of bar-joint frameworks has its own suite of practical applications, for example in sensor network localisation \cite{JJsn}, and we expect our extension to have similar uses.

Throughout this paper we will consider graphs whose only possible
multiple edges are multiple loops. We call such a graph $G=(V,E,L)$
a \emph{looped simple graph} where $E$ denotes the set of (non-loop)
edges and $L$ the set of loops.
A {\em $d$-dimensional linearly constrained framework}  is a triple
$(G, p, q)$ where $G=(V,E,L)$ is a looped simple graph, $p:V\to
\R^d$  and $q:L\to \R^d$. For $v_i\in V$ and $e_j\in L$ we put
$p(v_i)=p_i$ and $q(e_j)=q_j$. The framework $(G,p,q)$ is {\em generic} if 
the set of coordinates of $\{p,q\}$ is algebraically independent over $\mathbb Q$ i.e.\
the transcendence degree of $\mathbb{Q}(p,q)$ over $\mathbb{Q}$ is $d(|V|+|L|)$.


Two $d$-dimensional linearly constrained frameworks $(G,p,q)$ and
$(G,\tilde p,q)$ are {\em equivalent} if
\begin{eqnarray*}
\|p_i-p_j\|^2&=&\|\tilde p_i-\tilde p_j\|^2 \mbox{ for all $v_iv_j \in E$, and}\\
p_i\cdot q_j&=&\tilde p_i\cdot q_j \mbox{ for all incident pairs
$v_i\in V$ and $e_j \in L$.}
\end{eqnarray*}
We say that $(G,p,q)$ is {\em globally rigid} if its only equivalent
framework is itself.

We give an illustration of rigidity and global rigidity in $\R^2$ in Figure \ref{fig:globally_rigid}.
First note that a loop at a vertex constrains that vertex to lie on a specific line. Every realisation
of the graph $H$ as a generic linearly constrained framework will be globally rigid as having two different
line constraints at each vertex fixes the position of the vertices in $\R^2$. Every generic realisation
$(G,p)$ of the graph $G$ is rigid by Theorem \ref{thm:ST} below, but is not globally rigid, since we can obtain
an equivalent realisation by reflecting the vertex $v_2$ in the line through $p(v_1)$ which is perpendicular
to the line constraint at $v_2$.
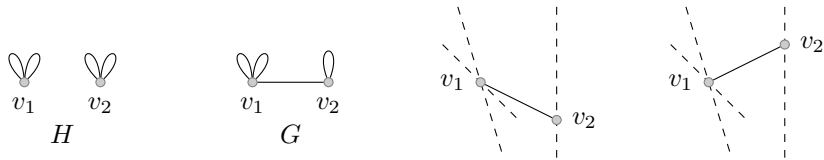
\begin{figure}[h]
\begin{center}
\begin{tikzpicture}[font=\small]
\node[roundnode] at (0,0) (v1) [label=below:$v_1$]{}
	edge[in=135,out=95,loop]()
	edge[in=85,out=45,loop] ();
\node[roundnode] at (1,0) (v2) [label=below:$v_2$]{}
	edge[in=135,out=95,loop]()
	edge[in=85,out=45,loop] ();
\node[] at (0.5,-0.3) (H) [label=below:$H$]{};
\begin{scope}[xshift=3cm]
\node[roundnode] at (0,0) (v1) [label=below:$v_1$]{}
	edge[in=135,out=95,loop]()
	edge[in=85,out=45,loop] ();
\node[roundnode] at (1,0) (v2) [label=below:$v_2$]{}
	edge[] (v1)
	edge[in=110,out=70,loop]();
\node[] at (0.5,-0.3) (G) [label=below:$G$]{};
\end{scope}
\begin{scope}[xshift=6cm]
\draw[dashed] (-0.5,0.5)--(0.5,-0.5);
\draw[dashed] (-0.3,1)--(0.3,-1);
\node[roundnode] at (0,0) (v1) [label=left:$v_1$]{};

\draw[dashed] (1,1)--(1,-1);
\node[roundnode] at (1,-0.5) (v2) [label=right:$v_2$]{}
	edge[] (v1);
\end{scope}
\begin{scope}[xshift=9cm]
\draw[dashed] (-0.5,0.5)--(0.5,-0.5);
\draw[dashed] (-0.3,1)--(0.3,-1);
\node[roundnode] at (0,0) (v1) [label=left:$v_1$]{};

\draw[dashed] (1,1)--(1,-1);
\node[roundnode] at (1,0.5) (v2) [label=right:$v_2$]{}
	edge[] (v1);
\end{scope}
\end{tikzpicture}
\end{center}
\caption{Every realisation of the graph $H$ as a generic linearly constrained framework in
$\R^2$ is globally rigid. Every generic realisation of the graph $G$ is rigid but not globally rigid.
Two distinct equivalent realisations of $G$ are given on the right of the figure.}
\label{fig:globally_rigid}
\end{figure}

Our main results characterise   global rigidity for generic linearly
constrained frameworks in $\R^2$ and  determine the precise number of frameworks which are equivalent to a given generic framework whenever the underlying  rigidity matroid of the given framework is connected. We also obtain a stress matrix sufficient condition 
and a Hendrickson type necessary condition for generic global rigidity in $\R^d$. A more detailed description of the content of the paper is as follows.


In Section \ref{sec:inf} we provide a brief background on infinitesimal rigidity for linearly constrained frameworks. In Section \ref{sec:Hendrickson} we give necessary conditions for generic global rigidity analogous to Hendrickson's conditions. 
We  obtain an algebraic sufficient condition for the generic global rigidity of a linearly constrained framework  in Section \ref{sec:stress}. This sufficient condition in terms of the rank of an appropriate stress matrix extends a key result of Connelly \cite{C05} for bar-joint frameworks. 
We focus on  characterising generic global rigidity  in $\R^2$ in the remainder of the paper. We obtain structural results on 
rigid circuits in the generic 2-dimensional linearly constrained rigidity matroid in Section \ref{sec:circuits}. These are used in Sections \ref{sec:admissible} and \ref{sec:feasible} to obtain a recursive construction for the family of looped simple graphs which are  balanced and generically redundantly rigid.
The recursive construction is then used in Section \ref{sec:globalthm} 
to characterise generic global rigidity. We determine the precise number of frameworks which are equivalent to a given generic framework whenever the underlying rigidity matroid of the given framework is connected in Section \ref{sec:count}.

\section{Infinitesimal rigidity}
\label{sec:inf}

An {\em infinitesimal motion} of a linearly constrained framework $(G, p, q)$ is a map $\dot
p:V\to \R^d$ satisfying the system of linear equations:
\begin{eqnarray*}
(p_i-p_j)\cdot (\dot p_i-\dot p_j)&=&0 \mbox{ for all $v_iv_j \in E$}\\
q_j\cdot \dot p_i&=&0 \mbox{ for all incident pairs $v_i\in V$ and
$e_j \in L$.}
\end{eqnarray*}
The second constraint implies that, for each vertex $v_i$,  its {\em infinitesimal velocity} $\dot p(v_i)$  
is constrained to lie in the intersection of the hyperplanes with normals $q_j$
for every loop $e_j$ incident to  $v_i$.

The {\em rigidity matrix $R (G, p, q)$} of the framework  is  the
matrix of coefficients of this system of equations for the unknowns
$\dot p$. Thus $R (G, p, q)$ is a $(|E|+|L|)\times d|V|$ matrix, in
which: the row indexed  by an edge $v_iv_j\in E$ has $p(u)-p(v)$ and
$p(v)-p(u)$ in the $d$ columns indexed by $v_i$ and $v_j$,
respectively and zeros elsewhere; the row indexed  by a loop
$e_j=v_iv_i\in L$ has $q_j$  in the $d$ columns indexed by $v_i$ and
zeros elsewhere.

The framework $(G, p, q)$ is {\em infinitesimally rigid} if its only
infinitesimal motion is $\dot p =0$, or equivalently if $\rank R(G,
p, q) = d|V|$. We say that the graph $G$ is  {\em rigid} in $\R^d$
if $\rank R(G, p, q) = d|V|$ for some realisation $(G,p, q)$ in
$\R^d$, or equivalently if $\rank R(G, p, q) = d|V|$ for all {\em
generic} realisations $(G,p,q)$.

Streinu and Theran \cite{ST} characterised the looped simple graphs
$G$ which are rigid in $\mathbb{R}^2$.
Given a looped simple graph  $G=(V,E,L)$ and 
$F\subseteq E\cup L$, let $V_F$ denote the set of vertices incident to $F$.

\begin{thm}\label{thm:ST}
Let $H$ be a looped simple graph. Then $H$ is rigid in $\R^2$ if and
only if $H$ has a spanning subgraph $G=(V,E,L)$ such that
$|E|+|L|=2|V|$, $|F|\leq 2|V_F|$ for all $F\subseteq E\cup L$ and $|F|\leq 2|V_F|-3$ for all $\emptyset \neq F\subseteq E$.
\end{thm}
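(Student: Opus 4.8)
The plan is to prove the two directions separately, after first recording that the stated combinatorial conditions define a matroid on $E\cup L$. Call a set $F\subseteq E\cup L$ \emph{sparse} if $|F'|\le 2|V_{F'}|$ for every $F'\subseteq F$ and $|F'|\le 2|V_{F'}|-3$ for every nonempty $F'\subseteq F\cap E$. First I would check that the sparse sets are the independent sets of a matroid $\C_2(G)$: this is a mixed count matroid, with the $(2,0)$-bound $2|V_F|$ (whose tight spanning subgraphs are the edge-disjoint unions of two spanning pseudoforests) imposed on all of $E\cup L$, refined by the Laman bound $2|V_F|-3$ on the non-loop edges. Matroidality follows from the standard construction of count matroids from monotone submodular set functions, once one verifies submodularity of $F\mapsto 2|V_F|$ on $E\cup L$ and of $F\mapsto 2|V_F|-3$ on the nonempty subsets of $E$, and combines the two functions. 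The standard route is then to prove the stronger statement that the generic rigidity matroid of $G$ equals $\C_2(G)$; the theorem follows since $G$ is rigid iff its rigidity matroid has rank $2|V|$, i.e.\ iff $\C_2(G)$ has rank $2|V|$, i.e.\ iff $G$ contains a spanning subgraph of the stated form.

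For the ``only if'' direction, suppose $H$ is rigid in $\R^2$ and fix a generic $(H,p,q)$, so $\rank R(H,p,q)=2|V|$. A maximal linearly independent set of rows determines a spanning subgraph $G=(V,E,L)$ with $|E|+|L|=2|V|$, and it remains to see that an independent row set is always sparse, i.e.\ that the generic rigidity matroid refines $\C_2$. This is a rank bound: for any $F\subseteq E\cup L$ the rows of $R$ indexed by $F$ have support in the $2|V_F|$ columns indexed by $V_F$, giving rank at most $2|V_F|$; and for nonempty $F\subseteq E$ these rows annihilate the restriction to $V_F$ of the $3$-dimensional space of infinitesimal isometries of $\R^2$ (two translations and an infinitesimal rotation), which is genuinely $3$-dimensional since $|V_F|\ge 2$, forcing rank at most $2|V_F|-3$. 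Hence $G$ is a spanning subgraph of the required type.

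For the ``if'' direction it suffices to exhibit one realisation $(G,p,q)$ with $\rank R(G,p,q)=2|V|$, since the rank is maximised at generic points and rigidity of the spanning subgraph $G$ forces rigidity of $H$. I would argue by induction on $|V|$, showing that sparse sets are independent in the generic rigidity matroid, via a Henneberg-type recursive construction of the tight sparse graphs in the slider-pinning form of Streinu and Theran: a degree count using the sparsity bounds (which force $|L|\ge 3$ and hence average degree below $4$) produces a vertex $v$ of degree at most three, and one shows that $G-v$, possibly after re-routing one of the edges at $v$ among its neighbours in the manner of a $1$-extension, and allowing addition or deletion of loops, is again tight and sparse. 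The base case is a single vertex carrying two loops with generic normals, which pins the vertex and is trivially infinitesimally rigid; each inverse operation then preserves generic infinitesimal rigidity by the usual arguments — a $0$-extension (adding a degree-two vertex, via two edges, an edge and a loop, or two loops) extends an independent spanning set of rows directly, while a $1$-extension preserves the rank because a generic perturbation of the split vertex avoids the proper subvariety on which a new dependency could occur. An alternative to the induction is the more computational route of decomposing $G$ into two spanning pseudoforests from the $(2,0)$-count, orienting each so every vertex has out-degree one, and choosing directions so that $R(G,p,q)$ becomes block-triangular with its rank computable pseudoforest by pseudoforest, the Laman count on $E$ excluding any dependency beyond the trivial one.

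The hard part is the ``if'' direction, and within it the delicate point is the reduction step: one needs a vertex together with a legal re-routing that simultaneously keeps the count intact and corresponds to an invertible move at the level of infinitesimal rigidity. When the three neighbours of a degree-three vertex are already pairwise joined, or loops obstruct the obvious choices, no move immediately reverses, and producing one — or locating a substitute vertex — is the combinatorial heart of the argument, which is exactly where the Maxwell--Laman-type analysis of Streinu and Theran, extended to handle loops and sliders, is required.
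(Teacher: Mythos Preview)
The paper does not prove this theorem at all: it is quoted as the result of Streinu and Theran \cite{ST} and used as a black box throughout. So there is no proof in the paper to compare your proposal against.

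As for the proposal itself: your necessity argument is correct and complete. The Maxwell-type rank bounds you give do show that any set of independent rows of $R(H,p,q)$ is sparse in your sense, so a row basis yields the required spanning subgraph.

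Your sufficiency argument, however, is only an outline, and you acknowledge as much. Both routes you sketch---the Henneberg-type induction and the pseudoforest decomposition---are viable strategies, but in each case the substantive work is deferred. For the inductive route, you correctly identify that the existence of a legal $1$-reduction at some low-degree vertex is ``the combinatorial heart of the argument,'' but you then say this ``is exactly where the Maxwell--Laman-type analysis of Streinu and Theran, extended to handle loops and sliders, is required.'' That is circular: it amounts to invoking the theorem you are trying to prove. The actual obstacle is that a degree-three vertex may have its neighbours sitting in both pure-critical (Laman-tight) and mixed-critical ($(2,0)$-tight) sets simultaneously, and one must show these cannot conspire to block every reduction; this interaction between the two counts is precisely what needs a new argument beyond the classical Laman case. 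For the decomposition route, the claim that one can orient the two pseudoforests so that $R(G,p,q)$ becomes block-triangular with readable rank is not obvious and would itself need proof.

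In short: the necessity half is fine, but for sufficiency you have identified the right difficulty without resolving it.
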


\section{Necessary conditions for global rigidity}
\label{sec:Hendrickson}

We say that a looped graph $G=(V,E,L)$ is \emph{redundantly rigid} if $G-e$ is rigid for any $e\in E\cup L$,
and that  $G$ is {\em $d$-balanced}
if, 
for all $X\subset V$ with $|X|= d$, 
each connected component of $G-X$ has at least 
one loop. We will show 
that the properties of being redundantly rigid and $d$-balanced are necessary conditions for a connected generic linearly constrained framework with at least two vertices  to be globally rigid in $\mathbb{R}^d$. (Note that a framework with one vertex is globally rigid if and only if it is rigid, and that a disconnected framework is globally rigid if and only if each of its connected components is globally rigid.)

Given a linearly constrained framework $(G,p,q)$ in $\mathbb{R}^d$ we define its \emph{configuration space} $C(G,p,q)$ to be the set
$$C(G,p,q)=\{\hat p \in \mathbb{R}^{d|V|}\,:\,\mbox{$(G,\hat p,q)$ is equivalent to $(G,p,q)$}\}.$$

In order to establish that globally rigid linearly constrained frameworks are also redundantly rigid an important step is to prove that the configuration space is compact. Since it is easy to see that the configuration space is closed this will follow from the following lemma.

\begin{lem}\label{lem:bounded}
Let $(G,p,q)$ be a generic linearly constrained framework in $\mathbb{R}^d$. Then $C(G,p,q)$ is bounded if and only if each connected component of $G$ contains at least $d$ loops.
\end{lem}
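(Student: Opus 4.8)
The plan is to reduce to the case of a connected graph and then prove the two implications separately; the forward implication is straightforward, while the converse is where the genericity hypothesis is used.

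\emph{Reduction to the connected case.} The equivalence constraints defining $C(G,p,q)$ split over the connected components of $G$: an edge constraint $\|\hat p_i-\hat p_j\|^2=\|p_i-p_j\|^2$ involves two vertices of a single component, and a loop constraint $\hat p_i\cdot q_j=p_i\cdot q_j$ involves one vertex. Hence $C(G,p,q)$ is naturally the Cartesian product of the configuration spaces of the components of $G$, each of which is nonempty (it contains the restriction of $p$), so $C(G,p,q)$ is bounded if and only if the configuration space of each component is bounded. Since the restriction of a generic framework to a component is again generic, it suffices to prove the lemma when $G$ is connected.

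\emph{Fewer than $d$ loops implies unbounded.} Suppose the loops of $G$ have normals $q_{j_1},\dots,q_{j_k}$ with $k\le d-1$. These span a proper subspace of $\R^d$, so choose a nonzero $u\in\R^d$ orthogonal to all of them. For $t\in\R$ put $\hat p_i=p_i+tu$ for every $v_i\in V$. Translation preserves all edge lengths, and $\hat p_i\cdot q_j=p_i\cdot q_j+t\,(u\cdot q_j)=p_i\cdot q_j$ for every loop $e_j$; thus $(G,\hat p,q)$ is equivalent to $(G,p,q)$ for all $t$, and $C(G,p,q)$ is unbounded. This half uses no genericity.

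\emph{At least $d$ loops implies bounded.} Fix a vertex $v_1$. Because $G$ is connected, every $v_i$ is joined to $v_1$ by a path in $E$, and the triangle inequality applied along such a path, using the fixed edge lengths, yields a constant $D$ with $\|\hat p_i-\hat p_1\|\le D$ for all $\hat p\in C(G,p,q)$ and all $i$. Then $\|\hat p\|\le\sqrt{|V|}\,(\|\hat p_1\|+D)$, so it is enough to bound $\hat p_1$ over $C(G,p,q)$. Let $e_{j_1},\dots,e_{j_d}$ be $d$ of the loops, with $e_{j_\ell}$ incident to $v_{i(\ell)}$. From $\hat p_{i(\ell)}\cdot q_{j_\ell}=p_{i(\ell)}\cdot q_{j_\ell}$ and $\|\hat p_1-\hat p_{i(\ell)}\|\le D$ we get $|\hat p_1\cdot q_{j_\ell}-c_\ell|\le D\,\|q_{j_\ell}\|$ for a constant $c_\ell$, so $\hat p_1$ lies in a slab of bounded width with normal $q_{j_\ell}$. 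By genericity the coordinates of $q$ are algebraically independent over $\bQ$, so the $d\times d$ determinant of the matrix with rows $q_{j_1},\dots,q_{j_d}$, which is a nonzero integer polynomial in these coordinates, does not vanish; hence $q_{j_1},\dots,q_{j_d}$ are linearly independent and the intersection of the $d$ corresponding slabs is bounded. Therefore $\hat p_1$, and with it $C(G,p,q)$, is bounded.

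I expect the last implication to be the crux: the key observation is that connectivity upgrades the hyperplane constraint that a loop at $v_{i(\ell)}$ places on $\hat p_{i(\ell)}$ into a slab constraint on the single chosen vertex $v_1$; once that is in hand, genericity provides $d$ linearly independent loop normals and the remaining argument is elementary linear algebra.
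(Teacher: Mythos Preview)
Your proof is correct and follows essentially the same approach as the paper: a translation in a direction orthogonal to all loop normals gives unboundedness when there are fewer than $d$ loops, and connectivity together with the linear independence of $d$ generic loop normals gives boundedness otherwise. The only cosmetic difference is that the paper expresses each standard basis vector as a combination of the $q(f_i)$ to bound the coordinates of $\hat p(v)$, whereas you phrase the same step as intersecting $d$ slabs with independent normals; these are equivalent.
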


\begin{proof}
Let $H$ be a connected component of $G$.

Suppose $H$ does not contain $d$ loops. Let $W$ be subspace of $\mathbb{R}^d$ spanned by the vectors $q(f)$ for $f\in L(H)$. Choose $0\neq t\in W^\perp$ and define $(G,p',q)$ by putting $p'(v)=p(v)+t$ for all $v\in V(H)$, and $p'(v)=p(v)$ for all $v\in V(G)\setminus V(H)$. Then $(G,p',q)$ is equivalent to $(G,p,q)$ and since we can choose $t$ to be arbitrarily large, $C(G,p,q)$ is not bounded.

Suppose $H$ contains $d$ loops $f_1,f_2,\dots,f_d$. Let $B:=\sum_{xy\in E(H)}{|p(x)-p(y)|}$ and choose $v\in V(H)$. Then the fact that $H$ is connected implies that $|p(v)\cdot q(f_i)|\leq B$ for all $1\leq i \leq d$.
Since $q$ is generic we have ${\bf e_1}=(1,0,\dots,0)=\sum_{i=1}^d \alpha_i q(f_i)$ for some scalars $\alpha_1,\alpha_2,\dots,\alpha_d$. Hence 
$$|p(v)\cdot {\bf e_1}|=\left|\sum_{i=1}^d \alpha_i p(v)\cdot q(f_i)\right|\leq B\sum_{i=1}^d |\alpha_i| .$$
A similar argument shows that $|p(v)\cdot  {\bf e_j}|$ is bounded for all vectors ${\bf e_j}$ in the standard basis for $\mathbb{R}^d$. Since $v$ is arbitrary, $C(H,p|_H,q|_H)$ is bounded. 

We may apply the same argument to each connected component of $G$ to deduce that $C(G,p,q)$ is bounded.
\end{proof}

\begin{thm}\label{thm:glob_nec}
Suppose $(G,p,q)$ is a generic globally rigid linearly constrained
framework in $\R^d$. Then each connected
component of $G$ is either a single vertex with at least $d$ loops or is $d$-balanced  and redundantly
rigid in $\R^d$.
\end{thm}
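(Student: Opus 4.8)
The plan is to reduce to a single connected component $H$ with at least two vertices, since the statement for one-vertex components and the reduction to components are both immediate from the remarks preceding the theorem, and to prove the contrapositive in two parts: if $H$ is globally rigid then it is $d$-balanced, and if $H$ is globally rigid then it is redundantly rigid.

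\textbf{Step 1: $d$-balanced is necessary.} Suppose $H$ is not $d$-balanced, so there is $X\subset V(H)$ with $|X|=d$ such that some connected component $K$ of $H-X$ contains no loop. Every loop of $H$ incident to $V(K)$ must in fact be incident to a vertex of $X$ (there are none inside $K$ by assumption, and edges — as opposed to loops — joining $V(K)$ to $X$ do not carry loop constraints). The idea is to construct a nontrivial equivalent framework by applying an isometry of $\R^d$ that fixes the affine span of $\{p(x):x\in X\}$ pointwise while moving $K$. Concretely, the affine hull of $d$ generic points is a hyperplane (or all of $\R^d$ if $d$ points fail to be affinely independent, but generically they span a $(d-1)$-flat), so reflection $\sigma$ in that hyperplane fixes each $p(x)$, hence preserves all edge-length and loop constraints within $H[X]$ and between $X$ and the rest. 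Define $\tilde p$ to equal $p$ outside $V(K)$ and $\sigma\circ p$ on $V(K)$: edges inside $K$ are preserved because $\sigma$ is an isometry, edges from $K$ to $X$ are preserved because their $X$-endpoints are fixed, and there are no loops on $V(K)$ to worry about. One must check $\tilde p\neq p$, which holds because a generic point is not fixed by $\sigma$. The only subtlety is the edge-case where the $d$ points of $X$ do not affinely span a hyperplane; here one instead uses that they span a flat $F$ of dimension $\le d-1$ and picks a nontrivial isometry fixing $F$ pointwise (which exists precisely because $\dim F\le d-1$), applying the same argument. This shows non-$d$-balanced implies not globally rigid.

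\textbf{Step 2: redundantly rigid is necessary.} This is the part I expect to be the main obstacle, and the structure of the excerpt (Lemma \ref{lem:bounded}, the configuration space $C(G,p,q)$) signals the intended route. First, if $H$ is $d$-balanced with $\ge 2$ vertices and globally rigid, then $H$ must be rigid (otherwise there is a flex, giving infinitely many equivalent frameworks), and being $d$-balanced forces each component — here $H$ itself — to contain at least $d$ loops, so by Lemma \ref{lem:bounded} the configuration space $C(H,p|_H,q|_H)$ is bounded; combined with the obvious fact that $C$ is closed (it is defined by polynomial equations), $C$ is compact. Now suppose for contradiction that $H$ is not redundantly rigid: there is $e\in E(H)\cup L(H)$ with $H-e$ not rigid. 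Since $H$ is rigid, the edge map for $e$ is nonconstant on the (positive-dimensional) configuration space of $H-e$. The standard argument (as in Hendrickson's original proof and its adaptations) is: consider the measurement/edge map $f_{H-e}$ restricted to $C(H-e, p, q)$, which is a variety of positive dimension through the generic point $p$; the coordinate function recording the constraint value of $e$ is a nonconstant polynomial on this variety, hence its restriction takes a range of values including the generic value $c_e$ of $(G,p,q)$ as a non-extreme point of its image. By compactness of $C(H,\cdot,\cdot)$ — which sits inside $C(H-e,\cdot,\cdot)$ as the fibre over $c_e$ — and genericity, one extracts a second point $\tilde p$ in this fibre, i.e.\ a framework equivalent to $(H,p,q)$ but distinct from it, contradicting global rigidity. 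The technical care needed is in the algebraic-geometry bookkeeping: ensuring the relevant varieties are irreducible (or passing to the component through $p$), that the generic value is attained in the interior of the image interval so that nearby fibres are nonempty, and that compactness genuinely supplies a distinct point in the same fibre rather than merely in a nearby one — this typically uses that a continuous function ($e$'s constraint value) on the connected positive-dimensional variety $C(H-e,p,q)$ attains its max and min on the compact set $C(H,p,q)$ only if it is constant, forcing a contradiction with non-rigidity of $H-e$, or alternatively a dimension count showing $\dim C(H-e,p,q)>0=\dim C(H,p,q)$ while the latter is a fibre of a dominant map, which is impossible. Assembling these pieces and then applying the two steps to each connected component yields the theorem.
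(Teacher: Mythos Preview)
Your Step 1 is essentially the paper's argument and is correct.

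Step 2 has a genuine gap. The Hendrickson-type argument requires compactness of (the component of $p$ in) $C(H-e,p,q)$, not of $C(H,p,q)$. You establish only the latter: you show $H$ has at least $d$ loops (in fact this follows from \emph{rigidity} of $H$, not from $d$-balancedness as you claim --- a loopless $H$ with $|V(H)|\le d$ is vacuously $d$-balanced), so Lemma~\ref{lem:bounded} gives $C(H,p,q)$ bounded. But $C(H,p,q)$ is the \emph{fibre} of the $e$-constraint map over the value $c_e$; to run the circle/parity argument you need the ambient $1$-dimensional space $C(H-e,p,q)$ to be compact, so that its component through $p$ is a circle and the nonconstant $e$-value function has even-cardinality regular fibres. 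Knowing only that one particular fibre is compact tells you nothing about the existence of a second point in it. Your sentence ``attains its max and min on the compact set $C(H,p,q)$ only if it is constant'' conflates the two spaces.

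The paper fills exactly this gap. It shows directly that every connected component of $H-e$ has at least $d$ loops, so that Lemma~\ref{lem:bounded} applies to $H-e$. For a component with $n\ge 2$ vertices this is a rank computation: the kernel of $R(H-e,p,q)$ has dimension at most one (since $H$ is rigid and only one row was removed), so the restriction to that component has rank at least $dn-1$, while the non-loop rows contribute rank at most $dn-\binom{d+1}{2}$ (or $\binom{n}{2}$ if $n<d$), forcing enough loops. A one-vertex component of $H-e$ is handled separately: if it had only $d-1$ loops, reflecting that vertex in the line through $p(u)$ perpendicular to those loop normals (where $e=uv$) gives an equivalent framework, contradicting global rigidity. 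Only after this does the paper invoke the ``component of $C(H-e,p,q)$ through $p$ is diffeomorphic to a circle'' argument. You need to supply this missing step.
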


\begin{proof}
Since $(G,p,q)$ is globally rigid if and only if each of its connected components are globally rigid, we may assume that $G$ is connected and is rigid in $\R^d$. It is easy to see that the theorem holds when $G$ has one vertex so we may assume that $|V|\geq 2$.

We first prove that $G$ is $d$-balanced. Let $X\subseteq V$ with $|X|=d$. Suppose some connected component  $G_1$ of $G-X$,
is incident with no loops.
Then we can obtain an equivalent but noncongruent realisation 
from $(G,p,q)$ by reflecting $G_1$ in the hyperplane spanned by the points $p(v)$, $v\in X$.
This contradicts the global rigidity of $(G,p,q)$. Hence 
$G$ is $d$-balanced.

We next show that $G$ is redundantly
rigid in $\R^d$. Suppose not. Then there exists $e\in E\cup L$ such that $(G-e,p,q)$ is flexible. 

We will use Lemma \ref{lem:bounded} to show that the configuration space  $C(G-e,p,q)$ is bounded. The fact that $G$ is rigid in $\R^d$ implies that each connected component $H$ of $G-e$ with $n\geq 2$ vertices  
contains at least $d$ loops. (This follows since the dimension of the kernel of $R(G-e,p,q)$ is one, so the dimension of the kernel of $R(H,p|_H,q|_H)$ is at most one. 
This implies that  the rank of 
$R(H,p|_H,q|_H)$ is at least $dn-1$. 
On the other hand, the rank of the submatrix of $R(H,p|_H,q|_H)$ consisting of the rows indexed by the (non-loop) edges of $H$ is at most $dn-{{d+1}\choose{2}}$ when $n\geq d$ and ${{n}\choose{2}}$ when $n<d$. This implies that the number of loops in $H$ is at least $(dn-1)-dn+{{d+1}\choose{2}}$ when $n\geq d$, and at least  $(dn-1)-{{n}\choose{2}}$ when $n<d$.) 

It remains to show that each connected component $H$ of $G-e$ with exactly one vertex $v$,
has at least $d$ loops. Suppose not. The facts that $G$ is rigid and connected imply that $H$ has exactly $(d-1)$ loops and that $e=uv$ for some $u\neq v$. 
Let $\ell$ be the line through $p (v)$ which is perpendicular to $q(f)$ for all loops incident to $v$ and let $P$ be the point on $\ell$ which is closest to $p (u)$. Let $p'(x)=p (x)$ for all $x \in V-v$ and $p'(v)=2P-p(v)$. Then $(G,p',q)$ is equivalent 
to $(G,p,q)$ and $p'\neq p$. This contradicts the fact that $(G,p,q)$ is globally rigid. Hence $C(G-e,p,q)$ is bounded. 

We can now use a similar argument to that given in \cite{JMN} to deduce that 
the component $\mathcal{C}$ of $C(G-e,p,q)$ that contains $p$ 
%
%
is diffeomorphic to a circle, and that
there exists a $p'\in \mathcal{C}-p$ with $(G,p',q)$ equivalent 
to $(G,p,q)$.
\end{proof}

We will show in Section \ref{sec:globalthm} that the necessary conditions for generic global rigidity given in Theorem \ref{thm:glob_nec} are also sufficient when $d=2$.

\section{Equilibrium stresses}
\label{sec:stress}
We will obtain an algebraic sufficient condition for a generic linearly constrained framework in $\R^d$ to be globally rigid, and show that the property that this condition holds is preserved by the graph `1-extension operation'. These results are  key  to our characterisation of global rigidity for 2-dimensional generic frameworks.

An \emph{equilibrium stress} for a linearly constrained  framework
$(G,p,q)$ in $\mathbb{R}^d$ is a pair $(\omega,\lambda)$, where
$\omega:E\to \R$, $\lambda:L\to \R$ and $(\omega,\lambda)$ belongs
to the cokernel of $R(G,p,q)$. Thus $(\omega,\lambda)$ is an
equilibrium stress for $(G,p)$ in $\R^d$ if and only if, for all
$v_i\in V$,
\begin{equation}\label{eq:stressdefn}
\sum_{v_j\in V} \omega_{ij}(p(v_i)-p(v_j)) + \sum_{e_j\in
L}\lambda_{i,j} q(e_j)=0.
\end{equation}
where $\omega_{ij}$ is taken to be equal to $\omega(e)$ if
$e=v_iv_j\in E$ and to be equal to $0$ if $v_iv_j\not\in E$, and $\lambda_{ij}$ is  equal to $\lambda(e_j)$ if
$e_j$ is a loop at $v_i$ and is equal to $0$ otherwise. An example is presented in Figure \ref{fig:stress}.

\begin{figure}[h]
\begin{center}
\begin{tikzpicture}[scale=1.5,font=\small]
\node[roundnode] at (1,0) (a) [label=below:$v_1$]{}
	edge[in=0,out=320,loop] node[pos=.5,below]{$l_1$} ()
	edge[in=40,out=0,loop] node[pos=.5,above]{$l_2$} ();
\node[roundnode] at (-1,0) (b) [label=below:$v_2$]{}
	edge[] node[pos=.5,above]{$e_1$} (a) 
	edge[in=180,out=220,loop] node[pos=.7,above]{$l_3$}();
\node[roundnode] at (0,1) (c) [label=left:$v_3$]{}
	edge[] node[pos=.5,right]{$e_2$}(a)
	edge[] node[pos=.5,left]{$e_3$}(b)
	edge[in=70,out=110,loop] node[pos=.7,right]{$l_4$}();
\node[] at (0,0) (G) [label=below:$G$] {};
\node[] at (2,1.3) () [label=right:$p(v_1)\text{=}(1\text{,}0)$]{};
\node[] at (2,1.05) () [label=right:$p(v_2)\text{=}(-1\text{,}0)$]{};
\node[] at (2,0.8) () [label=right:$p(v_3)\text{=}(0\text{,}1)$]{};
\node[] at (2,0.55) () [label=right:$q(l_1)\text{=}(1\text{,}-1)$]{};
\node[] at (2,0.30) () [label=right:$q(l_2)\text{=}(1\text{,}0)$]{};
\node[] at (2,0.05) () [label=right:$q(l_3)\text{=}(1\text{,}1)$]{};
\node[] at (2,-0.2) () [label=right:$q(l_4)\text{=}(0\text{,}1)$]{};

\begin{scope}[xshift=6cm,yshift=0.5cm]
\draw[dashed] (0.3,-0.7) -- (1.7,0.7) node[pos=.9,below right]{$l_1$};
\draw[dashed] (1,-0.7) -- (1,0.7) node[pos=0,right]{$l_2$};
\draw[dashed] (-1.7,0.7) -- (-0.3,-0.7) node[pos=-.1,right]{$l_3$};
\draw[dashed] (-0.7,1) -- (0.7,1) node[pos=1,above]{$l_4$};
\node[roundnode] at (1,0) (a) [label=right:$v_1$]{};
\node[roundnode] at (-1,0) (b) [label=below:$v_2$]{}
	edge[] node[pos=.5,above]{$e_1$} (a);
\node[roundnode] at (0,1) (c) [label=above:$v_3$]{}
	edge[] node[pos=.5,right]{$e_2$}(a)
	edge[] node[pos=.5,left]{$e_3$}(b);
\node[] at (0,0) (Gpq) [label=below:$(G\text{,}p\text{,}q)$] {};
\end{scope}
\end{tikzpicture}
\end{center}
\caption{A looped simple graph $G$ and the corresponding realisation as a linearly constrained framework in $\mathbb{R}^2$. The vectors $\omega=(0,1,1)$ and $\lambda=(-1,0,1,-2)$ give an equilibrium stress for this framework.
}
\label{fig:stress}
\end{figure}
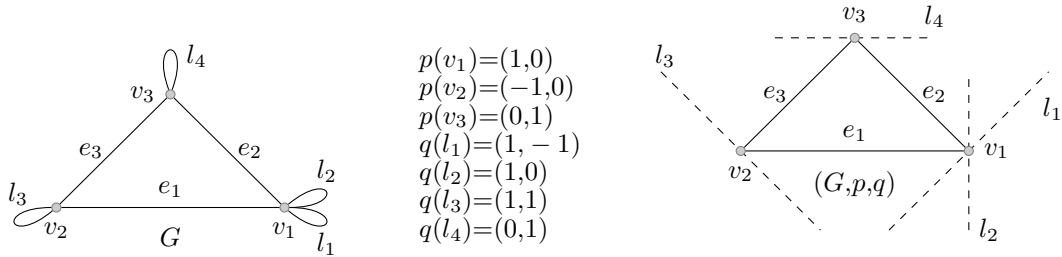

We can write (\ref{eq:stressdefn}) in matrix form
\begin{equation}\label{eq:stressmatrix}
\Pi(p)\Omega(\omega) + \Pi(q)\Lambda(\lambda)=0.
\end{equation}
 where: 
 \begin{itemize}
\item the
{\em stress matrix} $\Omega(\omega)$ is a $|V|\times |V|$-matrix in which the off
diagonal entry in row $v_i$ and column $v_j$ is $-\omega_{ij}$, and the
diagonal entry in row $v_i$  is $\sum_{v_j\in V} \omega_{ij}
$;
\item the 
{\em linear constraint matrix} $\Lambda(\lambda)$ is a $|V|\times |L|$-matrix in which the entry in row $v_i$ and column $e_j$ is $\lambda_{ij}$;
\item for any set $S=\{s_1,s_2,\ldots,s_t\}$ and map $f:S\to \R^d$, the {\em coordinate matrix} $\Pi(f)$ is the $d\times t$ matrix in which the $i$'th column is $f(s_i)$.
\end{itemize}

\begin{lem}\label{lem:stress}
Suppose $(\omega,\lambda)$ is an equilibrium stress for a $d$-dimensional linearly constrained framework $(G,p,q)$. Then 
$\rank \Omega(\omega)\leq |V|-1$. In addition, if $(G,p)$ is generic and $G$ has at least two vertices and at most $d-1$ loops, then $\rank \Omega(\omega)\leq |V|-2$.
\end{lem}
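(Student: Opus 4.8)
The plan is to handle the two assertions in turn. The first is a formal consequence of the definition of $\Omega(\omega)$: since there are no non-loop edges from a vertex to itself we have $\omega_{ii}=0$, so the sum of the entries in row $v_i$ of $\Omega(\omega)$ is $\sum_{v_j\in V}\omega_{ij}-\sum_{v_j\neq v_i}\omega_{ij}=0$. Thus $\Omega(\omega)\mathbf 1=0$, where $\mathbf 1$ is the all-ones vector in $\R^{|V|}$, and hence $\dim\ker\Omega(\omega)\geq 1$ and $\rank\Omega(\omega)\leq |V|-1$. No genericity is used here.

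For the second assertion the idea is to exhibit a vector of $\ker\Omega(\omega)$ that is linearly independent of $\mathbf 1$. Starting from (\ref{eq:stressdefn}) and collecting, for fixed $v_i$, the coefficient of each $p(v_j)$, one sees that the equilibrium condition can be rewritten as $\Omega(\omega)\Pi(p)^\top=-\Lambda(\lambda)\Pi(q)^\top$, an identity of $|V|\times d$ matrices. Because $G$ has at most $d-1$ loops, the span $W\subseteq\R^d$ of the loop directions $\{q(e_j):e_j\in L\}$ has dimension at most $d-1$, so $W^\perp\neq\{0\}$; moreover $W$ is defined over $\mathbb{Q}(q)$, so I would pick $0\neq t\in W^\perp$ with coordinates in $\mathbb{Q}(q)$ (if $L=\emptyset$ one simply takes $t$ to be a standard basis vector). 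Multiplying the identity on the right by $t$ gives $\Omega(\omega)\bigl(\Pi(p)^\top t\bigr)=-\Lambda(\lambda)\bigl(\Pi(q)^\top t\bigr)=0$, since the $j$-th entry of $\Pi(q)^\top t$ is $q(e_j)\cdot t=0$. Hence the vector $u:=\Pi(p)^\top t$, whose entry in position $v_i$ is $p(v_i)\cdot t$, lies in $\ker\Omega(\omega)$.

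It remains to verify that $\mathbf 1$ and $u$ are linearly independent, and this is the only step where I expect to need to be careful. Since $|V|\geq 2$ it suffices to show that $u$ is non-constant, i.e.\ that $(p(v_1)-p(v_2))\cdot t\neq 0$. As $t$ is a nonzero vector with coordinates in $\mathbb{Q}(q)$, the quantity $(p(v_1)-p(v_2))\cdot t$ is a nonzero $\mathbb{Q}(q)$-linear form evaluated at the coordinates of $p$; genericity of $(G,p,q)$ makes these coordinates algebraically independent over $\mathbb{Q}(q)$, so the form takes a nonzero value. Therefore $\dim\ker\Omega(\omega)\geq 2$, giving $\rank\Omega(\omega)\leq |V|-2$. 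The whole argument is short; the main thing to watch is keeping the dependence of $t$ on $q$ (and its independence from $p$) straight, so that the final genericity step is legitimate.
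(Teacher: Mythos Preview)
Your proof is correct and follows essentially the same approach as the paper's. The only cosmetic difference is that the paper phrases both parts in terms of the \emph{cokernel} of $\Omega(\omega)$ rather than the kernel; since $\Omega(\omega)$ is symmetric these coincide, so your use of $\ker\Omega(\omega)$ is equivalent. Your treatment of the genericity step is actually more careful than the paper's: you explicitly choose $t\in W^\perp$ with coordinates in $\mathbb{Q}(q)$ so that $(p(v_1)-p(v_2))\cdot t$ becomes a nonzero $\mathbb{Q}(q)$-linear form in the $p$-coordinates, whereas the paper simply asserts linear independence ``since $(G,p,q)$ is generic''.
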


\begin{proof} 
The first assertion follows from the fact that the vector $(1,1,\ldots,1)$ belongs to the cokernel of $\Omega(\omega)$. To prove the second assertion, we choose a nonzero vector $q_0\in \R^d$ such that $q_0$ is orthogonal to $q(f)$ for all loops $f$ of $G$. Then $(p(v_1)\cdot q_0,p(v_2)\cdot q_0,\ldots,p(v_n)\cdot q_0)$ belongs to the cokernel of $\Omega(\omega)$ by Equation  (\ref{eq:stressdefn}), and is linearly independent from $(1,1,\ldots,1)$ since $(G,p,q)$ is generic.
\end{proof}

We say that $(\omega,\lambda)$ is a {\em full rank equilibrium stress} for
$(G,p,q)$ if $\rank \Omega(\omega)= |V|-1$.
For example, the framework $(G,p,q)$ drawn in Figure \ref{fig:stress} has a stress matrix
\[\Omega(\omega)=\left[\begin{array}{rrr}
1&0&-1\\
0&1&-1\\
-1&-1&2
\end{array}\right]\]
with respect to the given equilibrium stress $(\omega,\lambda)=(0,1,1,-1,0,1,-2)$ and we have $\rank \Omega(\omega)=2$.
Thus $(\omega,\lambda)$ is a full rank equilibrium stress for the framework $(G,p,q)$ drawn in Figure \ref{fig:stress}. We will show in Theorem \ref{thm:stress} below that the property of having a full rank equilibrium stress is a sufficient condition for a generic linearly constrained framework 
to be globally rigid in $\R^d$.

\begin{lem}\label{lem:stress_for_both}
Let $(G,p,q)$ and $(G,p',q)$ be frameworks and $(\omega,\lambda)$ be a full rank
equilibrium stress for both $(G,p,q)$ and $(G,p',q)$. 
Then, for some fixed
$t\in \R^d$, we have $p(v_i)=t+p'(v_i)$ for all $v_i\in V(G)$.
\end{lem}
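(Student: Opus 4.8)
The plan is to exploit the matrix identity (\ref{eq:stressmatrix}) which holds for both frameworks. Since $(\omega,\lambda)$ is an equilibrium stress for both $(G,p,q)$ and $(G,p',q)$, we have $\Pi(p)\Omega(\omega) = -\Pi(q)\Lambda(\lambda) = \Pi(p')\Omega(\omega)$, and hence $(\Pi(p)-\Pi(p'))\Omega(\omega)=0$. Writing $P := \Pi(p)-\Pi(p')$, each row of $P$ is a vector in $\R^{|V|}$ lying in the cokernel of $\Omega(\omega)$ (equivalently, in the left kernel, since $\Omega(\omega)$ is symmetric). Because $(\omega,\lambda)$ is a \emph{full rank} equilibrium stress, $\rank\Omega(\omega)=|V|-1$, so this cokernel is one-dimensional; by the first assertion of Lemma \ref{lem:stress} it is spanned by $(1,1,\dots,1)$. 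Therefore each row of $P$ is a scalar multiple of the all-ones vector, which says precisely that $\Pi(p)-\Pi(p')$ has constant columns: there is a single vector $t\in\R^d$ (whose $k$-th coordinate is the scalar coming from the $k$-th row of $P$) with $p(v_i)-p'(v_i)=t$ for every $v_i\in V(G)$.

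The key steps, in order, are: (1) apply (\ref{eq:stressmatrix}) to each of the two frameworks and subtract to get $(\Pi(p)-\Pi(p'))\Omega(\omega)=0$; (2) invoke the definition of full rank equilibrium stress together with Lemma \ref{lem:stress} to conclude that the left kernel of $\Omega(\omega)$ is exactly $\langle(1,\dots,1)\rangle$; (3) deduce that every row of $\Pi(p)-\Pi(p')$ is a multiple of $(1,\dots,1)$ and read off the translation vector $t$.

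I do not expect any serious obstacle here; the only point requiring mild care is the bookkeeping that "each row of $\Pi(p)-\Pi(p')$ lies in the left kernel of $\Omega(\omega)$" translates into "each \emph{column} of $\Pi(p)-\Pi(p')$ is constant," i.e. that the $d$ row-scalars assemble into one vector $t\in\R^d$ with $p(v_i)=t+p'(v_i)$ for all $i$. This is just transposing the statement that $P\Omega(\omega)=0$ with $\Omega(\omega)$ symmetric of corank one. Note the hypothesis implicitly forces $G$ to have at least two vertices (otherwise $\rank\Omega(\omega)=0\ne|V|-1$), so the statement is vacuous or trivial in the degenerate case and the argument above covers everything else.
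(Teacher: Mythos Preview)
Your proof is correct and follows essentially the same approach as the paper: subtract the two instances of (\ref{eq:stressmatrix}) to obtain $(\Pi(p)-\Pi(p'))\Omega(\omega)=0$, use the full-rank hypothesis to identify $\coker\Omega(\omega)=\langle(1,\dots,1)\rangle$, and read off the translation $t$ from the row scalars. One small inaccuracy in your closing parenthetical: when $|V|=1$ we have $|V|-1=0$, so the full-rank condition \emph{is} satisfied (trivially, with $\Omega(\omega)=[0]$) and the argument still goes through; this does not affect the proof itself.
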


\begin{proof}
Since $\rank \Omega(\omega)=|V(G)|-1$, the cokernel of $\Omega(\omega)$ is spanned by $(1,1,\ldots,1)$.
Since $(\omega,\lambda)$ is an
equilibrium stress for both $(G,p,q)$ and $(G,p',q)$, we may use (\ref{eq:stressmatrix}) to deduce that $(\Pi(p)-\Pi(p'))\Omega(\omega) =0$. This implies that each row of $\Pi(p-p')$ belongs to $\coker  \Omega(\omega)$ and hence is a scalar multiple of $(1,1,\ldots,1)$. This gives $p(v_i)=t+p'(v_i)$ for all $v_i\in V$, where $t=(t_1,t_2,\ldots,t_d)$ and $t_i(1,1,\ldots,1)$ is the $i$'th row of $\Pi(p-p')$.
\end{proof}


\begin{lem}\label{lem:affine_image}
Let $(G,p,q)$ be a generic linearly constrained framework in $\R^d$ and $(G,p',q)$  be an equivalent framework. Suppose that $G$ has at least $d$ loops
and that $p(v)=p'(v)+t$ for some fixed $t\in \R^d$, for all $v\in V$. 
Then $p=p'$.
\end{lem}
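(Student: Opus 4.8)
The plan is to use genericity to force the translation $t$ to be zero. Since $(G,p',q)$ is equivalent to $(G,p,q)$ and $p = p' + t$, the edge-length constraints are automatically preserved by a translation, so they give no information; the useful constraints are the loop constraints. For every incident pair $v_i \in V$, $e_j \in L$ we have $p(v_i)\cdot q(e_j) = p'(v_i)\cdot q(e_j)$, and substituting $p'(v_i) = p(v_i) - t$ yields $t \cdot q(e_j) = 0$ for every loop $e_j \in L$. Thus $t$ is orthogonal to the span of $\{q(e_j) : e_j \in L\}$. Since $G$ has at least $d$ loops, I first want to argue that this span is all of $\R^d$.

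The key step is therefore: \emph{if $(G,p,q)$ is generic and $G$ has at least $d$ loops, then the vectors $q(e_j)$, $e_j \in L$, span $\R^d$}. This is immediate from genericity: the coordinates of $q$ are algebraically independent over $\bQ$, so any $d$ of the loop vectors $q(e_{j_1}), \dots, q(e_{j_d})$ have a determinant which is a nonzero polynomial in algebraically independent quantities, hence is nonzero, so they form a basis of $\R^d$. (One should be slightly careful that multiple loops at the same vertex are allowed, but the genericity hypothesis makes the full set of $d|L|$ loop coordinates algebraically independent, so distinct loops still give distinct, and in fact generic, vectors; any $d$ of them are linearly independent.)

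Combining the two observations: $t$ is orthogonal to $\R^d$, hence $t = 0$, and therefore $p = p'$. The argument is short and the only mild subtlety is the genericity/independence bookkeeping for the loop vectors, which is the step I would write out most carefully; everything else is a one-line substitution into the equivalence equations.
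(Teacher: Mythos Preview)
Your proof is correct and follows essentially the same route as the paper: from the loop constraints you extract $t\cdot q(e_j)=0$ for every loop $e_j$, and then genericity makes any $d$ of the vectors $q(e_j)$ linearly independent, forcing $t=0$. The paper packages the same argument in matrix form (writing $P-P'=AJ$ and using that the matrix $Q$ of loop vectors is nonsingular), but the content is identical.
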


\begin{proof}
Choose distinct loops $e_i\in L$ for $1\leq i \leq d$ and let $v_i$ be the vertex incident to $e_i$. Let $P$, $P'$ and $Q$ be the $d\times d$ matrices whose $i$'th columns are $p(v_i)$, $p'(v_i)$ and $q(e_i)$, respectively and let $A$ be the $d\times d$ diagonal matrix with the coordinates of $t$ on the diagonal. Then $(P-P')Q^T=0$ since $(G,p,q)$ and $(G,p',q)$ are equivalent. Since $P-P'=AJ$ where $J$ is the $d\times d$ matrix all of whose entries are 1, we have $AJQ^T=0$. Since $(G,p,q)$ is generic, $Q$ is nonsingular and hence $AJ=0$. This implies that $A=0$ and hence $t=0$.
\end{proof}

\begin{prop}[Connelly \cite{C05}]\label{prop:C}
Suppose that $f:\mathbb{R}^a\rightarrow \mathbb{R}^b$ is a function, where each coordinate is a polynomial with 
coefficients in some finite extension $\mathbb{K}$ of $\mathbb{Q}$, $p\in \mathbb{R}^a$ is generic over $\mathbb{K}$ and $f(p)=f(\hat p)$, for some $\hat p\in \mathbb{R}^a$. Then there are (open) neighbourhoods $N_p$ of $p$ and $N_{\hat p}$ of $\hat p$ in $\mathbb{R}^a$ and a diffeomorphism $g:N_{\hat p}\rightarrow N_p$ such that for all $x\in N_q$, $f(g(x))=f(x)$, and $g({\hat p})=p$.
\end{prop}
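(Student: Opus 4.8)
The statement to prove is Proposition~\ref{prop:C}, which is attributed to Connelly \cite{C05}; the plan is to reproduce the argument using the implicit/inverse function theorem together with a dimension-counting argument exploiting the genericity of $p$. First I would consider the image variety $V \subseteq \mathbb{R}^b$, namely the Zariski closure of $f(\mathbb{R}^a)$, which is defined over $\mathbb{K}$ since $f$ is a polynomial map with coefficients in $\mathbb{K}$. Let $k = \dim V$. The key point is that, because $p$ is generic over $\mathbb{K}$, the point $f(p)$ is a \emph{smooth} point of $V$ at which $f$ has locally maximal rank: indeed the rank of the Jacobian $df_x$ is a lower-semicontinuous function whose value on a generic (in particular on our) $p$ equals its maximum $r$ over $\mathbb{R}^a$, and one checks $r = k$. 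So near $p$ the map $f$ is a submersion onto a $k$-dimensional smooth submanifold of $\mathbb{R}^b$ through $f(p)$.

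Next I would transfer this smoothness to the point $\hat p$. Since $f(\hat p) = f(p)$ and $f(p)$ is a smooth point of $V$ of dimension $k$, the fibre $f^{-1}(f(p))$ has, near $p$, dimension exactly $a - k$ (by the constant-rank theorem applied near $p$). The subtle part is that we do not know a priori that $\hat p$ is a smooth point of the fibre or that $df_{\hat p}$ has rank $k$; this is where genericity of $p$ is used more seriously. Following Connelly, one argues that the set of $x \in \mathbb{R}^a$ at which $\operatorname{rank} df_x < k$ is a proper algebraic subset $W$ defined over $\mathbb{K}$ (cut out by vanishing of the $k \times k$ minors of the Jacobian, whose entries are polynomials over $\mathbb{K}$); since $p \notin W$ by genericity, and since $f(p) = f(\hat p)$, one shows $\hat p \notin W$ as well — the reason being that $\hat p$ lies in the same fibre, and if $\hat p \in W$ then $f(p) = f(\hat p)$ would lie in $f(W)$, a constructible set of dimension $< k$ defined over $\mathbb{K}$, forcing $f(p)$ into a proper $\mathbb{K}$-subvariety of $V$ and contradicting genericity of $p$. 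Hence $df_{\hat p}$ also has rank $k$.

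With $df_p$ and $df_{\hat p}$ both of rank $k$, I would finish by a standard application of the rank theorem at each of the two points. Near $p$ there are coordinates in which $f$ is a linear projection $\mathbb{R}^a \to \mathbb{R}^k \hookrightarrow \mathbb{R}^b$ with image the local model of $V$; the same holds near $\hat p$, onto the \emph{same} local piece of $V$ since $f(p)=f(\hat p)$ and $V$ is smooth of dimension $k$ there. Shrinking to neighbourhoods $N_p$ of $p$ and $N_{\hat p}$ of $\hat p$ mapping onto a common neighbourhood of $f(p)$ in the manifold $V$, one composes the local trivialisations to produce a diffeomorphism $g : N_{\hat p} \to N_p$ with $f \circ g = f$ on $N_{\hat p}$; and one may further compose with a diffeomorphism of $N_p$ fixing the fibre through $p$ to arrange $g(\hat p) = p$. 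The main obstacle, and the step requiring the most care, is the middle one: establishing that $\hat p$ inherits the maximal-rank (smoothness) property from $p$, which genuinely depends on $p$ being generic over the field of definition $\mathbb{K}$ and not merely on $f(p)$ being a smooth point of the image. (I note a harmless typo in the statement as given, "for all $x\in N_q$", which should read "for all $x \in N_{\hat p}$".)
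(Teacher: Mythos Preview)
The paper does not supply a proof of this proposition at all; it is stated with attribution to Connelly \cite{C05} and then applied directly in the proof of Theorem~\ref{thm:stress}. So there is no ``paper's own proof'' to compare against.

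Your sketch is correct and is essentially Connelly's argument. The three ingredients are exactly the ones you identify: (i) genericity of $p$ forces $df_p$ to have the maximal rank $k=\dim V$; (ii) the rank-drop locus $W$ is a proper $\mathbb{K}$-subvariety, and since $\operatorname{rank} d(f|_W)_x\le \operatorname{rank} df_x<k$ at every $x\in W$, the closure $\overline{f(W)}$ is a proper $\mathbb{K}$-subvariety of $V$, so the generic point $f(p)=f(\hat p)$ avoids it and hence $\hat p\notin W$; (iii) the constant-rank theorem applied at both $p$ and $\hat p$ gives compatible local product structures over a common neighbourhood of $f(p)$ in the smooth part of $V$, from which $g$ is assembled. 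Your identification of step~(ii) as the crux, and of the mechanism by which genericity over $\mathbb{K}$ transfers regularity from $p$ to $\hat p$ via the image, is exactly right. The typo you noticed in the statement (``$N_q$'' for ``$N_{\hat p}$'') is indeed just a typo.
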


\begin{thm}\label{thm:stress}
Suppose $(G,p,q)$ is a generic linearly constrained framework in
$\R^d$ with at least two vertices, and $(\omega,\lambda)$ is
a full rank equilibrium stress for  $(G,p,q)$.
Then $(G,p,q)$ is globally rigid.
\end{thm}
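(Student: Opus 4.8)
The plan is to follow the strategy of Connelly's argument for bar-joint frameworks, adapted to the linearly constrained setting via the coordinate matrices $\Pi(p)$, $\Pi(q)$. Let $(G,\hat p,q)$ be any framework equivalent to $(G,p,q)$; I must show $\hat p = p$. First I would set up the function $f:\mathbb{R}^{d|V|}\to \mathbb{R}^{|E|+|L|}$ whose edge-coordinates are $\|x_i-x_j\|^2$ for $v_iv_j\in E$ and whose loop-coordinates are $x_i\cdot q(e_j)$ for incident pairs $v_i,e_j$; these are polynomials with coefficients in the finite extension $\mathbb{K}=\mathbb{Q}(q)$ of $\mathbb{Q}$, and $p$ is generic over $\mathbb{K}$ since $(G,p,q)$ is generic. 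Since $f(p)=f(\hat p)$, Proposition \ref{prop:C} gives neighbourhoods $N_p$ of $p$, $N_{\hat p}$ of $\hat p$, and a diffeomorphism $g:N_{\hat p}\to N_p$ with $f\circ g = f$ on $N_{\hat p}$ and $g(\hat p)=p$.

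The key point is that the equilibrium stress condition is an algebraic (polynomial over $\mathbb{K}$) condition on the configuration. Concretely, $(\omega,\lambda)$ is an equilibrium stress for a framework $(G,x,q)$ precisely when $\Pi(x)\Omega(\omega) + \Pi(q)\Lambda(\lambda)=0$, i.e. when each column of $x$ satisfies the linear system (\ref{eq:stressdefn}); this is a system of polynomial equations in $x$ with coefficients in $\mathbb{K}$ (indeed, after fixing $(\omega,\lambda)$, linear in $x$). Since $p$ is generic over $\mathbb{K}$ and $(\omega,\lambda)$ is an equilibrium stress for $(G,p,q)$, the same must hold identically along the whole solution variety through $p$ — more precisely, I would argue that since $g$ preserves $f$, it maps frameworks equivalent to $(G,\hat p, q)$ to frameworks equivalent to $(G,p,q)$, and differentiating the identity $f(g(x))=f(x)$ shows that a vector lies in the cokernel of $R(G,g(x),q)$ if and only if its pullback lies in the cokernel of $R(G,x,q)$; hence $(\omega,\lambda)$ is an equilibrium stress for $(G,g(x),q)$ for all $x\in N_{\hat p}$. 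Taking $x=\hat p$ gives that $(\omega,\lambda)$ is an equilibrium stress for both $(G,p,q)$ and $(G,\hat p, q)$. (The standard way to make this rigorous: the set of configurations $x$ near $p$ for which $(\omega,\lambda)$ is an equilibrium stress is a linear subvariety $S_p$ defined over $\mathbb{K}$ containing the generic point $p$; pulling back through $g$, genericity of $p$ forces $\hat p$ to lie in the corresponding variety $S_{\hat p}$.)

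Once we know $(\omega,\lambda)$ is a full rank equilibrium stress for both $(G,p,q)$ and $(G,\hat p,q)$, Lemma \ref{lem:stress_for_both} gives $p(v_i) = t + \hat p(v_i)$ for all $v_i\in V$, for some fixed $t\in\mathbb{R}^d$. It remains to conclude $t=0$, i.e. $p=\hat p$. For this I would invoke Lemma \ref{lem:affine_image}: since $(\omega,\lambda)$ is a full rank equilibrium stress, $\rank\Omega(\omega)=|V|-1$, and by the second assertion of Lemma \ref{lem:stress} this is impossible if $G$ has at most $d-1$ loops and $|V|\ge 2$; hence $G$ has at least $d$ loops. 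Now $(G,\hat p,q)$ is equivalent to the generic framework $(G,p,q)$ and $p = \hat p + t$, so Lemma \ref{lem:affine_image} gives $t=0$, and therefore $\hat p = p$. This shows the only framework equivalent to $(G,p,q)$ is itself, i.e. $(G,p,q)$ is globally rigid.

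The main obstacle is the middle step: rigorously transferring the equilibrium stress $(\omega,\lambda)$ from $(G,p,q)$ to the nearby equivalent framework $(G,\hat p,q)$. This requires combining the local diffeomorphism $g$ from Proposition \ref{prop:C} with the genericity of $p$ over $\mathbb{K}$ to promote "$(\omega,\lambda)$ is a stress at $p$" to "$(\omega,\lambda)$ is a stress at every configuration equivalent to $(G,\hat p,q)$ near $\hat p$" — the usual subtlety being that the cokernel of $R(G,x,q)$ can jump in dimension, so one must argue at the level of the $\mathbb{K}$-defined variety cut out by the linear equations (\ref{eq:stressdefn}) with $(\omega,\lambda)$ fixed, rather than with the stress allowed to vary. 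Everything else is a direct appeal to the lemmas already established.
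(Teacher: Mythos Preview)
Your proposal follows the same route as the paper: apply Proposition~\ref{prop:C} to the rigidity map, transfer the stress $(\omega,\lambda)$ from $p$ to $\hat p$, then invoke Lemmas~\ref{lem:stress_for_both}, \ref{lem:stress} and~\ref{lem:affine_image} in exactly the order you describe. The overall structure is correct.

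Two points about the stress-transfer step. First, the paper's version is a single clean line: differentiating $F(g(x))=F(x)$ at $x=\hat p$ gives $R(G,p,q)\,D=R(G,\hat p,q)$ (chain rule, with $D=dg|_{\hat p}$), so $(\omega,\lambda)R(G,\hat p,q)=(\omega,\lambda)R(G,p,q)\,D=0$. Your ``hence $(\omega,\lambda)$ is an equilibrium stress for $(G,g(x),q)$ for all $x\in N_{\hat p}$'' does not follow from the cokernel equality you state, and is anyway unnecessary --- you only need, and only obtain, the single conclusion at $x=\hat p$. Second, your parenthetical alternative (the stress-defined affine subspace being cut out over $\bK$) has a genuine gap: $(\omega,\lambda)$ lies in $\coker R(G,p,q)$ and so in general has coordinates in $\bQ(p,q)$, not in $\bK=\bQ(q)$; the subvariety $S_p$ is therefore not defined over $\bK$, and genericity of $p$ over $\bK$ cannot be invoked as you suggest. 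Stick with the differentiation argument.

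Minor remark: $\bQ(q)$ is not a finite extension of $\bQ$; it is purely transcendental. The paper is equally informal here; the standard repair is to include $q$ among the domain variables so that the rigidity map has rational coefficients and $(p,q)$ is generic over $\bQ$, or to note that Proposition~\ref{prop:C} holds over an arbitrary extension field.
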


\begin{proof}
Let $(G,\hat p, q)$ be equivalent to $(G,p,q)$. Let $F:\mathbb{R}^{d|V|}\rightarrow \mathbb{R}^{|E|+|L|}$ be the rigidity map defined by $F(p)=(F_E(p),F_L(p))$ where $F_E:\mathbb{R}^{d|V|}\rightarrow \mathbb{R}^{|E|}$ is the usual rigidity map defined by 
$F_E(p)=(\dots,\|p(u)-p(v)\|, \dots)_{e=uv\in E}$,
and $F_L:\mathbb{R}^{d|V|}\rightarrow \mathbb{R}^{|L|}$ is defined by $F_L(p)=(\dots,p(v)\cdot q(f), \dots)_{f=vv\in L}$.

Proposition \ref{prop:C} implies that there exist neighbourhoods $N_p$ of $(p,q)$ and $N_{\hat p}$ of $(\hat p,q)$ and a diffeomorphism $g:N_{\hat p}\rightarrow N_p$ such that $g(\hat p)=p$ and for all $\bar p \in N_{\hat p}$ we have $F(g(\hat p))=F(\hat p)$. By differentiating at $\hat p$, observing that the differential of $F$ is (up to scaling) the rigidity matrix $R(G,p,q)$ and using the fact that $(\omega,\lambda)$ is an equilibrium stress for $(G,p,q)$, we have $(\omega,\lambda)R(G,\hat p, q)=(\omega,\lambda)R(G,p,q)D=0\cdot D=0$ where $D$ is the Jacobean of $g$ at $\hat p$. Hence $(\omega,\lambda)$ is an equilibrium stress for $(G,\hat p ,q)$.
We can now use Lemmas Lemma \ref{lem:stress}, \ref{lem:stress_for_both}
 and \ref{lem:affine_image} to deduce that $p=\hat p$. 
\end{proof}

We will prove a partial converse to Theorem \ref{thm:stress} in Section \ref{sec:globalthm} by showing that every {\em connected, 2-dimensional,} globally rigid generic framework with at least two vertices has a full rank equilibrium stress. Note that the converse to Theorem \ref{thm:stress} is false for disconnected frameworks since a framework is globally rigid if and only if each of its connected components is globally rigid, whereas no disconnected framework can have a full rank equilibrium stress (since we may apply Lemma \ref{lem:stress} to each connected component).

Our next result shows that we can apply Theorem \ref{thm:stress} whenever we can find a full rank equilibrium stress in an arbitrary infinitesimally rigid framework. 

\begin{lem}\label{lem:stress_gen} 
Suppose $G=(V,E,L)$ can be realised in $\R^d$ as an infinitesimally rigid linearly constrained framework
with a full rank equilibrium stress. Then every generic realisation of $G$ in $\R^d$ is infinitesimally rigid and has a full rank equilibrium stress which is nonzero on all elements of $E\cup L$. 
\end{lem}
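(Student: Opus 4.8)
The plan is to reduce the statement to a genericity/algebraic-independence argument about the rank of the rigidity matrix together with the existence of a stress. Suppose $(G,p_0,q_0)$ is an infinitesimally rigid realisation in $\R^d$ carrying a full rank equilibrium stress $(\omega_0,\lambda_0)$, i.e.\ $\rank R(G,p_0,q_0)=d|V|$ and $\rank\Omega(\omega_0)=|V|-1$. The first step is the standard observation that the set of $(p,q)\in\R^{d(|V|+|L|)}$ for which $\rank R(G,p,q)=d|V|$ is Zariski-open (it is the non-vanishing locus of the $d|V|\times d|V|$ minors of $R$, which are polynomials in $(p,q)$ with rational coefficients) and, by hypothesis, nonempty. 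Hence it contains every generic $(p,q)$, so every generic realisation of $G$ is infinitesimally rigid.

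For the second step I want to produce a full rank equilibrium stress at a generic point. Since $\rank R(G,p_0,q_0)=d|V|$ and $R$ has $|E|+|L|$ rows, the cokernel of $R(G,p,q)$ has dimension $|E|+|L|-d|V|$ for every $(p,q)$ in the (nonempty) open set where the rank is maximal; in particular this dimension is constant there, call it $k\ge 1$. One can then describe a basis of $\coker R(G,p,q)$ by a fixed choice of columns: pick $d|V|$ rows of $R$ that are independent at $p_0$ (hence on an open set), and for each of the remaining $k$ rows solve the resulting linear system by Cramer's rule to obtain a stress vector whose entries are rational functions of $(p,q)$, defined and spanning $\coker R(G,p,q)$ on a nonempty Zariski-open set $U$. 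Thus on $U$ we have a rational parametrisation $(p,q)\mapsto (\omega^{(1)}(p,q),\lambda^{(1)}(p,q)),\dots,(\omega^{(k)}(p,q),\lambda^{(k)}(p,q))$ of stresses. A general stress on $U$ is $\sum_i c_i(\omega^{(i)},\lambda^{(i)})$ with $c=(c_1,\dots,c_k)$ a parameter vector, and the entries of the corresponding stress matrix $\Omega$ are bilinear in $(c,(p,q))$-data, hence still rational in these variables. Now consider the $(|V|-1)\times(|V|-1)$ minors of $\Omega$: these are polynomials in $c$ and rational in $(p,q)$. At $(p_0,q_0)$ the hypothesis gives a value of $c$ (namely the one expressing $(\omega_0,\lambda_0)$ in the chosen basis) for which some such minor is nonzero; therefore this minor is not the zero function of $(c,p,q)$ on $U\times\R^k$. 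Consequently, for $(p,q)$ generic, the minor is not identically zero as a polynomial in $c$, so there is a $c\in\R^k$ (indeed one may take $c$ generic over $\bQ(p,q)$, or simply any $c$ avoiding a proper subvariety) making $\rank\Omega=|V|-1$; this $c$ gives the required full rank equilibrium stress $(\omega,\lambda)$ for the generic framework $(G,p,q)$.

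The third step handles the `nonzero on all elements of $E\cup L$' requirement. For each $e\in E\cup L$, the condition $\omega_e=0$ (or $\lambda_e=0$) together with the minor-nonvanishing is again the non-vanishing of a polynomial in $c$; either the stress constructed above already has $\omega_e\neq 0$, or $\omega_e$ vanishes identically in $c$ on all of $\coker R$, which would mean $e$ supports no stress at all in $(G,p,q)$ — but then $R(G-e,p,q)$ has the same rank $d|V|$ as $R(G,p,q)$ while having one fewer row, so $G-e$ is still rigid, and moreover one checks $R(G-e,p,q)$ has full row rank; iterating, delete all such `stress-free' elements to reach a spanning subgraph $G'$ that is still infinitesimally rigid, has $\coker R(G',p,q)$ of dimension $\ge 1$ (it is still $\ge 1$ because we only removed elements not in the support of any stress, which does not change the cokernel), and in which every element lies in the support of some stress. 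Running the minor argument of Step 2 on $G'$, a generic $c$ then gives a stress nonzero on every edge and loop of $G'$ with $\rank\Omega=|V|-1$; pulling back to $G$ by setting the stress to $0$ on $E\cup L\setminus(E'\cup L')$ does not change $\Omega$, but since those elements were stress-free in $G$ they were already forced to be zero in \emph{every} stress, so in fact $E'\cup L'=E\cup L$ and the stress is nonzero everywhere. (A cleaner phrasing: the set of $c$ for which the stress matrix has rank $|V|-1$ \emph{and} $\omega$ is nonzero on all stress-supported elements is the complement of finitely many proper subvarieties of $\R^k$, hence nonempty, and by the previous sentence every element of $E\cup L$ is stress-supported.)

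The main obstacle is Step 2: one must argue that the single polynomial condition ``$\rank\Omega=|V|-1$'' which is satisfied at the special realisation $(p_0,q_0)$ for a special stress $c_0$ persists at a generic realisation for \emph{some} (possibly different) stress. The key point making this work is that the existence of the rational stress-parametrisation on the open set $U$ lets us view the relevant minor as one fixed polynomial in the combined variables $(c,p,q)$; nonvanishing at one point of $U\times\R^k$ then forces non-identical-vanishing, and the usual fact that a nonzero polynomial does not vanish on a set of algebraically independent coordinates does the rest. The only technical care needed is to ensure the chosen basis of $\coker R$ remains valid at $(p,q)$ generic, which holds because its defining condition (independence of the chosen $d|V|$ rows) is itself an open nonempty condition containing $(p_0,q_0)$.
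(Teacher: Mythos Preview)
Your Steps 1 and 2 are correct and essentially coincide with the paper's argument (the paper attributes the parametrise-the-cokernel-by-rational-functions idea to Connelly--Whiteley). The problem lies in Step 3.

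You claim that if an element $e$ supports no stress at the generic point, then $R(G-e,p,q)$ has the same rank $d|V|$ as $R(G,p,q)$. This is exactly backwards. An element $e$ supports some stress if and only if its row $r_e$ is in the span of the remaining rows, which is precisely the condition under which deleting it does \emph{not} lower the rank. So a stress-free $e$ satisfies $\rank R(G-e,p,q)=d|V|-1$, and $G-e$ is \emph{not} infinitesimally rigid. Your iteration therefore produces a non-rigid $G'$, and the final sentence ``so in fact $E'\cup L'=E\cup L$'' is a non-sequitur: nothing you have written rules out the existence of stress-free elements.

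The paper closes this gap by invoking Theorem~\ref{thm:stress} (proved immediately before this lemma). Take a full rank stress $(\omega,\lambda)$ at the generic point with the fewest possible zeros, and suppose $\omega_f=0$ (or $\lambda_f=0$) for some $f$. Then the restriction is a full rank equilibrium stress for $(G-f,p,q)$, so by Theorem~\ref{thm:stress} the generic framework $(G-f,p,q)$ is globally rigid, hence rigid, hence infinitesimally rigid. \emph{That} is what forces $r_f$ to lie in the span of the other rows, yielding a second stress $(\hat\omega,\hat\lambda)$ with $\hat\omega_f\neq 0$; adding a small multiple of it to $(\omega,\lambda)$ keeps the stress matrix of rank $|V|-1$ while strictly reducing the number of zeros, contradicting minimality. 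The appeal to global rigidity here is not decorative: it is what converts ``has a full rank stress'' into ``is infinitesimally rigid'' for $G-f$, and there is no purely linear-algebraic substitute available at this point in the argument.
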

\begin{proof}
Let $|V|=n$, $|E\cup L|=m$, and let $(G,p,q)$ be a realisation of $G$ in $\R^d$. Since the  entries in the rigidity matrix $R(G,p,q)$ are polynomials in $p$ and $q$, the rank of  $R(G,p,q)$ will be maximised whenever $(G,p',q')$ is generic. Hence  $(G,p,q)$ will be infinitesimally rigid whenever $(G,p,q)$ is generic.

We adapt the proof technique of Connelly and Whiteley \cite[Theorem 5]{C&W} to prove the second part of the theorem.
Since the  entries in $R(G,p,q)$ are polynomials in $p$ and $q$, and the space of equilibrium stresses of $(G,p,q)$ is the cokernel of $R(G,p,q)$, each equilibrium stress of $(G,p,q)$ can be expressed as a pair of rational functions $(\omega(p,q,t),\lambda(p,q,t))$ of $p$, $q$ and $t$, where $t$ is a vector of $m-dn$ indeterminates. This implies that the entries in the corresponding stress matrix $\Omega(\omega(p,q,t))$ will also be rational functions of $p,q$ and $t$. Hence the rank of
$\Omega(\omega(p,q,t))$ will be maximised whenever $p,q,t$ is algebraically independent over $\bQ$. In particular, for any generic $p,q\in \R^{dn}$,  we can choose $t\in \R^{m-dn}$ such that
 $\rank \Omega(\omega(p,q,t))=dn-1$ and hence $(\omega(p,q,t),\lambda(p,q,t))$ will be a  full rank equilibrium stress for $(G,p,q)$.

Now suppose that $(G,p,q)$ is generic and that  $(\omega,\lambda)$ is a full rank stress for $(G,p,q)$ chosen such that the total number of edges $e\in E\cup L$ with $\omega_e=0$ and loops $\ell\in L$ with $\lambda_\ell=0$ is as small as possible. We may assume  that, for some $f\in E\cup L$, we have
$\omega_f=0$ if $f\in E$ and $\lambda_f=0$ if $f\in L$. Then $(\omega|_{E-f},\lambda|_{L-f})$ is a full rank equilibrium stress for $(G-f,p,q)$.  By Theorem \ref{thm:stress}, $(G-f,p,q|_{L-f})$ is globally rigid. In particular $(G-f,p,q|_{L-f})$ is rigid,  and hence, since $(G-f,p,q|_{L-f})$ is generic, it is infinitesimally rigid. This implies that the row of $R(G,p,q)$ indexed by $f$ is contained in a minimal linearly dependent set of rows, and gives us an equilibrium stress $(\hat \omega, \hat \lambda)$ for $(G,p,q)$ with $\hat \omega_f\neq 0$ when $f\in E$, and $\hat \lambda_f\neq 0$ when $f\in L$. Then
$(\omega',\lambda')=(\omega,\lambda)+c(\hat\omega,\hat\lambda)$ is an equilibrium stress for $(G,p)$, for any $c\in \R$. We can now choose
a sufficiently small $c>0$ so that  $\rank  \Omega(\omega')= n-1$, $\omega'_e\neq 0$ for all $e\in E$ for which $\omega_e\neq 0$, and $\lambda'_\ell\neq 0$ for all $\ell\in L$ for which $\lambda_\ell\neq 0$.
This contradicts the choice of $(\omega,\lambda)$.
\end{proof}
Theorem \ref{thm:stress}, Lemma \ref{lem:stress_gen} and the fact that the framework $(G,p,q)$
drawn in Figure \ref{fig:stress} is infinitesimally rigid and has a full rank equilibrium stress 
imply that every generic realisation of $G$
as a linearly constrained framework in $\R^2$ is globally rigid. A similar argument will allow us to show that the `1-extension operation' preserves the property of having a full rank equilibrium stress.

Let $G=(V,E,L)$ be a looped simple graph. The
\emph{$d$-dimensional $1$-extension operation} forms a new looped simple graph
from $G$ by deleting an edge or loop $e\in E\cup L$ and adding a new vertex
$v$ and $d+1$ new edges or loops incident to $v$, 
with the provisos that each end vertex of $e$ is incident to
exactly one new edge, and, if $e\in L$, then there is at least one new loop incident to $v$.
See Figure \ref{fig:1-ext} for an illustration of the types of 1-extension we will use.

\begin{figure}[h]
\begin{center}
\begin{tikzpicture}[scale=0.8, font=\small]
\node[] at (0,2) (G) [label=above:$G$]{};
\draw[] (0,2) circle (1cm);
\node[roundnode] at (0,1.5) (x) [label=left:$x$]{}
 edge[in=113,out=67,loop]();

\draw[] (-3,-1) circle (1cm);
\node[roundnode] at (-2.5,-1.5) (y2) []{};
\node[roundnode] at (-3,-1.5) (x2) [label=left:$x$]{};
\node[roundnode] at (-3,-2.5) (v2) [label=left:$v$]{}
 edge[] (x2)
 edge[bend right] (y2)
 edge[in=247,out=293,loop]();
\draw[->] (-1,1)--(-2,0);

\draw[] (3,-1) circle (1cm);
\node[roundnode] at (3,-1.5) (x3) [label=right:$x$]{};
\node[roundnode] at (3,-2.5) (v3) [label=left:$v$]{}
 edge[in=220,out=265,loop]()
 edge[in=275,out=320,loop]()
 edge[](x3);
\draw[->] (1,1)--(2,0);



\begin{scope}[xshift=9cm]
\node[] at (0,2) (G) [label=above:$G$]{};
\draw[] (0,2) circle (1cm);
\node[roundnode] at (-0.3,1.5) (x) [label=above:$x$]{};
\node[roundnode] at (0.3,1.5) (u) [label=above:$u$]{}
 edge[](x);

\draw[] (-3,-1) circle (1cm);
\node[roundnode] at (-2.7,-1.5) (u2) [label=above:$u$]{};
\node[roundnode] at (-3.3,-1.5) (x2) [label=above:$x$]{};
\node[roundnode] at (-3,-2.5) (v2) [label=left:$v$]{}
 edge[] (x2)
 edge[] (u2)
 edge[in=247,out=293,loop]();
\draw[->] (-1,1)--(-2,0);

\draw[] (3,-1) circle (1cm);
\node[roundnode] at (3.8,-1.2) (z3) []{};
\node[roundnode] at (2.7,-1.5) (x3) [label=above:$x$]{};
\node[roundnode] at (3.3,-1.5) (u3) [label=above:$u$]{};
\node[roundnode] at (3,-2.5) (v3) [label=left:$v$]{}
 edge[bend right] (z3)
 edge[] (u3)
 edge[](x3);
\draw[->] (1,1)--(2,0);
\end{scope}
\end{tikzpicture}
\end{center}
\caption{Possible 2-dimensional 1-extensions on a loop (on the left) and
on an edge (on the right) of a graph $G$.}
\label{fig:1-ext}
\end{figure}
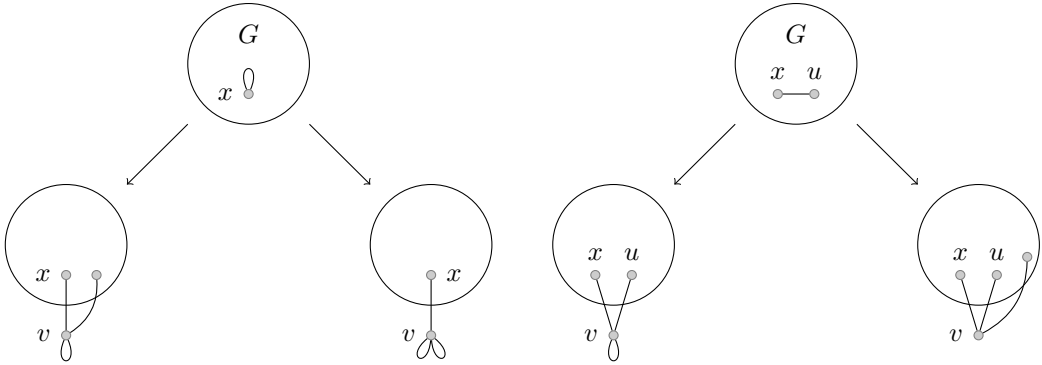


\begin{thm}\label{thm:stress_ext} 
Let $H$ be a looped simple graph and $G$ be obtained from $H$ by either adding an edge or a loop, or by applying the $d$-dimensional $1$-extension operation. Suppose that $H$ can be realised in $\R^d$ as an infinitesimally rigid linearly constrained framework with a full rank equilibrium stress.  Then $G$ can be realised in $\R^d$ as an infinitesimally rigid linearly constrained framework
with a full rank equilibrium stress. 
\end{thm}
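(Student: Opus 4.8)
The plan is to treat the three operations separately, the $1$-extension being the substantial case and itself splitting according to whether the deleted element $e$ lies in $E(H)$ or $L(H)$. By Lemma \ref{lem:stress_gen} I would start from a \emph{generic} realisation $(H,p,q)$ that is infinitesimally rigid and carries a full rank equilibrium stress $(\omega,\lambda)$ which is nonzero on every element of $E(H)\cup L(H)$; in particular $\omega_e\neq 0$ if $e\in E(H)$ and $\lambda_e\neq 0$ if $e\in L(H)$. In each case I would extend $(p,q)$ to a realisation $(G,p',q')$ and exhibit a full rank equilibrium stress for it. If $G$ is obtained by adding an edge or a loop this is immediate: realise the new element arbitrarily, keep $p'=p$ (since $V(G)=V(H)$), and extend $(\omega,\lambda)$ by $0$ on the new element; adding a row cannot lower $\rank R$, so $(G,p',q')$ stays infinitesimally rigid, and since the vertex set and all non-loop stress coefficients are unchanged, $\Omega$ is unchanged, so the extended stress is still full rank.

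Now suppose $G$ arises from a $1$-extension deleting an edge $e=xy$, so $G$ gains a vertex $v$, edges $vx$ and $vy$, and $d-1$ further new edges or loops at $v$. I would place $p'(v)=p(x)+t(p(y)-p(x))$ with $t\notin\{0,1\}$ generic, choose the $q'$-values of any new loops at $v$ generically, keep $(\omega,\lambda)$ on $E(H)-e$ and $L(H)$, set $\omega'_{vx}=\omega_{xy}/t$ and $\omega'_{vy}=\omega_{xy}/(1-t)$, and set all other new stress coordinates to $0$. Verifying (\ref{eq:stressdefn}) at $x$, at $y$, and at $v$ then shows $(\omega',\lambda')$ is an equilibrium stress for $(G,p',q')$; the key point is that at $v$ the $vx$ and $vy$ terms cancel because $p'(v)$ lies on the line through $p(x)$ and $p(y)$. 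For the full-rank property I would write $\Omega(\omega')$ in block form with the row and column of $v$ separated off: its $(v,v)$ entry is $\omega_{xy}/(t(1-t))\neq 0$, and the Schur complement with respect to it is exactly $\Omega(\omega)$, whence $\rank\Omega(\omega')=\rank\Omega(\omega)+1=|V(G)|-1$. Finally, for infinitesimal rigidity: the restriction of an infinitesimal motion of $(G,p',q')$ to $V(H)$ is a motion of $H-e$, hence of the form $\alpha m$ where $m$ either vanishes or spans the (at most one-dimensional) kernel of $R(H-e,p,q)$ and then satisfies $(p(x)-p(y))\cdot(m(x)-m(y))\neq 0$; subtracting the $vx$- and $vy$-constraints forces $\alpha(p(x)-p(y))\cdot(m(x)-m(y))=0$, so $\alpha=0$, and the remaining $d+1$ constraints at $v$ then force $\dot p(v)=0$ since the $d$ relevant direction vectors ($p(y)-p(x)$ together with those from the other $d-1$ new elements) span $\R^d$ for generic data.

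The $1$-extension deleting a loop $e\in L(H)$ at $x$ is entirely parallel. By definition at least one of the $d+1$ new edges or loops at $v$ is a loop $f$, and the unique new edge at $x$ is $vx$. I would place $p'(v)=p(x)+s\,q(e)$ with $s\neq 0$ generic, set $q'(f)=q(e)$, keep $(\omega,\lambda)$ on $E(H)\cup(L(H)-e)$, set $\omega'_{vx}=-\lambda_e/s$ and $\lambda'_f=\lambda_e$, choose the remaining new $q'$-values generically and all remaining new stress coordinates $0$. Then (\ref{eq:stressdefn}) holds at $x$ and at $v$, the Schur complement of $\Omega(\omega')$ with respect to its nonzero $(v,v)$ entry $\omega'_{vx}$ equals $\Omega(\omega)$, so $\rank\Omega(\omega')=|V(G)|-1$, and for infinitesimal rigidity the $vx$- and $f$-constraints together give $\alpha(q(e)\cdot m(x))=0$ with $q(e)\cdot m(x)\neq 0$, forcing $\alpha=0$ and then $\dot p(v)=0$ as before.

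I expect the main obstacle to be precisely the infinitesimal-rigidity step in the two $1$-extension cases. Our realisations are deliberately non-generic (the new vertex is confined to a line, respectively to a direction, and several stress values and one $q'$-value are prescribed), so I would have to confirm they remain ``generic enough'' for the two non-vanishing statements used above: that $\alpha$ is forced to $0$, and that the $d$ direction vectors at $v$ span $\R^d$. Each amounts to the non-vanishing of a determinant which is a polynomial in the remaining algebraically independent data, so it suffices to exhibit one witnessing realisation for each combinatorial type of the new neighbourhood of $v$; this bookkeeping is the fiddly part. By contrast, once the block/Schur-complement structure is in place, the equilibrium and rank computations are routine.
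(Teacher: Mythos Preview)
Your proposal is correct and follows essentially the same route as the paper. The paper also starts from a generic realisation of $H$ via Lemma~\ref{lem:stress_gen}, treats edge/loop addition by extending the stress by zero, defers the edge $1$-extension to Connelly's collinear-triangle argument, and handles the loop $1$-extension by placing the new vertex at $p(x)+q(e)$ (your $s=1$), setting $q'(f)=q(e)$, and putting $\omega'_{vx}=-\lambda_e$, $\lambda'_f=\lambda_e$, zero elsewhere---exactly your construction up to the harmless scalar $s$.

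The only packaging difference is in the infinitesimal-rigidity step. Where you argue directly that an infinitesimal motion restricts to $\alpha m$ on $V(H)$ and then force $\alpha=0$ and $\dot p(v)=0$, the paper instead shows that $(G+e-vx,p',q')$ is infinitesimally rigid by a block-triangular decomposition of its rigidity matrix (the old rows have zeros in the new $v$-columns, and the $d\times d$ block $M$ on the new rows and new columns is nonsingular), and then observes that the rows for $vx$, $e$, and $f$ are minimally dependent, so swapping $e$ out for $vx$ preserves the rank. Both reductions land on the same determinant: the $d$ direction vectors at $v$ (namely $q(e)$, the $p'(v)-p(z)$ for new edges $vz$, and the $q'(g)$ for new loops $g$) must span $\R^d$. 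So the ``fiddly part'' you flag is precisely the content of the paper's assertion that $M$ is nonsingular by genericity; neither argument avoids it. Your Schur-complement computation for $\rank\Omega(\omega')$ is equivalent to the paper's row/column operations.
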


\begin{proof} 
Let $(H,p,q)$ be a generic realisation of $H$ as a $d$-dimensional linearly constrained framework. By Lemma \ref{lem:stress_gen}, $(H,p,q)$ has a full rank equilibrium stress $(\omega,\lambda)$ which is nonzero on all elements of $E\cup L$. 

Suppose $G$ is obtained from $H$ by adding an edge or loop $f$. Let
 $(G,p,q')$ be the realisation of $G$ obtained from $(H,p,q)$ by assigning an arbitrary value to $q'(f)$ if $f$ is a loop.
Then $(G,p,q')$ is infinitesimally rigid and the equilibrium stress $(\omega',\lambda')$ for  $(G,p,q')$ obtained from $(\omega,\lambda)$ by setting $(\omega',\lambda')$ to be zero on $f$ has full rank. 
Hence we may suppose that $G$ is a 1-extension of $H$.

There are two cases to consider depending on whether the $1$-extension deletes 
an edge or a loop.
In the former case we can proceed by the standard collinear triangle technique given in \cite{C05} (if a new edge is a loop then the loop is assigned stress 0). Hence we present the latter case. 

Let $V=\{v_1,v_2,\dots,v_n\}$.
Suppose that the $1$-extension deletes $f_1\in L$, where $f_1$ is a loop at $v_1$ and adds a new vertex $v_0$ with neighbours $v_1,v_2,\dots,v_{k_1}$ and loops $f_0^1,\dots,f_0^{k_2}$ at $v_0$ (where $k_1+k_2=d+1$). Let 
$(G,p',q')$ be defined by putting $p'(v)=p(v)$ for all $v\in V$, $q'(f)=q(f)$ for all $f\in L-f_1$, $p'(v_0)=p(v_1)+q(f_1)$ and $q'(f_0^1)=q(f_1)$ and choosing $\{q(f_0^1),\dots,q(f_0^{k_2})\}$ to be algebraically independent. 

We first show that the framework $(G+f_1-v_0v_1,p',q')$ is infinitesimally rigid. Its rigidity matrix 
$R$ 
can be constructed from $R(H,p,q)$ by adding $d$ new columns indexed by $v_0$, and $d$ new rows indexed by $v_0v_2, \dots v_0v_{k_1}, f_0^1,\dots,f_0^{k_2}$, respectively. Since $(p,q)$ is generic the $d\times d$ submatrix $M$ of $R$ with rows indexed by $v_0v_2, \dots v_0v_{k_1}, f_0^1,\dots,f_0^{k_2}$ and columns indexed by $v_0$ is nonsingular.
The fact that the new columns contain zeros everywhere except in the new rows
now gives $\rank R=\rank R(H,p,q)+d$.
By the choice of $p',q'$, the rows in $R$ corresponding to
$v_0v_1,f_1,f_0^1$ are a minimal linearly dependent set. Thus
$$\rank R(G,p',q')=
\rank R=\rank R(H,p,q)+d.$$
The fact that $(H,p,q)$ is infinitesimally rigid now implies that $(G,p',q')$ is infinitesimally rigid.

Let $({\omega'},{\lambda'})$ be the stress for $(G,p',q')$ defined by putting $\omega'_e=\omega_e$ for all $e \in E$, $\omega'_{v_0v_i}=0$ for all $i>1$, $\omega'_{v_0v_1}=-\lambda(f_1)$, ${\lambda'}(f)=\lambda(f)$ for all
$f \in L$, ${\lambda'}(f_0^1)=\lambda(f_1)$ and ${\lambda'}(f_0^i)=0$ for all $i>1$. 
It is straightforward to verify
that $({\omega'},{\lambda'})$ is an equilibrium stress for
$(G,p',q')$. 
We let $\omega_{ij}$ denote the stress $\omega'_e=\omega_e$ on $e=v_iv_j\in E$ and 
put $\lambda_1=\lambda(f_1)$.
We have

\[ \Omega(\omega')=\begin{bmatrix}
-\lambda_1  & \lambda_1 & 0 & 0 & \dots & 0\\
\lambda_1 & \sum_{j\geq 1} \omega_{1j} -\lambda_1 & -\omega_{12} & -\omega_{13} & \dots & -\omega_{1n}\\
0 & -\omega_{12} & \sum_{j\geq 1} \omega_{2j}   &
-\omega_{23} & \dots & -\omega_{2n} \\ 0 & -\omega_{13} &
-\omega_{23} &  \sum_{j\geq 1} \omega_{3j} & \dots & -\omega_{3n} \\  \vdots & \vdots &
\vdots &  &  & \vdots
\end{bmatrix}. \] By  adding row 1 to row 2 and then column 1 to column 2 this reduces to
\[\begin{bmatrix}
-\lambda_1 & 0 & 0 & 0 & \dots & 0\\0 & \sum_j
\omega_{1j} & -\omega_{12} & -\omega_{13} & \dots &
-\omega_{1n}\\ 0 & -\omega_{12} & \sum_j \omega_{2j} &
-\omega_{23} & \dots & -\omega_{2n} \\ \vdots & \vdots & \vdots &
\vdots & & \vdots \\ \end{bmatrix} =
\begin{bmatrix} -\lambda_1 & 0 & \dots & 0\\ 0 & \\ \vdots & & \Omega(\omega) \\ 0 \end{bmatrix}. \]
Since $-\lambda_1\neq 0$, we have
 $\rank \Omega(\omega')=\rank \Omega(\omega)+1$. Hence $(\omega',\lambda')$ is a full rank equilibrium stress for $(G,p',q')$.
\end{proof}

\section{Circuits in the 2-dimensional linearly constrained rigidity matroid}
\label{sec:circuits}

We will focus on 2-dimensional linearly constrained frameworks in the following sections, and will suppress specific reference to the dimension.  In particular we will refer to the $2$-dimensional $1$-extension operation as a {\em $1$-extension}, to the property of being 2-balanced as {\em balanced}, and say that a graph is {\em rigid} to mean it is rigid in $\R^2$. 

This section will contain a combinatorial analysis of the simplest redundantly rigid graphs - these correspond to rigid circuits in the 
linearly constrained generic rigidity matroid. Our results are analogous to those obtained in \cite{BJ} for bar-joint frameworks and \cite{JJdl} for direction-length frameworks.

Let $G=(V,E,L)$ be a looped graph. For $X\subset V$, we define a matroid $\M_{lc}(G)$ on $E\cup L$ by the  conditions of Theorem \ref{thm:ST}: a set $F\subset E\cup L$ is independent if $|F'|\leq 2|V_{F'}|$ for all $F'\subseteq F$, and  $|F'|\leq 2|V_{F'}|-3$ for all $\emptyset\neq F'\subseteq F\cap E$.
We will refer to subgraphs of $G$ whose edge-set is a circuit in
$\M_{lc}(G)$ as {\em $\M_{lc}$-circuits}, see Figure \ref{fig:Mlc-circuits}.
\begin{figure}[h]
\begin{center}
\begin{tikzpicture}[font=\small]
\node[roundnode] at (0,0) (v1) []{}
	edge[in=205, out=245,loop]();
\node[roundnode] at (1,0) (v2) []{}
	edge[] (v1)
	edge[in=295, out=335,loop]();
\node[roundnode] at (1,1) (v3) []{}
	edge[] (v1)
	edge[] (v2)
	edge[in=25, out=65,loop]();
\node[roundnode] at (0,1) (v4) []{}
	edge[] (v1)
	edge[] (v2)
	edge[] (v3)
	edge[in=115, out=155,loop]();
	
\begin{scope}[xshift=3cm]
\node[roundnode] at (0,0) (v1) []{};
\node[roundnode] at (1,0) (v2) []{}
	edge[] (v1);
\node[roundnode] at (1,1) (v3) []{}
	edge[] (v1)
	edge[] (v2);
\node[roundnode] at (0,1) (v4) []{}
	edge[] (v1)
	edge[] (v2)
	edge[] (v3);
\end{scope}
\begin{scope}[xshift=6cm]
\node[roundnode] at (0,0) (v1) []{}
	edge[in=205, out=245,loop]();
\node[roundnode] at (1,0) (v2) []{}
	edge[] (v1)
	edge[in=295, out=335,loop]();
\node[roundnode] at (1,1) (v3) []{}
	edge[] (v1)
	edge[] (v2)
	edge[in=25, out=65,loop]();
\node[roundnode] at (0,1) (v4) []{}
	edge[] (v1)
	edge[] (v3)
	edge[in=115, out=155,loop]();
\end{scope}
\end{tikzpicture}
\end{center}
\caption{A graph $G$ on the left and two $\M_{lc}$-circuits of $G$ in the middle and on the right.
It can be verified by Lemma \ref{lem:circuit_char} that the subgraph in the middle is a flexible $\M_{lc}$-circuit and the subgraph on the right is a rigid $\M_{lc}$-circuit.}
\label{fig:Mlc-circuits}
\end{figure}
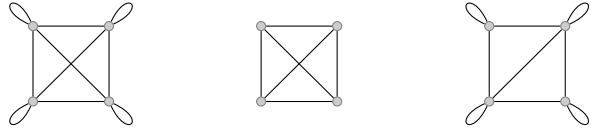
The definition of independence in $\M_{lc}(G)$ gives rise to  the following characterisation of its circuits. 
For $X\subseteq V$, let $i_{E}(X)$ and $i_{L}(X)$ denote the number of elements of $E$ and $L$ respectively in the subgraph induced by $X$ in $G$ and put $i_{E\cup L}(X)=i_E(X)+i_L(X)$.

\begin{lem}\label{lem:circuit_char}
Let $G=(V,E,L)$ be a looped simple graph. Then $G$ is an $\M_{lc}$-circuit if and only if, either\\
(a) $|E|+|L|=2|V|+1$,  $i_E(X)\leq 2|X|-3$ for all $X\subset V$ with $|X|\geq 2$ and $i_{E\cup L}(X)\leq 2|X|$ for all $X\subsetneq V$, or \\
(b) $L=\emptyset$, $|E|=2|V|-2$  and $i_E(X)\leq 2|X|-3$ for all $X\subsetneq V$ with $|X|\geq 2$.
\end{lem}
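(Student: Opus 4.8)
\textbf{Proof proposal for Lemma \ref{lem:circuit_char}.}

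The plan is to characterise circuits of $\M_{lc}(G)$ directly from the definition of independence, namely that $F$ is independent iff $|F'|\leq 2|V_{F'}|$ for all $F'\subseteq F$ and $|F'|\leq 2|V_{F'}|-3$ for all $\emptyset\neq F'\subseteq F\cap E$. Recall that a circuit is a minimal dependent set, so $G$ (with $\V$-count taken over $V=V_{E\cup L}$) is an $\M_{lc}$-circuit iff $E\cup L$ is dependent but every proper subset is independent. I would first argue that dependence of $E\cup L$ together with independence of all proper subsets forces exactly two possible ``global'' violations: either $|E|+|L|=2|V|+1$ with $V_{E\cup L}=V$ (the count $|F'|\leq 2|V_{F'}|$ fails on $F'=E\cup L$ by exactly one), or $L=\emptyset$ and $|E|=2|V|-2$ with $V_E=V$ (the count $|F'|\leq 2|V_{F'}|-3$ fails on $F'=E$ by exactly one). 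The key observation is that the violation must be by \emph{exactly} one: if $|E|+|L|\geq 2|V|+2$ or $|E|\geq 2|V|-1$ with $L=\emptyset$, one can delete a single element and the resulting set is still dependent (for the first-count violation, removing any element of $E\cup L$ keeps $|F|-2|V_F|\geq 0$ unless the removal disconnects a vertex, which a short case analysis rules out; similarly for the $E$-count violation), contradicting minimality. This gives the two ``global'' equalities in (a) and (b).

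Next I would establish the local count conditions. For the upper bounds on proper subsets: if some $X\subsetneq V$ with $|X|\geq 2$ had $i_E(X)\geq 2|X|-2$, then the induced edge set $F'$ on $X$ would violate $|F'|\leq 2|V_{F'}|-3$ (note $V_{F'}\subseteq X$, and if $V_{F'}\subsetneq X$ the count is even worse), so $F'$ is a dependent proper subset of $E\cup L$, contradicting that $G$ is a circuit. Hence $i_E(X)\leq 2|X|-3$ for all $X\subset V$ with $|X|\geq 2$ in both cases. Similarly, in case (a), if some $X\subsetneq V$ had $i_{E\cup L}(X)\geq 2|X|+1$, the induced set on $X$ is a dependent proper subset, again a contradiction; so $i_{E\cup L}(X)\leq 2|X|$ for all $X\subsetneq V$. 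For case (b) one checks that $i_{E\cup L}(X)=i_E(X)\leq 2|X|-3\leq 2|X|$ automatically, and moreover $L=\emptyset$ must hold because if $L\neq\emptyset$ then deleting a loop from a set with $|E|=2|V|-2$ leaves $E$ itself, which is still dependent (it violates the $E$-count), contradicting minimality — so if $L\neq\emptyset$ we are necessarily in case (a). This shows a circuit satisfies (a) or (b).

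For the converse, suppose $G$ satisfies (a). Then $E\cup L$ is dependent since $|E\cup L|=2|V|+1>2|V|=2|V_{E\cup L}|$. To see minimality, take any $f\in E\cup L$ and any $F'\subseteq (E\cup L)-f$; let $X=V_{F'}$. If $X\subsetneq V$ then $|F'|\leq i_{E\cup L}(X)\leq 2|X|=2|V_{F'}|$, and if $X=V$ then $|F'|\leq |E\cup L|-1=2|V|=2|V_{F'}|$; the refined bound for subsets of $E$ follows from $i_E(X)\leq 2|X|-3$. Hence $(E\cup L)-f$ is independent, so $E\cup L$ is a circuit. The argument for (b) is analogous: $E$ is dependent since $|E|=2|V|-2>2|V|-3=2|V_E|-3$, and for any $f\in E$ and $F'\subseteq E-f$ with $X=V_{F'}$, if $X\subsetneq V$ then $|F'|\leq i_E(X)\leq 2|X|-3$, while if $X=V$ then $|F'|\leq |E|-1=2|V|-3=2|V_E|-3$; and the plain count $|F'|\leq 2|V_{F'}|$ follows since $2|V_{F'}|-3<2|V_{F'}|$.

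The main obstacle I expect is the ``violation by exactly one'' step in the forward direction: one must rule out, via minimality, that a deeper local violation or a larger global excess occurs, and in particular show that in case (a) deleting an element cannot create a set that is still dependent. This requires a careful sub-argument that if $|E|+|L|=2|V|+1$ and all proper induced subsets satisfy the counts, then no proper subset obtained by deleting a single element of $E\cup L$ is dependent — which is exactly the minimality verification carried out in the converse direction, so organising the proof to share this computation between the two directions is the cleanest route.
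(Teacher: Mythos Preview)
Your approach is correct and is the natural argument directly from the definition of independence; the paper does not actually write out a proof of this lemma, treating it as an immediate consequence of the sparsity conditions, so there is nothing substantive to compare.

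One small clarification on the point you flagged as the main obstacle: when you write ``removing any element of $E\cup L$ keeps $|F|-2|V_F|\geq 0$ unless the removal disconnects a vertex, which a short case analysis rules out'', the case is not ruled out but is in fact \emph{even easier}. If deleting $f$ isolates a vertex then $|V_{F}|$ drops by $1$ while $|F|$ drops by only $1$, so the surplus $|F|-2|V_F|$ increases by $1$; hence the set is still dependent. The same remark applies to the $E$-count case. With this observation the ``violation by exactly one'' step is immediate and, as you noted, the minimality check in the forward direction and the converse verification are the same computation.
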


We will refer to the circuits described in (a) as {\em rigid $\M_{lc}$-circuits} and to those in (b) as {\em flexible $\M_{lc}$-circuits}. 
The smallest rigid $\M_{lc}$-circuits are the looped graphs $K_1^{[3]}$ and $K_2^{[2]}$.
The flexible $\M_{lc}$-circuits are precisely the circuits of the 2-dimensional generic  bar-joint rigidity matroid. A recursive construction for these circuits is given in \cite{BJ}. 
We will use the results of this section to obtain  a recursive construction for rigid $\M_{lc}$-circuits in Section \ref{sec:admissible}.

Let $d^\dag(v)$ denote the number of edges or loops that are incident with a vertex $v$ in  a looped simple graph $G$, $d(v)=d^\dag(v)+i_L(v)$ denote the {\em degree} of $v$ in $G$, and $V_3=\{v\in V\,:\,d^\dag(v)=3\}$ denote the set of {\em nodes} of $G$. For disjoint $X,Y\subseteq V$, let $G[X]$ denote the subgraph of $G$ induced by $X$ and $d(X,Y)$ denote the number of edges of $G$ between $X$ and $Y$.

\begin{lem}\label{lem:forest}
Let $G=(V,E,L)$ be a rigid $\M_{lc}$-circuit. Then $V_3\neq \emptyset$  and the subgraph $G^\dag[V_3]$ of $G$ obtained by deleting all loops in  $G[V_3]$ is a forest.
\end{lem}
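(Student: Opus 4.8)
The plan is to argue by a counting contradiction, exploiting the circuit inequalities of Lemma \ref{lem:circuit_char}(a) together with the fact that in a rigid $\M_{lc}$-circuit $|E|+|L| = 2|V|+1$. First I would establish $V_3 \neq \emptyset$: since $G$ is a circuit every element of $E \cup L$ lies in no proper independent set, so $G-e$ is independent for each $e$, which forces $|E|+|L|$ to be only one more than $2|V|$; a standard degree-sum calculation, $\sum_{v \in V} d^\dagger(v) = 2|E| + |L|$ (each non-loop edge contributing $2$, each loop contributing $1$ to the $d^\dagger$-count), then shows the average value of $d^\dagger$ is strictly less than $4$ while the circuit condition $i_{E\cup L}(X) \le 2|X|$ (applied, say, to the neighbourhood of a low-degree vertex) rules out vertices with $d^\dagger(v) \le 2$. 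Hence some vertex has $d^\dagger(v) = 3$, i.e. $V_3 \neq \emptyset$. I would double-check the edge cases where $|V|$ is small (the base circuits $K_1^{[3]}$ and $K_2^{[2]}$) separately, since there the whole vertex set consists of nodes.

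For the forest claim, suppose for contradiction that $G^\dagger[V_3]$ contains a cycle, and let $Y \subseteq V_3$ be the vertex set of a shortest such cycle $C$ (so $C$ is an induced cycle in $G^\dagger[V_3]$, hence an induced cycle in $G^\dagger$ once we remember $Y \subseteq V_3$). The key computation is to bound $i_{E\cup L}(Y)$ from below: every vertex of $Y$ has $d^\dagger = 3$ in $G$, and at least two of its three incident elements lie inside $G[Y]$ once we know it has two cycle-neighbours in $Y$; summing, $2 i_{E\cup L}(Y) \ge \sum_{v\in Y} (\text{elements at } v \text{ inside } Y) \ge 2|Y| + (\text{loops counted once})$, and a careful bookkeeping (loops at vertices of $Y$ add to $i_{E\cup L}(Y)$ but contribute only once to the $d^\dagger$-sum) yields $i_{E\cup L}(Y) \ge 2|Y|$, with the inequality strict or met-with-equality in a way that contradicts the appropriate clause of Lemma \ref{lem:circuit_char}(a). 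Specifically: if $Y = V$ we contradict $|E|+|L| = 2|V|+1$ combined with the degree count forcing too many internal elements; if $Y \subsetneq V$ we must check whether $i_{E\cup L}(Y) \le 2|Y|$ is violated outright, or whether equality holds but then $G[Y]$ together with the rest violates the circuit structure (a proper subset meeting its count with equality in a circuit is itself dependent, so contains a smaller circuit, contradicting that $G$ is a circuit unless $Y = V$).

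The main obstacle I anticipate is the bookkeeping with loops: a node $v \in V_3$ could have one, two or three of its incident elements be loops, and loops at $v$ always lie "inside" any vertex subset containing $v$, which inflates $i_L(Y)$ and hence $i_{E\cup L}(Y)$ favourably for the contradiction — but I need to handle the possibility that a node has so few non-loop edges that it is not genuinely "connected into" the cycle $C$ via non-loop edges. Since $C$ is a cycle in $G^\dagger$ (loops deleted), each $v \in Y$ does have two non-loop neighbours in $Y$, so its third incident element (loop or edge) is the only one that might leave $G[Y]$; thus each vertex of $Y$ contributes at least $2$ to $i_E(Y)$-incidences and possibly a loop on top. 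This should give $i_E(Y) \ge |Y|$ from the cycle edges alone (each counted at both ends: $2 i_E(Y) \ge 2|Y|$ from cycle edges, so in fact $i_E(Y) \ge |Y|$ just from $C$) plus any chords and loops, quickly pushing past $2|Y| - 3$ once $|Y|$ is not tiny. I would then separately dispatch $|Y| = 2, 3$ by hand using the explicit small-circuit list, and reconcile the $Y = V$ case with clause (a)'s global equality $|E| + |L| = 2|V| + 1$. Combining, $G^\dagger[V_3]$ has no cycle and is therefore a forest.
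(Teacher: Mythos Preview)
Your argument for $V_3\neq\emptyset$ is essentially the paper's: the degree sum $\sum_v d^\dag(v)=2|E|+|L|=4|V|+2-|L|$ together with $|L|\geq 3$ forces the average of $d^\dag$ below $4$, while $d^\dag(v)\leq 2$ is ruled out by applying the circuit inequality to $V-v$ (not to the neighbourhood of $v$, as you write, but this is a minor slip).

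The forest argument, however, has a genuine gap. If $Y$ is the vertex set of an induced cycle in $G^\dag[V_3]$, then each $v\in Y$ has exactly two cycle edges inside $G[Y]$ and one further incident element, which is either a loop at $v$ or an edge leaving $Y$. Hence $i_E(Y)=|Y|$ exactly (no chords, since the cycle is induced) and $i_L(Y)\leq|Y|$, giving $i_{E\cup L}(Y)=|Y|+i_L(Y)\leq 2|Y|$ --- the \emph{reverse} of the inequality you claim. Even if equality held, a proper subset with $i_{E\cup L}(Y)=2|Y|$ is entirely compatible with $G$ being an $\M_{lc}$-circuit: ``tight'' is not ``dependent'', so your fallback that $G[Y]$ would contain a smaller circuit is incorrect.

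The paper's fix is to count on the \emph{complement}. Set $T=V\setminus Y$. Since each $v\in Y$ sends its third element either to a loop or to $T$, we have $i_L(Y)+d(Y,T)=|Y|$. Combining this with $i_{E\cup L}(Y)=|Y|+i_L(Y)$ and $|E|+|L|=2|V|+1$ yields
\[
i_{E\cup L}(T)=2|V|+1-i_{E\cup L}(Y)-d(Y,T)=2|V|+1-|Y|-\bigl(i_L(Y)+d(Y,T)\bigr)=2|T|+1,
\]
which violates the circuit condition on the proper subset $T$. The slack you cannot detect inside $Y$ appears as excess on its complement; that is the idea your proposal is missing.
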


\begin{proof}
Since $G$ is a rigid $\M_{lc}$-circuit, $d^\dag(v)\geq 3$ for all $v\in V$,
$$4|V|+2=2|E|+2|L|=\sum_{v\in V}d(v)=\sum_{v\in V}d^\dag(v)+|L|,$$ and $|L|\geq 3$. This gives   $\sum_{v\in V}d^\dag(v)\leq 4|V|-1$ so $G$ has at least one node.

Suppose $C=(S,F)$ is  an induced 
cycle in $G^\dag[V_3]$.
Let $T=V\setminus S$. Then we have
\begin{align*}
i_{E\cup L}(T)&=2|V|+1-i_{E\cup L}(S)-d(S,T)\\
			  &=2|V|+1-|S|-i_L(S)-d(S,T)\\
			  &=2|T|+1,
\end{align*}
where the second equality follows from the fact that $C$ is a cycle,
and the last equality follows from the fact that a vertex in $S$
contributes $1$ to exactly one of $i_L(S)$ and $d(S,T)$ so $i_L(S)+d(S,T)=|S|$.
This contradicts the fact that $G$ is a rigid $\M_{lc}$-circuit and $T$ is
a proper subset of $V$.
\end{proof}

Let $G=(V,E,L)$ be a rigid $\M_{lc}$-circuit.  We say that $X\subseteq V$ is \emph{mixed critical} if $i_{E\cup L}(X)=2|X|$ and is \emph{pure critical} if 
$i_E(X)=2|X|-3$. We say that $X$ is {\em critical} if it is either mixed or pure critical.

\begin{lem}\label{lem:minus}
Let $X$ be a mixed critical set in  a rigid $\M_{lc}$-circuit $G=(V,E,L)$ and $Y=V\setminus X$.\\ 
(a)  Then $|V_3\cap Y|\geq 1$, with strict inequality when $|Y|\geq 2$.\\
(b) If $d(X,Y)\geq 3$ then $Y$ contains a vertex of degree three in $G$.
\end{lem}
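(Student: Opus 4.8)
\textbf{Proof proposal for Lemma \ref{lem:minus}.}

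The plan is to exploit the fact that $G$ is a rigid $\M_{lc}$-circuit together with the mixed critical hypothesis $i_{E\cup L}(X)=2|X|$, and to run a degree-counting argument on the subgraph induced by $Y$. First I would record the basic identities: since $G$ is a rigid $\M_{lc}$-circuit we have $|E|+|L|=2|V|+1$, so $i_{E\cup L}(Y)=2|V|+1-i_{E\cup L}(X)-d(X,Y)=2|Y|+1-d(X,Y)$. Each vertex of $Y$ has $d^\dag(v)\geq 3$ in $G$; summing over $Y$ and splitting each contribution into edges/loops inside $Y$ and edges to $X$ gives $\sum_{v\in Y}d^\dag(v)=2\,i_{E\cup L}(Y)-i_L(Y)+d(X,Y)$, hence $\sum_{v\in Y}\big(d^\dag(v)-3\big)=2(2|Y|+1-d(X,Y))-i_L(Y)+d(X,Y)-3|Y|=|Y|+2-d(X,Y)-i_L(Y)$. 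Since $\sum_{v\in Y}(d^\dag(v)-3)\geq 0$ this already forces $d(X,Y)+i_L(Y)\leq |Y|+2$.

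For part (a), I want a vertex of $Y$ with $d^\dag(v)=3$, i.e. a node lying in $Y$. Suppose not: then $d^\dag(v)\geq 4$ for all $v\in Y$, so $\sum_{v\in Y}(d^\dag(v)-3)\geq |Y|$, which combined with the identity above gives $|Y|\leq |Y|+2-d(X,Y)-i_L(Y)$, i.e. $d(X,Y)+i_L(Y)\leq 2$. I would then need to rule this out. Since $X\subsetneq V$ is mixed critical and $G$ is $2$-connected-ish in the matroidal sense (circuits have no small separations — concretely $i_{E\cup L}(Z)\le 2|Z|$ for all proper $Z$ and the pure bound $i_E(Z)\le 2|Z|-3$), a separation with $d(X,Y)$ very small is obstructed: if $d(X,Y)=0$ then $G$ is disconnected, contradicting that it is a circuit; if $d(X,Y)\le 2$ and $i_L(Y)=0$ then $i_{E\cup L}(Y)=i_E(Y)\le 2|Y|-3$ forces $d(X,Y)\ge 4$ from the identity $i_E(Y)=2|Y|+1-d(X,Y)$, a contradiction. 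The remaining small cases (e.g. $d(X,Y)+i_L(Y)\le 2$ with $i_L(Y)\in\{1,2\}$) I would eliminate by the same substitution $i_{E\cup L}(Y)=i_E(Y)+i_L(Y)\le (2|Y|-3)+i_L(Y)$ when $|Y|\ge 2$, against $i_{E\cup L}(Y)=2|Y|+1-d(X,Y)$; this yields $d(X,Y)\ge 4+i_L(Y)-i_L(Y)$ type inequalities contradicting $d(X,Y)\le 2$. For $|Y|=1$ one checks directly: the single vertex $v\in Y$ has $d^\dag(v)\ge 3$, all of it going to $X$ or into loops at $v$, and rigidity of the circuit forces $d^\dag(v)=3$, so $v$ is a node — giving $|V_3\cap Y|\ge 1$ with the strict inequality only claimed when $|Y|\ge 2$, consistent with the statement. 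When $|Y|\ge 2$ the above shows $\sum_{v\in Y}(d^\dag(v)-3)\le |Y|+2-d(X,Y)-i_L(Y)$ and, once we know $d(X,Y)+i_L(Y)\ge 3$ (from the circuit/connectivity argument), the right side is at most $|Y|-1<|Y|$, so not all of $Y$ can have $d^\dag\ge 4$; in fact strictly fewer than... one needs $\ge 2$ nodes, which follows because $\sum_{v\in Y}(d^\dag(v)-3)\le |Y|-1$ means at least two vertices of $Y$ contribute $0$. I should double-check the threshold $d(X,Y)+i_L(Y)\ge 3$ carefully — that is the crux.

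For part (b), assume $d(X,Y)\ge 3$. Plugging into $\sum_{v\in Y}(d^\dag(v)-3)=|Y|+2-d(X,Y)-i_L(Y)\le |Y|+2-3-i_L(Y)=|Y|-1-i_L(Y)\le |Y|-1$, so $\sum_{v\in Y}(d^\dag(v)-3)\le |Y|-1$, whence at least one vertex $v\in Y$ has $d^\dag(v)-3\le 0$, i.e. $d^\dag(v)=3$ (using $d^\dag\ge 3$ everywhere). Since $d(v)=d^\dag(v)+i_L(v)$, a degree-three-in-$G$ vertex requires additionally $i_L(v)=0$; I would note that a node with a loop would have $d^\dag(v)\ge 3$ still but degree $>3$, so I must instead argue that some node in $Y$ is loopless. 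If every node of $Y$ carried a loop, then each such $v$ contributes $1$ to $i_L(Y)$, tightening $\sum(d^\dag(v)-3)\le |Y|-1-i_L(Y)\le |Y|-1-(\#\text{nodes in }Y)$; combined with $\sum_{v\in Y}(d^\dag(v)-3)\ge (\#\text{non-nodes in }Y)\cdot 1 = |Y|-(\#\text{nodes in }Y)$ (using $d^\dag\ge 4$ for non-nodes) we get $|Y|-(\#\text{nodes})\le |Y|-1-(\#\text{nodes})$, i.e. $0\le -1$, a contradiction. Hence some node of $Y$ is loopless and has degree three in $G$.

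The main obstacle I anticipate is establishing the connectivity-type lower bound $d(X,Y)+i_L(Y)\ge 3$ (equivalently, ruling out that the complement $Y$ of a mixed critical set is "almost isolated"): this needs a careful case analysis playing the mixed bound $i_{E\cup L}(Y)\le 2|Y|$, the pure bound $i_E(Y)\le 2|Y|-3$, and the exact circuit count $i_{E\cup L}(Y)=2|Y|+1-d(X,Y)$ against each other, with the $|Y|=1$ case handled separately. Everything else is bookkeeping with the handshake identity; I would present the identity $\sum_{v\in Y}(d^\dag(v)-3)=|Y|+2-d(X,Y)-i_L(Y)$ once and derive both parts from it.
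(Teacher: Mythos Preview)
Your approach is essentially the paper's: both arguments are degree counts on $Y$ using the circuit identity $i_{E\cup L}(Y)=2|Y|+1-d(X,Y)$ together with the pure bound $i_E(Y)\le 2|Y|-3$. Your key identity
\[
\sum_{v\in Y}\bigl(d^\dag(v)-3\bigr)=|Y|+2-d(X,Y)-i_L(Y)
\]
is correct, and your proof of (b) is valid (though the paper gets there in one line by summing $d(v)$ instead of $d^\dag(v)$: $\sum_{v\in Y}d(v)=2i_{E\cup L}(Y)+d(X,Y)=4|Y|+2-d(X,Y)<4|Y|$ directly produces a vertex of degree three, with no separate discussion of loops needed).

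There is, however, a genuine slip in part (a) for $|Y|\ge 2$. You aim for the threshold $d(X,Y)+i_L(Y)\ge 3$, but with that bound you only obtain $\sum_{v\in Y}(d^\dag(v)-3)\le |Y|-1$, and your claim that this forces \emph{two} terms to vanish is false (e.g.\ one term equal to $0$ and the rest equal to $1$ is compatible). What you actually need---and what your own substitution already gives you---is the stronger bound $d(X,Y)+i_L(Y)\ge 4$: from $i_E(Y)\le 2|Y|-3$ and $i_E(Y)=i_{E\cup L}(Y)-i_L(Y)=2|Y|+1-d(X,Y)-i_L(Y)$ you get $d(X,Y)+i_L(Y)\ge 4$ in one step, with no case analysis. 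Then $\sum_{v\in Y}(d^\dag(v)-3)\le |Y|-2$, and if $|V_3\cap Y|\le 1$ the left side is at least $|Y|-1$, the desired contradiction. This is exactly the paper's argument, just rearranged; once you replace your ``$\ge 3$'' by ``$\ge 4$'' the proof is complete.
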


\begin{proof}
(a) If $|Y|=1$ then the unique vertex in $Y$ must be a node of $G$. Hence we may assume  that $|Y|\geq 2$. 

Suppose $|Y\cap V_3|\leq 1$. Note that 
for a vertex $v\in Y$, we have $d(v)= d^\dag(v)+i_L(v)$ so 
$$\sum_{v\in Y}{d(v)}=\sum_{v\in Y}{d^\dag(v)}+i_L(Y)\geq 4|Y|-1+i_L(Y).$$
We also have
\begin{align*}
2i_{E\cup L}(Y)=\sum_{v\in Y}{d_{G[Y]}(v)}=\sum_{v\in Y}{d(v)}-d(Y,X).
\end{align*}
We can combine these two (in)equalities to obtain
\begin{equation}\label{eq:degre_count1}
2i_E(Y)+i_L(Y)+d(Y,X)\geq 4|Y|-1.
\end{equation}
Since $|Y|\geq 2$ we have $i_E(Y)\leq 2|Y|-3$ and Equation
(\ref{eq:degre_count1}) now gives 
$$i_{E\cup L}(Y)+d(Y,X)=i_E(Y)+i_L(Y)+d(Y,X)\geq 4|Y|-1-i_E(Y)\geq  2|Y|+2.$$
The fact that $X$ is mixed critical now gives
\[
|E|+|L|=i_{E\cup L}(X)+i_{E\cup L}(Y)+d(Y,X)\geq 2|X|+2|Y|+2=2|V|+2,
\]
a contradiction.
\\[2mm]
(b) We have
$\sum_{v\in Y} d(v)=2i_{E\cup L}(Y)+d(Y,X)$ and  
$$i_{E\cup L}(Y)=|E|-i_{E\cup L}(X)-d(Y,X)=2|V\setminus X|-d(Y,X)+1.$$
This implies that
$$\sum_{v\in Y}d(v)=4|V\setminus X|-2d(Y,X)+d(Y,X)+2=4|Y|-d(Y,X)+2.$$
If $d(Y,X)\geq 3$ then $\sum_{v\in Y}d(v)<4|Y|$ and hence $Y$ contains a vertex of degree less than four.
\end{proof}


We next consider two versions of the {$1$-extension operation} for a looped simple graph $G=(V,E,L)$. 
The {\em $1$-extension operation at an edge $uw\in E$}  deletes  $uw$ and adds a new vertex $v$, new edges  $uv$ and $wv$ and either a new edge $vx$ for some $x\in V\setminus \{u,w\}$ or a new loop at $v$. The {\em $1$-extension operation at a loop $uu\in L$} deletes  $uu$, adds a new vertex $v$, a new edge  $uv$ and a new  loop at $v$, and either a new edge $wv$, for some $w\in V-u$ or a second new  loop at $v$.
It is straightforward to show that both versions of the $1$-extension operation transform a rigid $\M_{lc}$-circuit into another rigid $\M_{lc}$-circuit using Lemma \ref{lem:circuit_char}(a).

We refer to the inverse operation to each of the above 1-extension operations as a \emph{1-reduction to an edge} or \emph{loop}, respectively. When  $G$ is a rigid
$\M_{lc}$-circuit and $v\in V$, we say that these reduction operations are \emph{admissible} if  they result in a smaller rigid $\M_{lc}$-circuit, and that
$v$ is \emph{admissible} if there is an admissible  1-reduction
at $v$.

The remainder of this section will be devoted to obtaining a 
structural characterisation of `non-admissibility'. This will be used in the next section to show that every balanced rigid $\M_{lc}$-circuit on at least two vertices  contains an admissible vertex. 

\begin{lem}\label{lem:admcases}
Let $G=(V,E,L)$ be a rigid $\M_{lc}$-circuit and $v$ be a node in $G$.\\
(a) Suppose $N(v)=\{u,w,z\}$. Then the $1$-reduction at $v$ which adds $uw$ is non-admissible if and only if there exists a pure critical set $X$ with $u,w\in X$ and $v,z\notin X$ or a mixed critical set $Y$ with $u,w\in Y$ and $v,z\notin Y$.\\
(b) Suppose  $N(v)=\{u,w\}$. Then: \\
(i) the $1$-reduction at $v$ which adds $uw$  is non-admissible if and only if there exists a pure critical set $X$ with $u,w\in X$ and $v\notin X$;\\
(ii) the 1-reduction at $v$ which adds $uu$ is non-admissible if and only if there exists a mixed critical set $X$ with $u\in X$ and $w,v\notin X$.\\
(c) 
If $N(v)=\{u\}$, then the $1$-reduction at $v$ which adds $uu$ is admissible.
\end{lem}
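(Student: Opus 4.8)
The plan is to verify the three cases of Lemma~\ref{lem:admcases} using the circuit characterisation of Lemma~\ref{lem:circuit_char}(a) together with the submodular-type inequalities for $i_{E\cup L}$ and $i_E$. Throughout, let $G'$ denote the graph obtained by the relevant $1$-reduction at $v$; we need $G'$ to be a rigid $\M_{lc}$-circuit, and by the remarks preceding the lemma the only thing that can fail is that $G'$ acquires a ``bad'' set, i.e.\ some $X\subsetneq V(G')$ with $i_{E\cup L}^{G'}(X)\geq 2|X|+1$, or some $X$ with $|X|\geq 2$ and $i_E^{G'}(X)\geq 2|X|-2$. Since the reduction deletes $v$ and its incident edges/loops and adds back one edge or loop among $N(v)$, such a bad set in $G'$ corresponds exactly to a critical set in $G$ avoiding $v$ and containing the relevant vertices of $N(v)$ --- this is the content of the stated equivalences. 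So the real work is to prove the \emph{converse} direction in each part: if \emph{no} such critical set exists, the reduction is admissible.

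For part (a), I would argue by contradiction: if the $1$-reduction adding $uw$ is non-admissible, then $G'$ has a bad set, which (since $uw$ is the only new edge and $v$ is gone) forces a set $X\ni u,w$, $v\notin X$, with either $i_E^G(X)=2|X|-3$ (pure critical, when the bad inequality comes from a non-loop subset) or $i_{E\cup L}^G(X)=2|X|$ (mixed critical); the hypothesis that $z\notin X$ follows because if $z\in X$ then adding back the three edges $uv,wv,vz$ and the vertex $v$ would already violate the circuit count for $X\cup\{v\}$ in $G$, contradicting $G$ being an $\M_{lc}$-circuit. Conversely, if such an $X$ exists, then $X$ (in $G'$) together with the edge $uw$ already has too many edges, so $G'$ is not independent-free in the required way, hence not a circuit --- this gives non-admissibility directly. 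Parts (b)(i) and (b)(ii) are the same argument with $d^\dag(v)=2$: now the reduction adds $uw$ (an edge between the two neighbours) or $uu$ (a loop at $u$), and one tracks which count, pure or mixed, can be violated; the loop case can only create a \emph{mixed} violation, which is why only a mixed critical set appears in (ii). Part (c) is the easy base case: if $d^\dag(v)=3$ with a single neighbour $u$, then $v$ has a loop (or two) plus edges to $u$; the $1$-reduction adding $uu$ replaces $v$'s attachments by a loop at $u$, and one checks directly from Lemma~\ref{lem:circuit_char}(a) that no bad set can be created --- any candidate set $X$ with $u\in X$, $v\notin X$ would, after re-inserting $v$ and its $d^\dag(v)=3$ incident elements, violate $i_{E\cup L}(X\cup\{v\})\le 2|X\cup\{v\}|$ (a proper subset bound, valid since $|V|\ge 2$), contradicting that $G$ is an $\M_{lc}$-circuit.

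The main obstacle I expect is bookkeeping the interaction between the two different count constraints (the ``pure'' bound $i_E(X)\le 2|X|-3$ on edge-only subsets and the ``mixed'' bound $i_{E\cup L}(X)\le 2|X|$ on all proper subsets), and in particular making sure that in part (a) the extra vertex $z$ is genuinely excluded from the critical set --- this is where one has to use that $G$ is itself a circuit rather than merely sparse, and carefully count the edges $uv,wv,vz$ contributed when $v$ is put back. A secondary subtlety is that in the ``if'' directions one must confirm the located critical set actually produces a \emph{dependent} set in $G'$ of the right type (so that the reduced graph fails (a) of Lemma~\ref{lem:circuit_char}), not just that some count is tight; tightness plus the added edge/loop pushes it over, so this should go through, but it needs to be stated with care. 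No deep new idea is required beyond these count manipulations.
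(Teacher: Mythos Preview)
Your proposal is correct and follows essentially the same route as the paper's proof: in each case one checks that the global count $|E'|+|L'|=2|V'|+1$ is automatically preserved by the $1$-reduction, so non-admissibility of $G'$ can only come from a violated subset inequality, and since the counts in $G'$ and $G$ differ only on sets containing the newly added edge/loop, such a violation corresponds precisely to a critical set in $G$ containing the relevant endvertices; the exclusion of the remaining neighbour (e.g.\ $z$ in (a), or $w$ in (b)(ii)) is then forced by re-inserting $v$ together with all three of its incident elements and invoking the circuit bound on $X\cup\{v\}\subsetneq V$. Two small notational slips: in part (b) you write ``$d^\dag(v)=2$'' where you mean $|N(v)|=2$ (one still has $d^\dag(v)=3$, with one loop at $v$), and in part (c) the vertex $v$ necessarily carries exactly one edge $vu$ and two loops (not ``a loop or two plus edges''), since the underlying graph is simple apart from loops --- neither affects the argument.
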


\begin{proof}
It is straightforward to show that the existence of each of the critical sets described in (a) and (b) implies non-admissibility. 

For the converse, we first suppose that the 1-reduction described in case (a) is non-admissible. Then the graph resulting from this 1-reduction is not a rigid $\M_{lc}$-circuit. This implies that there exists  either an $X\subset V-v$ with $i_E(X)\geq 2|X|-2$ or a $Y\subsetneq V-v$ with $i_{E\cup L}(Y)\geq 2|Y|+1$. These subsets are pure critical and mixed critical, respectively, in $G$.  If the first alternative holds then $z\not \in X$, since otherwise $i_E(X+v)=i_E(X)+3=2|X|-3+3=2|X+v|-2$ and we would contradict the fact that $G$ is a rigid $\M_{lc}$-circuit. Similarly,
if the second alternative holds and $z\in Y$ then $i_{E\cup L}(Y+v)=i_{E\cup L}(Y)+3=2|Y|+3=2|Y+v|+1$, again contradicting the fact that $G$ is a rigid $\M_{lc}$-circuit.

The arguments in cases (b) and (c) are similar.
\end{proof}


\begin{lem}\label{lem:union}
Let $G$ be a rigid $\M_{lc}$-circuit. \\
(a) If $X,Y\subset V$ are pure critical with $|X\cap Y|\geq 2$, then $X\cup Y$ and $X\cap Y$ are pure critical and $d(X\sm Y,Y\sm X)=0$.\\
(b) If $X,Y\subset V$ are mixed critical and $|X\cup Y|\leq |V|-1$, then $X\cup Y$ and $X\cap Y$ are mixed critical and $d(X\sm Y,Y\sm X)=0$.\\
(c) If $X\subset V$ is mixed critical, $Y\subset V$ is pure critical, $|X\cap Y|\geq 2$ and $|X\cup Y|\leq |V|-1$, then $X\cup Y$ is mixed critical, $X\cap Y$ is pure critical and $i_L(Y\sm X)=0=d(X\sm Y,Y\sm X)$.
\end{lem}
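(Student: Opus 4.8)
The plan is to run a standard count-function / submodularity argument, but with two slightly different count functions -- one for the mixed-critical condition and one for the pure-critical condition -- and to manage carefully the boundary-edge and loop terms that distinguish the two. Throughout I will use the identities, valid for disjoint-into-overlapping decompositions, that
\[
i_{E\cup L}(X)+i_{E\cup L}(Y)+d(X\sm Y,Y\sm X)\leq i_{E\cup L}(X\cup Y)+i_{E\cup L}(X\cap Y),
\]
and similarly for $i_E$ in place of $i_{E\cup L}$, since every edge or loop counted on the left is counted on the right with at least the same multiplicity (an edge between $X\sm Y$ and $Y\sm X$ lies inside $X\cup Y$ but inside neither $X$ nor $Y$). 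For $i_L$ one has the exact identity $i_L(X)+i_L(Y)=i_L(X\cup Y)+i_L(X\cap Y)$ since loops cannot be boundary edges.

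For part (a): with $X,Y$ pure critical and $|X\cap Y|\geq 2$, I would write $i_E(X)=2|X|-3$, $i_E(Y)=2|Y|-3$, add, use $|X|+|Y|=|X\cup Y|+|X\cap Y|$ and the submodular inequality for $i_E$ to get
\[
2|X\cup Y|+2|X\cap Y|-6+d(X\sm Y,Y\sm X)\leq i_E(X\cup Y)+i_E(X\cap Y).
\]
Now $X\cap Y$ has at least two vertices, and $X\cup Y$ is a proper subset of $V$ (this needs a word: if $X\cup Y=V$ then $i_E(V)\leq|E|$ and one checks the count forces $L$ to be too small, contradicting that $G$ is a rigid $\M_{lc}$-circuit, which has $|L|\geq 3$); hence $G$ being a rigid $\M_{lc}$-circuit gives $i_E(X\cup Y)\leq 2|X\cup Y|-3$ and $i_E(X\cap Y)\leq 2|X\cap Y|-3$. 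Substituting forces $d(X\sm Y,Y\sm X)\le 0$, hence $=0$, and forces both of the last two inequalities to hold with equality, i.e. $X\cup Y$ and $X\cap Y$ are pure critical. Part (b) is entirely analogous using $i_{E\cup L}$ in place of $i_E$, the bound $i_{E\cup L}(Z)\le 2|Z|$ for $Z\subsetneq V$, and the hypothesis $|X\cup Y|\le|V|-1$ (which is exactly what guarantees the bound applies to $X\cup Y$); here $X\cap Y$ may be empty or a singleton, but the bound $i_{E\cup L}(Z)\le 2|Z|$ still holds trivially in those cases, so no lower bound on $|X\cap Y|$ is needed.

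For part (c), the mixed/pure case, I would start from $i_{E\cup L}(X)=2|X|$ and $i_E(Y)=2|Y|-3$. The subtlety is that the two count functions differ by $i_L$, so I first rewrite $i_{E\cup L}(Y)=i_E(Y)+i_L(Y)=2|Y|-3+i_L(Y)$. Adding to the $X$ equation and applying the submodular inequality for $i_{E\cup L}$:
\[
2|X\cup Y|+2|X\cap Y|-3+i_L(Y)+d(X\sm Y,Y\sm X)\leq i_{E\cup L}(X\cup Y)+i_{E\cup L}(X\cap Y).
\]
Since $|X\cup Y|\le|V|-1$ we get $i_{E\cup L}(X\cup Y)\le 2|X\cup Y|$; and $i_{E\cup L}(X\cap Y)\le i_E(X\cap Y)+i_L(X\cap Y)\le (2|X\cap Y|-3)+i_L(X\cap Y)$ using $|X\cap Y|\geq 2$ and the circuit bound on $i_E$. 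Substituting and cancelling:
\[
i_L(Y)+d(X\sm Y,Y\sm X)\leq i_L(X\cap Y)-3+3 = i_L(X\cap Y).
\]
But $i_L(X\cap Y)\leq i_L(Y)$ trivially, so $i_L(Y)+d(X\sm Y,Y\sm X)\le i_L(Y)$, forcing $d(X\sm Y,Y\sm X)=0$ and $i_L(X\cap Y)=i_L(Y)$, i.e. $i_L(Y\sm X)=0$. Tracing back, every inequality used was tight, so $X\cup Y$ is mixed critical and $X\cap Y$ is pure critical, as claimed.

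The main obstacle I anticipate is not any single inequality but the bookkeeping in part (c): keeping straight which terms are $i_E$, which are $i_{E\cup L}$, and which are $i_L$, and in particular justifying that the submodular defect on the mixed side can be charged against loops only. A secondary point needing care throughout is the case analysis on whether $X\cup Y=V$ -- in (a) one must rule this out by hand using $|L|\ge 3$, whereas in (b) and (c) it is excluded by hypothesis, so the three parts are not quite uniform and the write-up should flag this.
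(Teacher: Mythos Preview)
Your approach is correct and is essentially the same submodularity/counting argument the paper uses. For part (c) the paper writes down the single hybrid identity
\[
i_{E\cup L}(X)+i_E(Y)=i_{E\cup L}(X\cup Y)+i_E(X\cap Y)-i_L(Y\sm X)-d(X\sm Y,Y\sm X)
\]
directly and bounds the right-hand side, whereas you first convert $i_E(Y)$ into $i_{E\cup L}(Y)-i_L(Y)$, run the $i_{E\cup L}$ submodularity, and then convert back; unwinding your inequalities recovers exactly the paper's identity, so the two arguments are the same computation in different bookkeeping.

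One small correction to your write-up of (a): your parenthetical claiming that $X\cup Y=V$ would contradict $|L|\ge 3$ is not right as stated (you would only get $|L|\le 4$, which is no contradiction). In fact no case distinction is needed here. Since $|X\cap Y|\ge 2$ forces $|V|\ge 2$, and in a rigid $\M_{lc}$-circuit with $|V|\ge 2$ the edge set $E$ is a proper subset of $E\cup L$ and hence independent in $\M_{lc}$, we already have $i_E(V)=|E|\le 2|V|-3$. So the bound $i_E(X\cup Y)\le 2|X\cup Y|-3$ is available even when $X\cup Y=V$, and your inequality chain goes through unchanged. With that fix your proof of all three parts is complete and matches the paper.
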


\begin{proof}
We prove (c), parts (a) and (b) can be proved similarly.  We have
\begin{eqnarray*}
2|X|+2|Y|-3 &=& i_{E\cup L}(X)+i_{E}(Y)\\
&=& i_{E\cup L}(X\cup Y)+i_{E}(X\cap Y)-i_L(Y\sm X)-d(X\sm Y,Y\sm X)\\
&\leq & 2|X\cup Y|+2|X\cap Y|-3- i_L(Y\sm X)-d(X\sm Y,Y\sm X)\\
&=& 2|X|+2|Y|-3- i_L(Y\sm X)-d(X\sm Y,Y\sm X)
%
\end{eqnarray*}
 Hence $i_L(Y\sm X)=d(X-Y,Y-X)=0$ and equality holds throughout the above displayed calculation. In particular, $i_{E\cup L}(X\cup Y)=2|X\cup Y|$, $i_E(X\cap Y)=2|X\cap Y|-3$.
\end{proof}

\begin{lem}\label{lem:flower1}
Let $G$ be a 
rigid $\M_{lc}$-circuit and $v$ be a node of $G$ with three distinct neighbours $u,w,t$. Suppose that $X,Y$ are mixed critical sets in $G$ satisfying $\{u,w\}\subseteq X\subseteq V\setminus \{v,t\}$ and $\{w,t\}\subseteq Y\subseteq V\setminus \{v,u\}$. Suppose further that
$Z$ is a (mixed or pure) critical set with $\{u,t\}\subseteq Z\subseteq V\setminus \{v,w\}$ 
Let $W^\dag=(V-v)\setminus W$ for each $W\in \{X,Y,Z\}$. Then:\\
(a) $X\cup Y=X\cup Z=Y\cup Z=V-v$;\\
(b) $d(X^\dag,Y^\dag)=d(Y^\dag,Z^\dag)=d(X^\dag,Z^\dag)=0$;\\
(c) either $\{X^\dag,Y^\dag,Z^\dag,X\cap Y\cap Z\}$ is a partition of $V-v$, or $X\cap Y\cap Z=\emptyset$ and $\{X^\dag,Y^\dag,Z^\dag\}$ is a partition of $V-v$;\\
(d) if $Z$ is pure critical then $i_L(X^\dag)=0=i_L(Y^\dag)$.
\end{lem}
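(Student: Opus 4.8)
The strategy is to combine the submodular-type inequalities of Lemma~\ref{lem:union} with the mixed-critical bound of Lemma~\ref{lem:minus}, applying them to the three given critical sets $X,Y,Z$ in turn. Throughout we use that each of $X,Y,Z$ is a proper subset of $V-v$ (because $v$ together with its three neighbours forces each set to miss at least one neighbour and the vertex $v$ itself), and that $G$ is a rigid $\M_{lc}$-circuit, so the only forbidden configurations are a set $S\subsetneq V$ with $i_{E\cup L}(S)\geq 2|S|+1$ or a set $S\subset V$ with $i_E(S)\geq 2|S|-2$.

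\emph{Step (a): the unions equal $V-v$.} Consider $X\cup Y$. Since both are mixed critical, if $|X\cup Y|\leq |V|-1$ then Lemma~\ref{lem:union}(b) gives that $X\cup Y$ is mixed critical with $d(X\sm Y,Y\sm X)=0$. But then $v\notin X\cup Y$ and $v$ has a neighbour $u\in X\sm Y$ and a neighbour $t\in Y\sm X$; the edges $vu$ and $vt$ are not counted in $i_{E\cup L}(X\cup Y)$, and adding $v$ gives $i_{E\cup L}((X\cup Y)+v)\geq i_{E\cup L}(X\cup Y)+2 = 2|X\cup Y|+2 = 2|(X\cup Y)+v|$, which is allowed \emph{unless} $(X\cup Y)+v=V$, i.e.\ $X\cup Y=V-v$; but if $(X\cup Y)+v\subsetneq V$ we need the third edge at $v$ too, namely $vw$ with $w\in X\cap Y$, giving $i_{E\cup L}((X\cup Y)+v)\geq 2|X\cup Y|+3 = 2|(X\cup Y)+v|+1$, contradicting that $G$ is a rigid $\M_{lc}$-circuit. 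Hence $X\cup Y=V-v$. The same argument (using Lemma~\ref{lem:union}(c) when $Z$ is pure critical and Lemma~\ref{lem:union}(b) when it is mixed) gives $X\cup Z=Y\cup Z=V-v$, proving (a).

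\emph{Step (b): no edges between complements.} From (a) we have $X\cup Y=V-v$, so $X^\dag=(V-v)\sm X=Y\sm X$ and $Y^\dag=X\sm Y$. The equality $d(X\sm Y,Y\sm X)=0$ from the relevant part of Lemma~\ref{lem:union} is exactly $d(Y^\dag,X^\dag)=0$; running this for each of the three pairs gives (b). \emph{Step (c): the partition.} Parts (a) and (b), together with $X^\dag=Y\sm X$, $Y^\dag=Z\sm Y$, etc., show that $X^\dag,Y^\dag,Z^\dag$ are pairwise disjoint: indeed $X^\dag\cap Y^\dag=(Y\sm X)\cap(X\sm Y)=\emptyset$ trivially, and for $X^\dag\cap Z^\dag$ one checks using $X\cup Z=V-v$ that $Z^\dag=X\sm Z$, so $X^\dag\cap Z^\dag=(Z\sm X)\cap(X\sm Z)=\emptyset$; similarly for the third pair. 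Their union is contained in $V-v$, and $(V-v)\sm(X^\dag\cup Y^\dag\cup Z^\dag)=X\cap Y\cap Z$ by inclusion-exclusion on the three complements. This gives (c) according to whether $X\cap Y\cap Z$ is empty or not.

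\emph{Step (d): loop counts when $Z$ is pure critical.} Suppose $Z$ is pure critical. From Lemma~\ref{lem:union}(c) applied to the mixed critical set $X$ (or $Y$) and the pure critical set $Z$ — valid since $|X\cap Z|\geq 2$ (it contains $u$ together with another vertex: if $|X\cap Z|=1$ we can derive a contradiction from the critical bounds applied to $X\cap Z$ and the two edges at $v$) and $|X\cup Z|=|V-v|\leq |V|-1$ — we get $i_L(Z\sm X)=0$. But $Z\sm X = X^\dag$ by Step (c)'s identity $X^\dag=(V-v)\sm X\subseteq Z$ (since $X\cup Z=V-v$ forces $(V-v)\sm X\subseteq Z$). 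Hence $i_L(X^\dag)=0$, and symmetrically $i_L(Y^\dag)=0$.

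\textbf{Main obstacle.} The delicate point is Step~(a): verifying that each union is forced to be all of $V-v$ rather than merely a proper subset. This requires carefully tracking which of the three edges $vu,vw,vt$ are or are not counted inside the relevant critical set, and handling the boundary case where a union has size exactly $|V|-1$ versus $|V|$. One must be careful that the hypotheses of Lemma~\ref{lem:union}(b),(c) (the size constraint $|X\cup Y|\leq|V|-1$) are either satisfied or lead directly to the desired conclusion $X\cup Y=V-v$; the argument is a proof by contradiction that splits on $|X\cup Y|\leq|V|-1$ versus $|X\cup Y|=|V|$, and only the former invokes Lemma~\ref{lem:union}. A secondary technical nuisance is confirming the intersection-size lower bounds (e.g.\ $|X\cap Z|\geq 2$) needed to apply Lemma~\ref{lem:union}, which again follows from the circuit inequalities applied to small sets.
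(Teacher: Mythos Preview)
Your overall architecture matches the paper's, but there is a genuine gap when $Z$ is pure critical, and a smaller confusion in Step~(a).

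\medskip
\textbf{The confusion in Step~(a).} Since $v\notin X$ and $v\notin Y$, we always have $X\cup Y\subseteq V-v$, so $|X\cup Y|\leq |V|-1$ is automatic; there is no second case to split on. Moreover, all three edges $vu,vw,vt$ land in $(X\cup Y)+v$ from the start (because $u,w\in X$ and $w,t\in Y$), so the clean count is simply
\[
i_{E\cup L}\big((X\cup Y)+v\big)=i_{E\cup L}(X\cup Y)+3=2|X\cup Y|+3=2|(X\cup Y)+v|+1,
\]
forcing $(X\cup Y)+v=V$. Your detour through ``$+2$, which is allowed unless\ldots'' is not needed and obscures the argument. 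This is cosmetic, not fatal.

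\medskip
\textbf{The real gap: verifying $|X\cap Z|\geq 2$ when $Z$ is pure critical.} You invoke Lemma~\ref{lem:union}(c) in Step~(a) to get $X\cup Z=V-v$, and again in Step~(d) to get $i_L(Z\sm X)=0$; both applications require $|X\cap Z|\geq 2$. Your justification (``if $|X\cap Z|=1$ we can derive a contradiction from the critical bounds applied to $X\cap Z$ and the two edges at $v$'') does not work: a direct submodular count with $X\cap Z=\{u\}$ gives only
\[
i_{E\cup L}\big((X\cup Z)+v\big)\geq 2|(X\cup Z)+v| + i_L(Z)-i_L(\{u\})+d(X\sm Z,Z\sm X),
\]
and the surplus term need not be positive. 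The paper's argument is different and essential: since $Z$ is pure critical, $G[Z]$ is connected, so there is a path $P$ in $G[Z]$ from $u$ to $t$. If $X\cap Z=\{u\}$, the first edge of $P$ leaves $u$ to a vertex of $Z\setminus\{u\}=Z\setminus X\subseteq (V-v)\setminus X=Y\setminus X$; but $u\in X\setminus Y$, so this edge joins $X\setminus Y$ to $Y\setminus X$, contradicting the already-established $d(X\sm Y,Y\sm X)=0$. The same reasoning gives $|Y\cap Z|\geq 2$. Note that this argument uses the $X,Y$ conclusion first, so the order matters: you must settle $X\cup Y=V-v$ and $d(X^\dag,Y^\dag)=0$ \emph{before} attempting Lemma~\ref{lem:union}(c) for the pair $X,Z$.

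Once this connectivity step is inserted, your Steps~(b)--(d) go through exactly as in the paper.
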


\begin{proof}
Since $X,Y$ are mixed critical, $X\cup Y$ is mixed critical and $d(X\sm Y,Y\sm X)=0$ by Lemma \ref{lem:union}(b). The first assertion gives  $i_{E\cup L}(X\cup Y\cup\{v\})=2|X\cup Y\cup\{v\}|+1$. Since $G$ is an $\M_{lc}$-circuit, this imples that $X\cup Y=V-v$. We now have  $X^\dag=Y\sm X$ and $Y^\dag=X\sm Y$. When  $Z$ is mixed critical, a similar argument for $X,Z$ and $Y,Z$ tells us that (a) and (b) hold. Part (c) follows immediately from (a). Hence we may assume that  $Z$ is pure critical.

Since  $Z$ is pure critical, $G[Z]$ is connected and hence there is a path $P$ in $G[Z]$ from $u$ to $t$. If $X\cap Z=\{u\}$ then $P$ would contain no vertices of $X-u$. The existence of such a path $P$ would  contradict the fact that $u\in X\sm Y$, $t\in Y\sm X$ and $d(X\sm Y,Y\sm X)=0$. Hence $|X\cap Z|\geq 2$ and we can use   
Lemma \ref{lem:union}(c) to deduce that $X\cup Z$ is mixed critical and  $d_L^\dag(X^\dag)=0=d(X\sm Z,Z\sm X)$. A similar argument as in the previous paragraph now gives  $X\cup Z=V-v$. We can now use symmetry to deduce that $Y\cup Z=V-v$ and $d_L^\dag(Y^\dag)=0=d(Y\sm Z,Z\sm Y)$. This gives (a), (b), (c) and (d) in the case when $Z$ is pure critical.
%
%
%
%
%
%
%
\end{proof}

We call a triple $(X,Y,Z)$ of three sets satisfying the hypotheses of Lemma \ref{lem:flower1} 
a \emph{strong flower on $v$} when $Z$ is mixed critical and a \emph{weak flower on $v$} when $Z$ is pure critical. Note that it is 
possible for $(X,Y,Z)$ to be both a strong and weak flower on $v$, see Figure \ref{fig:strong_weak_flower}.

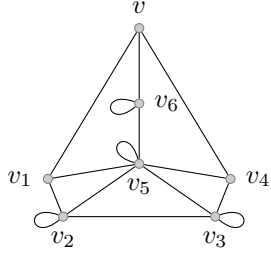
\begin{figure}[h]
\begin{center}
\begin{tikzpicture}[font=\small]
\node[roundnode] at (-1.2,0.5) (v1) [label=left:$v_1$]{};
\node[roundnode] at (-1,0) (v2) [label=below:$v_2$]{}
	edge[] (v1)
	edge[in=160,out=220,loop] ();
\node[roundnode] at (1,0) (v3) [label=below:$v_3$]{}
	edge[] (v2)
	edge[in=20,out=320,loop]();
\node[roundnode] at (1.2,0.5) (v4) [label=right:$v_4$]{}
	edge[] (v3);
\node[roundnode] at (0,0.7) (v5) [label=below:$v_5$]{}
	edge[] (v1)
	edge[] (v2)
	edge[] (v3)
	edge[] (v4)
	edge[in=105,out=165,loop] ();
\node[roundnode] at (0,1.5) (v6) [label=right:$v_6$]{}
	edge[] (v5)
	edge[in=160,out=220,loop] ();
\node[roundnode] at (0,2.5) (v) [label=above:$v$] {}
	edge[] (v1)
	edge[] (v4)
	edge[] (v6);
\end{tikzpicture}
\end{center}
\caption{Let $X=\{v_1,v_2,v_3,v_5,v_6\}$, $Y=\{v_2,v_3,v_4,v_5,v_6\}$, and  $Z=\{v_1,v_2,v_3,v_4,v_5\}$.
Then $Z$ is pure critical and mixed critical so $(X,Y,Z)$ is both a strong and a weak flower on $v$.}
\label{fig:strong_weak_flower}
\end{figure}

\begin{lem}\label{lem:flower2}
Let $G$ be a 
rigid $\M_{lc}$-circuit and $v$ be a node of $G$ with three distinct neighbours $r,s,t$. 
Suppose that $Y,Z$ are pure critical sets satisfying $\{s,t\}\subseteq Y\subseteq V\setminus \{v,r\}$ and $\{r,s\}\subseteq Z\subseteq V\setminus \{v,t\}$, and $X$ is a critical set with $\{r,t\}\subseteq X\subseteq V\setminus \{v,s\}$. Then $X$ is mixed critical, $X\cap Y=\{t\}$, $X\cap Z=\{r\}$, $Y\cap Z=\{s\}$, $G-v=G[X]\cup G[Y]\cup G[Z]$, and the component of $G-\{r,t\}$ which contains $\{v,s\}$ has no loops.
\end{lem}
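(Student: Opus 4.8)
The plan is to mimic the structure of the proof of Lemma~\ref{lem:flower1}, but now with the roles of the ``pure'' and ``mixed'' hypotheses rearranged: here two of the three sets ($Y$ and $Z$) are pure critical and only $X$ is allowed to be either. First I would observe that $Y$ and $Z$ are both pure critical and that $|Y\cap Z|\geq 1$ since $s\in Y\cap Z$; the key point is to upgrade this to $|Y\cap Z|\geq 2$ or else conclude $Y\cap Z=\{s\}$ directly. Since a pure critical set induces a connected subgraph (because $i_E(\cdot)=2|\cdot|-3$ forces connectivity, being a rigid circuit of the generic 2-dimensional bar-joint matroid restricted there), there is a path in $G[Z]$ from $r$ to $s$ and a path in $G[Y]$ from $s$ to $t$. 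I would then run the same path-blocking argument as in Lemma~\ref{lem:flower1}: if some intersection were a singleton, a path would be forced to cross an edge-cut of size zero between $W\setminus W'$ and $W'\setminus W$, a contradiction. Applying Lemma~\ref{lem:union}(a) to the pair $Y,Z$ (both pure critical with $|Y\cap Z|\geq 2$, should that case arise) gives $Y\cup Z$ pure critical and $d(Y\sm Z,Z\sm Y)=0$; adding $v$ as in Lemma~\ref{lem:flower1} forces $Y\cup Z=V-v$, hence $X^\dag=Y\cap Z$ up to the vertex $v$, and then $|Y\cap Z|\geq 2$ would make $Y\cup Z$ contain too much—so actually the clean outcome is $Y\cap Z=\{s\}$ after ruling out $|Y\cap Z|\ge 2$ via the circuit inequality.

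Next I would bring in $X$. Since $X$ contains $r$ and $t$ but not $s$, and $Y$ reaches from $s$ to $t$ while $Z$ reaches from $r$ to $s$, I would argue $|X\cap Y|\geq 2$ and $|X\cap Z|\geq 2$: if $X\cap Y=\{t\}$ we are in the desired conclusion for that intersection, but to get the global decomposition I need the union structure, so I would apply Lemma~\ref{lem:union} in the appropriate mixed/pure combination. If $X$ is pure critical, Lemma~\ref{lem:union}(a) applied to $X,Y$ and to $X,Z$ gives unions that are pure critical with zero edge-cuts, forcing each union to be $V-v$; combined with $Y\cup Z=V-v$ and the fact that $G$ is an $\M_{lc}$-circuit, the three pure critical sets $X,Y,Z$ would together contain more than $2|V-v|-3$ edges when pieced back with $v$'s three edges—a careful count should show this is impossible, hence $X$ must be mixed critical. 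Once $X$ is mixed critical, Lemma~\ref{lem:union}(c) applied to $X$ (mixed) and $Y$ (pure) and then to $X$ (mixed) and $Z$ (pure) yields $X\cup Y=V-v=X\cup Z$, $i_L(Y\sm X)=0=i_L(Z\sm X)$, and all three pairwise edge-cuts vanishing. From $X\cup Y=X\cup Z=Y\cup Z=V-v$ together with the vanishing edge-cuts, the standard inclusion–exclusion bookkeeping (exactly as in Lemma~\ref{lem:flower1}(c)) forces $\{X^\dag,Y^\dag,Z^\dag\}$ together with $X\cap Y\cap Z$ to partition $V-v$, and here the edge-counting equality from Lemma~\ref{lem:union} pins down $X\cap Y\cap Z=\emptyset$, giving $X\cap Y=\{t\}$, $X\cap Z=\{r\}$, $Y\cap Z=\{s\}$.

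The decomposition $G-v=G[X]\cup G[Y]\cup G[Z]$ then follows because every edge of $G-v$ lies inside one of $X,Y,Z$: there are no edges between the three ``petals'' $X^\dag,Y^\dag,Z^\dag$ by the vanishing cuts, the central vertices are the singleton intersections, and a count of $i_E(X)+i_E(Y)+i_E(Z)$ against $i_{E\cup L}(G-v)$ shows nothing is missed. Finally, for the last clause: the component of $G-\{r,t\}$ containing $\{v,s\}$ consists of $v$ together with $(Y^\dag\cup Z^\dag)\setminus\{r,t\}$ and possibly parts of the shared structure; since $i_L(Y\sm X)=0=i_L(Z\sm X)$, the petals $Y^\dag=X\sm Y$ and $Z^\dag=X\sm Z$—wait, more carefully, $Y^\dag$ and $Z^\dag$ are the petals disjoint from $Y$ resp.\ $Z$—carry no loops, and the only loops of $G-v$ sit inside $X\cap Y\cap Z=\emptyset$ region or attached through $X$; tracing which vertices survive in the $\{v,s\}$-component shows they all lie in loopless petals, so that component has no loops. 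The main obstacle I expect is the second paragraph: ruling out the possibility that $X$ is pure critical requires assembling the three separate union computations into one global edge-count contradiction with the circuit bound $|E|+|L|=2|V|+1$, and getting the inclusion–exclusion signs and the handling of $i_L$ exactly right there is the delicate part; the rest is a faithful adaptation of Lemma~\ref{lem:flower1}.
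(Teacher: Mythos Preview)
Your plan conflates the structure of Lemma~\ref{lem:flower1} with that of Lemma~\ref{lem:flower2}, and this causes a genuine gap. In Lemma~\ref{lem:flower1} the pairwise unions satisfy $X\cup Y=X\cup Z=Y\cup Z=V-v$, and the engine is Lemma~\ref{lem:union} applied to pairs with intersection of size $\geq 2$. In Lemma~\ref{lem:flower2} the conclusion is the \emph{opposite} shape: the pairwise intersections are singletons $\{r\},\{s\},\{t\}$, so the hypothesis $|X\cap Y|\geq 2$ of Lemma~\ref{lem:union}(a),(c) is \emph{false}. Your key step ``Once $X$ is mixed critical, Lemma~\ref{lem:union}(c) applied to $X$ and $Y$ \ldots\ yields $X\cup Y=V-v$'' therefore does not go through, and everything you build on it (the partition via $X^\dag,Y^\dag,Z^\dag$, the vanishing of $i_L(Y\setminus X)$, etc.) collapses.

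The paper uses Lemma~\ref{lem:union} only in the \emph{contrapositive} direction. For $Y,Z$: if $|Y\cap Z|\geq 2$ then Lemma~\ref{lem:union}(a) makes $Y\cup Z$ pure critical, and adding $v$ gives $i_E(Y\cup Z\cup\{v\})=2|Y\cup Z\cup\{v\}|-2$, contradicting the circuit bound; hence $Y\cap Z=\{s\}$. For $X$ mixed and $Z$: if $|X\cap Z|\geq 2$ then Lemma~\ref{lem:union}(c) gives $X\cup Z=V-v$ and $d(X\setminus Z,Z\setminus X)=0$, but the path $P$ in $G[Y]$ from $s$ to $t$ (which exists because $G[Y]$ is connected and avoids $Z-s$ since $Y\cap Z=\{s\}$) supplies an edge from $Z\setminus X$ to $X\setminus Z$, a contradiction; hence $X\cap Z=\{r\}$, and symmetrically $X\cap Y=\{t\}$. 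You mention the paths in your first paragraph but never deploy them at this decisive point. Finally, once the three pairwise intersections are singletons, the decomposition and the loop statement come from a direct count
\[
i_{E\cup L}(X)+i_E(Y)+i_E(Z)+3 \;\leq\; i_{E\cup L}(X\cup Y\cup Z\cup\{v\}) \;\leq\; 2|V|+1,
\]
with equality forcing $X\cup Y\cup Z\cup\{v\}=V$ and all loops to lie in $G[X]$; no analogue of the $W^\dag$ partition from Lemma~\ref{lem:flower1} is needed or available here. Ruling out $X$ pure critical is done the same way: singleton intersections first (by the $Y\cap Z$ argument repeated), then the direct count gives $i_E=2|\cdot|-2$ on $X\cup Y\cup Z\cup\{v\}$.
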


\begin{proof}
If $|Y\cap Z|\geq 2$ then $Y\cup Z$ would be pure critical by Lemma \ref{lem:union}(a) and we would have $i_E(Y\cup Z\cup \{v\})=2|Y\cup Z\cup \{v\}|-2$. This would contradict the fact that $G$ is a rigid $\M_{lc}$-circuit. Hence $Y\cap Z=\{s\}$. Since $Y$ is pure critical, $G[Y]$ is connected and hence there exists a path $P$ in $G$ from $s$ to $t$ which avoids $Z-s$.
 
Suppose $X$ is pure critical. Then a similar argument to the above gives $X\cap Z=\{r\}$ and $Y\cap X=\{t\}$,
$X\cup Y\cup Z$ is pure critical and $i_E(X\cup Y\cup Z\cup \{v\})=2|X\cup Y\cup Z\cup \{v\}|-2$. This would again contradict the fact that $G$ is a  rigid $\M_{lc}$-circuit. Hence $X$ is mixed critical.

Suppose $|X\cap Z|>1$. Then $X\cup Z$ is mixed critical and $d(X\sm Z,Z\sm X)=0$ by Lemma \ref{lem:union}(c). This gives $i_{E\cup L}(X\cup Z\cup \{v\})=2|X\cup Z\cup \{v\}|+1$ so  $X\cup Z=V-v$.  The path $P$ now implies that $d(X\sm Z,Z\sm X)>0$, a contradiction. 

Hence we have $X\cap Z=\{r\}$ and, by symmetry, $X\cap Y=\{t\}$. 
This gives
\begin{eqnarray*}
i_{E\cup L}(X\cup Y\cup Z\cup\{v\})&\geq& i_{E\cup L}(X)+i_E(Y)+i_E(Z)+3\\
&=&2|X|+(2|Y|-3)+(2|Z|-3)+3\\
&=&2|X\cup Y\cup Z\cup \{v\}|+1.
\end{eqnarray*}
Since $G$ is a rigid $\M_{lc}$-circuit, we must have $X\cup Y\cup Z\cup\{v\}=V$ and
$i_{E\cup L}(X)+i_E(Y)+i_E(Z)+3=|E|+|L|$. This implies that all loops in $G$ are contained in $G[X]$ and that the component of $G-\{r,t\}$ which contains $\{v,s\}$ has no loops.
Hence $G$ is not balanced.
\end{proof}

We call a triple $(X,Y,Z)$ of three sets satisfying the hypotheses of Lemma \ref{lem:flower2} for $G$  an {\em unbalanced flower on $v$}. Note that $(X,Y,Z)$ cannot be both an unbalanced flower and a strong or weak flower since, in the the former, $X\cap Y\cap Z=\emptyset$ and $G[Y],G[Z]$ are connected, while, for every strong or weak flower with $X\cap Y\cap Z=\emptyset$, each of $G[X],G[Y],G[Z]$ are disconnected. 

\begin{lem}\label{lem:flower3}
Let $G$ be a 
rigid $\M_{lc}$-circuit and $v$ be a non-admissible node of $G$ with three distinct neighbours. Then at least one of the following holds:\\
(a) there exists a strong flower on $v$ in $G$;\\
(b) there exsits a weak flower on $v$ in $G$; \\
(c) there exists an unbalanced flower on $v$ in $G$.
\end{lem}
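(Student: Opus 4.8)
The plan is to fix a non-admissible node $v$ with three distinct neighbours, call them $u,w,t$, and exploit Lemma~\ref{lem:admcases}(a) three times. Since $v$ is non-admissible, each of the three possible $1$-reductions at $v$ (the one adding $uw$, the one adding $wt$, and the one adding $ut$) is non-admissible. By Lemma~\ref{lem:admcases}(a), for each of these three reductions there is a critical set avoiding $v$ and the ``third'' neighbour: a set $A$ (pure or mixed critical) with $u,w\in A$, $v,t\notin A$; a set $B$ with $w,t\in B$, $v,u\notin B$; and a set $C$ with $u,t\in C$, $v,w\notin C$. The goal is to show these three sets, suitably relabelled, satisfy the hypotheses of one of Lemma~\ref{lem:flower1} (strong/weak flower) or Lemma~\ref{lem:flower2} (unbalanced flower).

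The first step is a case analysis on how many of $A,B,C$ are mixed critical. If all three are mixed, then $(A,B,C)$ is immediately a strong flower on $v$ with the roles $X=A$, $Y=B$, $Z=C$ (matching the neighbour labels $u,w,t$ to $r,s,t$ appropriately), so (a) holds. If exactly two are mixed critical, say $A$ and $B$, and $C$ is pure critical, then with $X=A$, $Y=B$, $Z=C$ the triple satisfies the hypotheses of Lemma~\ref{lem:flower1} with $Z$ pure, so it is a weak flower and (b) holds. The main work is the remaining cases, where at least two of the three sets are pure critical.

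Suppose $B$ and $C$ are both pure critical, with $A$ either pure or mixed. Relabelling the neighbours so that the pure-critical sets play the roles of $Y,Z$ in Lemma~\ref{lem:flower2} and $A$ plays the role of $X$, we check that the containment conditions $\{s,t\}\subseteq Y\subseteq V\setminus\{v,r\}$, etc., are exactly what Lemma~\ref{lem:admcases}(a) provides. Then Lemma~\ref{lem:flower2} directly applies and tells us $(X,Y,Z)$ is an unbalanced flower on $v$, giving (c). The one subtlety here is that Lemma~\ref{lem:flower2} also allows $X$ to be pure critical and concludes (forces) $X$ mixed; so the case ``all three pure'' is subsumed, and we need not treat it separately — Lemma~\ref{lem:flower2} handles ``$X$ critical, $Y,Z$ pure'' uniformly.

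The step I expect to be the main obstacle is making the bookkeeping of the neighbour labels airtight: Lemma~\ref{lem:admcases}(a) pins down which neighbour is excluded from each critical set (the third neighbour, not appearing in the added edge), and I must check that this matches the asymmetric exclusion pattern required by Lemmas~\ref{lem:flower1} and \ref{lem:flower2} — in \ref{lem:flower1} the set $X$ contains $\{u,w\}$ and avoids $\{v,t\}$, exactly the pattern ``contains the two endpoints of the potential added edge, avoids $v$ and the third neighbour''. Once the correspondence is set up correctly, each case is a one-line appeal to a previous lemma, but getting the correspondence right (and observing that the three subcases ``3 mixed / 2 mixed / $\le 1$ mixed'' are exhaustive and map to (a)/(b)/(c) respectively) is where the care lies. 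I would therefore organise the proof as: invoke Lemma~\ref{lem:admcases}(a) to obtain $A,B,C$; split on the number of mixed-critical sets among them; in each branch, verify the containment hypotheses and cite Lemma~\ref{lem:flower1} or \ref{lem:flower2}.

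\begin{proof}
Let $u,w,t$ be the three neighbours of $v$. Since $v$ is non-admissible, each of the $1$-reductions at $v$ adding $uw$, $wt$, or $ut$ is non-admissible. By Lemma~\ref{lem:admcases}(a), there exist critical sets $A,B,C$ with
$$\{u,w\}\subseteq A\subseteq V\setminus\{v,t\},\quad \{w,t\}\subseteq B\subseteq V\setminus\{v,u\},\quad \{u,t\}\subseteq C\subseteq V\setminus\{v,w\}.$$

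Suppose first that at least two of $A,B,C$ are mixed critical. After relabelling the neighbours we may assume $A$ and $B$ are mixed critical. Then, taking $X=A$, $Y=B$, $Z=C$ (with the neighbour roles $r=u$, $s=w$, $t=t$), the triple $(X,Y,Z)$ satisfies the hypotheses of Lemma~\ref{lem:flower1}: $X$ and $Y$ are mixed critical with $\{u,w\}\subseteq X\subseteq V\setminus\{v,t\}$ and $\{w,t\}\subseteq Y\subseteq V\setminus\{v,u\}$, and $Z$ is a critical set with $\{u,t\}\subseteq Z\subseteq V\setminus\{v,w\}$. If $C$ is mixed critical then $(X,Y,Z)$ is a strong flower on $v$ and (a) holds; if $C$ is pure critical then $(X,Y,Z)$ is a weak flower on $v$ and (b) holds.

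Now suppose at most one of $A,B,C$ is mixed critical, so at least two are pure critical. After relabelling the neighbours we may assume $B$ and $C$ are pure critical. Set $r=u$, $s=w$ and keep $t=t$, and take $X=A$, $Y=B$, $Z=C$. Then $Y,Z$ are pure critical with $\{s,t\}\subseteq Y\subseteq V\setminus\{v,r\}$ and $\{r,s\}\subseteq Z\subseteq V\setminus\{v,t\}$, and $X$ is a critical set with $\{r,t\}\subseteq X\subseteq V\setminus\{v,s\}$. Thus $(X,Y,Z)$ satisfies the hypotheses of Lemma~\ref{lem:flower2}, and that lemma tells us $(X,Y,Z)$ is an unbalanced flower on $v$. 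Hence (c) holds.
\end{proof}
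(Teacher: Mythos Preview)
Your approach is correct and matches the paper's (which simply says the result ``follows immediately from Lemmas~\ref{lem:admcases}, \ref{lem:flower1} and \ref{lem:flower2}''); you have spelled out exactly what that means, and the case split ``at least two mixed / at most one mixed'' is the right organisation.

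There is, however, a bookkeeping slip in the second case --- precisely the hazard you anticipated. With your assignment $r=u$, $s=w$, $t=t$ and $Z=C$, the claimed containment $\{r,s\}=\{u,w\}\subseteq Z=C$ fails, since $w\notin C$ by construction. The correct identification is $r=u$, $s=t$ (your $t$), and Lemma~\ref{lem:flower2}'s third neighbour equal to $w$; then with $X=A$, $Y=B$, $Z=C$ all three containments check:
\[
\{t,w\}\subseteq B\subseteq V\setminus\{v,u\},\qquad \{u,t\}\subseteq C\subseteq V\setminus\{v,w\},\qquad \{u,w\}\subseteq A\subseteq V\setminus\{v,t\}.
\]
The way to see this without guessing is to note that $X=A$ must avoid Lemma~\ref{lem:flower2}'s vertex $s$, and $A$ avoids your $t$; so your $t$ plays the role of $s$, and then $r,t$ (in Lemma~\ref{lem:flower2}'s labelling) are forced to be $u,w$ by which vertex each of $B,C$ avoids. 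With this fix the argument is complete.
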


\begin{proof}
This follows immediately from Lemmas \ref{lem:admcases}, \ref{lem:flower1} and \ref{lem:flower2}.
\end{proof}

Our final result of this section is a decomposition lemma for {\em unbalanced} rigid $\M_{lc}$-circuits i.e.\ rigid $\M_{lc}$-circuits which are not balanced. It uses the following graph operation. 

Given three looped simple graphs $G=(V,E,L)$, $G_1=(V_1,E_1,L_1)$ and $G_2=(V_2,E_2,L_2)$, we say that {\em $G$ is the $2$-sum of $G_1$ and $G_2$ along an edge $uv$} if $V_1\cup V_2=V$, $V_1\cap V_2=\{u,v\}$, $E=(E_1\cup E_2)-uv$, $E_1\cap E_2=\{uv\}$,  
$L=L_1\cup L_2$ and $L_1\cap L_2=\emptyset$.
Figure \ref{fig:unbalanced_circuit} gives an example of an unbalanced rigid $\M_{lc}$-circuit
which is the 2-sum of a rigid and a flexible $\M_{lc}$-circuit.
\begin{figure}[h]
\begin{center}
\begin{tikzpicture}[font=\small]

\node[roundnode] at (0,0) (v1) [label=below:$v$]{}
	edge[in=205,out=245,loop]();
\node[roundnode] at (1,0) (v2) []{}
	edge[] (v1);
\node[roundnode] at (1,1) (v3) []{}
	edge[] (v1)
	edge[] (v2);
\node[roundnode] at (0,1) (v4) [label=above:$u$]{}
	edge[] (v2)
	edge[] (v3)
	edge[in=125,out=165,loop]();
\node[roundnode] at (-1,0) (x1) []{}
	edge[] (v1)
	edge[in=210,out=250,loop]();
\node[roundnode] at (-1,1) (x2) []{}
	edge[] (v1)
	edge[] (v4)
	edge[in=110,out=150,loop]();
\node[roundnode] at (-2,0.5) (x3) []{}
	edge[] (x1)
	edge[] (x2)
	edge[in=160,out=200,loop]();
\begin{scope}[xshift=-5cm]
\node[roundnode] at (0,0) (v1) [label=below:$v$]{};
\node[roundnode] at (1,0) (v2) []{}
	edge[] (v1);
\node[roundnode] at (1,1) (v3) []{}
	edge[] (v1)
	edge[] (v2);
\node[roundnode] at (0,1) (v4) [label=above:$u$]{}
	edge[] (v1)
	edge[] (v2)
	edge[] (v3);
\end{scope}
\begin{scope}[xshift=-6cm]
\node[roundnode] at (0,0) (v1) [label=below:$v$]{}
	edge[in=205,out=245,loop]();
\node[roundnode] at (0,1) (v4) [label=above:$u$]{}
	edge[] (v1)
	edge[in=125,out=165,loop]();
\node[roundnode] at (-1,0) (x1) []{}
	edge[] (v1)
	edge[in=210,out=250,loop]();
\node[roundnode] at (-1,1) (x2) []{}
	edge[] (v1)
	edge[] (v4)
	edge[in=110,out=150,loop]();
\node[roundnode] at (-2,0.5) (x3) []{}
	edge[] (x1)
	edge[] (x2)
	edge[in=160,out=200,loop]();
\end{scope}
\end{tikzpicture}
\end{center}
\caption{An unbalanced rigid $\M_{lc}$-circuit on the right (removing $u$ and $v$ results in a loopless component) obtained from a rigid
$\M_{lc}$-circuit and a flexible $\M_{lc}$-circuit by a 2-sum operation along the edge $uv$ on the left.}
\label{fig:unbalanced_circuit}
\end{figure}
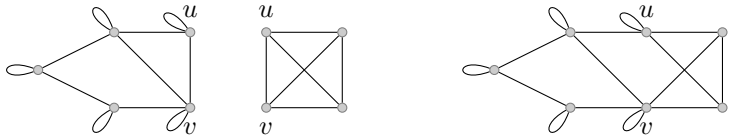
Lemma \ref{lem:unbalanced} below shows that every unbalanced rigid $\M_{lc}$-circuit can be obtained in this way.
\begin{lem}\label{lem:unbalanced}
Let $G=(V,E,L)$ be a looped simple graph. Then $G$ is an unbalanced rigid $\M_{lc}$-circuit if and only if $G$ is a $2$-sum of a rigid $\M_{lc}$-circuit and a flexible $\M_{lc}$-circuit. 
\end{lem}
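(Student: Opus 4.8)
The ``if'' direction is the easier half: suppose $G$ is a $2$-sum of a rigid $\M_{lc}$-circuit $G_1$ and a flexible $\M_{lc}$-circuit $G_2$ along an edge $uv$. I would first check that $G$ is a rigid $\M_{lc}$-circuit by verifying the conditions of Lemma \ref{lem:circuit_char}(a). The count $|E|+|L| = (|E_1|+|L_1|) + (|E_2|+|L_2|) - 1 = (2|V_1|+1) + (2|V_2|-2) - 1 = 2|V|+1$ follows from $|V_1|+|V_2| = |V|+2$. For the two subcount inequalities one argues that any violating set $X$ must meet both sides, and then splits $X$ as $X_1 = X \cap V_1$, $X_2 = X \cap V_2$, using the circuit inequalities on $G_1$ and $G_2$ together with a careful accounting of whether $\{u,v\} \subseteq X$ (so that the suppressed edge $uv$ contributes) -- this is a standard $2$-sum argument for count matroids. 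Finally $G$ is unbalanced because, on deleting $u$ and $v$, the part of $G_2$ that remains is a loopless graph (a flexible $\M_{lc}$-circuit has $L_2 = \emptyset$), so some component of $G-\{u,v\}$ has no loop; here one needs $G_2 - \{u,v\}$ to be nonempty, which holds since a flexible $\M_{lc}$-circuit has $|V_2| = (|E_2|+2)/2 \geq 3$.

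\textbf{The ``only if'' direction.} Suppose $G=(V,E,L)$ is an unbalanced rigid $\M_{lc}$-circuit. Since $G$ is not balanced, there is a set $\{u,w\}\subset V$ with $|\{u,w\}|=2$ such that some component $C$ of $G - \{u,w\}$ has no loop. (If $G$ has only one vertex it is $K_1^{[3]}$, which is balanced, so $|V| \geq 2$; and since $|L| \geq 3$ there is at least one loop, which must lie outside $C$.) Let $X = V(C) \cup \{u,w\}$ and $Y = (V \setminus V(C))$, so that $Y \supseteq \{u,w\}$ contains all the loops of $G$ and $X \cup Y = V$, $X \cap Y = \{u,w\}$. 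Because $C$ is a connected component of $G - \{u,w\}$ we have $d(X \setminus \{u,w\}, Y \setminus \{u,w\}) = 0$, so all edges of $G$ with one end in each part run through $u$ or $w$. I would then show that, essentially by the circuit count, $G[X]$ carries the structure of a flexible $\M_{lc}$-circuit relative to the edge $uw$ and $G[Y]$ (with $uw$ added) carries the structure of a rigid $\M_{lc}$-circuit. The natural candidates are $G_1 := G[Y] + uw$ (adding $uw$ if it is not already present) and $G_2 := G[X] + uw$ (again adding $uw$ if needed), so that $G$ is the $2$-sum of $G_1$ and $G_2$ along $uw$.

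\textbf{Verifying the two pieces.} To make this precise I would use the circuit equation $|E|+|L| = 2|V|+1$ together with $i_{E\cup L}(X) \leq 2|X|$ and $i_E(X) \leq 2|X|-3$ (and the same for $Y$), plus $d(X \setminus \{u,w\}, Y \setminus \{u,w\}) = 0$ and $X \cap Y = \{u,w\}$. Writing $|E|+|L| = i_{E\cup L}(X) + i_{E\cup L}(Y) - i_{E \cup L}(\{u,w\}) + d(\{u,w\}, V\setminus\{u,w\}$-through-structure$)$ -- more carefully, decomposing the edge/loop set of $G$ according to which of $X, Y$ its ends lie in, and using that cross edges only touch $\{u,w\}$ -- one gets an identity forcing both $i_{E\cup L}(X)$ and $i_{E \cup L}(Y)$ to be near their maxima. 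Since $C$ has no loop, $i_L(X \setminus \{u,w\}) = 0$, so the loops incident to $X$-vertices (if any loops are incident to $u$ or $w$ in $G[X]$, i.e. lie in $G[X]$) must be accounted for; combined with the fact that $Y$ contains all loops one deduces $i_{E\cup L}(X) = i_E(X) + i_L(\{u,w\}\cap\text{(loops in }G[X]))$. A short computation should then give $i_E(G[X] + uw) = 2|X| - 2$ with all proper subset bounds $i_E(X') \leq 2|X'| - 3$ inherited from $G$ being a rigid $\M_{lc}$-circuit, so $G_2$ satisfies Lemma \ref{lem:circuit_char}(b) and is a flexible $\M_{lc}$-circuit; and symmetrically $G_1 = G[Y] + uw$ satisfies Lemma \ref{lem:circuit_char}(a), hence is a rigid $\M_{lc}$-circuit. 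One should double-check the edge $uw$ really is present in exactly one of $E_1, E_2$ and gets suppressed correctly; if $uw \in E$ already, assign it to whichever side and the argument is symmetric, and if $uw \notin E$ we add it to both $G_1$ and $G_2$ as the 2-sum edge. The main obstacle I anticipate is the bookkeeping in this counting step: one must handle the possible edge $uw$ in $G$, loops possibly sitting at $u$ or $w$ inside $G[X]$ versus $G[Y]$, and confirm that ``some component of $G-\{u,w\}$ has no loop'' is exactly the hypothesis needed to force $L_2 = \emptyset$ (rather than merely a loop-free component that is not all of $G[X] - \{u,w\}$) -- possibly one needs to first argue that $G - \{u,w\}$ has exactly one loop-free component and the rest of the argument localises to it, or to enlarge $\{u,w\}$-minimal choices so that $C$ together with $\{u,w\}$ induces a connected subgraph.
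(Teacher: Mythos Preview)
Your approach is the same as the paper's, and correct in outline. Two points will tidy it up.

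First, in the ``if'' direction your count is off by one: the edge $uv$ lies in both $E_1$ and $E_2$ and is then deleted, so $|E|+|L| = (|E_1|+|L_1|) + (|E_2|+|L_2|) - 2$. With $-2$ the computation does give $2|V|+1$; with your $-1$ it gives $2|V|+2$.

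Second, the bookkeeping you flag as the ``main obstacle'' in the ``only if'' direction dissolves once you set up the two pieces correctly and use a single squeeze inequality. With $H$ a loopless component of $G-\{u,v\}$, let $H_1 = G[V\setminus V(H)]$ and let $H_2$ be the \emph{simple} subgraph of $G$ on $V(H)\cup\{u,v\}$ obtained by deleting any loops at $u$ or $v$ (so all loops go to $H_1$). Then every edge/loop of $G$ lies in $H_1$ or $H_2$, with only a possible edge $uv$ double-counted, so
\[
2|V|+1 \;=\; |E|+|L| \;\le\; i_{E(H_1)\cup L(H_1)}(V_1) + i_{E(H_2)}(V_2) \;\le\; 2|V_1| + (2|V_2|-3) \;=\; 2|V|+1,
\]
using $|V_1|+|V_2|=|V|+2$. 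Equality throughout now forces three things simultaneously: $uv\notin E$ (otherwise the first inequality is strict), $|E(H_1)|+|L(H_1)|=2|V_1|$, and $|E(H_2)|=2|V_2|-3$. Thus your worry about ``if $uw\in E$ already'' cannot arise, your loop-placement issue is resolved by assigning loops at $u,v$ to $H_1$, and there is no need to argue that the loop-free component is unique or to minimise anything. Setting $G_i=H_i+uv$, the proper-subset bounds for Lemma~\ref{lem:circuit_char}(a),(b) are inherited directly from $G$.
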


\begin{proof}
First suppose that $G$ is a 2-sum of a rigid $\M_{lc}$-circuit $G_1=(V_1,E_1,L_1)$ and a  flexible $\M_{lc}$-circuit $G_2=(V_2,E_2)$ along an edge
$e=uv\in E_1\cap E_2$. It is straightforward to check that $G$ satisfies the conditions given in Lemma \ref{lem:circuit_char}(a). Hence $G$ is  a rigid $\M_{lc}$-circuit. It is unbalanced since $G_2-\{u,v\}$ is a  connected component of $G-\{u,v\}$ which contains no loops.

Now suppose $G$ is an unbalanced, rigid $\M_{lc}$-circuit. Then there is a set $\{u,v\}\subset V$ such that $G-\{u,v\}$  has a component $H$ with
no loops. Let $H_1=(V_1,E_1,L_1)$  be the subgraph of $G$ induced by   $V\setminus V(H)$, and $H_2=(V_2,E_2)$ be the simple subgraph of $G$ obtained from $G-(V_1\sm \{u,v\})$ by deleting any loops at $u$ or $v$. 
We have
$$2|V|+1=|E|+|L|\leq i_{E_1\cup L_1}(V_1)+i_{E_2}(V_2)\leq 2|V_1|+2|V_2|-3=2|V|+1$$
Thus equality must occur throughout. This implies that $uv\not\in E$ (by equality in the first inequality),  $|E_1|+|L_1|=2|V_1|$ and $|E_2|=2|V_2|-3$ (by equality in the second inequality). Let $G_1$ and $G_2$ be the graphs obtained from $H_1$ and $H_2$, respectively, by adding the edge $uv$. It is straightforward to check that $G_1$ and $G_2$ satisify the conditions of Lemma \ref{lem:circuit_char} (a) and (b), respectively. Hence $G_1$ is a rigid $\M_{lc}$-circuit, $G_2$ is a flexible $\M_{lc}$-circuit, and $G$ is the 2-sum of $G_1$ and $G_2$. 
\end{proof}

\section{$\mathcal{M}_{lc}$-connected graphs}
\label{sec:admissible}

Our long term aim is to obtain a recursive construction for balanced redundantly rigid graphs. To accomplish this we first consider the closely related family of `{$\mathcal{M}_{lc}$-connected graphs}'.
It is easy to see that a looped simple graph  $G=(V,E,L)$ is redundantly rigid if and only if it is rigid and every element of $E\cup L$ belongs to an  $\mathcal{M}_{lc}$-circuit. 
The  graph  $G$ is {\em $\mathcal{M}_{lc}$-connected} if every pair of elements of $E\cup L$ belong to a common $\mathcal{M}_{lc}$-circuit in $G$.


We will show that any balanced $\mathcal{M}_{lc}$-connected graph other than $K_1^{[3]}$ can be reduced to a smaller $\mathcal{M}_{lc}$-connected graph using the operation of edge/loop deletion or 1-reduction.  Our proof uses the concept of an `ear decomposition' of a matroid and follows a similar strategy to that used in \cite{Carxiv,J&J}. 

Recall that a matroid $M=(E,r)$ is \emph{connected} if every pair of elements of $M$ is contained in a common circuit. 
Given a non-empty sequence of circuits $C_1,C_2,\dots,C_m$ in $M$, let  $D_i=C_1\cup C_2\cup \dots C_i$ for all $1\leq i \leq m$, and put $\tilde C_i=C_i-D_{i-1}$. The sequence $C_1,C_2,\dots,C_m$ is a \emph{partial ear decomposition} of $M$ if, for all $2\leq i \leq m$,
\begin{enumerate}
\item[(E1)] $C_i\cap D_{i-1}\neq \emptyset$,
\item[(E2)] $C_i-D_{i-1}\neq \emptyset$, and
\item[(E3)] no circuit $C_i'$ satisfying (E1) and (E2) has $C_i'-D_{i-1}\subset C_i-D_{i-1}$.
\end{enumerate}
A partial ear decomposition $C_1,C_2,\dots,C_m$ is an \emph{ear decomposition} of $M$ if $D_m=E$.

\begin{lem}[\cite{C&H}]\label{lem:ear}
Let $M=(E,r)$ be a matroid with $|E|\geq 2$. Then:
\begin{enumerate}
\item[(i)] $M$ is connected if and only if $M$ has an ear decompostion.
\item[(ii)] If $M$ is connected then every partial ear decomposition is extendable to an ear decomposition of $M$.
\item[(iii)] If $C_1,C_2,\dots,C_m$ is an ear decomposition of $M$ then $r(D_i)-r(D_{i-1})=|\tilde C_i|-1$ for all $2\leq i \leq m$.
\end{enumerate}
\end{lem}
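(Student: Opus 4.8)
The plan is to treat the three parts largely independently, with the strong circuit elimination axiom doing the real work throughout. The one preliminary I would record first is the classical fact that the relation $x\sim y$ (meaning $x=y$, or some circuit of $M$ contains both $x$ and $y$) is an equivalence relation on $E$, so that $M$ is connected precisely when $\sim$ has a single class; this is standard, and I would either quote it or reprove it by the usual minimal-counterexample argument using circuit elimination.

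For the direction $(\Leftarrow)$ of (i), I would induct on $i$ to show that all of $D_i$ lies in a single $\sim$-class: the base case $D_1=C_1$ is immediate since $C_1$ is a circuit, and for the step, (E1) supplies $g\in C_i\cap D_{i-1}$, whose $\sim$-class contains all of $D_{i-1}$ (induction) and all of $C_i$ (a circuit), hence all of $D_i=D_{i-1}\cup C_i$. Since $D_m=E$, this makes $M$ connected. The direction $(\Rightarrow)$ I would derive from (ii): a connected matroid with $|E|\ge 2$ has no coloop, so it contains some circuit $C_1$, and the one-term sequence $(C_1)$ is vacuously a partial ear decomposition, which (ii) extends to an ear decomposition.

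For (ii), given a partial ear decomposition $C_1,\dots,C_i$ with $D_i\ne E$, I would choose $f\in E\setminus D_i$ and $e\in D_i$; connectivity gives a circuit through $e$ and $f$, which automatically satisfies (E1) and (E2). Choosing $C_{i+1}$ among all circuits satisfying (E1) and (E2) with $C_{i+1}\setminus D_i$ minimal under inclusion then forces (E3), and, since $|D_i|$ strictly increases at each step by (E2) and $E$ is finite, iterating yields an ear decomposition extending the given sequence.

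The substantive part is (iii), which I would split into two matching inequalities; fix $i\ge 2$ and abbreviate $D=D_{i-1}$, $C=C_i$, $\tilde C=\tilde C_i=C\setminus D$. The bound $r(D_i)-r(D)\le|\tilde C|-1$ should be quick from submodularity $r(D\cup C)+r(D\cap C)\le r(D)+r(C)$ together with $r(C)=|C|-1$ (as $C$ is a circuit), $r(D\cap C)=|D\cap C|$ (a proper subset of a circuit is independent), and $|C|-|D\cap C|=|\tilde C|$. The reverse inequality is where (E3) is essential and, I expect, the main obstacle. Here I would extend a basis $B$ of $D$ greedily by elements of $\tilde C$ to a basis $B\cup I$ of $D_i$ — this can be kept inside $\tilde C$ because the elements of $D\setminus B$ lie in $\mathrm{cl}(B)$ and are never added — so that $r(D_i)-r(D)=|I|$, and then show $|\tilde C\setminus I|\le 1$. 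If not, take distinct $e,f\in\tilde C\setminus I$ and let $C_e$ be the fundamental circuit of $e$ with respect to $B\cup I$: then $e\in C_e\setminus D\subseteq(I\cup\{e\})\setminus\{f\}\subsetneq\tilde C$, and $C_e$ must meet $B\subseteq D$ (otherwise $C_e$ would be a circuit strictly inside the circuit $C$), so $C_e$ satisfies (E1) and (E2) while $C_e\setminus D\subsetneq C\setminus D$, contradicting (E3). Hence $|I|=|\tilde C|-1$, which gives (iii).
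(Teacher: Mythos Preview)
The paper does not prove this lemma; it is quoted from Coullard and Hellerstein~\cite{C&H} and used as a black box. So there is no proof in the paper to compare your attempt against. That said, your argument is correct and self-contained, and is essentially the standard proof one finds in the literature.

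A couple of minor remarks. In part~(iii), you never actually need to invoke strong circuit elimination as announced in your preamble: submodularity handles the upper bound, and the fundamental-circuit argument handles the lower bound. You might also note explicitly that $|\tilde C\setminus I|\ge 1$ follows already from the upper bound $|I|\le|\tilde C|-1$, so the (E3) argument showing $|\tilde C\setminus I|\le 1$ really is the matching inequality; you state this correctly but the logic could be signposted more clearly. Finally, in the fundamental-circuit step, the key observation that $C_e\subsetneq C$ is impossible (since no circuit properly contains another) is what forces $C_e\cap B\ne\emptyset$; you have this, but it deserves emphasis since it is exactly where the circuit axioms bite.
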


Given a looped simple graph $G$, it will be convenient to refer to an ear decomposition $C_1,C_2,\dots,C_m$ of $\M_{lc}(G)$
as an ear decomposition $H_1,H_2,\dots,H_m$ of $G$ where $H_i$ is the $\M_{lc}$-circuit of $G$ induced by $C_i$ for $1\leq i \leq m$.
See Figure \ref{fig:ear_decomp} for an example giving two distinct ear decompositions of the graph $G$ drawn on the far left. The ear decomposition drawn in the middle has a flexible
$\M_{lc}$-circuit $K_4$ whereas the circuits of the ear decomposition drawn on the right are all rigid $\M_{lc}$-circuits. The following lemma
tells that a rigid $\M_{lc}$-connected graph always has an ear decompositon into rigid $\M_{lc}$-circuits.
\begin{figure}[h]
\begin{center}
\begin{tikzpicture}[font=\small]
\node[roundnode] at (0,0) (v1) []{}
	edge[in=180,out=220,loop]()
	edge[in=230, out=270,loop]();
\node[roundnode] at (1,0) (v2) []{}
	edge[] (v1)
	edge[in=270, out=310,loop]()
	edge[in=320, out=360,loop]();
\node[roundnode] at (1,1) (v3) []{}
	edge[] (v1)
	edge[] (v2)
	edge[in=0, out=40,loop]()
	edge[in=50, out=90,loop]();
\node[roundnode] at (0,1) (v4) []{}
	edge[] (v1)
	edge[] (v2)
	edge[] (v3)
	edge[in=90, out=130,loop]()
	edge[in=140, out=180,loop]();
	
\begin{scope}[xshift=4cm]
\node[roundnode] at (0,0) (v1) []{}
	edge[in=180,out=220,loop]()
	edge[in=230, out=270,loop]();
\node[roundnode] at (0,1) (v4) []{}
	edge[] (v1)
	edge[in=90, out=130,loop]()
	edge[in=140, out=180,loop]();
	
	\begin{scope}[xshift=0.5cm]
	\node[roundnode] at (0,0) (v1) []{};
	\node[roundnode] at (1,0) (v2) []{}
		edge[] (v1);
	\node[roundnode] at (1,1) (v3) []{}
		edge[] (v1)
		edge[] (v2);
	\node[roundnode] at (0,1) (v4) []{}
		edge[] (v1)
		edge[] (v2)
		edge[] (v3);
	\end{scope}
	\begin{scope}[xshift=1cm]
	\node[roundnode] at (1,0) (v2) []{}
		edge[in=270, out=310,loop]()
		edge[in=320, out=360,loop]();
	\node[roundnode] at (1,1) (v3) []{}
		edge[] (v2)
		edge[in=0, out=40,loop]()
		edge[in=50, out=90,loop]();
	\end{scope}
\end{scope}
\begin{scope}[xshift=8cm]
\node[roundnode] at (0,0) (v1) []{}
	edge[in=180,out=220,loop]()
	edge[in=230, out=270,loop]();
\node[roundnode] at (1,0) (v2) []{}
	edge[] (v1);
\node[roundnode] at (1,1) (v3) []{}
	edge[] (v1)
	edge[] (v2);
\node[roundnode] at (0,1) (v4) []{}
	edge[] (v2)
	edge[] (v3)
	edge[in=90, out=130,loop]()
	edge[in=140, out=180,loop]();
	\begin{scope}[xshift=1.5cm]
	\node[roundnode] at (0,0) (v1) []{};
	\node[roundnode] at (1,0) (v2) []{}
		edge[] (v1)
		edge[in=270, out=310,loop]()
		edge[in=320, out=360,loop]();
	\node[roundnode] at (1,1) (v3) []{}
		edge[] (v1)
		edge[in=0, out=40,loop]()
		edge[in=50, out=90,loop]();
	\node[roundnode] at (0,1) (v4) []{}
		edge[] (v1)
		edge[] (v2)
		edge[] (v3);
	\end{scope}
\end{scope}
\end{tikzpicture}
\end{center}
\caption{A graph $G$ on the left, and two ear decompositions of $G$ in the middle and on the right.}
\label{fig:ear_decomp} 
\end{figure}
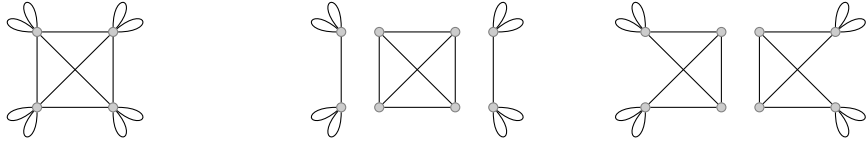
\begin{lem}\label{lem:rigideardecomp}
Let $G$ be an $\M_{lc}$-connected looped simple graph with at least one loop. Then $G$ has an ear decomposition into rigid $\M_{lc}$-circuits.
\end{lem}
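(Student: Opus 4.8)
The plan is to build an ear decomposition of $\M_{lc}(G)$ in which every circuit is rigid, exploiting Lemma \ref{lem:ear} together with the structure theory of flexible $\M_{lc}$-circuits developed in Section \ref{sec:circuits}. The key point is that a flexible $\M_{lc}$-circuit has no loops (Lemma \ref{lem:circuit_char}(b)), so if $G$ has at least one loop then no single flexible circuit can cover all of $E\cup L$, and more strongly any flexible circuit appearing in an ear decomposition can be "absorbed" into a rigid circuit using another circuit through a loop.

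First I would fix a loop $\ell\in L$ and start the ear decomposition with any rigid $\M_{lc}$-circuit $H_1$ containing $\ell$ (one exists since $\ell$, being a single loop, is a circuit by itself, namely a copy of $K_1^{[1]}$ — wait, that is not a circuit; rather $\ell$ lies in some circuit of the connected matroid $\M_{lc}(G)$, and since $\ell\in L$ any circuit containing $\ell$ must be rigid by Lemma \ref{lem:circuit_char}). Then I would extend greedily: given a partial ear decomposition $H_1,\dots,H_{i-1}$ into rigid circuits with $D_{i-1}\neq E\cup L$, I must choose $H_i$ to be a \emph{rigid} circuit satisfying (E1)--(E3) relative to the sub-collection of rigid circuits, and show this is always possible. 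The natural way: pick any element $f\in (E\cup L)\setminus D_{i-1}$ and any element $g\in D_{i-1}$; by $\M_{lc}$-connectedness there is a circuit $C$ containing both $f$ and $g$. If $C$ is rigid we are (almost) done after shrinking it to satisfy (E3). If $C$ is flexible, then $C$ has no loops, and I would use Lemma \ref{lem:unbalanced} in reverse: take a circuit $C'$ through $f$ and the fixed loop $\ell$ (which is rigid, since it contains a loop); then work inside the matroid restricted to $C\cup C'$, or more simply observe that because $C'$ is rigid and meets $D_{i-1}$ (as $\ell\in D_1\subseteq D_{i-1}$), one can replace the role of $C$ by a rigid circuit. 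The cleanest route is probably: among all circuits $C$ with $C\cap D_{i-1}\neq\emptyset$ and $C\setminus D_{i-1}\neq\emptyset$, choose one with $C\setminus D_{i-1}$ minimal (this gives (E3)); then argue this minimal-ear circuit must be rigid, because if it were flexible one could use the loop $\ell$ and a $2$-sum-type decomposition (Lemma \ref{lem:unbalanced}) or a direct circuit-exchange argument to produce a circuit with a strictly smaller new part — contradicting minimality.

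The main obstacle, and the step I would spend the most care on, is showing that the minimal ear $H_i$ is forced to be rigid. The idea: suppose $H_i$ is flexible, so $L(H_i)=\emptyset$ and $|E(H_i)|=2|V(H_i)|-2$. Since $D_{i-1}$ already contains the loop $\ell$, and $\M_{lc}(G)$ is connected, there is a circuit $K$ meeting both $H_i\setminus D_{i-1}$ and $D_{i-1}$ and containing a loop somewhere; using the circuit axioms (strong circuit elimination) on $H_i$ and $K$ one should be able to produce a circuit $K'$ with $K'\cap D_{i-1}\neq\emptyset$ and $\emptyset\neq K'\setminus D_{i-1}\subsetneq H_i\setminus D_{i-1}$, contradicting (E3). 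Making this elimination argument actually shrink the new part (rather than just produce some other circuit) is the delicate combinatorial core — here one leans on the fact that a flexible circuit is a circuit of the bar-joint matroid and on the count inequalities of Lemma \ref{lem:circuit_char}, possibly invoking Lemma \ref{lem:unbalanced} to understand how a flexible circuit can attach to the rest of $G$ (it must attach along a $2$-separating pair, so its "interface" with $D_{i-1}$ is small, which is exactly what lets the elimination bite).

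Finally, once the inductive step is established, Lemma \ref{lem:ear}(ii) guarantees the partial ear decomposition into rigid circuits extends all the way to $D_m = E\cup L$, and by construction every $H_j$ is a rigid $\M_{lc}$-circuit, completing the proof. I would also remark that this is where the hypothesis "at least one loop" is essential: without a loop $G$ could be a single flexible circuit (e.g.\ $K_4$), which has no ear decomposition into rigid circuits at all.
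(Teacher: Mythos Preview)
Your proposal contains the right idea but then abandons it for an approach that does not work. The simple observation you make---take a circuit $C'$ through $f$ and the fixed loop $\ell$, so that $C'$ is automatically rigid and meets $D_{i-1}$---is exactly the heart of the proof, and you should have stayed with it. The difficulty you flag, namely ensuring (E3), is real, but the route you then call ``cleanest'' (choose an ear with $C\setminus D_{i-1}$ minimal and argue that this minimal ear must be rigid) is simply false. The paper's own Figure~\ref{fig:ear_decomp} already gives a counterexample: in the middle ear decomposition, $H_2=K_4$ is a flexible circuit satisfying (E1)--(E3) relative to $D_1$. No circuit-elimination argument will produce a smaller new part there, because none exists; every circuit meeting $D_1$ and leaving it has new part equal to the whole of $\tilde C_2$. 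Your proposed contradiction via ``strong circuit elimination on $H_i$ and $K$'' cannot succeed, and invoking Lemma~\ref{lem:unbalanced} does not help since that lemma concerns the structure of a single rigid circuit, not the interaction of a flexible ear with $D_{i-1}$.

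The paper's fix for (E3) is short and worth internalising. Rather than building the rigid ear decomposition greedily from scratch, one first takes \emph{any} ear decomposition $H_1,\ldots,H_m$ with $H_1$ rigid (containing $\ell$), chosen so that the initial run $H_1,\ldots,H_k$ of rigid circuits is as long as possible. If $k<m$, pick $f\in \tilde C_{k+1}$; since $\bigcup_{i\le k+1}H_i$ is $\M_{lc}$-connected, there is a circuit $H_{k+1}'$ inside it through both $\ell$ and $f$. This $H_{k+1}'$ is rigid (it contains $\ell$), and because it lies inside $\bigcup_{i\le k+1}H_i$ its new part is contained in $\tilde C_{k+1}$; but (E3) for the original $H_{k+1}$ then forces $H_{k+1}'\setminus D_k=\tilde C_{k+1}$, so (E3) holds for $H_{k+1}'$ automatically. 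Now $H_1,\ldots,H_k,H_{k+1}'$ is a partial ear decomposition into rigid circuits which, by Lemma~\ref{lem:ear}(ii), extends to a full one---contradicting the maximality of $k$. The trick is to let the existing ear decomposition do the (E3) bookkeeping for you.
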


\begin{proof}
Let $\ell$ be a loop of $G$. Since $G$ is $\M_{lc}$-connected there exists an $\M_{lc}$-circuit $H_1$ containing $\ell$. Then $H_1$ is a rigid $\M_{lc}$-circuit and, if $G=H_1$, then we are done. So suppose $G\neq H_1$. Extend $H_1$ to an ear decomposition $H_1,H_2,\dots,H_k$ of $G$ such that each $H_i$ is a rigid $\M_{lc}$-circuit for $1\leq i\leq k$ and $k$ is as large as possible.

Suppose $k<m$. Then we may 
choose an edge or loop $f$ in $H_{k+1}$ which does not belong to $\bigcup_{i=1}^k H_i$. 
Since $\bigcup_{i=1}^{k+1} H_i$ is $\M_{lc}$-connected, there exists an $\M_{lc}$-circuit $H_{k+1}'\subseteq \bigcup_{i=1}^{k+1} H_i$ such that $\ell,f$ are in $H_{k+1}'$. Then $H_{k+1}'$ is a rigid $\M_{lc}$-circuit and $H_1,H_2,\dots,H_{k},H_{k+1}'$ is a partial ear decomposition of $G$. Since every partial ear decomposition can be extended to a `full' ear decomposition, this contradicts the maximality of $k$.
\end{proof}

Lemma \ref{lem:rigideardecomp} and the fact that the union of two redundantly rigid graphs is redundantly rigid immediately give

\begin{cor}\label{cor:mlcredundant}
Let $G$ be an $\M_{lc}$-connected looped simple graph. Then $G$ is redundantly rigid if and only if $G$ has a loop.
\end{cor}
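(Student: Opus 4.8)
The plan is to deduce Corollary \ref{cor:mlcredundant} from Lemma \ref{lem:rigideardecomp} together with two elementary facts: every rigid $\M_{lc}$-circuit is redundantly rigid, and the union of two redundantly rigid looped simple graphs is again redundantly rigid. Throughout I use the characterisation recorded just before the corollary: a looped simple graph is redundantly rigid if and only if it is rigid and every element of $E\cup L$ lies in an $\M_{lc}$-circuit.

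\emph{Forward implication.} Suppose $G$ is redundantly rigid; then $G$ is rigid. If $G$ had no loop, then Theorem \ref{thm:ST} would supply a spanning subgraph $G'=(V,E',L')$ with $|E'|+|L'|=2|V|$ and $|F|\le 2|V_F|-3$ for every $\emptyset\neq F\subseteq E'$. Since $L'\subseteq L=\emptyset$ we get $|E'|=2|V|$, and taking $F=E'$ (non-empty because $|E'|=2|V|>0$) gives $2|V|=|E'|\le 2|V_{E'}|-3\le 2|V|-3$, a contradiction. Hence $G$ has a loop.

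\emph{Converse implication.} Suppose $G$ has a loop. By Lemma \ref{lem:rigideardecomp}, $G$ has an ear decomposition $H_1,H_2,\dots,H_m$ in which each $H_i$ is a rigid $\M_{lc}$-circuit. Each such $H_i$ is redundantly rigid: by Lemma \ref{lem:circuit_char}(a) it has $2|V(H_i)|+1$ edges and loops and all of its subsets satisfy the $\M_{lc}$-sparsity counts, while every vertex of a rigid $\M_{lc}$-circuit is incident with at least three edges or loops (as in the proof of Lemma \ref{lem:forest}), so deleting any single element $e$ leaves an independent set that still spans $V(H_i)$ and has size $2|V(H_i)|$; by Theorem \ref{thm:ST} this makes $H_i-e$ rigid, and since every element of $H_i$ already lies in the circuit $H_i$ we conclude $H_i$ is redundantly rigid. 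Writing $D_i=H_1\cup\dots\cup H_i$, an induction on $i$ using the union fact (with base case $D_1=H_1$) shows $G=D_m$ is redundantly rigid. The union fact itself is immediate in the linearly constrained setting: if $G_1,G_2$ are rigid then any infinitesimal motion of $G_1\cup G_2$ restricts to an infinitesimal motion of each $G_j$, which vanishes identically because rigidity means the rigidity matrix has full column rank $2|V|$, i.e.\ the framework is completely pinned; so $G_1\cup G_2$ is rigid (no condition on the overlap of $G_1$ and $G_2$ is required), and if moreover $G_1,G_2$ are redundantly rigid then for any element $e$, say $e\in G_1$, the graph $(G_1\cup G_2)-e=(G_1-e)\cup G_2$ is again a union of two rigid graphs, hence rigid.

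The only step needing any thought is the union fact, and it turns on the observation that for linearly constrained frameworks ``rigid'' already encodes ``rigidly pinned in place'': there is no rotational degree of freedom to obstruct gluing, so unions — even disjoint ones — of rigid graphs remain rigid. Once this is noted, together with the routine verification (via Theorem \ref{thm:ST}) that a rigid $\M_{lc}$-circuit is redundantly rigid, the corollary is a formal consequence of the ear decomposition provided by Lemma \ref{lem:rigideardecomp}.
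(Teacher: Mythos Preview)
Your proof is correct and follows essentially the same approach as the paper, which also derives the corollary from Lemma~\ref{lem:rigideardecomp} together with the fact that the union of two redundantly rigid graphs is redundantly rigid; you have simply spelled out the details (including the forward direction) that the paper leaves implicit. One tiny imprecision: the identity $(G_1\cup G_2)-e=(G_1-e)\cup G_2$ fails when $e\in E(G_1)\cap E(G_2)$, but your argument goes through unchanged using $(G_1-e)\cup(G_2-e)\subseteq (G_1\cup G_2)-e$.
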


\begin{lem}\label{lem:lastear}
Let $G$ be an $\M_{lc}$-connected  looped simple graph that  contains a loop. Let $C_1,C_2,\dots,C_m$ be an ear decomposition of $\M_{lc}(G)$ where $m\geq 2$ and $H_i=G[C_i]$ is a rigid $\M_{lc}$-circuit for $1\leq i \leq m$. Let $Y=V(H_m) \sm \bigcup_{i=1}^{m-1}V(H_i)$ and $X=V(H_m) \sm Y$. Then
\begin{enumerate}
\item[(i)] $|\tilde C_m|=2|Y|+1$;
\item[(ii)] if $Y\neq \emptyset$ then every edge/loop $e\in \tilde C_m$ is incident to $Y$, $X$ is mixed critical in $H_m$ and $G[Y]$ is connected;
\item[(iii)] if $G$ is balanced, $Y\neq \emptyset$ and $\tilde C_m$ contains no loops then $Y$ has at least three neighbours in $X$.
\end{enumerate}
\end{lem}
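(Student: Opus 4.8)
The plan is to extract all three statements from the rank/count identities furnished by the ear decomposition, together with the circuit characterisation in Lemma \ref{lem:circuit_char}(a). For (i), I would apply Lemma \ref{lem:ear}(iii) to the matroid $\M_{lc}(G)$: since $H_m = G[C_m]$ is a rigid $\M_{lc}$-circuit we have $r(C_m) = |C_m| - 1 = |E(H_m)| + |L(H_m)| - 1 = 2|V(H_m)|$, and similarly $r(D_{m-1})$ and $r(D_m)$ are computed from the count on $\bigcup_{i<m} H_i$ and on $G$. The difference $r(D_m) - r(D_{m-1}) = |\tilde C_m| - 1$ then becomes, after substituting the relevant "$2|V|$"-type values and using $|Y| = |V(H_m)| - |X|$, the desired $|\tilde C_m| = 2|Y| + 1$. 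The one subtlety is that $D_{m-1}$ need not be spanning in $G$, so I should phrase the rank computation in terms of the induced subgraph $G[\bigcup_{i<m} V(H_i)]$ rather than all of $G$; the count condition from Theorem \ref{thm:ST}/Lemma \ref{lem:circuit_char} gives $r$ on any vertex set, and careful bookkeeping of which vertices and edges lie in $D_{m-1}$ versus $C_m$ closes the gap.

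For (ii), assume $Y \neq \emptyset$. First I would argue that no edge/loop of $\tilde C_m$ lies inside $X$: if $F = \tilde C_m \cap (\text{edges/loops within } X)$ were nonempty, then since every element of $\tilde C_m$ is added "on top of" $D_{m-1}$, the set $C_m \setminus \tilde C_m$ already spans all of $X$ in the rank sense, forcing a violation of the circuit inequality $i_{E\cup L}(X) \le 2|X|$ in $H_m$ — more precisely $D_{m-1}$ restricted to $X$ plus these extra elements would overload $X$. Granting that, every element of $\tilde C_m$ is incident to $Y$, so $i_{E\cup L}^{H_m}(X) = |C_m| - |\tilde C_m| = (2|V(H_m)|+1) - (2|Y|+1) = 2|X|$, i.e. $X$ is mixed critical in $H_m$. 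Connectivity of $G[Y]$ then follows: if $G[Y]$ split into parts $Y_1, Y_2$, then at least one part, say $Y_1$, together with $X$ would form a proper subset $X \cup Y_1 \subsetneq V(H_m)$ carrying $i_{E\cup L}(X \cup Y_1) = 2|X| + (2|Y_1|+1) \ge 2|X\cup Y_1| + 1$ (using the edges of $\tilde C_m$ incident only to $Y_1$ and that there are $\ge 2|Y_1|+1$ of them by an internal count), contradicting Lemma \ref{lem:circuit_char}(a) for the circuit $H_m$.

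For (iii), suppose $G$ is balanced, $Y \neq \emptyset$, and $\tilde C_m$ has no loops. Let $k = d(X,Y)$ be the number of edges between $X$ and $Y$ in $H_m$. Since by (ii) every element of $\tilde C_m$ is incident to $Y$ and none is a loop, $\tilde C_m$ consists of the $k$ edges between $X$ and $Y$ plus the $i_E(Y)$ edges inside $Y$, so $2|Y| + 1 = |\tilde C_m| = k + i_E(Y)$. If $k \le 2$ then $i_E(Y) \ge 2|Y| - 1 > 2|Y| - 3$ when $|Y| \ge 2$, contradicting $i_E(Y) \le 2|Y| - 3$ from Lemma \ref{lem:circuit_char}(a); and if $|Y| = 1$ then $\tilde C_m$ would be $k \le 2$ non-loop edges at a single vertex giving $|\tilde C_m| \le 2 < 3 = 2|Y|+1$, again impossible. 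So $k \ge 3$, but that only says $Y$ sends three edges to $X$, possibly to fewer than three distinct vertices. To upgrade to three distinct neighbours, I invoke the balanced hypothesis: if $Y$ had at most two neighbours $\{a,b\}$ in $X$ then $G[Y] \cup \{$edges to $a,b\}$ would be, after deleting $a$ and $b$, a union of components of $G - \{a,b\}$, and since $\tilde C_m$ has no loops and (by the argument of (ii), pushed to $G$) the vertices of $Y$ carry no loops of $G$ either, some component of $G - \{a,b\}$ inside $Y$ would be loopless, contradicting balance. The main obstacle I anticipate is exactly this last point — carefully transferring "$\tilde C_m$ has no loops" into "$Y$-vertices have no loops in all of $G$", which needs that loops at a vertex of $Y$ would have had to appear in some earlier ear $H_i$ and then tracing why that is incompatible with $Y$ being the set of genuinely new vertices; this is where Lemma \ref{lem:minus} and the critical-set machinery of the previous section will do the real work.
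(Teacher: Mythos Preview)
Your approach to (i) and (ii) is essentially the paper's: rank counting via Lemma~\ref{lem:ear}(iii) for (i), then the circuit inequality $i_{E\cup L}(X)\le 2|X|$ in $H_m$ forcing $k=0$ and $X$ mixed critical for (ii). Your connectivity argument is the contrapositive of the paper's (they sum $i_{H_m}(X\cup Y_i)-i_{H_m}(X)\le 2|Y_i|$ over all components to get $|\tilde C_m|\le 2|Y|$, while you pigeonhole to find one $Y_i$ with $\ge 2|Y_i|+1$ incident elements of $\tilde C_m$); both are fine.

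For (iii) you are overcomplicating matters. The detour through $d(X,Y)\ge 3$ is unnecessary, and the ``main obstacle'' you flag is no obstacle at all. By definition $Y=V(H_m)\setminus\bigcup_{i<m}V(H_i)$, so no vertex of $Y$ lies in any earlier ear; since the ear decomposition covers $E\cup L$, every loop of $G$ incident to a vertex of $Y$ must lie in $\tilde C_m$. The hypothesis that $\tilde C_m$ contains no loops therefore gives \emph{immediately} that $G[Y]$ is loopless in $G$ --- no appeal to Lemma~\ref{lem:minus} or critical-set machinery is needed. The paper then proceeds in one line: let $X'\subseteq X$ be the set of neighbours of $Y$; then $G[Y]$ is a loopless connected component of $G-X'$ (using part (ii) for connectivity), and balance together with $\M_{lc}$-connectedness forces $|X'|\ge 3$.
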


\begin{proof}
(i) Let $G_j=\bigcup_{i=1}^j H_i$ and $D_j=\bigcup_{i=1}^j C_j$. Hence $E(G_j)\cup L(G_j)=D_j$. Lemma \ref{lem:ear}(i) implies that $G_{m-1}$ is $\M_{lc}$-connected. Corollary \ref{cor:mlcredundant} implies that $G_{m-1}$ and $G$ are rigid so $r(D_{m-1})=2|V\sm Y|$ and $r(E\cup L)=2|V|$. Hence by Lemma \ref{lem:ear}(iii) we have $|\tilde C_m|=r(E\cup L)-r(D_{m-1})+1=2|V|-2|V-Y|+1=2|Y|+1$.

(ii) Suppose $Y\neq \emptyset$. Let $k$ be the number of edges/loops in $E\cup L - D_{m-1}$ which have all endvertices in $V(G_{m-1})$. Since $H_m$ is a rigid $\M_{lc}$-circuit, part (i) implies that 
$$i_{H_m}(X)=|C_m|-|\tilde C_m|+k=2|X\cup Y|+1 - (2|Y|+1)+k=2|X|+k.$$
Since $H_m[X]$ is a proper subgraph of $H_m$ we must have $k=0$ and $X$ is mixed critical in $H_m$. 

Assume $G[Y]$ is disconnected. 
Let $Y_1,Y_2,\ldots,Y_k$ be the vertex sets of the connected components of $G[Y]$. Since $H_m$ is an $\M_{lc}$-circuit, $k\geq 2$ and $X$ is mixed critical, we have   $i_{H_m}(X\cup Y_i)-i_{H_m}(X)\leq 2|X\cup Y_i|-2|X|=2|Y_i|$. This
implies that 
$$|\tilde C_m|=\sum_{i=1}^k(i_{H_m}(X\cup Y_i)-i_{H_m}(X))\leq \sum_{i=1}^k2|Y_i|=2|Y|,$$ 
contradicting part (i).

(iii) Let $X'$ be the set of vertices in $X$ which are adjacent to $Y$. Then $G- X'$ has a component with no loops. The fact that $G$ is balanced and $\M_{lc}$-connected now 
implies that $|X'|\geq 3$.
\end{proof}

\begin{lem}\label{lem:feasible2}
Let $G=(V,E,L)$ be an $\mathcal{M}_{lc}$-connected looped simple graph which contains a loop.
Suppose that $G'$ is obtained from $G$ by
an edge or loop addition, or a $1$-extension. Then $G'$ is $\mathcal{M}_{lc}$-connected.
\end{lem}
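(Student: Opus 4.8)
The plan is to handle the three operations separately, reducing each to a statement about $\M_{lc}$-circuits. Recall the goal: starting from an $\M_{lc}$-connected graph $G$ with a loop, and forming $G'$ by adding an edge/loop or performing a $1$-extension, we must show every pair $e,f\in E(G')\cup L(G')$ lies in a common $\M_{lc}$-circuit of $G'$. Write $\ell$ for a fixed loop of $G$, which survives into $G'$ in all three cases (in a $1$-extension on a loop we may keep one such loop fixed, or use any other loop of $G$). First I would observe that by transitivity it suffices to show that every single element of $E(G')\cup L(G')$ lies in a common $\M_{lc}$-circuit with $\ell$: if each of $e$ and $f$ shares a circuit with $\ell$, then $\ell,e,f$ all lie in the same connected component of $\M_{lc}(G')$ restricted to the union of those circuits, which is a connected matroid, so some circuit contains both $e$ and $f$. (This is exactly the transitivity-of-connectivity argument used implicitly throughout Section~\ref{sec:admissible}, cf.\ the proof of Lemma~\ref{lem:rigideardecomp}.)

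\textbf{Edge or loop addition.} Suppose $G'=G+g$. Every element of $E(G)\cup L(G)$ already shares a circuit with $\ell$ in $G\subseteq G'$, so it remains to find an $\M_{lc}$-circuit of $G'$ containing both $g$ and $\ell$. Since $G$ is rigid (Corollary~\ref{cor:mlcredundant} gives redundant rigidity, hence rigidity), adding $g$ creates a dependency: $g$ lies in a unique $\M_{lc}$-circuit $C_g$ of $G'$, and $C_g$ is a rigid $\M_{lc}$-circuit (it contains the new element $g$; if $g$ is a loop this is automatic, and if $g$ is an edge then $C_g$ cannot be a flexible $\M_{lc}$-circuit because $G$ already spans a rigid matroid so the fundamental circuit of $g$ must use the count $2|V|+1$, not $2|V|-2$). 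Now take the ear decomposition $C_1,\dots,C_m$ of $\M_{lc}(G)$ from Lemma~\ref{lem:rigideardecomp} with $\ell\in C_1$ and each $C_i$ a rigid $\M_{lc}$-circuit; appending $C_g$ (if $C_g\not\subseteq E(G)\cup L(G)$, which holds since $g\in C_g$) gives a partial ear decomposition of $\M_{lc}(G')$, so $\M_{lc}(G')$ restricted to $C_1\cup\dots\cup C_m\cup C_g=E(G')\cup L(G')$ is connected; hence $g$ and $\ell$ lie in a common circuit.

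\textbf{$1$-extension.} Say $G'$ is obtained from $G$ by deleting $e_0\in E(G)\cup L(G)$, adding a vertex $v$ and three new elements incident to $v$. First I would record that every $\M_{lc}$-circuit $C$ of $G$ with $e_0\in C$ transforms, under the inverse operation (the $1$-extension at $e_0$ applied inside $C$), into a rigid $\M_{lc}$-circuit $C'$ of $G'$ on vertex set $V(C)\cup\{v\}$ containing $\ell$ (if $\ell\in C$) and all three new elements; this is the standard fact (stated just before the definition of $1$-reduction in Section~\ref{sec:circuits}) that $1$-extension preserves rigid $\M_{lc}$-circuits, verified via Lemma~\ref{lem:circuit_char}(a). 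Applying this with $C$ a rigid $\M_{lc}$-circuit of $G$ through both $\ell$ and $e_0$ (which exists because $G$ is $\M_{lc}$-connected, and the circuit through $\ell$ and $e_0$ is automatically rigid), we obtain a rigid $\M_{lc}$-circuit $C'$ of $G'$ containing $\ell$ and all three new elements. For the remaining old elements $f\in (E(G)\cup L(G))-e_0$: pick a rigid $\M_{lc}$-circuit $C_f$ of $G$ through $\ell$ and $f$. If $e_0\notin C_f$ then $C_f$ is still an $\M_{lc}$-circuit of $G'$ and we are done for $f$; if $e_0\in C_f$ then apply the $1$-extension transformation to get a rigid $\M_{lc}$-circuit $C_f'$ of $G'$ through $\ell$ and $f$. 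Thus every element of $E(G')\cup L(G')$ shares an $\M_{lc}$-circuit with $\ell$, and transitivity finishes the proof.

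The step I expect to be the main obstacle is the bookkeeping in the $1$-extension case, specifically verifying cleanly that for a rigid $\M_{lc}$-circuit $C$ of $G$ containing $e_0$, the inverse-$1$-extension-inside-$C$ really produces an $\M_{lc}$-circuit of $G'$ containing \emph{all three} of the new elements at $v$ and \emph{retaining} $\ell$ — one has to check the count conditions of Lemma~\ref{lem:circuit_char}(a) for the enlarged graph $C\cup\{v\}$, handling separately the sub-cases where $e_0$ is an edge versus a loop and where the third new element is an edge versus a loop (mirroring the case distinctions in Figure~\ref{fig:1-ext}). Everything else is an application of the ear-decomposition machinery of Lemma~\ref{lem:ear} and the transitivity of matroid connectivity.
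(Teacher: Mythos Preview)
Your edge/loop addition argument is fine (the side claim that $C_g$ must be a rigid circuit is false---if $g$ completes a copy of $K_4$ inside $G$ then $C_g=K_4$ is a flexible circuit---but you never actually use rigidity of $C_g$, so this does no harm).

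The $1$-extension case has a genuine gap. You assert that for any rigid $\M_{lc}$-circuit $C$ of $G$ with $e_0\in C$, performing ``the $1$-extension at $e_0$ inside $C$'' yields a rigid $\M_{lc}$-circuit $C'$ of $G'$ on $V(C)\cup\{v\}$ containing all three new elements. But the $1$-extension on $G$ may introduce an edge $vx$ to a vertex $x\notin V(C)$. In that case the set $(C-e_0)\cup\{f_1,f_2,f_3\}$ lives on $V(C)\cup\{v,x\}$ and has only $2|V(C)|+3=2(|V(C)|+2)-1$ elements, with $x$ of degree~$1$; it is $\M_{lc}$-independent and contains no circuit at all. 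The fact you cite from Section~\ref{sec:circuits}, that $1$-extension takes rigid $\M_{lc}$-circuits to rigid $\M_{lc}$-circuits, applies only to a $1$-extension \emph{of} the circuit itself, which requires the third neighbour to already lie in $V(C)$. Since $\M_{lc}$-connectivity only guarantees a common circuit through a \emph{pair} of specified elements, you cannot in general arrange $x\in V(C)$. The same failure recurs in your treatment of old elements $f$ with $e_0\in C_f$.

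The paper avoids this by working with bases of the full matroid rather than with individual circuits. For each $e\in(E\cup L)\setminus\{e_0\}$ it takes a circuit $C$ through $e$ and $e_0$, extends $C-e_0$ to a base $B$ of $\M_{lc}(G-e_0)$ (so $G[B]$ is rigid and spans all of $V$, in particular $x\in V(G[B])$), swaps to the base $B'=B-e+e_0$, and performs the $1$-extension on the rigid spanning subgraph $G[B']$ to obtain a base $B''=B'-e_0+f_1+f_2+f_3$ of $\M_{lc}(G')$. A short minimum-degree argument at $v$ then forces the fundamental circuit of $e$ over $B''$ to contain all of $f_1,f_2,f_3$. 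Passing through a spanning base is precisely what absorbs the possibly external vertex $x$ and makes the $1$-extension step legitimate.
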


\begin{proof}
Note that $G$ is redundantly rigid by Corollary \ref{cor:mlcredundant}.

First suppose that $G'$ is obtained from $G$ by adding a new edge or loop $f$. Since $G$ is rigid,
there exists an $\M_{lc}$-circuit $C$ in $G'$ with $f\in C$. The $\M_{lc}$-connectivity of $G'$ now follows from the transitivity of the relation that defines $\M_{lc}$-connectivity.

Next suppose $G'$ is obtained from $G$ by a 1-extension operation that deletes an edge or loop $f$ and adds
a new vertex $v$ with three incident edges or loops  $f_1,f_2,f_3$. By 
transitivity, 
it will suffice to show that every $e\in (E\cup L)-f$ belongs to an  $\M_{lc}$-circuit of $G'$ containing $f_1,f_2$ and $f_3$.  Since $G$ is $\M_{lc}$-connected, there exists an  $\M_{lc}$-circuit $C$ in $G$ containing $e$ and $f$. Choose a base $B$ of  $\M_{lc}(G-f)$ with $C-f$ contained in $B$. Then $G[B]$ is rigid simce $G$ is rigid and $e\in C\subseteq B+f$. Hence $B'=B-e+f$ is another base of  $\M_{lc}(G)$ and $G[B']$ is rigid. Then $B''=B'-f+f_1+f_2+f_3$ is a base of  $\M_{lc}(G')$ and $G'[B'']$ is rigid, since $G'[B'']$ is obtained by performing a 1-extension on $G[B']$. Hence $B''+e$ contains a unique  $\M_{lc}$-circuit $C'$. Since $B''$ is independent we have $e\in C'$. If $f_1\notin C'$ then, since $v$ is incident with exactly three edges, we would have $C\subseteq B$, contradicting the fact that $B$ is independent. Hence $f_1$ is in $C'$ and, since $v$ is incident to exactly three edges, we also have $f_2,f_3\in C'$. 
%
\end{proof}

We next consider the inverse operations to edge/loop addition and 1-extension. To do this 
we extend our definition of admissibility from an $\M_{lc}$-circuit to an $\mathcal{M}_{lc}$-connected graph $G=(V,E,L)$.   We say that an edge or loop  $f\in E\cup L$ is \emph{admissible} if $G-f$ is 
$\mathcal{M}_{lc}$-connected, and a node $v\in V$ is \emph{admissible} if there exists a 1-reduction at $v$ which results in an $\mathcal{M}_{lc}$-connected graph. 

We will show that every balanced $\mathcal{M}_{lc}$-connected graph $G$ other than $K_1^{[3]}$  has an admissible edge or node.
The main idea is to find an admissible edge/loop or node in the last  $\M_{lc}$-circuit of an ear decomposition of $G$. We will need some additional terminology for nodes in rigid $\M_{lc}$-circuits to do this.

 Suppose $v$ is a node in a rigid $\M_{lc}$-circuit $G=(V,E,L)$ and $X$ is a critical set in $G$. We say that $X$ is \emph{node critical for $v$} 
if $N(v)-z\subseteq X\subseteq V\sm \{v,z\}$ for some $z\in N(v)$ with $d^\dag(z)\geq 4$. We will also refer to a critical set $X$ as being
\emph{node critical} when it is \emph{node critical} for some (unspecified) node. 
We say that $v$ 
 is a {\em leaf node} if $d_{G^\dag[V_3]}(v)\in \{0,1\}$, a {\em series node} if $d_{G^\dag[V_3]}(v)=2$ and a {\em branching node} if $d_{G^\dag[V_3]}(v)=3$.
Note that a 1-reduction
at a branching node cannot be admissible since the resulting graph will have  a vertex $v$ with $d^\dag(v)=2$.

\begin{thm}\label{thm:balancedrradmissiblenode}
Let $G=(V,E,L)$  be a balanced, $\mathcal{M}_{lc}$-connected, looped simple graph 
distinct from $K_1^{[3]}$. Then $G$ has an admissible edge or loop, or an admissible node.
\end{thm}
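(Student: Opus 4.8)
The plan is to take an ear decomposition $H_1, H_2, \dots, H_m$ of $G$ into rigid $\M_{lc}$-circuits, which exists by Lemma~\ref{lem:rigideardecomp} since $G$ contains a loop (being $\M_{lc}$-connected and distinct from $K_1^{[3]}$ it must have a loop — this follows from Corollary~\ref{cor:mlcredundant} and the fact that $G$ must be redundantly rigid, or more directly: if $G$ had no loop it would be a flexible $\M_{lc}$-circuit and hence not balanced, contradicting the hypothesis). If $m=1$ then $G=H_1$ is itself a balanced rigid $\M_{lc}$-circuit, and I would invoke the circuit theory of Section~\ref{sec:circuits}: by Lemma~\ref{lem:forest} $V_3\neq\emptyset$, and I claim that some node is admissible. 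Indeed, a leaf node with $N(v)=\{u\}$ is admissible by Lemma~\ref{lem:admcases}(c); a leaf node with $|N(v)|=2$ reduces via Lemma~\ref{lem:admcases}(b); and for a node with three distinct neighbours, non-admissibility forces a strong, weak, or unbalanced flower on $v$ by Lemma~\ref{lem:flower3}. Since $G$ is balanced, Lemma~\ref{lem:flower2} rules out unbalanced flowers, so only strong/weak flowers can obstruct. The combinatorial heart of the $m=1$ case is to show that the forest $G^\dag[V_3]$ has enough leaf nodes, or nodes of small neighbour-degree, that at least one escapes being the base of a strong or weak flower — here I would argue using the partition structure from Lemma~\ref{lem:flower1}(c) together with Lemma~\ref{lem:minus} to derive a contradiction if every node were obstructed, exploiting that a flower on $v$ pushes the ``small side'' vertices into critical sets and iterating along leaves of the forest.

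For the case $m\geq 2$, I would concentrate on the last ear $H_m$ and the sets $Y = V(H_m)\setminus\bigcup_{i<m}V(H_i)$ and $X = V(H_m)\setminus Y$. If $Y=\emptyset$, then $\tilde C_m$ is a set of $\geq 3$ edges/loops (by Lemma~\ref{lem:lastear}(i) it has size $1$; wait — recheck: $|\tilde C_m|=2|Y|+1=1$) all joining vertices already present, so $\tilde C_m$ is a single edge or loop $f$; I would then show $f$ is admissible, i.e.\ $G-f$ is still $\M_{lc}$-connected, because $H_1,\dots,H_{m-1}$ already spans $V$ and removing $f$ just returns $\bigcup_{i<m}H_i$ which is $\M_{lc}$-connected by Lemma~\ref{lem:ear}(i). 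If $Y\neq\emptyset$, then by Lemma~\ref{lem:lastear}(ii) every element of $\tilde C_m$ meets $Y$, $X$ is mixed critical in $H_m$, and $G[Y]$ is connected; if some element of $\tilde C_m$ is a loop, I would look for an admissible loop or an admissible node inside $H_m$ using the structure of $\tilde C_m$; if $\tilde C_m$ has no loops, Lemma~\ref{lem:lastear}(iii) gives that $Y$ has at least three neighbours in $X$, and I would run a flower-avoidance argument localized to $H_m$: pick a leaf node $v$ of the forest $G^\dag[V_3\cap Y]$ (nonempty by a degree count as in Lemma~\ref{lem:forest} applied to $H_m$), and show the $1$-reduction at $v$ either stays inside the already-connected part or produces a graph still containing all of $f_1,f_2,f_3$ in a common circuit with each remaining element, mirroring the proof of Lemma~\ref{lem:feasible2}.

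The main obstacle I anticipate is the $m=1$ case: ruling out the possibility that \emph{every} node of a balanced rigid $\M_{lc}$-circuit carries a strong or weak flower. Since a single node can simultaneously be a strong and weak flower base (Figure~\ref{fig:strong_weak_flower}), and since these flowers partition $V-v$ into three ``petals'' plus a core, the danger is a globally consistent system of overlapping flowers across all nodes. I would handle this by choosing $v$ to be a leaf node of the forest $G^\dag[V_3]$ and analyzing which petal contains the bulk of the graph: the critical sets in a flower on $v$ must, by Lemma~\ref{lem:minus}(a), each contain a node in their complement, and by chasing these nodes and applying Lemma~\ref{lem:union} to merge critical sets, I expect to force either a proper critical set equal to $V$ minus a node (contradicting the circuit property) or a violation of balance. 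A clean way to organize this may be to pick the node $v$ minimizing the size of the smallest petal over all its flowers, and show that petal must be a single vertex, which then yields an admissible $1$-reduction directly from Lemma~\ref{lem:admcases}. I would close the argument by verifying that the reduced graph is $\M_{lc}$-connected — for $m\geq 2$ via a base-exchange argument as in Lemma~\ref{lem:feasible2}, and for $m=1$ simply because a rigid $\M_{lc}$-circuit minus an admissible $1$-reduction is again a (smaller) rigid $\M_{lc}$-circuit, hence $\M_{lc}$-connected.
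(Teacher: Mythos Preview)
Your overall framework---take an ear decomposition into rigid $\M_{lc}$-circuits, focus on the last ear $H_m$, dispose of $Y=\emptyset$ by returning the single element of $\tilde C_m$---matches the paper. But the heart of the argument, the case $Y\neq\emptyset$, is not there in your proposal, and a couple of your claimed shortcuts are wrong.

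First, a leaf node with $|N(v)|=2$ is \emph{not} automatically admissible: Lemma~\ref{lem:admcases}(b) characterises non-admissibility via a pure critical set containing both neighbours or a mixed critical set containing one and missing the other, and nothing prevents both obstructions from occurring. You cannot simply ``reduce via Lemma~\ref{lem:admcases}(b)''. Second, for $m\ge 2$ the circuit $H_m$ need not itself be balanced, so you cannot rule out unbalanced flowers on nodes of $Y$ directly from Lemma~\ref{lem:flower2}; the paper needs a separate claim (using that $X$ is mixed critical and $X\subseteq V(H)$) to show that an unbalanced flower on a node $v\in Y$ would force $G$ itself to be unbalanced.

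The key idea you are missing is the \emph{maximal node-critical set} argument. The paper does not try to minimise a petal; instead it shows (after establishing that every node of $H_m$ in $Y$ has at least two neighbours in $Y$ and carries no unbalanced flower) that some node $v\in Y$ admits a \emph{mixed} node-critical set $X_v$ in $H_m$ with $X\subseteq X_v$. It then picks $(v,X_v)$ with $|X_v|$ maximal. By Lemma~\ref{lem:minus} the complement $Y_v=V_m\setminus X_v$ contains a second node $z\neq v$ with $d_{F_v}(z)\le 1$ in the forest $H_m^\dag[V_3\cap Y_v]$. A case analysis on $z$ (two neighbours, series node, leaf node) then uses Lemmas~\ref{lem:union} and~\ref{lem:flower1} to produce a strictly larger mixed node-critical set, contradicting maximality. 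Your ``chase nodes and merge critical sets'' intuition is pointing in this direction, but the organising principle---maximise $|X_v|$ over all choices of $(v,X_v)$, then exhibit a bigger one---is what makes the argument terminate, and it is absent from your sketch.
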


\begin{proof}
We proceed by contradiction. Suppose that $G$ has no admissible edges or nodes. 
Since $G$ is balanced, $G$ has at least one loop.
Lemma \ref{lem:rigideardecomp} now implies that $G$ has an ear decomposition $C_1,C_2,\dots,C_m$ such that $H_i=G[C_i]$ is a rigid $\M_{lc}$-circuit for all $1\leq i \leq m$. Let $H_i=(V_i,E_i,L_i)$ for all $1\leq i \leq m$ and $H=\bigcup_{i=1}^{m-1}H_i$. Then $H$ is $\M_{lc}$-connected by Lemma \ref{lem:ear}(i). 
Put $Y=V_m\sm \cup_{i=1}^{m-1}V_i$ and $X=V_m \sm Y$. Note that if $m=1$ then $Y=V$ and $X=\emptyset$. 
If $Y=\emptyset$ then $H=G-e$ for some $e\in E\cup L$ and $e$ is admissible in $G$ since $H$ is $\mathcal{M}_{lc}$-connected. Hence we may assume that $Y\neq \emptyset$. 
Lemma \ref {lem:lastear}(ii) now implies that $X$ is mixed critical in $H_m$.
Hence $Y$ contains at least one node of $H_m$ by Lemma \ref{lem:minus}. 

\begin{claim}\label{clm:node}
No node of $H_m$ in $Y$  is admissible in $H_m$, and hence every node of $H_m$ in $Y$ has at least two distinct neighbours in $G$. 
\end{claim} 

\begin{proof}
Suppose some node $v\in Y$  is admissible in $H_m$.
Let $C_m'$ be the edge set of the rigid $\M_{lc}$-circuit obtained from $H_m$ by performing an admissible 1-reduction at $v$. 
Then $v$ is a node in $G$ and the graph obtained from $G$ by performing the same 1-reduction at $v$ will be $\mathcal{M}_{lc}$-connected since it will have an ear decomposition $C_1,C_2,\ldots,C_{m-1},C_m'$. 
Hence $v$ will be admissible in $G$, contradicting the fact that $G$ has no admissible nodes. We can now use lemma \ref{lem:admcases}(c) to deduce that $v$ has at least two distinct neighbours.
\end{proof}

\begin{claim}\label{clm:nbs}
Suppose $v$ is a node of $H_m$ in $Y$. Then $v$  has at least two neighbours in $Y$.
\end{claim}

\begin{proof}
We split the proof into three cases.
\\\textbf{Case 1:} $N(v)\cap Y=\emptyset$.

Since $H_m[Y]$ is connected by Lemma \ref{lem:lastear}(ii), we have $Y=\{v\}$.  If $G[N(v)]$ is not complete then we may choose $p,q\in N(v)$ with $pq\not\in E$. Then the 1-reduction at $v$ which adds the edge $pq$ creates the graph $H+pq$. Lemma \ref{lem:feasible2} and the fact that $H$ is $\mathcal{M}_{lc}$-connected now imply that this 1-reduction is admissible in $G$. This contradicts the assumption that $G$ has no admissible nodes.
Hence $G[N(v)]$ is complete. Then $G-v=H$ by Lemma \ref{lem:lastear}(ii), so $G-v$ is $\mathcal{M}_{lc}$-connected. Since $G-e$ is a 1-extension of $H$ for any edge $e$ in $G[N(v)]$, it is also $\mathcal{M}_{lc}$-connected by Lemma \ref{lem:feasible2}. Hence $e$ is admissible in $G$. This contradicts the 
assumption  that $G$ has no admissible edges.
\\\textbf{Case 2:} $N(v)=\{r,s,t\}$ with $\{r,t\}\subseteq X$, $s\in Y$.

Since $v$ is not admissible in $H_m$, there exist critical sets $R,T$ in $H_m$ such that $\{s,r\}\subseteq T\subseteq V_m\sm \{v,t\}$ and 
$\{s,t\}\subseteq R\subseteq V_m\sm \{v,r\}$ by Lemma \ref{lem:admcases}. We may suppose that $R,T$ have been chosen to be the minimal critical sets with these properties. 
If $R,T$ are both pure critical, then $(R,S,X)$ would be an unbalanced flower on $v$ in $H_m$, and 
we could use Lemma \ref{lem:flower2} to contradict the hypothesis that $G$ is balanced. Relabelling $R,T$ if necessary 
we may assume that $R$ is mixed critical, but not pure critical, in $H_m$. 
Lemma \ref{lem:flower3} now implies that $(X,R,T)$ is
a strong or weak flower on $v$ in $H_m$.   
Let $H_m'$ denote the graph obtained from $H_m$ by applying a 1-reduction at $v$ adding the edge $st$.
Then $H_m'$ contains a unique $\M_{lc}$-circuit $D$. The minimality of $R$ and the fact that $R$ is not pure critical now imply that $D=H_m[R]+st$ and $D$ is a rigid $\M_{lc}$-circuit. Let $C_m'=E(D)\cup L(D)$.

We will show that $v$ is admissible in $G$ by verifying that $G'=G-v+st$ is $\mathcal{M}_{lc}$-connected. Since $H$ is $\mathcal{M}_{lc}$-connected, it will suffice to show that $G'=H\cup D$ and that $D$ contains an edge or loop of $H$.
We first verifiy that $D$ contains an edge of $H$. Since each edge of $\tilde C_m$ is incident with $Y$ by Lemma \ref {lem:lastear}(ii), this is equivalent to showing that $D$ contains an edge of $H_m[X]$. This follows because $R$ and $X$ are mixed critical in $H_m$, so $R\cap X$ is mixed critical and nonempty, and all edges of $H_m[R\cap X]$ belong to $D$.
It remains to show that $G'=D\cup H$ i.e.\ $(\tilde C_m\sm \{vr,vs,vt\})+st\subseteq C_m'$. Lemma \ref{lem:flower1} implies that all edges of $H_m$ which are not incident with $v$ are induced by either $X$ or $R$, and
no edge of $\tilde C_m$ is induced by $X$ by Lemma \ref{lem:lastear}(ii). Since $C_m'=E(D)\cup L(D)$ and $D=H_m[R]+st$ we have $(\tilde C_m\sm \{vr,vs,vt\})+st\subseteq C_m'$.
Thus $G'$ is $\mathcal{M}_{lc}$-connected and $v$ is admissible in $G$.
\\\textbf{Case 3:} $N(v)=\{r,s\}$ with  $r\in X$ and $s\in Y$.

Since $v$ is not admissible in $H_m$, Lemma \ref{lem:admcases} implies that there exists a pure critical set $R$ in $H_m$ with $\{r,s\}\subseteq R\subseteq V(H_m)-v$ and a mixed critical set $S$ with $s\in S\subseteq V(H_m)\sm \{r,v\}$. We may assume that $R$ and $S$ have been chosen to be the minimal such sets.  

Suppose that $|R\cap X|\geq 2$. Then $X\cup R$ is mixed critical, $X\cap R$ is pure critical, $H_m[R]$ is simple and $H_m-v=H_m[X]\cup H_m[R]$ by Lemma \ref{lem:union}(c). Note that Lemma \ref{lem:union}(b) applied to $X$ and $S$ tells us that $rs\notin E$.
Let $H_m'=H_m-v+rs$. Then $H_m'$ contains a unique $\M_{lc}$-circuit $D$. The minimality of $R$  now implies that $D=H_m[R]+rs$ and $D$ is simple. Let 
$G'=G-v+rs$. We will show that $G'$ is $\M_{lc}$-connected.
Since $H_m-v=H_m[X]\cup H_m[R]$ and $X\subseteq V(H)$, all edges of $G'$ belong to  $H$ or $D$. Since $H$ and $D$ are both $\M_{lc}$-connected and have at least one edge in common (as $X\cap R$ is a critical set with at least two vertices in $H_m$), $G'$ is $\M_{lc}$-connected
and hence $v$ is admissible in $G$.
This contradicts the assumption that $G$ has no admissible nodes so we must have $R\cap X=\{r\}$.

Since $R$ is pure-critical, $R\cap X=\{r\}$ and $s\in R$, we can find a path $P$ in $H_m[R]$ from $r$ to $s$ which avoids $X-r$.
Since $X$ and $S$ are mixed critical in $H_m$ and $v\not\in X\cup S$, Lemma \ref{lem:union}(b) implies there are no edges in $H_m$ from $X\sm S$ to $S\sm X$. 
Since $r\in X\sm S$ and $s\in S\sm X$, this contradicts the existence of the path $P$.
\end{proof}

\begin{claim}\label{clm:notbal}
Let $v$ be a node of $H_m$ in $Y$ with three distinct neighbours. Then there is no unbalanced flower on $v$ in $H_m$.
\end{claim}

\begin{proof}
Let $N(v)=\{w,u,z\}$ and suppose there exists an unbalanced flower $(W,U,Z)$ on $v$ in $H_m$ with
$u,z\in W\subseteq V_m\sm \{v,w\}$, $w,z\in U\subseteq V_m\sm \{v,u\}$, and $w,u\in Z\subseteq V_m\sm \{v,z\}$.

First suppose that $N(v)\subseteq Y$. We may assume that $U,Z$ are pure critical and that $W$ is mixed critical.
Replacing $W$ by $W\cup X$ if necessary, we may assume that $X\subseteq W$. The properties of an unbalanced flower (Lemma \ref{lem:flower2}) now tell us that the component of $G-\{u,z\}$ which contains $\{v,w\}$ is loopless. This contradicts the hypothesis that $G$ is balanced.

Now suppose $N(v)\not\subseteq Y$.  Claim \ref{clm:nbs} implies that $v$ has at least two neighbours in $Y$. Hence we may assume that $u,z\in Y$ and $w\in X$. Since $X$ is mixed critical, each connected component of $H_m[X]$ contains a loop. Lemma \ref{lem:flower2} and the fact that $X\cap Z=\{w\}\subseteq  X\cap U$, now implies that $U$ is mixed critical, $W$ is pure critical, and no vertex of $(W\cup Z)\sm \{w,z\}$ is incident with a loop in $H_m$. The facts that $H_m[X]$ is rigid and no vertex of $X\cap (W-z)$ is incident with a loop now imply that $X\cap (W-z)=\emptyset$, and hence $X\subseteq U$. This contradicts the hypothesis that $G$ is balanced since the component of $G-\{w,z\}$ which contains $\{v,u\}$ will be loopless.  
\end{proof}

\begin{claim}\label{clm:nodecritical}
We can choose a node $v$ of $H_m$ in $Y$ and a critical set $X_v$ in $H_m$ such that $X_v$ is mixed node critical for $v$ and $X\subseteq X_v$.
\end{claim}

\begin{proof}
Let $F=H_m^\dag[V_3\cap Y]$.  Then $F$ is a forest by Lemma \ref{lem:forest} and we may choose a vertex $v$ of $F$ such that $d_F(v)\leq 1$. Since $v$ has at least two neighbours in $Y$ by Claim \ref{clm:nbs}, $v$ has a neighbour $z$ in $Y$ with $d^\dag(z)\geq 4$. 

We first consider the case when $N(v)=\{y,z\}$. Then $y\in Y$ by Claim \ref{clm:nbs}. Since $v$ is not admissible in $H_m$, Lemma \ref{lem:admcases} implies that there exists a mixed critical set $S$ in $H_m$ with
$y\in S\subseteq V_m\sm \{v,z\}$. Then $S\cup X$ is a mixed node critical set for $v$.

We next consider the case when $N(v)=\{w,u,z\}\subseteq Y$.  Since $d_F(v)\leq 1$ we may assume that $d^\dag(u)\geq 4$. Since $v$ is not admissible in $H_m$, Lemma \ref{lem:flower3} and Claim \ref{clm:notbal} imply that there exists a strong or weak flower $(W,U,Z)$ on $v$ in $H_m$ with
$u,z\in W\subseteq V_m\sm \{v,w\}$, $w,z\in U\subseteq V_m\sm \{v,u\}$, and $w,u\in Z\subseteq V_m\sm \{v,z\}$. Then either $Z$, or $U$, is mixed critical and hence either $Z\cup X$, or $U\cup X$, is a mixed node critical set for $v$. 

It remains to consider the case when $N(v)=\{w,u,z\}\not\subseteq Y$.  Since $v$ has at least two neighbours in $Y$ we may assume that $u\in Y$ and $w\in X$. Since $v$ is not admissible in $H_m$, Lemma \ref{lem:flower3} and Claim \ref{clm:notbal} imply that there exists a strong or weak flower $(W,U,Z)$ on $v$ in $H_m$ with
$u,z\in W\subseteq V_m\sm \{v,w\}$, $w,z\in U\subseteq V_m\sm \{v,u\}$, and $w,u\in Z\subseteq V_m\sm \{v,z\}$. 
If $Z$ is mixed critical or $|X\cap Z|\geq 2$ then $Z\cup X$ is a mixed node critical set for $v$. Hence we may assume that $Z$ is not mixed critical and $X\cap Z=\{w\}$.

Thus $(W,U,Z)$ is a weak flower. Lemma \ref{lem:flower1} and the fact that  $Z$ is not mixed critical now imply that  $W$ is mixed critical and $i_L(W^\dag)=0=d(W^\dag, Z^\dag)$. Furthermore, since $X\cap Z=\{w\}$, we have $X-w\subseteq Z^\dag$. This implies that $w$ is an isolated vertex of $H_m[X]$ and contradicts the fact that $X$ is mixed critical in $H_m$. 
%
\end{proof}

Choose a node $v$ of $H_m$ in $Y$ and a mixed node critical set $X_v$ for $v$ in $H_m$ which satisfy the conditions of Claim \ref{clm:nodecritical} and are such that $|X_v|$ is as large as possible over all such choices of $v$ and $X_v$. Let $Y_v=V_m\setminus X_v$. Since $X_v$ is mixed node critical for $v$, $|Y_v|\geq 2$. Let $F_v=H_m^\dag[V_3\cap Y_v]$. By Lemma \ref{lem:minus} we can choose a node $z\neq v$ in $Y_v$ such that $d_{F_v}(z)\leq 1$. By Claim \ref{clm:node}, $z$ has at least two distinct neighbours in $G$.

Suppose $z$ has exactly two neighbours, say $a,b$, in $G$. If $\{a,b\}\cap  X_v \neq \emptyset$ then the set $X_v'=X_v+z$ would contradict the maximality of $X_v$. Hence $\{a,b\}\subset Y_v$ and since $d_{F_v}(z)\leq 1$
we may assume that $d^\dag(b)\geq 4$. Since $z$ is not admissible in $H_m$ we have $a\in X_z\subseteq V\setminus \{z,b\}$ for some mixed critical set $X_z$. Then 
$X_z'=X_z\cup X_v$ is  mixed node critical for $z$ and contradicts the maximality of $X_v$. Hence  $z$ has three distinct neighbours in $G$. 

The facts that $H_m$ is an $\M_{lc}$-circuit, $X_v$ is mixed critical and $v\not\in X_v+z$ imply that $z$ has at most two neighbours in $X_v$.  If $z$ had exactly two neighbours in $X_v$, then $X_v\cup \{z\}$ would be a mixed node critical set for $v$ and would contradict the maximality of $X_v$. Hence $z$ has at most one neighbour in $X_v$.
Consider the following two cases.

\textbf{Case 1:} $z$ is  a series node in $H_m$.

Let $N(z)=\{p,q,t\}$. 
Since $z$ is a series node in $H_m$ and $d_{F_v}(z)\leq 1$, $z$ has exactly one neighbour, say $t$, in $X_v$ and $t$ is a node in $H_m$.
Without loss of generality, we may assume $d^\dag(p)=3$ and $d^\dag(q)\geq 4$. Since $z$ is non-admissible, Lemma \ref{lem:admcases}(a) implies there exists a (pure or mixed) critical set $X_z$ with $\{t,p\}\subseteq X_z\subseteq V\sm \{z,q\}$. Since $H_m^\dag[V_3]$ is a forest by Lemma \ref{lem:forest}, we have $pt\notin E$ and hence $X_z\neq \{t,p\}$.
Since $X_v,X_z$ are critical, $t$ is a node, $t\in X_v\cap X_z$ and $z\not\in X_v\cup X_z$, all neighbours of $t$ other than $z$ belong to $X_v\cap X_z$. Lemma \ref{lem:union}(b) or (c) now implies that $X_v\cup X_z$ is mixed critical. Since 
$\{t,p\}\subseteq X_v\cup X_z\subseteq V\sm \{z,q\}$ and $d^\dag(q)\geq 4$, $X_v\cup X_z$ is mixed node critical for $z$. This contradicts the maximality of $X_v$.

\textbf{Case 2:} $z$ is a leaf node in $H_m$.
%
%

Let $N(z)=\{z_1,z_2,z_3\}$. 
Since $z$ is non-admissible, Lemma \ref{lem:flower3} and Claim \ref{clm:notbal}  imply there is either a strong or weak flower $(Z_1,Z_2,Z_3)$ on $z$ in $H_m$ with $Z_i\subseteq V_m\setminus \{z,z_i\}$ for $1\leq i \leq 3$.
We have $z_iz_j\not\in E$ for all $1\leq i<j\leq 3$ by Lemma \ref{lem:flower1}(b) (since $z_i\in Z_i^\dag$). 
Since $z$ is a leaf node in $H_m$, we may suppose that $d^\dag(z_1)\geq 4$ and $d^\dag(z_2)\geq 4$.
Since  neither $z_1$ nor $z_2$ are nodes in $H_m$, $Z_1$ and $Z_2$ are two node critical sets for $z$ with $Z_1\cup Z_2=V_m-z$ (by Lemma \ref{lem:flower1}(a)) and at least one of them, say $Z_1$, is mixed critical. 
Then $Z_1\cup X_v$ is mixed critical. If $z_1\not\in X_v$ then  $Z_1\cup X_v$ would be a mixed node critical set for $z$ which would be larger than $X_v$ since $z_2,z_3\in Z_1$ and $|N(z)\cap X_v||\leq 1$. This would contradict the maximality of $X_v$ so we must have $z_1\in X_v$. The fact that $|N(z)\cap X_v|\leq 1$ now implies that $z_2,z_3\not\in X_v$. It follows that, if either $Z_2$ is mixed critical or $|Z_2\cap X_v| \geq 2$, then $Z_2\cup X_v$ would be a mixed node critical set for $z$ which is larger than $X_v$. 
Hence $Z_2$ is pure critical and $Z_2\cap X_v =\{z_1\}$.

%
%

We complete the proof by using a similar argument to the last paragraph of the proof of Claim \ref{clm:nodecritical}.
Lemma \ref{lem:flower1} and the fact that  $Z_2$ is not mixed critical imply that $(Z_1,Z_2,Z_3)$ is a weak flower, and $i_L(Z_2^\dag)=0=d(Z_1^\dag, Z_2^\dag)$. Furthermore, 
since $X_v\cap Z_2=\{z_1\}$, we have $X_v-z_1\subseteq Z_2^\dag$. This implies that $z_1$ is an isolated vertex of $H_m[X_v]$ and contradicts the fact that $X_v$ is mixed critical in $H_m$. 
\end{proof}

\subsection*{Recursive constructions}
 We close this section by using Theorem \ref{thm:balancedrradmissiblenode} to obtain a recursive construction for rigid  $\mathcal{M}_{lc}$-connected graphs.   It uses the special case of the 2-sum operation in which one side of the 2-sum is a copy of $K_4$. We will refer to this operation and its inverse as a {\em $K_4$-extension} and a {\em $K_4$-reduction}, respectively. 

We will need the following result on admissible reductions of loopless $\mathcal{M}_{lc}$-connected graphs and an extension of Lemma \ref{lem:unbalanced} to $\mathcal{M}_{lc}$-connected graphs.
 
\begin{thm}\label{thm:admissible_simple}
Let $G=(V,E)$ be an  $\mathcal{M}_{lc}$-connected simple graph which is distinct from $K_4$ and $uv,xy\in E$. \\
(a) If $G$ is an $\mathcal{M}_{lc}$-circuit then some vertex of $V\sm \{u,v,x\}$ is an admissible node in $G$.\\
(b) If $G$ is not an $\mathcal{M}_{lc}$-circuit then either some edge of $E\sm \{uv,xy\}$ is admissible in $G$, or some vertex of $V\sm \{u,v,x,y\}$ is an admissible node in $G$.
\end{thm}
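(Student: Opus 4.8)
The plan is to mimic the bar-joint argument of Jackson--Jord\'an \cite{J&J} (and its linearly-constrained adaptation implicit in the rigid case via Theorem~\ref{thm:balancedrradmissiblenode}), working entirely inside the sublattice of critical sets.

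First I would reduce (a) to an argument about nodes. Since $G$ is a loopless $\M_{lc}$-circuit, Lemma~\ref{lem:circuit_char}(b) applies, so $G$ is in fact a circuit of the ordinary $2$-dimensional generic rigidity matroid, and by Lemma~\ref{lem:forest} $V_3\neq\emptyset$ and $G^\dag[V_3]$ is a forest. Here every reduction at a node $v$ with $N(v)=\{a,b,c\}$ must add one of $ab,ac,bc$, and by Lemma~\ref{lem:admcases}(a) such a reduction is non-admissible precisely when a pure critical set separates the two chosen neighbours from $v$ and the third neighbour. The crucial dichotomy (Lemma~\ref{lem:flower3}) says a non-admissible node carries a flower, but in the loopless case there are no mixed critical sets and no loops, so a "weak flower" degenerates, and an "unbalanced flower" is ruled out because $G$ has no loops; so a non-admissible node must carry an honest flower of three pure critical sets, which would force (via the union lemma, Lemma~\ref{lem:union}(a)) a pure critical set on $V-v$, contradicting that $G$ is an $\M_{lc}$-circuit. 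Hence \emph{every} node of $G$ is admissible, except that a branching node cannot be reduced (it would leave a degree-two vertex). So I would invoke: the forest $G^\dag[V_3]$ has at least two leaves (if $|V_3|\ge 2$) or a single isolated vertex; a leaf or isolated vertex of the forest is a leaf/series node, hence admissible. To dodge the three forbidden vertices $u,v,x$: a forest with $\ge 2$ vertices has $\ge 2$ leaves, and one counts that $|V_3|$ is large enough (from $|E|=2|V|-2$, $d^\dag\ge 3$ everywhere, one gets $\sum_{w}(d^\dag(w)-3)=2|V|-6$, so many nodes). When $G\ne K_4$ this gives enough leaf/series nodes of the forest to avoid $\{u,v,x\}$; the boundary cases (few nodes) are checked directly, and $K_4$ is exactly the excluded exception.

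For (b), since $G$ is $\M_{lc}$-connected but not a single $\M_{lc}$-circuit, take an ear decomposition $H_1,\dots,H_m$ (all flexible $\M_{lc}$-circuits, since $G$ has no loops) with $m\ge 2$, and set $Y=V_m\setminus\bigcup_{i<m}V_i$, $X=V_m\setminus Y$. If $Y=\emptyset$ then $G-e$ is $\M_{lc}$-connected for some $e\in E(H_m)$, and I would argue $H_m=K_4$ is the only bad case (since a flexible $\M_{lc}$-circuit on $\ge 4$ vertices with $\ge 3$ distinct deletable edges away from $\{uv,xy\}$ exists unless it is $K_4$); when $H_m\ne K_4$ some edge of $H_m$ outside $\{uv,xy\}$ is admissible. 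If $Y\ne\emptyset$, Lemma~\ref{lem:lastear}(ii) gives that $X$ is critical in $H_m$ (here necessarily pure critical, as there are no loops) and $H_m[Y]$ is connected, and by Lemma~\ref{lem:minus}(a) $Y$ contains a node of $H_m$. Running the argument of Claims~\ref{clm:node}--\ref{clm:nodecritical} of Theorem~\ref{thm:balancedrradmissiblenode} in the loopless world---where "mixed critical" is vacuous and the only flowers are the ordinary pure ones---one shows a non-admissible node $v\in Y$ would force, by Lemmas~\ref{lem:union}(a) and \ref{lem:flower1}, that $H_m[X\cup($flower set$)]$ fills $V_m-v$ and violates the circuit count; so some node of $H_m$ in $Y$ is admissible in $H_m$, and (as in Claim~\ref{clm:node}) the corresponding $1$-reduction, carried out in $G$, produces an $\M_{lc}$-connected graph because it only alters $C_m$ in the ear decomposition. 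To avoid $\{u,v,x,y\}$ I would pick the admissible node among the leaves of the forest $H_m^\dag[V_3\cap Y]$ (of which there are at least two when $|V_3\cap Y|\ge 2$, by Lemma~\ref{lem:minus}) and, in the finitely many small cases, fall back on part (a) applied to $H_m$ itself or to $G-e$.

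The main obstacle I expect is not the core matroidal dichotomy---that is essentially Lemmas~\ref{lem:union}, \ref{lem:flower1}--\ref{lem:flower3} specialised to the loopless setting---but the bookkeeping needed to \emph{steer the admissible element away from the prescribed pair of edges (and their endpoints)}. This requires showing there is genuinely a surplus of admissible nodes/edges: in case (a) that the forest $G^\dag[V_3]$ has enough leaves and series nodes outside $\{u,v,x\}$, and in case (b) that either $H_m$ has enough admissible edges when $Y=\emptyset$ or the forest $H_m^\dag[V_3\cap Y]$ has enough leaves. Pinning down exactly which small graphs are the true exceptions (so that $K_4$ really is the only one to exclude) is the delicate part, and I would handle it by a short case analysis on $|V|$, or on $m$ and $|Y|$, citing part (a) as the base case.
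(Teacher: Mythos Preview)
Your approach diverges substantially from the paper's, and it contains a genuine error.

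The paper does not prove this theorem from scratch: it simply cites external results. Part~(a) is \cite[Theorem 3.8]{BJ} (Berg--Jord\'an) verbatim, since a loopless $\M_{lc}$-circuit is exactly a circuit of the bar-joint rigidity matroid. For part~(b), the paper's one-line argument is to choose an $\M_{lc}$-circuit $H_1$ containing \emph{both} $uv$ and $xy$, extend it to an ear decomposition $H_1,\ldots,H_m$, and then invoke \cite[Theorem 5.4]{J&J}, which guarantees an admissible edge or node in the last ear $H_m$. Because $uv,xy\in H_1$ and the admissible element lies in $\tilde C_m$ (or is a vertex of $H_m$ not in earlier ears), it automatically avoids $\{uv,xy\}$ and $\{u,v,x,y\}$. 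You never make this choice of $H_1$, and that is why you are left with awkward ``avoid the prescribed vertices'' bookkeeping that you describe as the main obstacle; the paper sidesteps it entirely.

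More seriously, your argument for~(a) is incorrect. You claim that a non-admissible node $v$ would carry three pure critical blocking sets, whose union would make $V-v$ pure critical, ``contradicting that $G$ is an $\M_{lc}$-circuit''. But in a flexible $\M_{lc}$-circuit, $V-v$ \emph{is always} pure critical for any node~$v$: one has $i_E(V-v)=|E|-3=(2|V|-2)-3=2|V-v|-3$, and this is entirely consistent with Lemma~\ref{lem:circuit_char}(b). So there is no contradiction, and indeed non-admissible (non-branching) nodes do exist in flexible circuits; the whole content of \cite[Theorem 3.8]{BJ} is a delicate argument showing there are nonetheless \emph{enough} admissible ones to avoid any three prescribed vertices. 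Your assertion ``hence every node of $G$ is admissible, except that a branching node cannot be reduced'' is false. Relatedly, Lemmas~\ref{lem:forest}, \ref{lem:minus}, \ref{lem:admcases}, \ref{lem:union}, \ref{lem:flower1}--\ref{lem:flower3} are all stated for \emph{rigid} $\M_{lc}$-circuits (edge count $2|V|+1$); you cannot invoke them directly for flexible circuits (edge count $2|V|-2$), and in particular Lemma~\ref{lem:minus} concerns mixed critical sets, which do not exist in the loopless setting.
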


\begin{proof}
Part (a) follows immediately from  \cite[Theorem 3.8]{BJ}. To see (b) we choose an $\mathcal{M}_{lc}$-circuit $H_1$ in $G$ containing $uv,xy$ and then extend $H_1$ to an ear decomposition $H_1,H_2,\ldots,H_m$ of $G$. Then \cite[Theorem 5.4]{J&J} implies that either some edge of $H_m$ distinct from $uv,xy$ is admissible in $G$ or some vertex of $H_m-\bigcup_{i=1}^{m-1}H_{m-1}$ is an admissible node of $G$.
\end{proof}

An {\em unbalanced $2$-separator} of a looped simple graph $G$ is a pair of vertices $\{u,v\}$ such that  $G-\{u,v\}$ has a component with no loops. Note that we allow this loopless component to be equal to $G-\{u,v\}$. An {\em unbalanced $2$-separation} of $G$ is an ordered pair of subgraphs $(G_1,G_2)$ such that $G=G_1\cup G_2$,
$|V(G_1)\cap V(G_2)|=2<|V(G_2)|$ and $E(G_1)\cap E(G_2)=\emptyset=L(G_2)$.

\begin{lem}\label{lem:feasible1}
Let $G=(V,E,L)$ be a looped simple graph 
with $L\neq \emptyset$, 
and $(G_1,G_2)$ be an unbalanced $2$-separator of $G$ such that
$V(G_1)\cap V(G_2)=\{u,v\}$  and $uv\not\in E$.
Then the following statements are equivalent:\\
(a) $G+uv$ is $\mathcal{M}_{lc}$-connected;\\
(b) $G_1+uv$ and $G_2+uv$ are both $\mathcal{M}_{lc}$-connected;\\
(c) $G$ is $\mathcal{M}_{lc}$-connected.
\end{lem}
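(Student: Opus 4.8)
The plan is to prove the cycle of implications (a)$\Rightarrow$(b)$\Rightarrow$(c)$\Rightarrow$(a), which is the natural order given the tools available. Throughout, write $\{u,v\}=V(G_1)\cap V(G_2)$ and recall that $L(G_2)=\emptyset$, that $G_2$ is the loopless side, and that $L\neq\emptyset$ so all the loops of $G$ lie in $G_1$. The key structural fact to exploit is that, since $uv\notin E$, adding $uv$ and then splitting along it realises $G+uv$ as the 2-sum of $G_1+uv$ and $G_2+uv$, so matroidally $\M_{lc}(G+uv)$ behaves like the 2-sum of $\M_{lc}(G_1+uv)$ and $\M_{lc}(G_2+uv)$ with the element $uv$ as the common point. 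This lets us pass $\M_{lc}$-circuits back and forth: an $\M_{lc}$-circuit of $G+uv$ meeting both sides must contain $uv$ and restricts to an $\M_{lc}$-circuit (through $uv$) on each side, and conversely two such circuits glue.

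For (a)$\Rightarrow$(b): suppose $G+uv$ is $\M_{lc}$-connected. To see $G_1+uv$ is $\M_{lc}$-connected, take any two elements $e,f\in E(G_1)\cup L(G_1)\cup\{uv\}$. There is an $\M_{lc}$-circuit $C$ of $G+uv$ containing both. If $C\subseteq E(G_1)\cup L(G_1)\cup\{uv\}$ we are done; otherwise $C$ meets $E(G_2)$, hence $uv\in C$ and $C\cap(E(G_1)\cup L(G_1)\cup\{uv\})$ is an $\M_{lc}$-circuit of $G_1+uv$ through $uv$, $e$ and $f$ (here one uses that the 2-sum of the two parts has circuits that are either circuits of a part, or unions through $uv$ — the standard matroid 2-sum circuit description, which matches Lemma \ref{lem:circuit_char} via the count conditions). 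The same argument gives $G_2+uv$ $\M_{lc}$-connected, noting that $G_2$ has $\geq 3$ vertices so this is not a degenerate case.

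For (b)$\Rightarrow$(c): assume $G_1+uv$ and $G_2+uv$ are $\M_{lc}$-connected. First, $G_2+uv$ is a loopless $\M_{lc}$-connected graph, so every element of it — in particular the edge $uv$ — lies in a (flexible) $\M_{lc}$-circuit $C_2$ of $G_2+uv$; then $C_2-uv$ together with a path through $G_2$ realises, after the 2-sum, the fact that $G_1\cup(\text{an }\M_{lc}\text{-circuit of }G_2+uv)$ is $\M_{lc}$-connected without needing $uv$ itself. Concretely: take any $e\in E(G_1)\cup L(G_1)$ and any $e'\in E(G_2)$. Choose an $\M_{lc}$-circuit $C_1$ of $G_1+uv$ containing $e$ and $uv$ (possible by $\M_{lc}$-connectivity of $G_1+uv$, and $uv$ lies in \emph{some} circuit since $G_1+uv$ is $\M_{lc}$-connected with $\geq 2$ elements) and an $\M_{lc}$-circuit $C_2$ of $G_2+uv$ containing $e'$ and $uv$. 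Then $(C_1\cup C_2)-uv$ is an $\M_{lc}$-circuit of $G$ containing $e$ and $e'$: it satisfies the count characterisation of Lemma \ref{lem:circuit_char}(a) because gluing a rigid $\M_{lc}$-circuit and a flexible $\M_{lc}$-circuit along a deleted edge produces a rigid $\M_{lc}$-circuit — this is exactly the 2-sum computation done in Lemma \ref{lem:unbalanced} — and here at least one loop is present so it is case (a). For $e,f$ both in $G_1$, use $\M_{lc}$-connectivity of $G_1+uv$ directly if the connecting circuit avoids $uv$; if it uses $uv$, replace $uv$ by $(C_2-uv)$ for any circuit $C_2$ of $G_2+uv$ through $uv$, again getting a rigid $\M_{lc}$-circuit of $G$. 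For $e,f$ both in $G_2$: if a connecting $\M_{lc}$-circuit of $G_2+uv$ avoids $uv$ it lies in $G$; if it uses $uv$, glue in a circuit $C_1$ of $G_1+uv$ through $uv$ (which exists and has a loop, giving a rigid part) to get a rigid $\M_{lc}$-circuit of $G$ through $e$ and $f$. By transitivity $G$ is $\M_{lc}$-connected.

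For (c)$\Rightarrow$(a): assume $G$ is $\M_{lc}$-connected. Since $uv\notin E$ and $G+uv$ is obtained from $G$ by an edge addition, apply Lemma \ref{lem:feasible2}: the hypotheses there require $G$ to be $\M_{lc}$-connected and to contain a loop, which holds since $L\neq\emptyset$; hence $G+uv$ is $\M_{lc}$-connected. This closes the cycle.

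The main obstacle I expect is the bookkeeping in (b)$\Rightarrow$(c): one must verify carefully that the spliced-together edge sets $(C_1\cup C_2)-uv$ really are $\M_{lc}$-circuits of $G$ (and not merely dependent sets), checking each count inequality $i_E(X)\le 2|X|-3$ and $i_{E\cup L}(X)\le 2|X|$ for all relevant $X$ — separating the cases where $X$ lies inside $V(G_1)$, inside $V(G_2)$, or straddles $\{u,v\}$ — which is precisely the argument abstracted in the proof of Lemma \ref{lem:unbalanced}, so it can largely be cited. A secondary point of care is ensuring that $uv$ itself lies in \emph{some} $\M_{lc}$-circuit of $G_i+uv$ when we invoke $\M_{lc}$-connectivity of $G_i+uv$ — this is immediate since an $\M_{lc}$-connected graph with at least two elements has every element in a circuit — and that $G_2+uv$ is non-trivial, guaranteed by $|V(G_2)|\ge 3$.
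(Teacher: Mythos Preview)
Your overall strategy --- proving the cycle (a)$\Rightarrow$(b)$\Rightarrow$(c)$\Rightarrow$(a), splicing circuits across the separation via the 2-sum description (Lemma~\ref{lem:unbalanced}), and invoking Lemma~\ref{lem:feasible2} for (c)$\Rightarrow$(a) --- is exactly the paper's approach, and your (b)$\Rightarrow$(c) and (c)$\Rightarrow$(a) are essentially correct.

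There is, however, a genuine slip in (a)$\Rightarrow$(b). You assert that if an $\M_{lc}$-circuit $C$ of $G+uv$ contains $e,f$ on the $G_1$-side and also meets $E(G_2)$, then ``$uv\in C$''. In fact the opposite holds. Since $V(G_1)\cap V(G_2)=\{u,v\}$, any such $C$ has $\{u,v\}$ as a 2-separator of the graph $C$, and the count argument in the proof of Lemma~\ref{lem:unbalanced} (or its flexible-circuit analogue when $C$ has no loop) then forces $uv\notin E(C)$. Consequently $C\cap\bigl(E(G_1)\cup L(G_1)\cup\{uv\}\bigr)$ equals $C\cap\bigl(E(G_1)\cup L(G_1)\bigr)$, a \emph{proper} subset of the circuit $C$, hence independent and certainly not a circuit. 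The correct statement --- and what the paper uses --- is that $\bigl(C\cap(E(G_1)\cup L(G_1))\bigr)+uv$ is an $\M_{lc}$-circuit of $G_1+uv$: you must \emph{add} $uv$ back, not expect it already to be present. The same confusion appears in your preamble, where you write that ``$G+uv$ is the 2-sum of $G_1+uv$ and $G_2+uv$''; it is $\M_{lc}(G)$, not $\M_{lc}(G+uv)$, that is the matroid 2-sum along $uv$, which is precisely why circuits crossing the separation never contain $uv$. With this correction your argument goes through and matches the paper's. (A minor secondary point: in your (b)$\Rightarrow$(c), the circuit $C_1$ of $G_1+uv$ through $e$ and $uv$ need not be rigid when $e\in E(G_1)$; the glued set $(C_1\cup C_2)-uv$ is then a flexible $\M_{lc}$-circuit rather than a rigid one, but it is still an $\M_{lc}$-circuit, so the conclusion stands.)
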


\begin{proof}
Let $G_i+uv=(V_i,E_i,L_i)$ for $i=1,2$. 
By symmetry we may suppose $L_1\neq \emptyset =L_2$. Choose $f_1\in L_1$ and $g_2\in E_2-uv$.

\smallskip\noindent
(a)$\Rightarrow$(b).
Suppose that $G+uv$ is $\M_{lc}$-connected.  Then there exists an $\M_{lc}$-circuit $C$ in $G+uv$ containing $f_1,g_2$.   Lemma \ref{lem:unbalanced} now implies that $C$ is the 2-sum of two $\M_{lc}$-circuits $C_1,C_2$ with $f_1,uv\in C_1\subseteq G_1+uv$ and $g_2,uv\in C_2\subseteq G_2+uv$. Transitivity now implies that $G_2+uv$ is $\M_{lc}$-connected and that all loops of $G_1+uv$ belong the the same $\M_{lc}$-connected component $H_1$ of $G_1+uv$ as  $uv$. To complete the proof we choose an edge $g_1\in E_1$ and show $g_1$ is in $H_1$. Since $G+uv$ is $\M_{lc}$-connected, there exists an $\M_{lc}$-circuit $C'$ in $G+uv$ containing $f_1,g_1$. If $C'\subseteq G_1+uv$ then we are done. On the other hand, if $C'\not\subseteq G_1+uv$ then Lemma \ref{lem:unbalanced} implies that $(C'\cap G_1)+uv$ is an $\M_{lc}$-circuit  in $G_1+uv$ containing $f_1,g_1,uv$.

\smallskip\noindent
(b)$\Rightarrow$(c).
Suppose $G_1+uv$ and $G_2+uv$ are both $\mathcal{M}_{lc}$-connected. Then there exists an $\M_{lc}$-circuit $C_i$ in $G_i+uv$ such that $C_1$ contains $f_1,uv$ and $C_2$ contains $g_2,uv$. By  
Lemma \ref{lem:unbalanced}, $C=(C_1-uv)\cup (C_2-uv)$ is an $\M_{lc}$-circuit in $G$. Transitivity now implies that all loops of $G_1$ and all edges of $G_2$ belong the the same $\M_{lc}$-connected component $H$ of $G$. To complete the proof we choose an edge $g_1\in E_1-uv$ and show $g_1$ is in $H_1$. Since $G_1+uv$ is $\M_{lc}$-connected, there exists an $\M_{lc}$-circuit $C_1'$ in $G+uv$ containing $f_1,g_1$. If $C_1'\subseteq G$ then we are done. On the other hand, if $C_1'\not\subseteq G_1+uv$ then $uv\in C_1'$ and Lemma \ref{lem:unbalanced} implies that $(C_1'-uv)\cup (C_2-uv)$ is an $\M_{lc}$-circuit  in $G$ containing $f_1,g_1$.

\smallskip\noindent
(c)$\Rightarrow$(a).
This follows immediately from Lemma \ref{lem:feasible2}. 
\end{proof}

We can now give our recursive construction.

\begin{thm}\label{thm:rec_unbalanced}
A looped simple graph  is rigid and $\mathcal{M}_{lc}$-connected if and only if it can be obtained from $K_1^{[3]}$ by recursively applying the operations of  $1$-extension, $K_4$-extension and adding a new edge or loop.
\end{thm}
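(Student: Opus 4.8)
The plan is to prove both implications by induction, with almost all of the genuine content already packaged in Theorems \ref{thm:balancedrradmissiblenode} and \ref{thm:admissible_simple} and in Lemmas \ref{lem:feasible1} and \ref{lem:feasible2}; the task is mainly to assemble these. First I would record an observation used throughout: a rigid looped simple graph must contain a loop, since a loopless graph on $n$ vertices has rigidity rank at most $2n-3<2n$. Hence every rigid $\M_{lc}$-connected graph is redundantly rigid by Corollary \ref{cor:mlcredundant}, and every graph occurring in the construction contains a loop.

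\textbf{Sufficiency.} Starting from $K_1^{[3]}$ (a rigid $\M_{lc}$-circuit, hence rigid and $\M_{lc}$-connected), I would check that each operation preserves the property of being rigid and $\M_{lc}$-connected. For an edge or loop addition, or a $1$-extension, applied to a rigid $\M_{lc}$-connected $G$, the result is $\M_{lc}$-connected by Lemma \ref{lem:feasible2} (applicable since $G$ has a loop), and it is rigid by a rank count: deleting an element from the redundantly rigid $G$ leaves the rank at $2|V|$, and inserting a new vertex with three incident new edges/loops raises the rank by exactly $2$. For a $K_4$-extension along an edge $uv$, the new graph $G$ is the $2$-sum of the old graph (playing the role of ``$G_1+uv$'', which contains a loop) with $K_4$ along $uv$; since $K_4$ is a flexible $\M_{lc}$-circuit, hence $\M_{lc}$-connected, Lemma \ref{lem:feasible1}(b)$\Rightarrow$(c) gives that $G$ is $\M_{lc}$-connected, and rigidity follows from a rank count using that $K_4-uv$ is a minimally rigid bar-joint graph whose five edges raise the rank by $4$ when its two new vertices are added.

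\textbf{Necessity.} I would induct on $|E|+|L|$. Let $G$ be rigid and $\M_{lc}$-connected with $G\neq K_1^{[3]}$; then $G$ has a loop and is redundantly rigid. If $G$ is balanced, Theorem \ref{thm:balancedrradmissiblenode} provides an admissible edge or loop $f$, or an admissible node $v$: in the former case $G-f$ is $\M_{lc}$-connected and (being redundantly rigid) rigid, so induction applies and $G=(G-f)+f$; in the latter case the $1$-reduction at $v$ yields an $\M_{lc}$-connected $G'$ with $G$ a $1$-extension of $G'$, and since a $1$-extension raises the rigidity rank by at most $2$, rigidity of $G$ forces rigidity of $G'$, so induction applies. (The one-vertex graphs arising here are the $K_1^{[k]}$, $k\geq 3$, each $K_1^{[3]}$ followed by loop additions.) If $G$ is not balanced it has an unbalanced $2$-separator $\{u,v\}$; if $uv\in E(G)$ then applying Lemma \ref{lem:feasible1} to $G-uv$ (for which $uv$ is a non-edge) shows $G-uv$ is $\M_{lc}$-connected, and it is rigid, so we reduce by deleting $uv$; thus we may assume $uv\notin E(G)$. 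Choose an unbalanced $2$-separation $(G_1,G_2)$ with $V(G_1)\cap V(G_2)=\{u,v\}$ and $|V(G_2)|$ minimum. By Lemma \ref{lem:feasible1}, $G_1+uv$ and $G_2+uv$ are $\M_{lc}$-connected, and $G_2+uv$ is loopless with at least four vertices (a loopless $\M_{lc}$-connected graph has no circuit smaller than $K_4$). If $G_2+uv=K_4$ then $G$ is a $K_4$-extension of $G_1+uv$; since a $K_4$-extension raises the rigidity rank by at most $4$, rigidity of $G$ forces rigidity of $G_1+uv$, which also contains a loop, so induction applies. Otherwise Theorem \ref{thm:admissible_simple} applied to $G_2+uv$ with the distinguished edge(s) taken to be $uv$ yields either an admissible $e\in E(G_2+uv)\setminus\{uv\}$ or an admissible node $w\in V(G_2)\setminus\{u,v\}$; the corresponding reduction of $G$ (deleting $e$, or $1$-reducing at $w$) has $G_1$ and the reduced $G_2+uv$ as a natural $2$-separation, so by Lemma \ref{lem:feasible1} it is $\M_{lc}$-connected, it is rigid, and induction completes the step.

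\textbf{Main obstacle.} The substantive combinatorics lives entirely in Theorems \ref{thm:balancedrradmissiblenode} and \ref{thm:admissible_simple}; what remains is bookkeeping, and the parts needing care are: verifying that each reduction carried out inside $G_2+uv$ descends to a genuine reduction of $G$ for which $(G_1,\cdot)$ is still a valid unbalanced $2$-separation — in particular that the minimum-size separation does not degenerate, which uses $|V(G_2)|\geq 4$ and that the reductions keep the loopless side with more than two vertices — and the small wrinkle that a $1$-reduction inside $G_2+uv$ may create the edge $uv$, in which case $\M_{lc}$-connectedness of the reduced graph is obtained by the transitivity argument of Lemma \ref{lem:feasible2} rather than by a direct appeal to Lemma \ref{lem:feasible1}.
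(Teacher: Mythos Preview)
Your proof is correct and follows essentially the same approach as the paper's: sufficiency via Lemmas~\ref{lem:feasible2} and~\ref{lem:feasible1}, and necessity by invoking Theorem~\ref{thm:balancedrradmissiblenode} in the balanced case and Theorem~\ref{thm:admissible_simple} on the loopless side of a minimal unbalanced $2$-separation otherwise, reducing to $G_1+uv$ via a $K_4$-reduction when $G_2+uv=K_4$. Your explicit treatment of the case $uv\in E(G)$ is a detail the paper leaves implicit, and your concern in the ``main obstacle'' about a $1$-reduction inside $G_2+uv$ creating the edge $uv$ is unnecessary: since $uv$ is already present in $G_2+uv$ and $w\notin\{u,v\}$, adding $uv$ would produce a multiple edge, so no admissible $1$-reduction can do this.
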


\begin{proof}
Sufficiency follows from Lemma \ref{lem:feasible2} and the fact that $K_1^{[3]}$ is $\mathcal{M}_{lc}$-connected. To prove necessity it will suffice to show that every rigid $\mathcal{M}_{lc}$-connected graph $G$ with at least two vertices can be reduced to a smaller rigid $\mathcal{M}_{lc}$-connected graph by applying either a 1-reduction, a $K_4$-reduction or an edge deletion. If $G$ is balanced then Theorem \ref{thm:balancedrradmissiblenode} implies there is an edge, loop or node such that the corresponding  edge/loop deletion or 1-reduction gives a rigid $\mathcal{M}_{lc}$-connected graph.

Hence we may assume that $G$ is unbalanced. By Lemma \ref{lem:feasible1}, $G$ is the 2-sum of an $\M_{lc}$-connected graph $G_1$ with at least one loop and a $\M_{lc}$-connected graph $G_2$ with no loops along an edge $uv$. We may assume that $G_2$ is 3-connected by choosing a 2-sum such that $G_2$ is as small as possible. If $G_2=K_4$, then $G$ can be reduced to $G_1$ by applying the $K_4$-reduction operation.

Hence we may assume that $G_2\neq K_4$.  Then Theorem \ref{thm:admissible_simple} implies that $G_2$ contains either an admissible edge distinct from $uv$, or an admissible node $x$  distinct from $u,v$. The graph $G'$ obtained from $G$ by performing the same reduction operation  will then be $\mathcal{M}_{lc}$-connected since it is 
the 2-sum of $G_1$ and  $G_2'$ along $uv$. 
\end{proof}

Figure \ref{fig:rec_unbalanced} provides an illustration of Theorem \ref{thm:rec_unbalanced}. In this figure a thick edge or
loop means the next step in the construction will be a 1-extension which deletes that edge or loop, and a dashed edge
or loop means the next step in the construction will add that edge or loop.
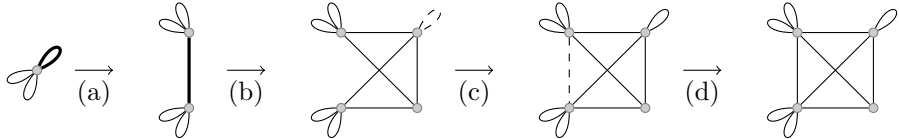
\begin{figure}[h]
\begin{center}
\begin{tikzpicture}[font=\small]
\begin{scope}[yshift=0.5cm]
\node[roundnode] at (0,0) (v1) []{}
	edge[in=180,out=220,loop]()
	edge[in=230, out=270,loop]()
	edge[in=25,out=65,loop,very thick]();
\draw[->] (0.5,0)--(1,0)  node[pos=0.5, below] {(a)};

\end{scope}
\begin{scope}[xshift=2cm]
\node[roundnode] at (0,0) (v1) []{}
	edge[in=180,out=220,loop]()
	edge[in=230, out=270,loop]();
\node[roundnode] at (0,1) (v4) []{}
	edge[very thick] (v1)
	edge[in=90, out=130,loop]()
	edge[in=140, out=180,loop]();
\draw[->] (0.5,0.5)--(1,0.5) node[pos=0.5, below] {(b)};
\end{scope}
\begin{scope}[xshift=4cm]
\node[roundnode] at (0,0) (v1) []{}
	edge[in=180,out=220,loop]()
	edge[in=230, out=270,loop]();
\node[roundnode] at (1,0) (v2) []{}
	edge[] (v1);
\node[roundnode] at (1,1) (v3) []{}
	edge[] (v1)
	edge[] (v2)
	edge[in=25, out=65,loop, dashed]();
\node[roundnode] at (0,1) (v4) []{}
	edge[] (v2)
	edge[] (v3)
	edge[in=90, out=130,loop]()
	edge[in=140, out=180,loop]();
\draw[->] (1.5,0.5)--(2,0.5) node[pos=0.5, below] {(c)};
\end{scope}
\begin{scope}[xshift=7cm]
\node[roundnode] at (0,0) (v1) []{}
	edge[in=180,out=220,loop]()
	edge[in=230, out=270,loop]();
\node[roundnode] at (1,0) (v2) []{}
	edge[] (v1);
\node[roundnode] at (1,1) (v3) []{}
	edge[] (v1)
	edge[] (v2)
	edge[in=25, out=65,loop]();
\node[roundnode] at (0,1) (v4) []{}
	edge[] (v2)
	edge[] (v3)
	edge[dashed] (v1)
	edge[in=90, out=130,loop]()
	edge[in=140, out=180,loop]();
\draw[->] (1.5,0.5)--(2,0.5) node[pos=0.5, below] {(d)};
\end{scope}
\begin{scope}[xshift=10cm]
\node[roundnode] at (0,0) (v1) []{}
	edge[in=180,out=220,loop]()
	edge[in=230, out=270,loop]();
\node[roundnode] at (1,0) (v2) []{}
	edge[] (v1);
\node[roundnode] at (1,1) (v3) []{}
	edge[] (v1)
	edge[] (v2)
	edge[in=25, out=65,loop]();
\node[roundnode] at (0,1) (v4) []{}
	edge[] (v2)
	edge[] (v3)
	edge[] (v1)
	edge[in=90, out=130,loop]()
	edge[in=140, out=180,loop]();
\end{scope}
\end{tikzpicture}
\end{center}
\caption{A construction of the rigid $\M_{lc}$-connected graph drawn on the far right from a copy of $K_1^{[3]}$, by (a) 1-extension on a loop, (b) $K_4$-extension, (c) loop addition and (d) edge addition.}
\label{fig:rec_unbalanced}
\end{figure}

Theorem \ref{thm:rec_unbalanced} implies in particular that all rigid $\mathcal{M}_{lc}$-circuits can be constructed from $K_1^{[3]}$ by applying the operations of  $1$-extension, $K_4$-extension and adding a new edge or loop. Since every rigid $\mathcal{M}_{lc}$-circuit $G=(V,E,L)$ satisfies $|E|+|L|=2|V|$ we can never use the edge/loop addition operation when constructing a rigid circuit from $K_1^{[3]}$. This gives the following result.

\begin{thm}
A looped simple graph is a rigid $\M_{lc}$-circuit if and only if it can be obtained from $K_1^{[3]}$ by recursively applying the $1$-extension and $K_4$-extension operations.
\end{thm}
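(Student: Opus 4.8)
The plan is to deduce this theorem directly from Theorem~\ref{thm:rec_unbalanced} together with the cardinality constraint that characterises rigid $\M_{lc}$-circuits. First I would prove sufficiency: $K_1^{[3]}$ is a rigid $\M_{lc}$-circuit (it satisfies Lemma~\ref{lem:circuit_char}(a) trivially, having $|E|+|L|=0+3=2|V|+1$ with the degree/count conditions holding vacuously since $|V|=1$), and we already observed just before the $1$-extension discussion in Section~\ref{sec:circuits} that both versions of the $1$-extension operation transform a rigid $\M_{lc}$-circuit into another rigid $\M_{lc}$-circuit via Lemma~\ref{lem:circuit_char}(a). For the $K_4$-extension, I would check that taking the $2$-sum of a rigid $\M_{lc}$-circuit $G_1$ with a copy of $K_4$ (a flexible $\M_{lc}$-circuit) along an edge $uv$ yields a rigid $\M_{lc}$-circuit: this is exactly the content of Lemma~\ref{lem:unbalanced} (the ``if'' direction), since $K_4$ is a flexible $\M_{lc}$-circuit. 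Hence any graph obtained from $K_1^{[3]}$ by these two operations is a rigid $\M_{lc}$-circuit.

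For necessity, I would invoke Theorem~\ref{thm:rec_unbalanced}: a looped simple graph is rigid and $\M_{lc}$-connected if and only if it can be built from $K_1^{[3]}$ by $1$-extensions, $K_4$-extensions, and edge/loop additions. Now suppose $G=(V,E,L)$ is a rigid $\M_{lc}$-circuit. In particular $G$ is rigid and $\M_{lc}$-connected (every $\M_{lc}$-circuit is $\M_{lc}$-connected, as any two of its elements lie in the circuit itself). By Lemma~\ref{lem:circuit_char}(a), $|E|+|L|=2|V|+1$. The key observation is that each of the three allowed operations changes the quantity $(|E|+|L|)-2|V|$ in a controlled way: an edge or loop addition increases $|E|+|L|$ by $1$ while leaving $|V|$ fixed, so it strictly increases $(|E|+|L|)-2|V|$; a $1$-extension deletes one element and adds $d+1=3$ elements and one vertex, so it leaves $(|E|+|L|)-2|V|$ unchanged; a $K_4$-extension (a $2$-sum with $K_4$ along an edge $uv$) replaces the edge $uv$ by the five other edges of $K_4$ and adds two vertices, changing $(|E|+|L|)-2|V|$ by $5-1-2\cdot 2=0$, so it is also count-preserving. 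Since $K_1^{[3]}$ has $(|E|+|L|)-2|V|=3-2=1$ and the target $G$ also has value $1$, the edge/loop addition operation—being the only one that strictly increases this quantity and never decreases it—can never be used in a construction of $G$. Therefore $G$ is obtained from $K_1^{[3]}$ using only $1$-extensions and $K_4$-extensions.

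There is essentially no serious obstacle here: the theorem is a corollary of Theorem~\ref{thm:rec_unbalanced} obtained by a bookkeeping argument on the count function. The only point requiring a moment's care is to confirm that, along an arbitrary construction sequence of $G$ from $K_1^{[3]}$, the running graph always has count value at least $1$ (so that once we reach the value $1$ we cannot have used an addition step earlier and then compensated—but no operation decreases the count, so this is immediate) and that the $K_4$-extension is genuinely an instance of the $2$-sum operation of Section~\ref{sec:circuits} with the $K_4$ side being a flexible $\M_{lc}$-circuit, which it is since $K_4$ has $|E|=6=2\cdot 4-2$, no loops, and $i_E(X)\le 2|X|-3$ for all proper $X$ with $|X|\ge 2$ (as $K_4$ has no triangle on a proper subset violating the bound—indeed $i_E(X)\le 3=2\cdot 3-3$ for $|X|=3$ and $i_E(X)=1\le 1$ for $|X|=2$), so Lemma~\ref{lem:circuit_char}(b) applies. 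Assembling these observations gives the two directions, and I would write the proof in just a few lines, citing Theorem~\ref{thm:rec_unbalanced}, Lemma~\ref{lem:circuit_char}, and Lemma~\ref{lem:unbalanced}.
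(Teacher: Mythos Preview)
Your proposal is correct and follows essentially the same approach as the paper: both derive sufficiency from the earlier observations that $1$-extension (Section~\ref{sec:circuits}) and $K_4$-extension (Lemma~\ref{lem:unbalanced}) preserve the property of being a rigid $\M_{lc}$-circuit, and derive necessity from Theorem~\ref{thm:rec_unbalanced} together with the bookkeeping observation that the edge/loop addition is the only operation that increases $(|E|+|L|)-2|V|$, so it can never appear when building a graph with the circuit count $|E|+|L|=2|V|+1$. Your write-up is simply more explicit than the paper's one-paragraph deduction.
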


\section{Balanced $\mathcal{M}_{lc}$-connected graphs}
\label{sec:feasible}

We next consider reductions that preserve balance as well as $\mathcal{M}_{lc}$-connectivity. 
We will need two more structural results in unbalanced separations.

\begin{lem}\label{lem:2setdeg4}
Let $G$ be $\M_{lc}$-connected and let $\{u,v\}$ be an unbalanced $2$-separator in $G$. Then $u$ and $v$ each have at least four incident edges or loops in $G$. Furthermore, if $G'$ is obtained from $G$ by performing an edge/loop deletion or $1$-reduction, then 
$\{u,v\}$ is an unbalanced $2$-separator in $G'$.
\end{lem}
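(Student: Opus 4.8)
The plan is to first establish the degree condition via a counting argument relative to the unbalanced $2$-separation, and then derive the persistence of the separator under reductions as an easy consequence. For the first part, I would use Lemma \ref{lem:feasible1}: since $G$ is $\mathcal{M}_{lc}$-connected with an unbalanced $2$-separator $\{u,v\}$, we may write $G$ (or $G+uv$, if $uv\notin E$) as a $2$-sum of $\mathcal{M}_{lc}$-connected graphs $G_1+uv$ and $G_2+uv$ along $uv$, where $G_2$ is the loopless side containing a loopless component of $G-\{u,v\}$. Each $G_i+uv$ is $\mathcal{M}_{lc}$-connected, hence redundantly rigid (by Corollary \ref{cor:mlcredundant} when $G_1+uv$ has a loop; and $G_2+uv$ is a loopless $\mathcal{M}_{lc}$-connected graph, so it is a union of flexible $\M_{lc}$-circuits, equivalently every edge lies in a circuit of the generic bar-joint rigidity matroid). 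In particular $uv$ lies in an $\M_{lc}$-circuit $C_i\subseteq G_i+uv$ for $i=1,2$. In a rigid $\M_{lc}$-circuit every vertex has $d^\dag\ge 3$ and in a flexible $\M_{lc}$-circuit (a circuit of the $2$-dimensional generic rigidity matroid) every vertex has degree $\ge 3$; so each of $u,v$ has at least $3$ incident edges/loops \emph{other than} $uv$ inside at least one of $C_1,C_2$. I would then argue that $u$ (and symmetrically $v$) in fact has $\ge 3$ incident edges/loops on \emph{each} side: on the $G_2$ side this is immediate since $G_2$ is $2$-connected (being a union of circuits of the generic rigidity matroid, or directly: a loopless $\mathcal{M}_{lc}$-connected graph with $\geq 3$ vertices is $2$-connected), so $u$ has $\ge 2$ neighbours in $G_2-uv$, and redundant rigidity of $G_2+uv$ forces $d^\dag_{G_2+uv}(u)\ge 3$, hence $d^\dag_{G_2}(u)\ge 2$; combined with $d^\dag_{G_1}(u)\ge 2$ coming from $G_1+uv$ containing a loop-bearing circuit through $u$, we get at least $4$ incident edges/loops of $G$ at $u$. (The edge $uv$, if present, is counted once.) The cleanest way to phrase this uniformly is: $d^\dag_G(u)=d^\dag_{G_1}(u)+d^\dag_{G_2}(u)-[uv\in E]$, and each summand is $\ge 2$ once we discount a possible shared $uv$, because deleting $u$ from either side cannot disconnect or trivialise it without contradicting $2$-connectivity of the rigid/flexible circuit through $uv$ on that side.

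For the second statement, suppose $G'$ is obtained from $G$ by an edge/loop deletion or a $1$-reduction, and suppose for contradiction that $\{u,v\}$ is \emph{not} an unbalanced $2$-separator of $G'$. An edge/loop deletion only removes edges/loops, so $G'-\{u,v\}$ is a spanning subgraph of $G-\{u,v\}$ on the same vertex set with a (weakly) smaller loop set; hence any loopless component of $G-\{u,v\}$ remains loopless in $G'-\{u,v\}$ (a component may split, but each piece is still loopless), so $\{u,v\}$ is still an unbalanced $2$-separator — so the deletion case is immediate. For a $1$-reduction at a node $x$: the $1$-reduction removes $x$ and its three incident edges/loops and adds back one edge or loop among $N(x)$. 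If $x\notin\{u,v\}$, then in $G'-\{u,v\}$ we have simply deleted $x$ and its incident elements and possibly added one edge/loop; the loopless component $H$ of $G-\{u,v\}$ either did not contain $x$ (and then the added edge/loop, being incident to $N(x)$, lies in the same old component as $x$, which is $\neq H$ if $x\notin V(H)$, or is a loopless component if the new element is a loop but then it was added at a vertex which was in a loopless component only if...) — more carefully, I would split on whether $x\in V(H)$: if $x\notin V(H)$ then $H$ survives untouched and loopless in $G'-\{u,v\}$; if $x\in V(H)$ then since the $1$-reduction adds no loop outside $N(x)\subseteq V(H)$ and adds at most one edge within $N(x)\cup\{\text{new loop at a vertex of }N(x)\}$, the component(s) of $G'-\{u,v\}$ obtained from $H$ remain loopless (a new loop would have to be added at the reduced vertex, but a $1$-reduction never creates a loop at an existing vertex — it only removes $x$ and adds back an edge between two old vertices or a loop at $x$, and $x$ is gone). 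Either way $G'-\{u,v\}$ has a loopless component, contradiction. The remaining subtlety is when the node $x$ being reduced equals $u$ or $v$; but $x$ is a node means $d^\dag_G(x)=3$, which contradicts the degree bound $d^\dag_G(u),d^\dag_G(v)\ge 4$ just proved, so this case cannot occur. This also shows the two parts of the lemma interlock: the degree bound is exactly what is needed to rule out reducing at $u$ or $v$.

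The main obstacle I anticipate is the bookkeeping in the first part — making precise "$\ge 2$ incident edges/loops on each side of the $2$-sum" uniformly across the cases $uv\in E$ versus $uv\notin E$, and across "$G_1$ has a loop, $G_2$ is loopless". I would handle this by always passing to $G+uv$ (adding $uv$ if absent, which preserves $\mathcal{M}_{lc}$-connectivity by Lemma \ref{lem:feasible1}), invoking Lemma \ref{lem:feasible1} to get the $2$-sum decomposition into two $\mathcal{M}_{lc}$-connected graphs each containing $uv$, and then using that each side, being $\mathcal{M}_{lc}$-connected on $\ge 3$ vertices, is $2$-connected and has minimum $d^\dag\ge 3$ (for the rigid side this is the $\M_{lc}$-circuit vertex-degree bound extended to $\mathcal{M}_{lc}$-connected graphs via ear decompositions; for the flexible/loopless side it is the classical fact that $2$-connected graphs which are unions of generic rigidity circuits have min degree $\ge 3$ — alternatively, redundant rigidity plus $2$-connectivity). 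Subtracting the single shared edge $uv$ then yields $d^\dag_G(u)\ge 3+3-1+\ (\text{at least one more})$; being slightly careful, $d^\dag_{G+uv}(u)=d^\dag_{G_1+uv}(u)+d^\dag_{G_2+uv}(u)-1\ge 3+3-1=5$, so $d^\dag_G(u)\ge 4$, as required, and likewise for $v$.
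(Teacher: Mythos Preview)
Your approach is essentially the paper's, though considerably more elaborate. The paper dispatches the degree bound in two lines: by Lemma~\ref{lem:feasible1}, both $G_1+uv$ and $G_2+uv$ are $\mathcal{M}_{lc}$-connected, so every vertex in each has $d^\dag\ge 3$; since $u$ lies in both and the edge $uv$ is shared, $d^\dag_G(u)\ge (3-1)+(3-1)=4$, and likewise for $v$. Your final paragraph arrives at exactly this computation. The intervening discussion of rigid versus flexible circuits, $2$-connectivity, and redundant rigidity on each side is not needed.

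For the second statement the paper simply observes that, by the degree bound, neither $u$ nor $v$ is a node, so a $1$-reduction cannot be performed at either of them, after which the rest is declared straightforward. Your case analysis is correct in outline but contains a factual slip: you write that ``a $1$-reduction never creates a loop at an existing vertex --- it only removes $x$ and adds back an edge between two old vertices or a loop at $x$''. This is false. The $1$-reduction inverse to a $1$-extension on a loop \emph{does} add a loop at a neighbour of $x$ (see the definitions just before Lemma~\ref{lem:admcases}). Your conclusion is nonetheless correct, for a reason you did not state: if the node $x$ lies in the loopless component $H$ of $G-\{u,v\}$, then $x$ has no loop in $G$, so the only applicable $1$-reductions at $x$ add an edge between two of its neighbours, and this keeps the component(s) coming from $H-x$ loopless in $G'-\{u,v\}$. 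You should replace the faulty parenthetical with this observation.
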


\begin{proof} Let $(G_1,G_2)$ be an unbalanced $2$-separation in $G$ with $V(G_1)\cap V(G_2)=\{u,v\}$. Then $G_1+uv$ and $G_2+uv$ are $\M_{lc}$-connected by Lemma \ref{lem:feasible1}. Hence $u$ and $v$ are each incident with at least three edges or loops in $G_i+uv$ for $i=1,2$.  This implies that $u$ and $v$ each have at least four incident edges or loops in $G$. In particular, neither $u$ nor $v$ is a node of $G$. It is now straightforward to check that $\{u,v\}$ is an unbalanced $2$-separator in $G'$.
\end{proof}

Our next lemma shows that a rigid looped simple graph cannot contain two `crossing' unbalanced 2-separations.

\begin{lem}\label{lem:crossing}
Supose that $G=(V,E,L)$ is rigid looped simple graph and $(G_1,G_2), (G_1',G_2')$ are two unbalanced $2$-separations in $G$ with $G_i=(V_i,E_i,L_i)$ and $G_i'=(V_i',E_i',L_i')$ for $i=1,2$, $L_2=\emptyset=L_2'$, $V_1\cap V_2=\{u,v\}$, $V_1'\cap V_2'=\{u',v'\}$ and $\{u,v\}\cap \{u',v'\}=\emptyset$. Then $\{u',v'\}\subseteq V_i$ for some $i\in \{1,2\}$.
\end{lem}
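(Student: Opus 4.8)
The plan is to argue by contradiction, assuming that $\{u',v'\}$ is split by the separation $(G_1,G_2)$, say $u'\in V_1\setminus V_2$ and $v'\in V_2\setminus V_1$, and to derive a contradiction with the rigidity of $G$. The basic strategy mimics the standard ``crossing separations'' arguments in rigidity theory (cf.\ Lemma \ref{lem:union}): intersect the two separating vertex cuts and use a counting argument on the four ``corner'' pieces. Concretely, consider the four subsets of $V$ obtained from the pieces $V_1,V_2,V_1',V_2'$, namely $A=V_1\cap V_1'$, $B=V_1\cap V_2'$, $C=V_2\cap V_1'$, $D=V_2\cap V_2'$; these cover $V$, they pairwise overlap only in the (at most four) cut vertices $u,v,u',v'$, and since $\{u,v\}\cap\{u',v'\}=\emptyset$ the four cut vertices are distributed with $u,v\in V_1\cap V_2$ and $u',v'\in V_1'\cap V_2'$, so each of $u,v$ lies in exactly two of $A,B,C,D$ and similarly for $u',v'$. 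The key point is that there can be no edges of $G$ between $A\setminus\{u,v,u',v'\}$ and $D\setminus\{u,v,u',v'\}$ (such an edge would have to cross both separations at once, contradicting $E_1\cap E_2=\emptyset$ and $E_1'\cap E_2'=\emptyset$), and likewise no edges between the ``$B$'' corner and the ``$C$'' corner; moreover every loop of $G$ lies in $G_1$ and in $G_1'$, hence in $A$.

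Next I would exploit that $u'\in V_1\setminus V_2$ and $v'\in V_2\setminus V_1$ forces the ``anti-diagonal'' to be nonempty: one of $B\setminus\{u,v,u',v'\}$, $C\setminus\{u,v,u',v'\}$ contains $u'$-side vertices interior to $G_1$ and the other contains $v'$-side vertices interior to $G_2$, so each of $G_2$ and $G_2'$ (the loopless sides) must spread across two of the corners. Since $G_2$ has no loops and the only loops of $G$ live in the $A$-corner, at least one of the corners $B, C, D$ is a \emph{loopless} set of vertices of size at least $2$ that is separated from the rest of $G$ by at most $3$ vertices (the relevant subset of $\{u,v,u',v'\}$). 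I would make this precise by checking that, after the interior-edge restrictions above, one of these corners together with its (at most $3$) boundary vertices is a connected component of $G$ minus at most two vertices (the two cut vertices relevant to that corner) and contains no loop. The plan is then to invoke Theorem \ref{thm:ST}: a rigid looped simple graph $G$ has a spanning subgraph $G_0$ with $|E_0|+|L_0|=2|V|$ and $|F|\le 2|V_F|$ for all $F\subseteq E_0\cup L_0$, together with $|F|\le 2|V_F|-3$ for $\emptyset\ne F\subseteq E_0$. Restricting to the union of the $A$-corner with one of the other corners, the loopless corner contributes a deficiency of at least $3$ (by the $2|V_F|-3$ bound applied to its non-loop edges, there being no loops to compensate), and this deficiency cannot be made up elsewhere once the interior cross-edges are forbidden; summing the sparsity bounds over the corner decomposition will give $|E_0|+|L_0|<2|V|$, contradicting rigidity.

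I expect the main obstacle to be the careful bookkeeping of the cut vertices $u,v,u',v'$ among the four corners and, in particular, handling the degenerate configurations: when one of $A,B,C,D$ is empty, when a corner consists only of cut vertices, or when $u$ or $v$ happens to coincide with nothing but still sits on the boundary of three corners at once. One must verify in each such case that the loopless corner still yields a genuine unbalanced $2$-separation-type obstruction rather than a vacuous one; the cleanest way is probably to phrase everything in terms of the count $i_{E\cup L}(X)$ for $X$ ranging over unions of corners, as in Lemma \ref{lem:union}, and to show that the submodular-type inequality forces $i_{E\cup L}$ of the whole vertex set to exceed $2|V|$, which is impossible since $G$ is rigid (so its edge set has rank $2|V|$ and any spanning independent-enough subgraph has exactly $2|V|$ edges/loops). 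Once the corner counting is set up, the contradiction itself is a one-line inequality; the work is entirely in the case analysis of where the four boundary vertices land.
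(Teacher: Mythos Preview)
Your overall strategy---set up the four ``corner'' pieces $A=V_1\cap V_1'$, $B=V_1\cap V_2'$, $C=V_2\cap V_1'$, $D=V_2\cap V_2'$, observe that all loops live in $A$, and bound the rank of $G$ by summing the ranks of the induced subgraphs on the corners---is exactly what the paper does. The paper phrases the bound directly in terms of matroid rank: with three loopless corners one gets
\[
r(G)\le 2n_A+\sum_{X\in\{B,C,D\}}(2n_X-3)=2\bigl(\textstyle\sum n_X\bigr)-9=2(|V|+4)-9=2|V|-1,
\]
using that each of the four cut vertices is counted in exactly two corners. This is cleaner than routing through a spanning tight subgraph $G_0$, and it avoids the confused alternative you mention (``forces $i_{E\cup L}$ of the whole vertex set to exceed $2|V|$''---that is not what you want; you want an upper bound on the rank).

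There is, however, a genuine gap in your plan, and it is precisely the degenerate case you flag but do not resolve. For the inequality above to bite, each of $B,C,D$ must have at least two vertices so that the bound $2n_X-3$ is meaningful. This is guaranteed only once you know that $\{u,v\}$ is split by $\{u',v'\}$ as well, i.e.\ (after relabelling) $u\in V_1'$ and $v\in V_2'$; then each corner contains two of the four cut vertices. If instead $\{u,v\}\subseteq V_1'$, then $B$ and $D$ each contain only one cut vertex, and one of them can collapse to a single vertex; in that situation the four-corner count yields only $r(G)\le 2|V|$, which is no contradiction. The paper disposes of this sub-case first with a one-line $1$-cut argument: if $\{u,v\}\subseteq V_1'$ then $B\setminus\{u'\}$ and $D\setminus\{v'\}$ are (possibly empty) loopless unions of components of $G-u'$ and $G-v'$ respectively, and since $|V_2'|>2$ at least one of them is nonempty. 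A rigid graph cannot have a cut-vertex with a loopless component on one side (the rank would drop by at least one), so this case is immediately excluded. You should insert this step before the corner count; once it is in place your counting argument goes through without any further case analysis.
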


\begin{proof} Suppose for a contradiction that $u'\in V_1$ and $v'\in V_2$. If 
$\{u,v\}\subseteq V_i'$ for some $i\in \{1,2\}$ then either $G-u'$ or $G-v'$ would have a loopless component.
This would contradict the hypothesis that $G$ is rigid and hence we may assume that $u\in V_1'$ and $v\in V_2'$.

Let $H_1=G_1[V_1\cap V_1']$, $H_2=G_2[V_1'\cap V_2]$, $H_3=G_2[V_2\cap V_2']$,
$H_4=G_2'[V_1\cap V_2']$, and put $n_i=|V(H_i)|$ for $1\leq i\leq 4$, see Figure \ref{fig:crossing_separators}.
Then $H_1,H_2,H_3,H_4$ cover $E\cup L$ and  $H_2,H_3,H_4$ are loopless. This gives
$$r(G)\leq \sum_{i=1}^4 r(H_i)\leq 2n_1+\sum_{i=2}^4 (2n_i-3)=2|V|-1$$ 
and again contradicts the hypothesis that $G$ is rigid.
\end{proof}

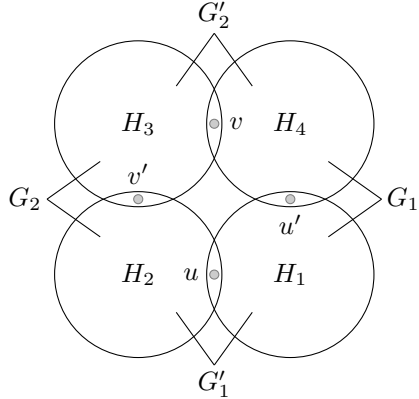
\begin{figure}[h]
\begin{center}
\begin{tikzpicture}[font=\small]
\draw (0,0) ellipse (1.1cm and 1.1cm);
\node[] at (0,0) (H2) [label=center:$H_2$] {};
\draw (0,2) ellipse (1.1cm and 1.1cm);
\node[] at (0,2) (H2) [label=center:$H_3$] {};
\draw (2,2) ellipse (1.1cm and 1.1cm);
\node[] at (2,2) (H2) [label=center:$H_4$] {};
\draw (2,0) ellipse (1.1cm and 1.1cm);
\node[] at (2,0) (H2) [label=center:$H_1$] {};

\node[roundnode] at (1,0) (u) [label=left:$u$]{};
\node[roundnode] at (1,2) (v) [label=right:$v$]{};
\node[roundnode] at (2,1) (u') [label=below:$u'$]{};
\node[roundnode] at (0,1) (v') [label=above:$v'$]{};

\node[] at (3,1) (V1) [label=right:$G_1$]{};
\draw (3.2,1)--(2.5,1.5);
\draw (3.2,1)--(2.5,0.5);

\node[] at (-1,1) (V2) [label=left:$G_2$]{};
\draw (-1.2,1)--(-0.5,1.5);
\draw (-1.2,1)--(-0.5,0.5);

\node[] at (1,-1) (V11) [label=below:$G_1'$]{};
\draw (1,-1.2)--(1.5,-0.5);
\draw (1,-1.2)--(0.5,-0.5);

\node[] at (1,3) (V22) [label=above:$G_2'$]{};
\draw (1,3.2)--(1.5,2.5);
\draw (1,3.2)--(0.5,2.5);

\end{tikzpicture}
\end{center}
\caption{The subgraphs $H_1,H_2,H_3,H_4$ in the proof of Lemma \ref{lem:crossing}.}
\label{fig:crossing_separators}
\end{figure}

An edge or loop  $f$ in a balanced $\mathcal{M}_{lc}$-connected graph $G$ is \emph{feasible} if $G-f$ is balanced and $\mathcal{M}_{lc}$-connected, and a node $v$ is \emph{feasible} if there exists a 1-reduction at $v$ which results in a balanced $\mathcal{M}_{lc}$-connected graph.
Figure \ref{fig:non-adm_adm-non-fea_fea} illustrates the difference between admissibility and feasibility.
On the far left, the vertex $x$ is not admissible in $H$. Each 1-reduction at $x$ that adds a loop creates a vertex of degree 2, and the 1-reduction at $x$
which adds an edge results in a graph with only 3 loops, none of which is contained in an $\M_{lc}$-circuit.
The only admisible 1-reduction at the vertex $y$ in the graph $G$ is the one that adds the edge $y_1y_2$. However,
the graph $G-y+y_1y_2$ is not balanced since $(G-y+y_1y_2)-\{u,v\}$ has no loops. Therefore $y$ is admisible but not feasible in $G$. The 1-reduction at $y_1$
which adds the edge $vy$ results in a balanced rigid $\M_{lc}$-circuit, so the vertex $y_1$ is feasible in $G$.

\begin{figure}[h]
\begin{center}
\begin{tikzpicture}[font=\small]
\node[roundnode] at (0,0) (v1) []{}
	edge[in=205,out=245,loop]();
\node[roundnode] at (1,0) (v2) []{}
	edge[] (v1)
	edge[in=270, out=310,loop]()
	edge[in=320, out=360,loop]();
\node[roundnode] at (1,1) (v3) []{}
	edge[] (v1)
	edge[] (v2);
\node[roundnode] at (0,1) (v4) []{}
	edge[] (v1)
	edge[] (v2);
\node[roundnode] at (0.5,1.5) (x) [label=right:$x$]{}
	edge[] (v3)
	edge[] (v4)
	edge[in=70,out=110,loop]();
\node[] at (0.5,-0.4) (H) [label=below:$H$]{};
\begin{scope}[xshift=3cm]
\node[roundnode] at (0,0) (v1) [label=above:$v$]{}
	edge[in=180,out=220,loop]()
	edge[in=230, out=270,loop]();
\node[roundnode] at (0,1) (v2) [label=below:$u$]{}
	edge[in=90, out=130,loop]()
	edge[in=140, out=180,loop]();
\node[roundnode] at (1,0) (u1) [label=below:$y_1$]{}
	edge[] (v1)
	edge[] (v2);
\node[roundnode] at (1,1) (u2) [label=above:$y_2$]{}
	edge[] (v1)
	edge[] (v2);
\node[roundnode] at (1.5,0.5) (y) [label=below:$y$]{}
	edge[] (u1)
	edge[] (u2)
	edge[in=20, out=340,loop]();
\node[] at (0.5,-0.4) (G) [label=below:$G$]{};
\end{scope}
\begin{scope}[xshift=6cm]
\node[roundnode] at (0,0) (v1) [label=above:$v$]{}
	edge[in=180,out=220,loop]()
	edge[in=230, out=270,loop]();
\node[roundnode] at (0,1) (v2) [label=below:$u$]{}
	edge[in=90, out=130,loop]()
	edge[in=140, out=180,loop]();
\node[roundnode] at (1,0) (u1) [label=below:$y_1$]{}
	edge[] (v1)
	edge[] (v2);
\node[roundnode] at (1,1) (u2) [label=above:$y_2$]{}
	edge[] (v1)
	edge[] (v2)
	edge[] (u1);
\node[] at (0.5,-0.4) (Gy) [label=below:$G-y+y_1y_2$]{};
\end{scope}
\begin{scope}[xshift=9cm]
\node[roundnode] at (0,0) (v1) [label=above:$v$]{}
	edge[in=180,out=220,loop]()
	edge[in=230, out=270,loop]();
\node[roundnode] at (0,1) (v2) [label=below:$u$]{}
	edge[in=90, out=130,loop]()
	edge[in=140, out=180,loop]();
\node[roundnode] at (1,1) (u2) [label=above:$y_2$]{}
	edge[] (v1)
	edge[] (v2);
\node[roundnode] at (1.5,0.5) (y) [label=below:$y$]{}
	edge[] (u2)
	edge[] (v1)
	edge[in=20, out=340,loop]();
\node[] at (0.5,-0.4) (Gy1) [label=below:$G-y_1+vy$]{};
\end{scope}
\end{tikzpicture}
\end{center}
\caption{The vertex $x$ is non-admissible in $H$, the vertex $y$ is admissible but non-feasible in $G$, and the vertex $y_1$ is feasible in $G$.}
\label{fig:non-adm_adm-non-fea_fea}
\end{figure}
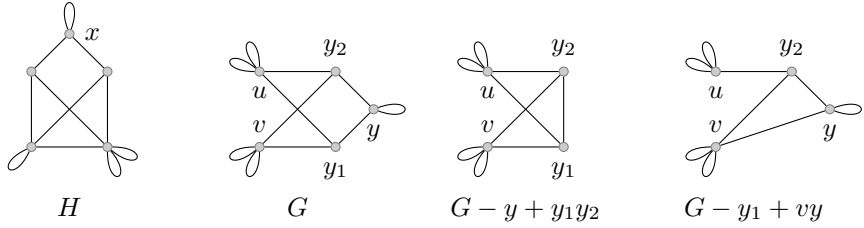
\begin{thm}\label{thm:feasible}
Let $G=(V,E,L)$ be a balanced, $\mathcal{M}_{lc}$-connected looped simple graph distinct from $K_1^{[3]}$. Then some edge, loop, or node of $G$ is feasible.
\end{thm}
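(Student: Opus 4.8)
The plan is to argue by contradiction: let $G$ be a counterexample with $|V|+|E|+|L|$ minimum, so $G$ is balanced, $\M_{lc}$-connected and distinct from $K_1^{[3]}$ but has no feasible edge, loop or node. Since $G$ is balanced it has no unbalanced $2$-separator. By Theorem~\ref{thm:balancedrradmissiblenode}, $G$ has an admissible edge or loop, or an admissible node; fix one such admissible element $\xi$ and let $G'$ be the graph produced by the corresponding reduction. Then $G'$ is $\M_{lc}$-connected (hence rigid) by admissibility, but not balanced since $\xi$ is not feasible, so $G'$ has an unbalanced $2$-separator. Over all admissible reductions of $G$ and all resulting unbalanced $2$-separations, I would fix one, say $(G_1',G_2')$ with $L(G_2')=\emptyset$ and $V(G_1')\cap V(G_2')=\{a,b\}$, whose loopless side $G_2'$ has as few vertices as possible.

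The first step is to pull this structure back to $G$. Passing from $G$ to $G'$ alters connectivity only near $\xi$ and removes only loops incident with the element(s) deleted by $\xi$; since $G$ is balanced, it follows that $\{a,b\}$ is a $2$-vertex-cut of $G$ and that the side of $G$ corresponding to $G_2'$ contains exactly one loop $\ell$, which is destroyed by $\xi$. (If $\xi$ deletes a loop, $\ell$ is that loop; if $\xi$ deletes an edge $xy$, then $xy$ is a bridge of $G-\{a,b\}$ cutting off a loop-free part and $\ell$ is the unique loop of the other part; if $\xi$ is a $1$-reduction at a vertex $v$, then $\ell$ is one of the at most two loops at $v$, the precise subcase being controlled by the number of distinct neighbours of $v$ through Lemma~\ref{lem:admcases}.) Let $G_1$ and $G_2$ be the two subgraphs of $G$ into which $\{a,b\}$ separates it, with $G_2$ the side carrying $\ell$; then $G$ is the $2$-sum of $G_1+ab$ and $G_2+ab$ along $ab$ (treating $ab\in E(G)$ separately), and by Lemma~\ref{lem:feasible1} applied to $G'$ both $G_1+ab$ and $G_2'+ab$ are $\M_{lc}$-connected, whence also $G_2+ab$ is $\M_{lc}$-connected by Lemma~\ref{lem:feasible2}. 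Moreover Lemma~\ref{lem:2setdeg4} shows $a$ and $b$ each have at least four incident edges or loops in $G$ (so neither is a node), and Lemma~\ref{lem:crossing} shows $G'$ admits no two crossing unbalanced $2$-separations; both facts, together with the minimality of $|V(G_2')|$, will be used to exclude spurious new separators.

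The main step is to produce a feasible element of $G$ strictly inside one of the two sides. If $\xi$ affects only $G_1$, then $G_2'+ab$ is a loopless $\M_{lc}$-connected simple graph: either $G_2'+ab=K_4$, when $G$ admits a $K_4$-reduction that I verify is feasible, or $G_2'+ab\neq K_4$, when Theorem~\ref{thm:admissible_simple}(b) yields an admissible edge or node of $G_2'+ab$ avoiding $a,b$; performing the same operation in $G$ gives, via the $2$-sum and Lemma~\ref{lem:feasible1}, an $\M_{lc}$-connected graph which is still balanced, because the destroyed loop $\ell$ lies on the untouched side $G_2$ and the minimality of $|V(G_2')|$ together with Lemma~\ref{lem:crossing} prevents any other unbalanced $2$-separator from appearing, so the operation is feasible, contradicting the choice of $G$. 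If instead $\xi$ affects $G_2$ (so $\xi$ destroys $\ell$), I argue symmetrically inside $G_1+ab$: when this side is loopless I again invoke Theorem~\ref{thm:admissible_simple}, and when it carries loops I either apply minimality of $G$ (if $G_1+ab$ is balanced) or iterate the unbalanced $2$-separation decomposition of Lemma~\ref{lem:feasible1} to peel off an innermost loopless $\M_{lc}$-connected piece and apply Theorem~\ref{thm:admissible_simple} there.

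The part I expect to be the genuine obstacle is the bookkeeping forced by the case in which $\xi$ is a $1$-reduction: such a reduction simultaneously deletes three elements (possibly one or two of them loops) and adds one, so there are several configurations — according to whether the new vertex $v$ has one, two or three distinct neighbours, whether the added element is an edge or a loop, and how $N(v)$ splits between the two sides of $\{a,b\}$ — and in each I must identify exactly which loop disappears, show that $\{a,b\}$ together with $v$ and $N(v)$ occupies a sufficiently rigid position relative to $G_2$ (this is where the critical-set and flower machinery of Section~\ref{sec:circuits}, in particular Lemmas~\ref{lem:union}, \ref{lem:flower1} and \ref{lem:flower2}, is needed), and then check that the replacement operation supplied by Theorem~\ref{thm:admissible_simple} really lies inside the intended side and creates no second unbalanced $2$-separator.
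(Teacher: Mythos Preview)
Your opening moves match the paper's: argue by contradiction, invoke Theorem~\ref{thm:balancedrradmissiblenode} to get an admissible element, pass to an $\M_{lc}$-connected but unbalanced $G'$, and fix an unbalanced $2$-separation of $G'$ with the loopless side as small as possible. After that, however, your plan diverges from the paper's and develops a genuine gap.

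The central problem is your ``pull-back'' step. You assert that $\{a,b\}$ is a $2$-vertex-cut of $G$ and that the side of $G$ corresponding to $G_2'$ carries exactly one loop $\ell$ destroyed by $\xi$. This is false in general. Take the case where $\xi$ is a $1$-reduction at a node $w$ with three distinct neighbours $q,s,t$, adding the edge $st$. If $s,t$ lie in the loopless side $G_2'$ and $q$ lies strictly in $V(G_1')\setminus\{a,b\}$ (and the paper shows this configuration genuinely arises---it is precisely the case $\theta=e$ in the paper's notation), then in $G$ the vertex $w$ is adjacent to both sides, so $\{a,b\}$ is not a separator of $G$ at all; moreover no loop is destroyed, since $w$ has no loop. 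The same issue occurs when $\xi$ is deletion of an edge with one end strictly in each side. Your subsequent case split (``$\xi$ affects only $G_1$'' versus ``$\xi$ affects $G_2$'') therefore does not exhaust the possibilities, and the structure you build on it collapses.

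A second gap is the $K_4$ case. You write that when $G_2'+ab=K_4$ ``$G$ admits a $K_4$-reduction that I verify is feasible'', but the theorem asks for a feasible \emph{edge, loop, or node}, i.e.\ an edge/loop deletion or a $1$-reduction that preserves both $\M_{lc}$-connectedness and balance; $K_4$-reduction is a different operation and does not by itself produce a feasible element in the required sense. The paper spends Claims~\ref{claim:f8}--\ref{claim:f10} and the final paragraph finding an explicit feasible $1$-reduction in exactly this situation, and the argument is delicate (it depends on whether the added edge lies inside the $K_4$, on where the deleted node's third neighbour sits, and on Lemma~\ref{lem:2setdeg4}).

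For comparison, the paper never tries to pull the separator back to $G$. It works inside $G'$: it shows (Claims~\ref{claim:f2}, \ref{claim:f3}, \ref{claim:f5}, \ref{claim:f6}) that no edge of $E^\dag(H_1)$ and no node in $V^\dag(H_1)$ can be admissible in $H_1^+$, which by Theorem~\ref{thm:admissible_simple} forces $H_1^+$ to be an $\M_{lc}$-circuit and then $K_4$; only then does it construct, by hand, a feasible $1$-reduction in $G$. Your minimality-of-$|V(G_2')|$ is used in the paper as well, but as a tool to rule out smaller loopless pieces appearing after each tentative reduction, not to set up an induction on $G_1+ab$.
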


\begin{proof}
Suppose, for a contradiction,  that all possible edge/loop deletions and node 1-reductions of $G$ fail to be either $\mathcal{M}_{lc}$-connected or balanced. Theorem \ref{thm:balancedrradmissiblenode} implies that $G$ contains either an admissible node $w$ or an admissible edge or loop $f$. By assumption neither $w$ nor $f$ is feasible. Let $G'$ be the result of deleting $f$ or performing an admissible 1-reduction at $w$. Then $G'$ is $\mathcal{M}_{lc}$-connected and not balanced. Hence $G'$ contains an unbalanced 2-separation $H_1,H_2$ where $H_1$ is loopless and  $V(H_1)\cap V(H_2)=\{u,v\}$.
 We may suppose that 
the pair $(r,\{u,v\})$, where $r\in \{w,f\}$, has been chosen so that 
$X=V(H_1)\sm\{u,v\}$ is as small as possible.

Let $H_1^+=H_1+uv$ and $H_2^+=H_2+uv$. 
\begin{claim}\label{claim:-2}
$H_1^+$ and $H_2^+$ are $\mathcal{M}_{lc}$-connected and $H_2^+$ is redundantly rigid.
\end{claim}

\begin{proof}[Proof of Claim]
Corollary \ref{cor:mlcredundant} implies that $H_1^+$ and $H_2^+$ are $\mathcal{M}_{lc}$-connected. Corollary \ref{cor:mlcredundant} now implies that $H_2^+$ is redundantly rigid.
\end{proof}

Note that the minimality of $X$ implies that $H_1^+$ is 3-connected. 

\begin{claim}\label{claim:-1}
$uv\notin E$. 
\end{claim}

\begin{proof}[Proof of Claim]
Suppose $uv\in E$. Since $G'$ is $\mathcal{M}_{lc}$-connected and $\{u,v\}$ is an unbalanced  2-separator in $G'$, Lemma \ref{lem:feasible1} implies that $G'-uv$ is $\mathcal{M}_{lc}$-connected. Since $G-uv$ is obtained from $G'-uv$ by an edge addition or a 1-extension, $G-uv$ is $\mathcal{M}_{lc}$-connected by Lemma \ref{lem:feasible2}. It remains to show that $G-uv$ is balanced. 

Suppose $\{u',v'\}$ is an unbalanced 2-separator in $G-uv$. Since $G$ is balanced, $u$ and $v$ belong to different components of $(G-uv)-\{u',v'\}$, at least one of which is loopless. Furthermore, $\{u',v'\}$ is an unbalanced 2-separator in $G'-uv$ by Lemma \ref{lem:2setdeg4}.
This implies that $G'-uv$ does not contain three internally disjoint $uv$-paths. Since $H_1^+$ is 3-connected, there are two internally disjoint $uv$-paths in $H_1^+-uv$.
Hence $u',v'\in X$ and $u$ and $v$ belong to different components $J_u$ and $J_v$ of $H_2^+-uv$,
see Figure \ref{fig:uv_paths} for an illustration. Note that $J_u$ and $J_v$ both contain loops since $G'$ is rigid.
This contradicts the fact that either $u$ or $v$ belongs to a loopless component of $(G-uv)-\{u',v'\}$
since $G-uv$ is obtained from $G'-uv$ by applying a 1-extension or edge/loop addition.
\end{proof}

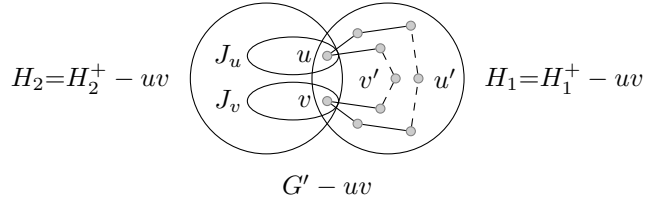
\begin{figure}[h]
\begin{center}
\begin{tikzpicture}[font=\small]
\draw (0,0) circle (1cm);
\node[] at (-1,0) (H2) [label=left:$H_2\text{=}H_2^+-uv$]{};
\draw (1.6,0) circle (1cm);
\node[] at (2.6,0) (H1) [label=right:$H_1\text{=}H_1^+-uv$]{};
\node[roundnode] at (0.8,0.3) (u) [label=left:$u$]{};
\draw (0.35,0.3) ellipse (0.6cm and 0.25cm);
\node[] at (-0.5,0.3) (Ju) [label=center:$J_u$]{};
\node[roundnode] at (0.8,-0.3) (v) [label=left:$v$]{};
\node[] at (-0.5,-0.3) (Jv) [label=center:$J_v$]{};
\draw (0.35,-0.3) ellipse (0.6cm and 0.25cm);

\node[roundnode] at (1.2,0.6) (a1) []{}
	edge[] (u);
\node[roundnode] at (1.9,0.7) (a2) []{}
	edge[] (a1);
\node[roundnode] at (2,0) (u') [label=right:$u'$]{}
	edge[dashed] (a2);

\node[roundnode] at (1.9,-0.7) (a3) []{}
	edge[dashed] (u');
\node[roundnode] at (1.2,-0.6) (a4) []{}
	edge[] (a3)
	edge[] (v);

\node[roundnode] at (1.5,0.4) (b1) []{}
	edge[] (u);
\node[roundnode] at (1.7,0) (v') [label=left:$v'$]{}
	edge[dashed] (b1);
\node[roundnode] at (1.5,-0.4) (b2) []{}
	edge[dashed] (v')
	edge[] (v);
\node[] at (0.8,-1) (G') [label=below:$G'-uv$]{};
\end{tikzpicture}
\end{center}
\caption{An illustration of the structure of $G'-uv$ in the proof of Claim \ref{claim:-1}. The 3-connectivity of $H_1^+$ implies there are two disjoint $uv$-paths within $H_1=H_1^+-uv$,
and hence removing $u'$ and $v'$ must destroy these paths.}
\label{fig:uv_paths}
\end{figure}

Our strategy in the remainder of the proof is to show that some edge or node of $G$ in $H_1$ is feasible in $G$. We have to be careful when considering the edges and nodes of $H_1$ since not all of them are edges or nodes in $G$. In addition a vertex which is a node in both $H_1$ and $G$ may be incident with different edges in each graph. We use the following notation to handle this.  We first put $E^\dag(H_1)=E(H_1)\cap E(G)$. 
If $r=w$, we let $V^\dag(H_1)=X-N_G(w)$ and,
if $r=f$ and $f=yz$, then we let $V^\dag(H_1)=X-\{y,z\}$. 
If the reduction operation which converts $G$ to $G'$ adds an edge $e$ between two vertices of $H_1$ we put $\theta=e$. Otherwise there is a unique vertex $x$ of $X$ which is incident/adjacent to $r$ and we put $\theta =x$.


\begin{claim}\label{claim:f1}
Suppose that $G-e$ is $\mathcal{M}_{lc}$-connected for some $e\in E^\dag(H_1)$. Then $H_1-\{u,v,e\}$ is connected.
\end{claim}

\begin{proof}[Proof of Claim]
Suppose $H_1-\{u,v,e\}$ has two components $J_1,J_2$. Choose $i\in \{1,2\}$ such that $\theta \notin V(J_i)\cup E(J_i)$. Then $\{u,v\}$ is an unbalanced 2-separation of $G-e$ with $J_i$ as a component and $V(J_i)$ is properly contained in $X$. This contradicts the minimality of $X$.
\end{proof}

\begin{claim}\label{claim:f2}
$G-e$ is not $\mathcal{M}_{lc}$-connected for all $e\in E^\dag(H_1)$.
\end{claim}

\begin{proof}[Proof of Claim]
Suppose that $G-e$ is $\mathcal{M}_{lc}$-connected for some edge $e=ab\in E^\dag(H_1)$. Then $G-e$ is not balanced so there exists an unbalanced 2-separator $T$ in $G-e$. Since $G$ is balanced, $a$ and $b$ are in different components of $(G-e)-T$. 

We can apply Lemma \ref{lem:2setdeg4} to $G-e$ and $G'$ respectively to deduce that  $T$ and $S=\{u,v\}$ are unbalanced 2-separators in $G''=G'-e$.
Since $G'$ is $\mathcal{M}_{lc}$-connected and contains a loop, it is redundantly rigid by Corollary \ref{cor:mlcredundant}. Hence $G''=G'-e$ is rigid. 
Lemma \ref{lem:crossing} now implies that $T\subseteq V(H_i)$ for some $i\in\{1,2\}$.
 Since $H_1-\{u,v,e\}$ is connected by Claim \ref{claim:f1}, $G''[X]$ is a component of $G''-S$. Since $a,b\in X\cup S$ and $T$ separates $a,b$ in $G-e$, we have $T\cap X\neq \emptyset$. Hence $T\subseteq V(H_1)$.

Let $X_T$ be the vertex set of  a loopless component of $(G-e)-T$. Since $G$ is balanced, each component of $H_2-S$ contains a loop and hence $X_T\cap (V(H_2-S)=\emptyset$. This implies that $X_T\cup T \subseteq X\cup S\cup \{r\}$ so we may contradict the minimality of $X$ by showing that $r$ and some vertex of $X\cup S$ does not belong to $X_T\cup T$. 
We know that $r$ is adjacent/incident to some vertex $x\in X$. If $r$ is also adjacent/incident to some vertex of $V(H_2)\sm S$ then the facts that $r$ does not belong to $G'-e$ and $T$ is an unbalanced 2-separator of $G-e$ gives $r,x\notin X_T\cup T$. Similarly, if $r$ is only adjacent/incident to vertices of $H_1$, then either $r$ is a loop or $r$ is a node incident with a loop and we again have $r,x\notin X_T\cup T$.
\end{proof}

\begin{claim}\label{claim:f3}
$H_1^+-e$ is not $\mathcal{M}_{lc}$-connected for all $e\in E^\dag(H_1)$.
\end{claim}

\begin{proof}[Proof of Claim]
Suppose $H_1^+-e$ is $\mathcal{M}_{lc}$-connected. 
Then $G'-e$ is the 2-sum of $H_1^+-e$ and $H_2^+$, and $G'-e$ is $\mathcal{M}_{lc}$-connected by Lemma \ref{lem:feasible1}. Lemma \ref{lem:feasible2} now implies that $G-e$ is $\mathcal{M}_{lc}$-connected,
contradicting Claim \ref{claim:f2}.
\end{proof}

\begin{claim}\label{claim:f4}
Suppose $p\in V^\dag(H_1)$ is a node of $H_1^+$, $N(p)=\{q,s,t\}$ and $G-p+st$ is $\mathcal{M}_{lc}$-connected. Then 
$(H_1-p+st)-\{u,v\}$ is connected.
\end{claim}

\begin{proof}[Proof of Claim]
Suppose $(H_1-p+st)-\{u,v\}$ has two components $J_1,J_2$. 
Choose $i\in \{1,2\}$ such that $\theta \notin V(J_i)\cup E(J_i)$.
Then $\{u,v\}$ is an unbalanced 2-separation in $G-p+st$ and $J_i$ is a loopless component of $(G-p+st)-\{u,v\}$. This contradicts the minimality of $X$.
\end{proof}

\begin{claim}\label{claim:f5}
Every node $p$ of $H_1^+$ in $V^\dag(H_1)$ is non-admissible in $G$.
\end{claim}

\begin{proof}[Proof of Claim]
Suppose $p\in V^\dag(H_1)$ is a node of $H_1^+$ with $N(p)=\{q,s,t\}$ and $G-p+st$ is $\mathcal{M}_{lc}$-connected. Then $G-p+st$ is not balanced, so has an unbalanced 2-separator $T$. Since $G$ is balanced, $st$ and $q$ are in different components of $(G-p+st)-T$. 

We can apply Lemma \ref{lem:2setdeg4} to $G-p+st$ and $G'$ respectively to deduce that  $T$ and $S=\{u,v\}$ are unbalanced 2-separators in $G''=G'-p+st$.
Since $G'$ is $\mathcal{M}_{lc}$-connected and contains loops it is redundantly rigid by Corollary \ref{cor:mlcredundant}. Hence 
$G'-pq$ is rigid. Since $G'-p$ is obtained from $G'-pq$ by deleting a vertex with two incident edges, it is rigid. Since $G''$ is obtained from $G'-p$ by an edge addition, it is also rigid.
Lemma \ref{lem:crossing} now implies that $T\subseteq V(H_i)$ for some $i\in\{1,2\}$.
 Since $H_1-p+st-\{u,v\}$ is connected by Claim \ref{claim:f1}, $G''[X-p]$ is a component of $G''-S$. Since $q,s,t\in X\cup S$ and $T$ separates $st$ from $q$ in $G''$, we have $T\cap X\neq \emptyset$. Hence $T\subseteq V(H_1)$.


Let $X_T$ be the vertex set of a loopless component of $(G-p+st)-T$. Since $G$ is balanced, each component of $H_2-S$ contains a loop and hence $X_T\cap (V(H_2)-S)=\emptyset$. This implies that $X_T\cup T \subseteq (X\cup S)+r-p$ so we may contradict the minimality of $X$ by showing that $r$ does not belong to $X_T\cup T$. This is trivially true if $r=f$, so we may assume $r=w$.
We know that $w$ is adjacent to some vertex $x\in X$. If $w$ is also adjacent to some vertex of $H_2- S$ then the facts that $w$ does not belong to $G'-p+st$ and $T$ is an unbalanced 2-separator of $G-p+st$ give $w\notin X_T\cup T$. Similarly, if $w$ is only adjacent to vertices of $H_1$, then $w$ is a node incident with a loop and we again have $w\notin X_T\cup T$.
%
\end{proof}

\begin{claim}\label{claim:f6}
Every node $p$ of $H_1^+$ in $V^\dag(H_1)$ is non-admissible in $H_1^+$.
\end{claim}

\begin{proof}[Proof of Claim]
Suppose $p$ is a node with $N(p)=\{q,s,t\}$ and $H_1^+-p+st$ is $\mathcal{M}_{lc}$-connected. 
Then $G'-p+st$ is the 2-sum of $H_1^+-p+st$ and $H_2^+$ and $G'-p+st$ is $\mathcal{M}_{lc}$-connected by Lemma \ref{lem:feasible1}. It follows from Lemma \ref{lem:feasible2} that $G-p+st$ is $\mathcal{M}_{lc}$-connected, contradicting Claim \ref{claim:f5}.
\end{proof}

\begin{claim}\label{claim:f7}
$H_1^+$ is an $\mathcal{M}_{lc}$-circuit.
\end{claim}

\begin{proof}[Proof of Claim]
This follows immediately from Theorem \ref{thm:admissible_simple}(b), Claims \ref{claim:-2}, \ref{claim:f3} and \ref{claim:f6}, and the definition of $V^\dag(H_1)$.
%
\end{proof}

\begin{claim}\label{claim:f8}
$H_1^+$ is isomorphic to $K_4$.
\end{claim}

\begin{proof}[Proof of Claim]
Suppose $H_1^+$ is not isomorphic to $K_4$. By Claim \ref{claim:f6}, no node of $H_1^+$ in $V^\dag(H_1)$ is admissible in $H_1^+$. Theorem \ref{thm:admissible_simple}(a), Claim \ref{claim:f7}  and the definition of $V^\dag(H_1)$ now imply that $G'=G-w+xy$, for some $x,y\in V(H_1)$ and $u,v,x,y$ are the only admissible nodes in $H_1^+$. We shall show that $x$ is a feasible node in $G$.

Since $x$ is an admissible node of $H_1^+$, $H_1^+-x+st$ is $\mathcal{M}_{lc}$-connected for some $s,t\in N_{H_1^+}(x)$. Let $N_{H_1^+}(x)=\{q,s,t\}$. Since $xy$ is an edge of $H_1^+$ and $y$ is a node of $H_1^+$, we must have $y\in \{s,t\}$. Without loss of generality suppose $y=t$. 
Since $G'-x+sy$ is the 2-sum of $H_1^+-x+sy$ and $H_2^+$, and $H_2^+$ is $\mathcal{M}_{lc}$-connected by Claim \ref{claim:-2}, Lemma \ref{lem:feasible1} implies that $G'-x+sy$ is $\mathcal{M}_{lc}$-connected. Since $G-x+sw$ is a 1-extension of $G'-x+sy$,
Lemma \ref{lem:feasible2} now implies  that $G-x+sw$ is $\mathcal{M}_{lc}$-connected.

Suppose $H_1^+-x+sy-\{u,v\}$ is disconnected. Since $H_1^+$ is 3-connected, $H_1^+-x+sy-\{u,v\}$ has two components $J_1,J_2$ with  $sy\in E(J_1)$ and $q\in V(J_2)$. Then $V(J_2)$ is properly contained in $X$.
Since $J_2$ is a loopless component of $(G-x+sw)-\{u,v\}$, this contradicts the minimality of $X$. Thus $H_1^+-x+sy-\{u,v\}$ is connected. 

Since $x$ is not feasible in $G$, $G-x+sw$ is not balanced. Let $T$ be an unbalanced 2-separator in $G-x+sw$. Since $G$ is balanced, $T$ separates $sw$ and $q$. 
We can apply Lemma \ref{lem:2setdeg4} to $G-x+sw$ and $G'$ respectively to deduce that  $T$ and $S=\{u,v\}$ are unbalanced 2-separators in $G''=G'-x+sy$. Since $G'$ is $\mathcal{M}_{lc}$-connected and contains a loop, Corollary \ref{cor:mlcredundant} implies that $G'$ is redundantly rigid, and hence 
$G'-xq$ is rigid. Since $G'-x$ is obtained from $G'-xq$ by deleting a vertex with two incident edges, it is rigid. Since $G''=G'-x+sy$ is obtained from $G'-x$ by an edge addition, it is also rigid. 
Lemma \ref{lem:crossing} now implies that $T\subseteq V(H_i)$ for some $i\in\{1,2\}$.
Since $(H_1^+-x+sy)-\{u,v\}$ is connected, $G''[X-x]$ is a component of $G''-S$. Since $q,s,y\in X\cup S$, and $T$ separates $sy$ and $q$ in $G''$, we have $T\cap X\neq \emptyset$. Hence $T\subseteq V(H_1)$.

Let $X_T$ be the vertex set of the component of $(G-x+sw)-T$ which is loopless. Since $G$ is balanced, each component of $H_2^+-S$ contains a loop and hence $X_T\cap (V(H_2)-S)=\emptyset$. This implies that $X_T\cup T \subseteq (X\cup S)+w-x$, so we may contradict the minimality of $X$ by showing that $w$ does not belong to $X_T\cup T$.  Note that $w\notin T$ by Lemma \ref{lem:2setdeg4}.
If $w$ is adjacent to some vertex $z\in V(H_2)\sm S$ then $z\notin X_T\cup T$. Since $zw\in E(G-x+sw)$ and $T$ is an unbalanced 2-separator, $w\notin X_T$.
Similarly, if $w$ is only adjacent to vertices of $H_1^+$, then $w$ is a node incident with a loop and we again have $w\notin X_T$.
\end{proof}

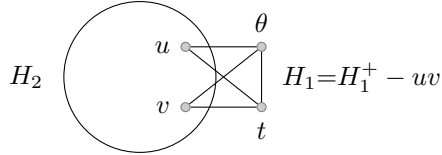
\begin{figure}[h]
\begin{center}
\begin{tikzpicture}[font=\small]
\draw (0,0) circle (1cm);
\node[] at (-1,0) (H2) [label=left:$H_2$]{};
\node[roundnode] at (0.6,0.4)(u) [label=left:$u$]{};
\node[roundnode] at (0.6,-0.4)(v) [label=left:$v$]{};

\node[roundnode] at (1.6,-0.4) (a) [label=below:$t$]{}
	edge[] (u)
	edge[] (v);
\node[roundnode] at (1.6,0.4) (b) [label=above:$\theta$]{}
	edge[] (u)
	edge[] (v)
	edge[] (a);
\node[] at (1.6,0) (H1) [label=right:$H_1\text{=}H_1^+-uv$]{};
\end{tikzpicture}
\end{center}
\caption{The structure of $G'$ in the proof of Claim \ref{claim:f9}.}
\label{fig:H1+_iso_K4}
\end{figure}

\begin{claim}\label{claim:f9}
$G'=G-w+xy$ for some $x,y\in V(H_1)$ and hence $\theta=xy\in E(H_1)$.
\end{claim}

\begin{proof}[Proof of Claim]
Suppose that the claim is false. Then $\theta$ is a vertex in $X$, $V(H_1)=\{u,v,\theta,t\}$ and $t$ is a node of $G$, see Figure \ref{fig:H1+_iso_K4}. We will show that $G-t+uv$ is balanced and $\mathcal{M}_{lc}$-connected. Note that $uv\notin E$ by Claim \ref{claim:-1}. Note also that $G-t+uv$ can be obtained from $H_2^+$ by either a sequence of one 1-extension and one edge addition (in the case that $G'=G-f$) or two 1-extensions 
(in the case when $G'=G-w+z_1z_2$ for some $z_1,z_2\notin X$). Since $H_2^+$ is $\mathcal{M}_{lc}$-connected by Claim \ref{claim:-2}, it follows from Lemma \ref{lem:feasible2} that $G-t+uv$ is $\mathcal{M}_{lc}$-connected. Since $\theta$ is adjacent to $u$ and $v$ and $G$ is balanced there is no unbalanced 2-separation separating $\theta$ from $uv$ in $G-t+uv$. Thus $G-t+uv$ is balanced.
\end{proof}

\begin{claim}\label{claim:f10}
$X\neq \{x,y\}$.
\end{claim}

\begin{proof}[Proof of Claim]
Suppose that $X=\{x,y\}$. Then $x,y$ are nodes of $G$. We shall show that $G-x+wv$ is $\mathcal{M}_{lc}$-connected and balanced. Note that $wv\notin E$ since, if $w$ has a neighbour $z$ distinct from $x,y$, then $z\in V(H_2)\sm S$. Note further that $G-x+wv$ can be obtained from $H_2^+$ by a sequence of two 1-extensions. Since $H_2^+$ is $\mathcal{M}_{lc}$-connected by Claim \ref{claim:-2}, Lemma \ref{lem:feasible2} implies that $G-x+wv$ is $\mathcal{M}_{lc}$-connected. Suppose that $G-x+wv$ is not balanced. Since $G$ is balanced, there is an unbalanced 2-separator $T$ in $G-x+wv$ that separates $u$ and $wv$. Since $u,w$ and $v$ are all neighbours of $y$ in $G-x+wv$, we must have $y\in T$. Since  $y$ is a node in $G-x+wv$, this contradicts  Lemma \ref{lem:2setdeg4}. Thus $G-x+wv$ is balanced.
\end{proof}

We can now complete the proof of the theorem. Claims \ref{claim:f9} and \ref{claim:f10} allow us to assume, after relabelling if necessary, that $X=\{x,t\}$ and $v=y$. Thus $x$ is a node of $G$. We will show that $G-x+wt$ is $\mathcal{M}_{lc}$-connected and balanced. Note that $wt\notin E$ since the neighbour of $w$ distinct from $x,y$ belongs to $V-(X\cup S)$. Note further that $G-x+wt$ can be obtained from $H_2^+$ by a sequence of two 1-extensions. Since $H_2^+$ is $\mathcal{M}_{lc}$-connected by Claim \ref{claim:-2}, it follows from Lemma \ref{lem:feasible2} that $G-x+wt$ is $\mathcal{M}_{lc}$-connected. Suppose that $G-x+wt$ is not balanced. Since $G$ is balanced there is an unbalanced 2-separator $T$ in $G-x+wt$ separating $u$ and $wt$. 
Since $ut$ is an edge of $G-x+wt$ we must have $t\in T$. 
Since  $t$ is a node in $G-x+wt$, this contradicts  Lemma \ref{lem:2setdeg4}. Thus $G-x+wv$ is balanced.
\end{proof}

 Lemma \ref{lem:feasible2} and Lemma \ref{lem:2setdeg4} imply that the operations of edge/loop addition and 1-extension preserve the properties of being $\mathcal{M}_{lc}$-connected and balanced.  Combined with Theorem \ref{thm:feasible}, this immediately 
gives the following recursive construction. 

\begin{thm}\label{thm:balancedrecurse}
A looped simple graph is balanced and $\M_{lc}$-connected if and only if it can be obtained from $K_1^{[3]}$ by recursively applying the operations of performing a $1$-extension and adding a new edge or loop.
\end{thm}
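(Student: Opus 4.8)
The plan is to prove Theorem~\ref{thm:balancedrecurse} by closing the loop between Theorem~\ref{thm:feasible} and the forward implications. For sufficiency, I would argue by induction on the number of edges and loops: $K_1^{[3]}$ is balanced and $\M_{lc}$-connected, and I must check that each allowed operation preserves both properties. That $1$-extension and edge/loop addition preserve $\M_{lc}$-connectivity is exactly Lemma~\ref{lem:feasible2} (which applies since every graph obtained from $K_1^{[3]}$ by these operations contains a loop). For balance, the key observation is Lemma~\ref{lem:2setdeg4}: if $G'$ is obtained from the balanced graph $G$ by one of these operations and $G'$ were unbalanced with unbalanced $2$-separator $\{u,v\}$, then the inverse operation (an edge/loop deletion or a $1$-reduction) applied to $G'$ would still leave $\{u,v\}$ an unbalanced $2$-separator, contradicting the balance of $G$. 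One has to be slightly careful that the inverse of the specific operation performed is indeed covered by ``edge/loop deletion or $1$-reduction'' in the sense of Lemma~\ref{lem:2setdeg4}, but that is immediate from the definitions.

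For necessity, I would show that every balanced $\M_{lc}$-connected looped simple graph $G$ distinct from $K_1^{[3]}$ can be reduced to a strictly smaller balanced $\M_{lc}$-connected graph by an edge deletion, a loop deletion, or a $1$-reduction. This is precisely the content of Theorem~\ref{thm:feasible}: there exists a feasible edge, loop, or node, i.e.\ a reduction whose result $G'$ is again balanced and $\M_{lc}$-connected. I would then induct on $|E|+|L|$ (or on $|V|+|E|+|L|$), noting that each of these reductions strictly decreases the size, and that $K_1^{[3]}$ is the unique minimal balanced $\M_{lc}$-connected graph (any smaller $\M_{lc}$-connected graph with a loop is either $K_1^{[3]}$ itself or has too few elements to be $\M_{lc}$-connected and balanced). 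Since each reduction is the inverse of a $1$-extension or an edge/loop addition, reversing the reduction sequence exhibits $G$ as built from $K_1^{[3]}$ by the allowed operations.

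The main obstacle here is essentially bookkeeping rather than a new idea: I need to confirm that the terminal object of the reduction process is exactly $K_1^{[3]}$. Concretely, if $G$ is balanced and $\M_{lc}$-connected and no feasible reduction applies, then by Theorem~\ref{thm:feasible} we must have $G = K_1^{[3]}$; so the induction bottoms out correctly. The only genuinely substantive input beyond Theorem~\ref{thm:feasible} is the ``preservation'' direction, and there the nontrivial step is the balance-preservation argument via Lemma~\ref{lem:2setdeg4} --- one must verify that the inverse operation is of the type handled by that lemma and that it is applied to an $\M_{lc}$-connected graph so the lemma's hypothesis is met. Since $G'$ is $\M_{lc}$-connected (by Lemma~\ref{lem:feasible2}) and contains a loop whenever $G'\neq K_1^{[3]}$... actually $G'$ always contains at least one loop since it is built from $K_1^{[3]}$ upward, so Lemma~\ref{lem:2setdeg4} applies directly. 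I expect the whole argument to be short, roughly two paragraphs, once these checks are made explicit.

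\begin{proof}
\emph{Sufficiency.} The graph $K_1^{[3]}$ is clearly balanced, and it is $\M_{lc}$-connected since its edge set is a single rigid $\M_{lc}$-circuit. Suppose $G$ is obtained from a balanced, $\M_{lc}$-connected looped simple graph $G_0$ by a $1$-extension or by adding a new edge or loop. Since $G_0$ is built from $K_1^{[3]}$, it has at least one loop, so Lemma~\ref{lem:feasible2} implies that $G$ is $\M_{lc}$-connected. To see that $G$ is balanced, suppose not; then $G$ has an unbalanced $2$-separator $\{u,v\}$. The graph $G_0$ is obtained from $G$ by an edge/loop deletion or a $1$-reduction, so Lemma~\ref{lem:2setdeg4} (applied to the $\M_{lc}$-connected graph $G$) implies that $\{u,v\}$ is an unbalanced $2$-separator of $G_0$, contradicting the fact that $G_0$ is balanced. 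Hence $G$ is balanced and $\M_{lc}$-connected, and the sufficiency follows by induction on $|E|+|L|$.

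\emph{Necessity.} We proceed by induction on $|E|+|L|$. Let $G$ be balanced and $\M_{lc}$-connected. If $G=K_1^{[3]}$ there is nothing to prove, so assume $G\neq K_1^{[3]}$. By Theorem~\ref{thm:feasible}, $G$ has a feasible edge, loop, or node; that is, there is an edge/loop deletion or a $1$-reduction producing a balanced $\M_{lc}$-connected looped simple graph $G'$ with $|E(G')|+|L(G')|<|E|+|L|$. By the induction hypothesis, $G'$ can be obtained from $K_1^{[3]}$ by recursively applying $1$-extensions and edge/loop additions. Since $G$ is obtained from $G'$ by the inverse of the reduction used --- which is a $1$-extension or an edge/loop addition --- the same holds for $G$.
\end{proof}
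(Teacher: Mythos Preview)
Your proof is correct and follows essentially the same route as the paper. The paper's argument is stated tersely --- it simply notes that Lemma~\ref{lem:feasible2} and Lemma~\ref{lem:2setdeg4} give preservation of $\M_{lc}$-connectivity and balance under the two operations, and then invokes Theorem~\ref{thm:feasible} for the reverse direction --- and you have unpacked exactly this into an explicit induction, including the contrapositive use of Lemma~\ref{lem:2setdeg4} for balance preservation.
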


Consider the balanced, $\M_{lc}$-connected graph $G$ drawn on the far right in Figure \ref{fig:rec_balanced}. We
gave a construction of $G$ from $K_1^{[3]}$ in Figure \ref{fig:rec_unbalanced}. However, the second step in this construction, where we use
$K_4$-extension, resulted in an unbalanced graph. In Figure \ref{fig:rec_balanced}, we show that
we can obtain $G$ from $K_1^{[3]}$ by using only 1-extensions and edge or loop additions.
\begin{figure}[h]
\begin{center}
\begin{tikzpicture}[font=\small]
\begin{scope}[yshift=0.5cm]
\node[roundnode] at (0,0) (v1) []{}
	edge[in=180,out=220,loop]()
	edge[in=230, out=270,loop]()
	edge[in=25,out=65,loop,very thick]();
\draw[->] (0.5,0)--(1,0);
\end{scope}
\begin{scope}[xshift=2cm]
\node[roundnode] at (0,0) (v1) []{}
	edge[in=180,out=220,loop]()
	edge[in=230, out=270,loop]();
\node[roundnode] at (0,1) (v4) []{}
	edge[very thick] (v1)
	edge[in=90, out=130,loop]()
	edge[in=140, out=180,loop]();
\draw[->] (0.5,0.5)--(1,0.5);
\end{scope}
\begin{scope}[xshift=4cm]
\node[roundnode] at (0,0) (v1) []{}
	edge[in=180,out=220,loop]()
	edge[in=230, out=270,loop]();
\node[roundnode] at (1,1) (v3) []{}
	edge[] (v1)
	edge[in=25, out=65,loop]();
\node[roundnode] at (0,1) (v4) []{}
	edge[very thick] (v3)
	edge[in=90, out=130,loop]()
	edge[in=140, out=180,loop]();
\draw[->] (1.5,0.5)--(2,0.5);
\end{scope}
\begin{scope}[xshift=7cm]
\node[roundnode] at (0,0) (v1) []{}
	edge[in=180,out=220,loop]()
	edge[in=230, out=270,loop]();
\node[roundnode] at (1,0) (v2) []{}
	edge[] (v1);
\node[roundnode] at (1,1) (v3) []{}
	edge[] (v1)
	edge[] (v2)
	edge[in=25, out=65,loop]();
\node[roundnode] at (0,1) (v4) []{}
	edge[] (v2)
	edge[dashed] (v1)
	edge[in=90, out=130,loop]()
	edge[in=140, out=180,loop]();
\draw[->] (1.5,0.5)--(2,0.5);
\end{scope}
\begin{scope}[xshift=10cm]
\node[roundnode] at (0,0) (v1) []{}
	edge[in=180,out=220,loop]()
	edge[in=230, out=270,loop]();
\node[roundnode] at (1,0) (v2) []{}
	edge[] (v1);
\node[roundnode] at (1,1) (v3) []{}
	edge[] (v1)
	edge[] (v2)
	edge[in=25, out=65,loop]();
\node[roundnode] at (0,1) (v4) []{}
	edge[] (v2)
	edge[] (v1)
	edge[dashed] (v3)
	edge[in=90, out=130,loop]()
	edge[in=140, out=180,loop]();
\draw[->] (1.5,0.5)--(2,0.5);
\end{scope}
\begin{scope}[xshift=13cm]
\node[roundnode] at (0,0) (v1) []{}
	edge[in=180,out=220,loop]()
	edge[in=230, out=270,loop]();
\node[roundnode] at (1,0) (v2) []{}
	edge[] (v1);
\node[roundnode] at (1,1) (v3) []{}
	edge[] (v1)
	edge[] (v2)
	edge[in=25, out=65,loop]();
\node[roundnode] at (0,1) (v4) []{}
	edge[] (v2)
	edge[] (v3)
	edge[] (v1)
	edge[in=90, out=130,loop]()
	edge[in=140, out=180,loop]();
\end{scope}
\end{tikzpicture}
\end{center}
\caption{An illustration of the recursive construction given in Theorem \ref{thm:balancedrecurse}.}
\label{fig:rec_balanced}
\end{figure}
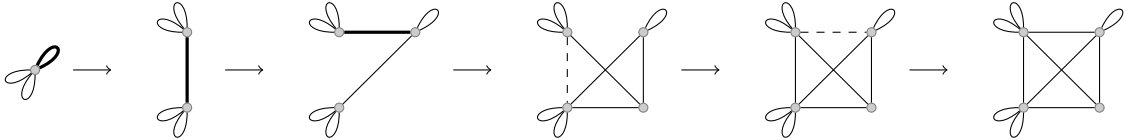

\section{Global rigidity} 
\label{sec:globalthm}

We will use our recursive construction to characterise generic global rigidity. We first need a lemma which shows that the properties of redundant rigidity and $\M_{lc}$-connectedness are equivalent for connected balanced graphs.

\begin{lem}\label{lem:Mconnected}
Let $G$ be a balanced looped simple graph. Then $G$ is $\M_{lc}$-connected if and only if $G$ is connected and redundantly rigid.
\end{lem}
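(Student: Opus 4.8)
The plan is to prove the two implications separately. The forward direction is the easier one: if $G$ is balanced and $\M_{lc}$-connected, then $G$ has a loop (a balanced graph with at least one edge must, since otherwise $G-X$ for an appropriate $X$ would leave a loopless component; more directly, $\M_{lc}$-connectedness with no loops would make $G$ a loopless $\M_{lc}$-connected graph, but a balanced such graph on at least $2$ vertices is impossible since removing any two vertices leaves a loopless component — and the one-vertex case $K_1$ is not $\M_{lc}$-connected). Hence by Corollary \ref{cor:mlcredundant}, $G$ is redundantly rigid, and in particular rigid, hence connected. So the forward direction reduces to the already-proved Corollary \ref{cor:mlcredundant} together with the observation that a balanced $\M_{lc}$-connected graph must contain a loop.

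For the converse, suppose $G$ is balanced, connected, and redundantly rigid; I want to show $G$ is $\M_{lc}$-connected. First, since $G$ is connected and redundantly rigid it is rigid, so every element of $E\cup L$ lies in some $\M_{lc}$-circuit. Moreover $G$ must contain a loop: a connected rigid looped graph with no loops on $n\ge 2$ vertices would have some two-vertex cut (or simply be a bar-joint rigid graph) leaving a loopless component — in any case a balanced rigid graph with $n\ge2$ must have a loop, and if $n=1$ then redundant rigidity forces at least $4$ loops. The substantive content is that the $\M_{lc}$-connected components of $G$ must all coincide. Decompose $G$ into its $\M_{lc}$-connected components $G_1,\dots,G_k$ (maximal subgraphs that are $\M_{lc}$-connected, together with any ``bridge'' elements that lie in no circuit — but redundant rigidity rules those out, so $E\cup L$ partitions into the edge sets of the $G_i$). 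I would argue that if $k\ge 2$ then the components share at most one vertex pairwise and the ``block structure'' of the rigidity matroid forces a vertex $u$ or a vertex pair $\{u,v\}$ separating $G$ in a way incompatible with either connectedness or balance.

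More concretely, I expect the key step to be an analysis of how two distinct $\M_{lc}$-connected components $G_i, G_j$ can meet. Using the count $|E_i|+|L_i| = 2|V_i|$ for a rigid $\M_{lc}$-connected graph (each is rigid by Corollary \ref{cor:mlcredundant} and a rigid graph all of whose elements lie in circuits is ``tight'': $|E|+|L|=2|V|$), a submodularity/counting argument on $V_i \cap V_j$ shows $|V_i \cap V_j|\le 1$. If $|V_i\cap V_j|=0$ for all pairs in some connected piece, then $G$ is disconnected, contradiction; if $|V_i\cap V_j|=1$, then that shared vertex is a cut vertex, and I would use balance: removing a suitable pair of vertices (the cut vertex plus one more, chosen in the loopless side if one exists) leaves a loopless component. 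The trick is that a rigid $\M_{lc}$-connected component other than $K_1^{[3]}$ that is attached to the rest of $G$ only through a single vertex $x$ must itself contain all its loops ``locally,'' and then $G - \{x, x'\}$ for a neighbour $x'$ of $x$ inside that component splits off a loopless piece unless the component is a single vertex with loops — but $K_1^{[\ge3]}$ attached at its only vertex would not be a proper component. I would formalize this by combining Lemma \ref{lem:feasible1} (the unbalanced $2$-separation machinery) with the recursive structure from Theorem \ref{thm:balancedrecurse}.

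The main obstacle I anticipate is handling the $1$-vertex intersection (cut-vertex) case cleanly and ruling out all configurations using only balance plus the counting bounds, without circular appeal to the theorem being proved. A cleaner route, which I would actually pursue, is: take a maximal $\M_{lc}$-connected subgraph $G_1$ of $G$ containing a loop; if $G_1\ne G$, pick an element $f\in (E\cup L)\setminus(E_1\cup L_1)$ incident to $V_1$ (exists since $G$ is connected), and pick (via redundant rigidity) an $\M_{lc}$-circuit $C$ through $f$; since $G_1$ is $\M_{lc}$-connected and maximal, $C\not\subseteq G_1$ and $C\cap G_1$ is nonempty, so $C\cup G_1$ is $\M_{lc}$-connected (circuits sharing an element with an $\M_{lc}$-connected set enlarge it — this needs a short lemma, or one invokes the ear-decomposition characterisation and Lemma \ref{lem:ear}) unless the overlap is too small; the failure mode is precisely an unbalanced $2$-separation, which contradicts balance via Lemma \ref{lem:feasible1}. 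I will write the proof in this last form, leaning on Corollary \ref{cor:mlcredundant}, Lemma \ref{lem:feasible1}, and Lemma \ref{lem:unbalanced}, so that the only genuinely new work is the bookkeeping showing that the obstruction to merging is an unbalanced separation.
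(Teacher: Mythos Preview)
Your forward direction is correct and matches the paper: a balanced $\M_{lc}$-connected graph has a loop, so Corollary~\ref{cor:mlcredundant} gives redundant rigidity, and rigidity gives connectedness.

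For the converse, however, your argument has a genuine error and a missing idea. The claim ``$|E_i|+|L_i|=2|V_i|$ for a rigid $\M_{lc}$-connected graph'' is false: an $\M_{lc}$-connected graph can have arbitrarily many edges beyond $2|V_i|$ (add extra loops to $K_1^{[3]}$, for instance). What \emph{is} true is that the \emph{rank} of each $\M_{lc}$-component $H_i$ equals $2|V_i|$ if $H_i$ contains a loop, and $2|V_i|-3$ if $H_i$ is loopless. This is what the paper exploits. Since the $H_i$ are the connected components of the matroid $\M_{lc}(G)$, one has $r(G)=\sum_i r(H_i)$ exactly. Writing $Y_i = V_i \cap \bigcup_{j\ne i}V_j$, connectedness of $G$ forces $|Y_i|\ge 1$, and --- here is the step you are missing --- \emph{balance forces $|Y_i|\ge 3$ whenever $H_i$ is loopless}, since otherwise $Y_i$ (padded to two vertices) would be a $2$-cut leaving a loopless component. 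Summing $r(H_i)=2|X_i|+2|Y_i|-3$ over loopless components and $2|X_i|+2|Y_i|$ over looped ones, and using $\sum|Y_i|\ge 2|\bigcup Y_i|$ together with $|Y_i|\ge 3$ in the loopless case, gives $r(G)>2|V|$, a contradiction.

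Your ``cleaner route'' via a maximal $\M_{lc}$-connected subgraph $G_1$ does not work as stated: there is no reason the circuit $C$ through $f$ should share an \emph{edge} with $G_1$ (sharing a vertex is not enough to merge matroid components), and the assertion that the obstruction is an unbalanced $2$-separation is exactly the content of the lemma you are trying to prove. The paper's rank-counting argument avoids all of this structural case analysis.
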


\begin{proof}
Necessity follows from Corollary \ref{cor:mlcredundant} and the assumption that $G$ is $\M_{lc}$-connected.

To prove sufficiency, we suppose, for a contradiction that $G$ is connected and redundantly rigid but not $\M_{lc}$-connected. Let $H_1,H_2,\dots,H_m$ be the $\M_{lc}$-components of $G$. Let $V_i=V(H_i)$, $X_i=V_i \sm \bigcup_{j\neq i}V_j$ and $Y_i=V_i \sm X_i$. Since $G$ is connected, $|Y_i|\geq 1$, and since $G$ is balanced, $|Y_i|\geq 3$ when $H_i$ is loopless.

We may assume that $H_1,H_2,\dots,H_s$ are loopless and $H_{s+1},H_{s+2},\dots, H_m$ are not. Then 
\begin{eqnarray*}
r(G)&=&\sum_{i=1}^s (2|V_i|-3) +\sum_{i=s+1}^m 2|V_i| \\
&=& \sum_{i=1}^s(2|X_i|+2|Y_i|-3)+\sum_{i=s+1}^m(2|X_i|+2|Y_i|)\\
&\geq & \sum_{i=1}^m (2|X_i|+|Y_i|)+m-s,
\end{eqnarray*}
where the final inequality follows from the fact that $|Y_i|\geq 3$ for $1\leq i \leq s$ and $|Y_i|\geq 1$ for $s+1\leq i\leq m$. Since the $X_i$ are disjoint we have $\sum_{i=1}^m|X_i|=|\bigcup_{i=1}^mX_i|$. Also
each element of $Y_i$ is contained in at least one other $Y_j$ with $j\neq i$. Hence we have $\sum_{i=1}^m|Y_i|\geq 2|\bigcup_{i=1}^mY_i|$. In addition the hypothesis that $G$ is balanced implies that at least one $H_i$ contains a loop so $m>s$. Hence
\begin{eqnarray*}
r(G)\geq 2(|\bigcup_{i=1}^mX_i|+|\bigcup_{i=1}^mY_i|)+m-s >2|V(G)|.
\end{eqnarray*}
This contradicts the fact that $r(G)\leq 2|V(G)|$.
\end{proof}

We may use Lemma \ref{lem:Mconnected} to restate Theorem \ref{thm:balancedrecurse} as

\begin{thm}\label{thm:fullrecurse}
A looped simple graph is balanced, connected and redundantly rigid if and only if it can be obtained from $K_1^{[3]}$ by recursively applying the operations of performing a $1$-extension and adding a new edge or loop.
\end{thm}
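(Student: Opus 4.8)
The plan is to deduce this theorem directly from Theorem \ref{thm:balancedrecurse} together with Lemma \ref{lem:Mconnected}. Indeed, Lemma \ref{lem:Mconnected} asserts that, for a balanced looped simple graph $G$, being $\M_{lc}$-connected is equivalent to being connected and redundantly rigid. Hence the class of ``balanced, connected, redundantly rigid'' looped simple graphs coincides with the class of ``balanced, $\M_{lc}$-connected'' looped simple graphs, and the latter is precisely the class described by the recursive construction in Theorem \ref{thm:balancedrecurse}. So the only thing to check is the bookkeeping needed to pass from one phrasing to the other; there is no genuinely new content.

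Concretely, I would argue as follows. For necessity, suppose $G$ is balanced, connected and redundantly rigid. Then $G$ is rigid, and since it is redundantly rigid it must have a loop (a loopless rigid graph has an edge $e$ in no $\M_{lc}$-circuit, so $G-e$ is not rigid — alternatively this is immediate from Corollary \ref{cor:mlcredundant} once $\M_{lc}$-connectedness is established); in particular $G$ is balanced with a loop, so Lemma \ref{lem:Mconnected} applies and gives that $G$ is $\M_{lc}$-connected. Theorem \ref{thm:balancedrecurse} then says $G$ can be built from $K_1^{[3]}$ by $1$-extensions and edge/loop additions. For sufficiency, suppose $G$ is obtained from $K_1^{[3]}$ by such a sequence of operations. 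Theorem \ref{thm:balancedrecurse} (or the remark preceding it, that $1$-extension and edge/loop addition preserve balance and $\M_{lc}$-connectedness, starting from the balanced $\M_{lc}$-connected graph $K_1^{[3]}$) shows $G$ is balanced and $\M_{lc}$-connected. Lemma \ref{lem:Mconnected} then gives that $G$ is connected and redundantly rigid, as required.

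I do not expect any real obstacle here: this is a restatement lemma and the proof is a two-line appeal to Theorem \ref{thm:balancedrecurse} and Lemma \ref{lem:Mconnected}. The only point requiring a word of care is that Lemma \ref{lem:Mconnected} is stated for balanced graphs and uses ``connected and redundantly rigid'' on one side, so I should make sure the equivalence is being applied in the correct direction in each half of the argument and that ``balanced'' is present as a hypothesis throughout (which it is in both the hypothesis and the conclusion of the theorem, and is preserved by the construction). No separate handling of $K_1^{[3]}$ is needed since it is itself balanced, connected and redundantly rigid and also $\M_{lc}$-connected, so it sits consistently in both descriptions.
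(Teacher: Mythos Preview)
Your proposal is correct and matches the paper's approach exactly: the paper presents Theorem~\ref{thm:fullrecurse} as a direct restatement of Theorem~\ref{thm:balancedrecurse} via Lemma~\ref{lem:Mconnected}, with no separate proof given. Your additional bookkeeping (checking the directions in which Lemma~\ref{lem:Mconnected} is applied and noting that balance is present throughout) is sound, though the aside about the existence of a loop is unnecessary since Lemma~\ref{lem:Mconnected} applies directly under the hypothesis ``balanced''.
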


We can now characterise global rigidity for 2-dimensional generic linearly constrained frameworks.

\begin{thm}\label{thm:global_char_con}
Suppose $G$ is a connected looped simple graph with at least two vertices and $(G,p,q)$ is a generic realisation of $G$ as a linearly constrained framework in $\R^2$. Then the following statements are equivalent:\\
(a) $(G,p,q)$ is globally rigid;\\
(b) $G$ is balanced and redundantly rigid;\\
(c) $(G,p,q)$ has a full rank equilibrium stress.
\end{thm}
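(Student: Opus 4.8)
The plan is to prove the chain of implications (c)$\Rightarrow$(a)$\Rightarrow$(b)$\Rightarrow$(c), using results already established in the paper. The implication (c)$\Rightarrow$(a) is immediate from Theorem \ref{thm:stress}, since $G$ has at least two vertices. The implication (a)$\Rightarrow$(b) is essentially Theorem \ref{thm:glob_nec}: a connected generic globally rigid framework with $|V|\geq 2$ must be $2$-balanced (i.e.\ balanced) and redundantly rigid in $\R^2$; one only needs to note that the single-vertex exceptional case in Theorem \ref{thm:glob_nec} does not arise here. So the real content is (b)$\Rightarrow$(c).

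For (b)$\Rightarrow$(c), the strategy is an induction along the recursive construction supplied by Theorem \ref{thm:fullrecurse}. Assume $G$ is balanced, connected, and redundantly rigid. By Theorem \ref{thm:fullrecurse}, $G$ is obtained from $K_1^{[3]}$ by a sequence of $1$-extensions and edge/loop additions. The base case is $K_1^{[3]}$: a generic realisation of $K_1^{[3]}$ in $\R^2$ has a single vertex with three generic loop directions, so its rigidity matrix is $3\times 2$ with a one-dimensional cokernel; any nonzero element $(\lambda)$ of that cokernel is an equilibrium stress, and since $|V|=1$ the stress matrix $\Omega(\omega)$ is the $1\times 1$ zero matrix, which trivially has rank $|V|-1=0$. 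Hence $K_1^{[3]}$ is infinitesimally rigid with a full rank equilibrium stress. For the inductive step, suppose $H$ is balanced, connected, redundantly rigid, and (by induction, invoking Lemma \ref{lem:stress_gen}) can be realised in $\R^2$ as an infinitesimally rigid linearly constrained framework with a full rank equilibrium stress; if $G$ is obtained from $H$ by an edge/loop addition or a $1$-extension, then Theorem \ref{thm:stress_ext} immediately tells us that $G$ can also be so realised. Finally, applying Lemma \ref{lem:stress_gen} once more to $G$, the given generic realisation $(G,p,q)$ is infinitesimally rigid and has a full rank equilibrium stress, establishing (c). (Here we use that $K_1^{[3]}$ has one loop, so every graph in the construction has a loop, and the hypotheses of the supporting lemmas are met.)

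There is one subtlety worth handling carefully: the recursive construction in Theorem \ref{thm:fullrecurse} builds the target graph, but intermediate graphs in the construction sequence need not themselves be redundantly rigid or balanced in a way we need — however, we do not actually need them to be; what we need is only that \emph{infinitesimal rigidity with a full rank equilibrium stress} is preserved by each operation, and that is exactly what Theorem \ref{thm:stress_ext} gives, with no balance or connectivity hypothesis on $H$ beyond realisability with a full rank stress. Thus the induction runs cleanly on the single property ``realisable as an infinitesimally rigid framework with a full rank equilibrium stress,'' starting from $K_1^{[3]}$.

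The main obstacle is not in the logical skeleton, which is short, but in making sure all the hypotheses of the cited results are genuinely available at each step — in particular that the base graph $K_1^{[3]}$ really does have a full rank equilibrium stress (a one-vertex case where ``full rank'' means rank $0$, which must be checked against the definition), and that Lemma \ref{lem:stress_gen} may be invoked to pass from ``some realisation has the property'' to ``every generic realisation has the property.'' Once these are in place, the proof is essentially a three-line deduction:
\[
\text{(b)}\;\xRightarrow{\text{Thm \ref{thm:fullrecurse}, base case, Thm \ref{thm:stress_ext}, Lemma \ref{lem:stress_gen}}}\;\text{(c)}\;\xRightarrow{\text{Thm \ref{thm:stress}}}\;\text{(a)}\;\xRightarrow{\text{Thm \ref{thm:glob_nec}}}\;\text{(b)}.
\]
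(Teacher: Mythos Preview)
Your proof is correct and follows essentially the same approach as the paper: the implications (c)$\Rightarrow$(a) and (a)$\Rightarrow$(b) are handled by Theorems \ref{thm:stress} and \ref{thm:glob_nec}, and (b)$\Rightarrow$(c) is proved by induction along the recursive construction of Theorem \ref{thm:fullrecurse}, with base case $K_1^{[3]}$ and inductive step via Theorem \ref{thm:stress_ext} and Lemma \ref{lem:stress_gen}. Your remark that the induction only needs to track the property ``infinitesimally rigid with a full rank equilibrium stress'' (rather than balance or redundant rigidity of intermediate graphs) is a helpful clarification, though in fact the intermediate graphs in the construction do remain balanced and $\mathcal{M}_{lc}$-connected by Lemmas \ref{lem:feasible2} and \ref{lem:2setdeg4}.
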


\begin{proof}
The implications (a) $\Rightarrow$ (b) and (c) $\Rightarrow$ (a) follow from  Theorems \ref{thm:glob_nec} and \ref{thm:stress}, respectively. It remains to prove that (b) $\Rightarrow$ (c). 
%
We use induction on the number of vertices of $G$ to show that $(G,p,q)$ has a full rank equilibrium stress whenever $G$ is connected, balanced and redundantly rigid. It is straighforward to check that 
every generic realisation of the 
smallest redundantly rigid looped simple graph $K_1^{[3]}$ 
has a full rank equilibrum stress (given by a $1\times 1$ stress matrix of rank zero).  The induction step now follows by using Lemma \ref{lem:stress_gen}, and Theorems \ref{thm:stress_ext} and \ref{thm:fullrecurse}. 
%
\end{proof}

We can use Theorem \ref{thm:global_char_con} and the fact that a linearly constrained framework is globally rigid if and only if each of its connected components is globally rigid to deduce:
 
\begin{thm}\label{thm:global_char}
Suppose $(G,p,q)$ is a generic linearly constrained framework in
$\R^2$. Then $(G,p,q)$ is globally rigid if and only if $G$ is
balanced and each connected component of $G$ is either a single vertex with two loops or is redundantly rigid.
\end{thm}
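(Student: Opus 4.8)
The plan is to reduce Theorem~\ref{thm:global_char} to the connected case handled by Theorem~\ref{thm:global_char_con}, together with the reduction of global rigidity to connected components and the trivial one-vertex case. First I would invoke the observation, already recorded in Section~\ref{sec:Hendrickson}, that a linearly constrained framework is globally rigid if and only if each of its connected components is globally rigid; this lets me work one component at a time. A connected component $H$ of $G$ is then either a single vertex or has at least two vertices, and I treat these two cases separately.

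For a single-vertex component $H$ with loop set $L(H)$, the framework $(H,p|_H,q|_H)$ has no (non-loop) edges, so global rigidity is equivalent to the rigidity of $H$, i.e.\ to $\rank R(H,p|_H,q|_H)=d|V(H)|=2$; since $q$ is generic this happens exactly when $H$ has at least two loops. Note that a single vertex with at least two loops is vacuously balanced (it has no two-element vertex subset, indeed no way to delete two vertices) so it is consistent with the statement to call such a component ``a single vertex with two loops''. For a component $H$ with $|V(H)|\ge 2$, Theorem~\ref{thm:global_char_con} gives that $(H,p|_H,q|_H)$ is globally rigid if and only if $H$ is balanced and redundantly rigid. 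Finally I observe that $G$ is balanced if and only if each connected component of $G$ is balanced: a set $X\subset V$ with $|X|=2$ only interacts with the components meeting it, and a component of $G-X$ is a union of pieces coming from individual components of $G$, so a loopless component of $G-X$ forces a loopless component of $H-X'$ for some component $H$ and some $X'\subseteq X\cap V(H)$ with $|X'|\le 2$; conversely restricting attention to a single component and padding $X'$ out to size $2$ using vertices outside that component shows the converse. (Here one uses that a single-vertex component, or a component with only one non-separating vertex, is trivially balanced, which is why the statement's ``single vertex with two loops'' alternative is needed.)

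Assembling these pieces: $(G,p,q)$ is globally rigid $\iff$ every connected component is globally rigid $\iff$ every single-vertex component has at least two loops and every component with $\ge 2$ vertices is balanced and redundantly rigid $\iff$ $G$ is balanced and every component is either a single vertex with two loops or is redundantly rigid. The genericity of $(G,p,q)$ passes to the genericity of each $(H,p|_H,q|_H)$, so Theorem~\ref{thm:global_char_con} applies to each multi-vertex component without extra work.

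The main obstacle is purely bookkeeping: making the equivalence ``$G$ balanced $\iff$ each component balanced'' genuinely airtight, since the definition of $d$-balanced quantifies over $X$ with $|X|=d$ rather than $|X|\le d$, so one must be slightly careful when a deleted vertex lies outside the component under consideration and when a component has fewer than $d$ vertices. There is no deep content here, only the need to state the case analysis cleanly; all the substantive mathematics has already been done in Theorems~\ref{thm:glob_nec}, \ref{thm:stress} and \ref{thm:global_char_con}.
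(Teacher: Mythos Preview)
Your proposal is correct and follows exactly the route the paper takes: the paper simply states that Theorem~\ref{thm:global_char} follows from Theorem~\ref{thm:global_char_con} together with the fact that a linearly constrained framework is globally rigid if and only if each of its connected components is, and leaves all the bookkeeping implicit. Your write-up just makes this reduction explicit.

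One small caution on the bookkeeping you flag: the bald assertion ``$G$ is balanced if and only if each connected component of $G$ is balanced'' is not literally true in isolation (a loopless single-vertex component is vacuously balanced but makes $G$ unbalanced), so you should not state it as a standalone lemma. What you actually need, and what your final chain of equivalences uses, is that under the extra hypotheses in play (single-vertex components have at least two loops; multi-vertex components are redundantly rigid as well as balanced) the component-wise conditions do imply $G$ is balanced. The redundant rigidity is what rules out the awkward case $|X\cap V(H)|\le 1$ for a multi-vertex component $H$: if $H-u$ had a loopless component then either padding $u$ out to a $2$-set inside $H$ contradicts balance, or $|V(H)|=2$ and deleting the unique edge leaves an isolated loopless vertex, contradicting redundant rigidity. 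You already gesture at this, but it is worth writing the two lines rather than asserting the clean-looking but false biconditional.
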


%
%

Theorem \ref{thm:ST} implies that redundant rigidity can be checked efficiently by graph orientation or pebble game type algorithms \cite{B+J,L&S}. Since we can also check the property of being balanced in polynomial time, Theorem \ref{thm:global_char} gives rise to an efficient algorithm to decide whether a given looped simple graph is generically globally rigid in $\mathbb{R}^2$.

We conclude this section by mentioning a possible direction for future research.
As mentioned in the introduction, \cite{CGJN} gives a characterisation
of generic rigidity for linearly constrained frameworks in $\mathbb{R}^d$ when
each vertex is constrained to lie in an affine subspace of sufficiently small dimension compared to $d$. It would be interesting to obtain an analogous characterisation for generic global rigidity.

\section{The number of equivalent realisations}\label{sec:count}

We will extend  Theorem \ref{thm:global_char} by determining the number of distinct frameworks which are equivalent to a given generic linearly constrained framework $(G,p,q)$ when $G=(V,E,L)$ is a rigid $\M_{lc}$-connected looped simple graph. For $u,v\in V$, let $b(u,v)$ be the number of loopless connected components of $G-\{u,v\}$ and put $b(G)=\sum_{u,v\in V}b(u,v)$.  

\begin{thm}\label{thm:num}  Suppose $(G,p,q)$ is a generic linearly constrained framework in $\R^2$ and that $G$ is rigid and $\M_{lc}$-connected.
Then there are exactly $2^{b(G)}$ distinct frameworks which are equivalent to $(G,p,q)$.
\end{thm}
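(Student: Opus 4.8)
The plan is to induct on the structure of $G$ using the recursive construction in Theorem~\ref{thm:rec_unbalanced}, which says that every rigid $\M_{lc}$-connected graph is built from $K_1^{[3]}$ by $1$-extensions, $K_4$-extensions, and edge/loop additions. For the base case $G=K_1^{[3]}$ we have $b(G)=0$ (a single vertex with three loops; there are no two-vertex subsets, and indeed $G$ itself has no loopless component), and by Theorem~\ref{thm:global_char_con} the unique equivalent framework is $(G,p,q)$ itself, giving $2^0=1$. For the inductive step I would track how $b(G)$ changes under each construction operation, and simultaneously show that the number of equivalent realisations multiplies by the same power of $2$. The cleanest bookkeeping is probably via the decomposition of $G$ along unbalanced $2$-separators: by Lemma~\ref{lem:feasible1} and Lemma~\ref{lem:crossing}, a rigid $\M_{lc}$-connected graph either is balanced or has a well-behaved $2$-sum decomposition $G=G_1\cup G_2$ where $G_2$ is loopless and $3$-connected, and one can peel off such pieces.

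First I would reduce to the balanced case. If $G$ is balanced then Theorem~\ref{thm:global_char_con} gives that $(G,p,q)$ is globally rigid, so there is exactly $1=2^0$ equivalent framework; and I would check $b(G)=0$ for balanced rigid $\M_{lc}$-connected $G$, since for every pair $\{u,v\}$ every component of $G-\{u,v\}$ contains a loop by definition of balanced. So the theorem holds whenever $G$ is balanced. Next, if $G$ is not balanced, apply Lemma~\ref{lem:feasible1}/Theorem~\ref{thm:rec_unbalanced}: $G$ is a $2$-sum of an $\M_{lc}$-connected $G_1$ with at least one loop and an $\M_{lc}$-connected loopless $G_2$ along an edge $uv\notin E$. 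The key geometric observation is that in a generic realisation, once the positions of the two "cut" vertices $u$ and $v$ are fixed, the equivalent realisations of $G$ split as a product: the placements of the rest of $G_1$ and the rest of $G_2$ can be chosen independently. Moreover, since $G_2$ is a loopless generically globally rigid graph relative to the pinned edge $uv$ — here I would invoke Theorem~\ref{thm:bar-joint}, as a $3$-connected redundantly rigid graph with an extra edge $uv$ is globally rigid — the only freedom coming from $G_2$ is a reflection of $G_2-\{u,v\}$ in the line through $p(u),p(v)$. This contributes a factor of $2$ for the loopless component $G_2-\{u,v\}$, i.e.\ exactly the contribution of the unbalanced $2$-separation $\{u,v\}$ to $b(G)$ that is "new" relative to $b(G_1)$. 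One then needs to verify the additivity $b(G) = b(G_1) + (\text{number of loopless components of } G-\{u,v\}\text{ living in }G_2) + \cdots$ carefully — this is the delicate combinatorial accounting, because a loopless component of $G-\{x,y\}$ for a pair $\{x,y\}\neq\{u,v\}$ could straddle the $2$-sum, and Lemma~\ref{lem:crossing} (no two crossing unbalanced $2$-separations in a rigid graph) is exactly what rules out the pathological overlaps.

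More precisely, the inductive engine I would set up is: for every unbalanced $2$-separator $\{u,v\}$ with $G=G_1\cup G_2$, $G_2$ loopless and $3$-connected, show (i) the map sending an equivalent framework $(G,\tilde p,q)$ to the pair $(\tilde p|_{V_1}, \tilde p|_{V_2})$ realises the set of equivalent frameworks of $G$ as (equivalent frameworks of $G_1+uv$) $\times$ $\{$two choices of reflecting $G_2-\{u,v\}\}$, once we note that $(G,p,q)$ is rigid so $p(u),p(v)$ are determined up to the $G_1$ part, and $G_2-\{u,v\}$ rigidly follows $u,v$ up to reflection; and (ii) $b(G)=b(G_1)+1$... no — rather $b(G) = b(G_1+uv) + c$ where $c$ counts the loopless components of $G-\{x,y\}$ "caused by" $G_2$ over all pairs $\{x,y\}$. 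Honestly the cleanest statement is: $b(G) = b(G_1) + b(G_2) + (\text{cross terms})$ and the cross terms are controlled because any pair $\{x,y\}$ whose removal disconnects things either has both points in $V_1$ or both in $V_2$ (by Lemma~\ref{lem:crossing} applied with the unbalanced separation $\{u,v\}$), reducing the count to the two sides plus the single separator $\{u,v\}$ itself contributing the loopless component $G_2-\{u,v\}$ — but note $G_2$ is itself a single loopless $\M_{lc}$-component so its internal $b$-contribution is $b(G_2)=$ (number of loopless components over all pairs internal to $G_2$), and each such is also globally rigid in $G_1\cup G_2$ only up to reflection, so these are precisely accounted for too. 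I would set up the induction so that the claim "$2^{b(G)}$ equivalent frameworks" is stable under both balanced $1$-extensions (which, being globally-rigidity-preserving by Theorem~\ref{thm:stress_ext} when they keep the graph balanced, leave $b=0$) and $K_4$/edge/loop additions.

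The main obstacle I anticipate is precisely the combinatorial identity for $b(G)$ under the $2$-sum, together with making the "product of solution sets" argument fully rigorous: one must argue that (a) for a \emph{generic} $(G,p,q)$ the restriction $(G_2, p|_{V_2}, q|_{V_2})$ together with the pinning edge $uv$ is generic enough that Theorem~\ref{thm:bar-joint} applies giving exactly two realisations (identity and one reflection), (b) these glue back with \emph{every} equivalent realisation of the $G_1$ side, with no congruence-type identifications collapsing the count, and (c) recursively the $G_1$ side may itself have further unbalanced $2$-separators, so the reflections are independent — here I'd need that reflecting across one line through one separator and across another line through a different separator genuinely produces distinct frameworks, which again follows from genericity and the fact (Lemma~\ref{lem:crossing}) that the separators are "nested or disjoint", never crossing, so the reflections act on disjoint loopless pieces. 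Packaging all of this as a single induction on $|V|$ over rigid $\M_{lc}$-connected graphs, with the two cases "balanced" (Theorem~\ref{thm:global_char_con} does everything, $b=0$) and "has an unbalanced $2$-separator" (peel off $G_2$, apply Theorem~\ref{thm:bar-joint} and the induction hypothesis to $G_1$), should give the result $2^{b(G)}$.
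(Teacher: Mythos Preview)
Your overall strategy --- treat the balanced case with Theorem~\ref{thm:global_char_con}, and in the unbalanced case peel off a 3-connected loopless piece $G_2$ across an unbalanced 2-separator $\{u,v\}$, use Theorem~\ref{thm:bar-joint} on $G_2+uv$, and induct on $G_1+uv$ --- is exactly the paper's approach (the paper inducts on $b(G)$ rather than on the recursive construction, but the skeleton is the same). However, there is a genuine gap.

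You write that ``once the positions of the two `cut' vertices $u$ and $v$ are fixed, the equivalent realisations of $G$ split as a product'', and later that Theorem~\ref{thm:bar-joint} gives ``exactly two realisations'' on the $G_2$ side. But $uv\notin E$, so an equivalent framework $(G,\tilde p,q)$ carries no a priori constraint forcing $\|\tilde p(u)-\tilde p(v)\|=\|p(u)-p(v)\|$. Without this, $(G_2+uv,\tilde p|_{V_2})$ is \emph{not} known to be equivalent to $(G_2+uv,p|_{V_2})$ as a bar-joint framework, and Theorem~\ref{thm:bar-joint} does not apply. This is precisely the content of the paper's Lemma~\ref{lem:globlinked2}: $u,v$ are \emph{globally linked} in $(G,p,q)$. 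That lemma is not a triviality; it is proved by its own induction, reducing via admissible edge-deletions and 1-reductions in $G_2+uv$ (Theorem~\ref{thm:admissible_simple}) and bottoming out at $K_4$, using along the way an algebraic-geometry fact about generic points (Lemmas~\ref{genpoint1}--\ref{globlinked1}) to show that a 1-reduction preserves global linkedness when the reduced graph is rigid. Your point (a) gestures at genericity concerns but does not supply this argument, and nothing in your sketch forces $\|\tilde p(u)-\tilde p(v)\|$ to be preserved.

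A second, smaller issue: your accounting for $b(G)$ wanders between $b(G_1)+1$, $b(G_1+uv)+c$, and ``cross terms''. The clean statement, which the paper uses, is $b(G)=b(H_1+uv)+1$ when $(H_1,H_2)$ is chosen with $H_2$ loopless and $|V(H_2)|$ minimal (so $H_2+uv$ is 3-connected); Lemma~\ref{lem:crossing} shows every unbalanced 2-separator of $G$ other than $\{u,v\}$ lies entirely in $V(H_1)$, and 3-connectivity of $H_2+uv$ means there are no further unbalanced 2-separators inside $H_2$. Settling on this formulation would tighten your induction considerably.
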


Our proof of Theorem \ref{thm:num} is similar to the proof of an analogous result for bar-joint frameworks \cite[Theorem 8.2]{JJS}. We will indicate below how the latter can be adapted to prove Theorem \ref{thm:num}.

We first need a result on generic points in $\R^n$. We will denote the algebraic closure of a field $\bK$ by $\overline \bK$.

\begin{lem}\label{genpoint1}
Let $f:\R^n\to \R^n$ by $f({\bf x})=
(f_1({\bf x}),f_2({\bf x}),\ldots,f_n({\bf x}))$, where $f_i({\bf x})$
is a polynomial with  coefficients in some extension field $\bK$ of $\bQ$ for all $1\leq i\leq n$.
Suppose that $\max_{{\bf x}\in \R^n}\{\mbox{rank }df|_{\bf x}\}=n$.
If either ${\bf x}$ or $f({\bf x})$ is generic over  $\bK$, then ${\bf x}$ and $f({\bf x})$ are both generic over  $\bK$ and $\overline{\bK({\bf x})}=\overline{\bK({f(\bf x}))}$.
\end{lem}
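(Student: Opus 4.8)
\textbf{Proposal for the proof of Lemma \ref{genpoint1}.}

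The plan is to exploit the hypothesis that the maximum rank of $df|_{\bf x}$ over $\R^n$ equals $n$, which means $f$ is a dominant polynomial map, and to combine this with basic facts about transcendence degree and the inverse function theorem. First I would observe that since the entries of the Jacobian $df$ are polynomials in ${\bf x}$ with coefficients in $\bK$, the set of points where $df$ has rank $n$ is a non-empty Zariski-open subset $U$ of $\R^n$ defined over $\bK$ (it is the non-vanishing locus of the sum of squares of the $n\times n$ minors). Any point which is generic over $\bK$ lies in $U$, since a generic point avoids every proper $\bK$-subvariety. So if ${\bf x}$ is generic over $\bK$, then $df|_{\bf x}$ is invertible; and the components $f_i({\bf x})$ are polynomials in the algebraically independent coordinates of ${\bf x}$, hence $\bK(f({\bf x}))\subseteq \bK({\bf x})$, and $\operatorname{td}_\bK \bK(f({\bf x}))\le n$. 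The reverse inequality — that $f({\bf x})$ has transcendence degree exactly $n$, i.e. is generic — is the crux: it follows because the Jacobian criterion (over the algebraically closed field $\overline{\bK({\bf x})}$, or equivalently via the fact that $df|_{\bf x}$ invertible implies the map germ has a local analytic inverse) shows $f$ is dominant, so $\bK({\bf x})$ is algebraic over $\bK(f({\bf x}))$; since $\operatorname{td}_\bK \bK({\bf x})=n$, this forces $\operatorname{td}_\bK \bK(f({\bf x}))=n$, hence $f({\bf x})$ is generic and $\overline{\bK({\bf x})}=\overline{\bK(f({\bf x}))}$.

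For the other direction I would argue as follows. Suppose instead that $f({\bf x})$ is generic over $\bK$, with ${\bf x}$ possibly not generic. We still have $\bK(f({\bf x}))\subseteq \bK({\bf x})$, so $\operatorname{td}_\bK \bK({\bf x})\ge \operatorname{td}_\bK\bK(f({\bf x}))=n$; since ${\bf x}\in\R^n$ we also have $\operatorname{td}_\bK\bK({\bf x})\le n$, so equality holds and ${\bf x}$ is generic after all. We are then back in the previous case and conclude $\overline{\bK({\bf x})}=\overline{\bK(f({\bf x}))}$. Thus the two statements ``${\bf x}$ generic'' and ``$f({\bf x})$ generic'' are each self-reinforcing, and the lemma follows.

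The main obstacle, and the step needing the most care, is the passage from ``$df|_{\bf x}$ is invertible'' to ``$\bK({\bf x})$ is algebraic over $\bK(f({\bf x}))$''. The cleanest route is the standard algebraic fact: for a polynomial map $f$ with coefficients in $\bK$ and a point ${\bf x}$ generic over $\bK$, the field extension $\bK({\bf x})/\bK(f({\bf x}))$ is separably algebraic if and only if the Jacobian matrix $df|_{\bf x}$ is invertible over $\bK({\bf x})$ — this is the field-theoretic Jacobian criterion (e.g. via the fact that $d f_1,\dots,d f_n$ span the module of $\bK$-differentials $\Omega_{\bK({\bf x})/\bK}$ precisely when the Jacobian is nonsingular, together with $\Omega_{\bK({\bf x})/\bK(f({\bf x}))}=0\iff \bK({\bf x})/\bK(f({\bf x}))$ is separably algebraic, for finitely generated extensions). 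I would state this as the key auxiliary fact, cite a standard reference, and then the transcendence-degree bookkeeping above is routine. An alternative, more elementary, but slightly less clean route is to invoke the real-analytic inverse function theorem to get a local inverse $g$ near $f({\bf x})$ with real-analytic, and in fact algebraic (by Proposition \ref{prop:C}-style arguments), components over $\overline{\bK}$, giving ${\bf x}=g(f({\bf x}))\in\overline{\bK(f({\bf x}))}$ directly; I would likely present the algebraic version as the main line and mention this as a remark.
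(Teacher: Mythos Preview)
Your proposal is correct. The paper itself gives no argument for this lemma: it simply states that the proof is identical to that of \cite[Lemmas~3.1, 3.2]{JJS}, with $\bK$ in place of $\bZ$. Your transcendence-degree bookkeeping for the direction ``$f({\bf x})$ generic $\Rightarrow$ ${\bf x}$ generic'' is exactly the argument in \cite{JJS}. For the direction ``${\bf x}$ generic $\Rightarrow$ $f({\bf x})$ generic'', your primary route is the field-theoretic Jacobian criterion via K\"ahler differentials, whereas \cite{JJS} argues more directly: if $f({\bf x})$ satisfied a nontrivial polynomial relation $h$ over $\bK$, then $h\circ f$ would vanish at the generic point ${\bf x}$ and hence identically; but since $\rank df|_{\bf x}=n$ somewhere, the image of $f$ contains a nonempty open set on which $h$ vanishes, forcing $h\equiv 0$. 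This is precisely the ``inverse function theorem'' alternative you sketch at the end. Both routes are standard and equally valid; the \cite{JJS} argument is slightly more elementary and is what readers of this literature would expect, while your algebraic version avoids any appeal to real-analytic facts and makes the equality $\overline{\bK({\bf x})}=\overline{\bK(f({\bf x}))}$ fall out in one step.
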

The proof of Lemma \ref{genpoint1} is the same as that for  \cite[Lemmas  3.1, 3.2]{JJS}. The only difference being that we work with polynomials with coefficients in $\bK$ rather than $\bZ$.

Let $G=(V,E,L)$ be a looped simple graph with  $E=\{e_1,e_2,\ldots,e_m\}$, $F=\{\ell_1,\ell_2,\ldots,\ell_s\}$ and $q:L\to \R^2$. The {\em rigidity map} $f_{G,q}:\R^{2|V|}\to \R^{|E\cup L|}$ is defined by putting 
$$f_{G,q}(p)=(f_{e_1}(p),\ldots,f_{e_m}(p),f_{\ell_1}(p)\ldots, 
f_{\ell_s}(p))$$
where $f_{e_i}(p)=\|p(u)-p(v)\|^2$ when $e_i=uv$ and $f_{\ell_j}(p)=q(\ell_j)\cdot p(v)$ when $\ell_j$ is incident to $v$.

\begin{lem}\label{genpoint2}
Let $(G,p,q)$ be a generic, rigid, linearly constrained framework in $\R^2$ and suppose that $(G,p',q)$ is an equivalent framework. Then $\overline{\bQ(p,q)}=\overline{\bQ(f_{H,q}(p))}=\overline{\bQ(p',q)}$.
\end{lem}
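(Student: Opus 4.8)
\textbf{Proof proposal for Lemma \ref{genpoint2}.}

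The plan is to apply Lemma \ref{genpoint1} to the rigidity map $f_{G,q}$, viewed as a polynomial map on the point coordinates $p$ (with the slider data $q$ playing the role of fixed coefficients). The first step is to fix the field of coefficients: since $q$ is generic over $\bQ$, I would take $\bK=\bQ(q)$, so that each coordinate of $f_{G,q}$ is a polynomial in $p$ with coefficients in $\bK$, and the hypothesis that $(G,p,q)$ is generic says precisely that $p$ is generic over $\bK$. The second step is to observe that $f_{G,q}$ is a map $\R^{2|V|}\to\R^{|E\cup L|}$ whose image lies in a coordinate subspace of the right dimension: because $G$ is rigid, the rank of its rigidity matrix $R(G,p,q)$ equals $2|V|$ at every generic realisation, and $R(G,p,q)$ is (up to the usual factor of $2$ on the edge rows) the Jacobian of $f_{G,q}$ at $p$. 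Hence the maximal rank of $df_{G,q}$ over $\R^{2|V|}$ is $2|V|$. To put this in exactly the form of Lemma \ref{genpoint1} I would restrict $f_{G,q}$ to a suitable coordinate-target of dimension $2|V|$: since $G$ is rigid it contains a spanning subgraph $G_0$ with $|E(G_0)|+|L(G_0)|=2|V|$ whose rigidity matrix already has rank $2|V|$ at $p$ (this is exactly Theorem \ref{thm:ST}); let $f_0=f_{G_0,q}:\R^{2|V|}\to\R^{2|V|}$ be the corresponding restriction of the rigidity map. Then $\max_{x}\{\operatorname{rank}df_0|_x\}=2|V|$, so Lemma \ref{genpoint1} applies to $f_0$.

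Now apply Lemma \ref{genpoint1} to $f_0$ with the generic point $p$: it gives that $f_0(p)$ is generic over $\bK$ and $\overline{\bK(p)}=\overline{\bK(f_0(p))}$. Since $\bK=\bQ(q)$ this reads $\overline{\bQ(p,q)}=\overline{\bQ(f_0(p),q)}$, and since $f_0(p)$ is a subtuple of $f_{G,q}(p)$ while every coordinate of $f_{G,q}(p)$ is a polynomial in $p$ and $q$ over $\bQ$, we get $\overline{\bQ(f_0(p),q)}=\overline{\bQ(f_{G,q}(p),q)}$; this gives the first claimed equality $\overline{\bQ(p,q)}=\overline{\bQ(f_{G,q}(p),q)}$ (and one can suppress the $q$ inside $f_{G,q}(p)$ since it is already present on both sides). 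For the second equality, the hypothesis that $(G,p',q)$ is equivalent to $(G,p,q)$ means exactly that $f_{G,q}(p')=f_{G,q}(p)$; in particular $f_0(p')=f_0(p)$, which is generic over $\bK$. Applying Lemma \ref{genpoint1} again to $f_0$ — this time using that the \emph{image} point $f_0(p')$ is generic over $\bK$ — yields that $p'$ is generic over $\bK$ and $\overline{\bK(p')}=\overline{\bK(f_0(p'))}=\overline{\bK(f_0(p))}$. Combining, $\overline{\bQ(p',q)}=\overline{\bQ(f_0(p),q)}=\overline{\bQ(p,q)}$, completing the proof.

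The one point requiring a little care — and the place I would expect the only real friction — is the bookkeeping around which polynomial map has square target and full rank: Lemma \ref{genpoint1} is stated for maps $\R^n\to\R^n$ with Jacobian of maximal rank $n$, whereas $f_{G,q}$ itself is generally not square (there can be more edges and loops than $2|V|$). Passing to the spanning rigid subgraph $G_0$ as above is the natural fix, and it is legitimate precisely because Theorem \ref{thm:ST} guarantees such a $G_0$ exists and the extra rows of $f_{G,q}$ contribute nothing to the algebraic closure of the coordinate data. Everything else is a routine transcription of \cite[Lemmas 3.1, 3.2]{JJS}, with $\bQ$ replaced throughout by $\bQ(q)$, exactly as already noted after the statement of Lemma \ref{genpoint1}.
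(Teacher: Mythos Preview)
Your proposal is correct and follows exactly the paper's own approach: the paper's proof is the one-line ``choose a minimally rigid spanning subgraph $H$ of $G$ and put $f=f_{H,q}$, $\bK=\bQ(q)$, then apply Lemma~\ref{genpoint1}'', and you have simply unpacked this in detail, with your $G_0$ playing the role of $H$. The extra care you take in explaining why one must pass to a spanning subgraph with $|E|+|L|=2|V|$ (so that the map is square and Lemma~\ref{genpoint1} applies verbatim) is precisely the point the paper is leaving implicit.
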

\begin{proof} This follows from Lemma \ref{genpoint1} by choosing a minimally rigid spanning subgraph $H$ of $G$ and then putting
$f=f_{H,q}$ and $\bK=\bQ(q)$.
\end{proof}

Given a  linearly constrained framework $(G,p,q)$ in $\R^2$, we say that two vertices $u,v$ of $G$ are {\em globally linked in $(G,p,q)$} if 
$\|p(u)-p(v)\|=\|p'(u)-p'(v)\|$ whenever $(G,p',q)$ is equivalent to $(G,p,q)$.

\begin{lem}\label{globlinked1}
Let $(G,p,q)$ be a generic linearly constrained framework in $\R^2$ and $v$ be a node of $G$ such that $N_G(v)=\{u,w,x\}$ and $G-v$ is rigid. Then  $u,w$ are globally linked in $(G,p,q)$.
\end{lem}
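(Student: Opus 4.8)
The plan is to establish that $u$ and $w$ are globally linked by exploiting the structure of the $1$-extension at $v$ together with the genericity of the configuration. Suppose $(G,p',q)$ is a framework equivalent to $(G,p,q)$. First I would observe that, since $G-v$ is rigid and $(G,p,q)$ is generic, Lemma \ref{genpoint2} applied to $G$ gives $\overline{\bQ(p',q)}=\overline{\bQ(p,q)}$; in particular $(G,p',q)$ is also generic. Restricting to the subframework on $V-v$, the framework $(G-v,p'|_{V-v},q|_{L(G-v)})$ is then a generic realisation of the rigid graph $G-v$, equivalent to $(G-v,p|_{V-v},q|_{L(G-v)})$.

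The key step is to use the edge/loop constraints incident to the degree-three vertex $v$. We have $\|p'(v)-p'(u)\|=\|p(v)-p(u)\|$, $\|p'(v)-p'(w)\|=\|p(v)-p(w)\|$, and $\|p'(v)-p'(x)\|=\|p(v)-p(x)\|$ (or the corresponding dot-product constraint if the third element incident to $v$ is a loop rather than the edge $vx$). Thus $p'(v)$ lies on the intersection of three circles (or two circles and a line) centred at $p'(u),p'(w),p'(x)$. Generically three such circles in $\R^2$ have at most one common point, so $p'(v)$ is determined by $p'(u),p'(w),p'(x)$; more precisely, $p'(v)$ is an algebraic function over $\bQ(p'(u),p'(w),p'(x),q)\subseteq \overline{\bQ(p,q)}$ of the data already fixed. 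The upshot is that $\|p'(u)-p'(w)\|$ is algebraic over a field that does not `see' any new freedom. To turn this into the desired equality I would argue as follows: the quantity $\|p'(u)-p'(w)\|^2$ is a polynomial in the coordinates of $p'$ with rational coefficients; applying Lemma \ref{genpoint1} (or directly Lemma \ref{genpoint2}) we know $\overline{\bQ(p',q)}=\overline{\bQ(f_{H,q}(p'))}=\overline{\bQ(f_{H,q}(p))}$ for a minimally rigid spanning subgraph $H$ of $G-v$ together with two of the three edges at $v$, and $f_{H,q}(p')=f_{H,q}(p)$ since the frameworks are equivalent; hence $\|p'(u)-p'(w)\|^2\in \overline{\bQ(f_{H,q}(p))}$, which has transcendence degree $2|V|$ over $\bQ$, forcing $\|p'(u)-p'(w)\|^2$ to take the same value as $\|p(u)-p(w)\|^2$ because the latter is the unique value compatible with genericity.

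More cleanly, I expect the cleanest route is: choose the spanning minimally rigid subgraph $H$ of $G$ to contain all three elements incident to $v$ and a minimally rigid spanning subgraph of $G-v$; then $f_{H,q}$ has maximal-rank differential, Lemma \ref{genpoint1} applies, and $\overline{\bQ(p,q)}=\overline{\bQ(f_{H,q}(p))}=\overline{\bQ(f_{H,q}(p'))}=\overline{\bQ(p',q)}$. Now $\|p'(u)-p'(w)\|^2$ is an element of $\overline{\bQ(p',q)}=\overline{\bQ(p,q)}$ which is moreover a root of the same minimal polynomial over $\bQ(f_{H,q}(p))=\bQ(f_{H,q}(p'))$ as $\|p(u)-p(w)\|^2$; since $(G,p,q)$ is generic, one checks the relevant polynomial relation forces the two values to coincide. (Here one uses that $u,w$ having a common neighbour $v$ of degree three in a rigid graph makes $uw$ lie in the rigidity closure, so $\|p(u)-p(w)\|$ is algebraic over $\bQ(f_{H',q}(p))$ for the subgraph $H'=H-v$, and the ambiguity is resolved by the third constraint at $v$.)

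The main obstacle I anticipate is the last resolution-of-ambiguity step: a priori $p'(v)$ could be the `reflection' of $p(v)$ across the line through $p'(u),p'(w)$, which would still satisfy the two edge constraints $vu,vw$ and generically change $\|p'(u)-p'(w)\|$ — except that the third constraint at $v$ (the edge $vx$ or a loop) pins down which of the two preimages occurs, and genericity guarantees the three circles/lines are not mutually tangent so there is no continuum of solutions. Making this rigorous requires carefully counting the number of solutions to the three constraints at $v$ given the positions of $u,w,x$, and invoking genericity to rule out degenerate coincidences; this is the step where I would need to be careful rather than wave hands, presumably mirroring the corresponding argument in \cite{JJS}.
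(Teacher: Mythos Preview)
Your overall strategy matches the paper's: the paper does not give a self-contained proof but says the argument is identical to \cite[Lemma~4.1]{JJS}, with Lemma~\ref{genpoint2} replacing the corresponding genericity lemmas there. You correctly identify Lemma~\ref{genpoint2} as the key input and correctly isolate the ``reflection'' step as the crux.

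There is one concrete error. In your ``cleaner'' route you propose to take a minimally rigid spanning subgraph $H$ of $G$ containing all three elements incident to $v$ together with a minimally rigid spanning subgraph $H_0$ of $G-v$. That is impossible: $H_0$ has $2(|V|-1)$ elements, so adding all three of $vu,vw,vx$ gives $2|V|+1$ elements, one more than a base of $\M_{lc}(G)$. You can include at most two of the three, and this is exactly why the third constraint at $v$ plays a distinguished role rather than being absorbed into $H$.

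The step you flag as an obstacle is indeed the whole content of the lemma, and your sketch does not resolve it. Knowing that $\|p(u)-p(w)\|^2$ and $\|p'(u)-p'(w)\|^2$ are both algebraic over $\bQ(q,f_{H_0,q}(p))$, or even that they satisfy the same minimal polynomial, does not force them to be equal. The actual argument (from \cite{JJS}) runs as follows. Since $(G-v,p|_{V-v},q)$ is generic and rigid, its equivalent realisations $p^{(1)}=p|_{V-v},p^{(2)},\ldots,p^{(k)}$ are finite in number and their coordinates lie in $\overline{\bQ(q,p|_{V-v})}$. For each $i$, the condition that a point exists at the prescribed distances $\|p(v)-p(u)\|,\|p(v)-p(w)\|,\|p(v)-p(x)\|$ from $p^{(i)}(u),p^{(i)}(w),p^{(i)}(x)$ is the vanishing of a Cayley--Menger determinant, a polynomial $\Phi_i$ in $p(v)$ with coefficients in $\overline{\bQ(q,p|_{V-v})}$. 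One has $\Phi_1\equiv 0$ trivially, whereas $\Phi_i$ is not identically zero unless the triangle $p^{(i)}(u)p^{(i)}(w)p^{(i)}(x)$ is congruent to $p(u)p(w)p(x)$. Since $p(v)$ is generic over $\overline{\bQ(q,p|_{V-v})}$, it avoids the zero sets of the nonzero $\Phi_i$; hence any equivalent $(G,p',q)$ must have $p'|_{V-v}=p^{(i)}$ with $\|p^{(i)}(u)-p^{(i)}(w)\|=\|p(u)-p(w)\|$. Your outline points in this direction but stops short of this counting/genericity argument, which is the substantive part of the proof.
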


The proof of Lemma \ref{globlinked1} is the same as that of \cite[Lemma 4.1]{JJS}. The only difference being that we use Lemma \ref{genpoint2} instead of \cite[Lemmas 3.3, 3.4]{JJS}.

\begin{lem}\label{lem:globlinked2}
Suppose $G$ is a rigid, $\M_{lc}$-connected looped simple graph and $\{u,v\}$ is an unbalanced $2$-separation in $G$. Then $u,v$ are globally linked in every generic realisation of $G$ as a linearly constrained framework  in $\R^2$.
\end{lem}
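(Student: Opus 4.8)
The plan is to exploit the unbalanced $2$-separation to write $G$ as the union of two rigid pieces, each pinning the pair $\{u,v\}$ to a finite list of possible distances, and then to show that genericity forces the two lists to overlap only at the true distance $\|p(u)-p(v)\|$.

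If $uv\in E$ the conclusion is immediate from the definition of equivalence, so assume $uv\notin E$. Since no loopless graph on at least two vertices is rigid in $\R^2$ (Theorem \ref{thm:ST}), $G$ has a loop, so Lemma \ref{lem:feasible1} writes $G$ as a $2$-sum, along $uv$, of a looped $\M_{lc}$-connected graph $G_1+uv$ and a loopless $\M_{lc}$-connected graph $G_2+uv$, with $V(G_1)\cap V(G_2)=\{u,v\}$. By Corollary \ref{cor:mlcredundant} $G_1+uv$ is redundantly rigid, so $G_1=(G_1+uv)-uv$ is rigid (as a linearly constrained graph); and $G_2+uv$, being loopless and $\M_{lc}$-connected, is connected in the ordinary $2$-dimensional generic rigidity matroid, hence redundantly rigid, so $G_2$ is rigid as a bar-joint graph. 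Now let $(G,p',q)$ be equivalent to $(G,p,q)$ and write $\delta=\|p'(u)-p'(v)\|$. Restricting the constraints, $p'|_{V(G_1)}$ lies in the configuration space of the rigid framework $(G_1,p|_{V(G_1)},q)$, which is $0$-dimensional and hence finite, so every coordinate of $p'|_{V(G_1)}$ — and in particular $\delta^2$ — is algebraic over the field generated by the constraint values, which lies in $\bQ(p|_{V(G_1)},q)$. Similarly, after applying a congruence of $\R^2$, $p'|_{V(G_2)}$ lies in a pinned (hence finite) configuration space of the classically rigid framework $(G_2,p|_{V(G_2)})$, so $\delta^2$ is algebraic over $\bQ(p|_{V(G_2)})$. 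Because $(p,q)$ is generic the coordinate blocks $p|_{V(G_1)\sm\{u,v\}}$, $p|_{V(G_2)\sm\{u,v\}}$, $p(u)$, $p(v)$, $q$ are mutually algebraically independent over $\bQ$, and it follows that $\overline{\bQ(p|_{V(G_1)},q)}\cap\overline{\bQ(p|_{V(G_2)})}=\overline{\bQ(p(u),p(v))}$; hence $\delta^2$ is algebraic over $\bQ(p(u),p(v))$.

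It remains to show that $\delta^2=\|p(u)-p(v)\|^2$, and this is the main obstacle. Observe first that $\delta$ necessarily lies in the set $A$ of distances $\|\hat p(u)-\hat p(v)\|$ realised over the finitely many frameworks $\hat p$ equivalent to $(G_2,p|_{V(G_2)})$, and in the analogous set $B$ for $(G_1,p|_{V(G_1)},q)$; conversely, given any common value $\sqrt\beta\in A\cap B$ one can glue a $G_2$-realisation attaining it to a $G_1$-realisation attaining it along $\{u,v\}$ (the two agree on the distance between $u$ and $v$) to produce a framework equivalent to $(G,p,q)$ with $\|\cdot(u)-\cdot(v)\|^2=\beta$, so $u,v$ are globally linked precisely when $A\cap B=\{\|p(u)-p(v)\|^2\}$. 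To prove the latter I would argue, as in the proof of the corresponding bar-joint statement in \cite{JJS}, that $\|p(u)-p(v)\|^2$ is the only element of $A$ that is algebraic over $\bQ(p(u),p(v))$: any other branch of the (algebraic) assignment of a $uv$-distance to the $G_2$-edge-lengths genuinely depends on the coordinates of $p|_{V(G_2)\sm\{u,v\}}$ — this uses Proposition \ref{prop:C} to propagate an exceptional realisation over a neighbourhood of the generic point and thereby detect that dependence — whence it cannot lie in $\overline{\bQ(p(u),p(v))}$, contradicting the containment proved above. Setting up this last dependence argument cleanly, while correctly matching the congruence freedom of the loopless piece $G_2$ against the constraints imposed by $G_1$, is the part I expect to require the most care; the bookkeeping when $G-\{u,v\}$ has several components, and the degenerate case $uv\in E$, are routine.
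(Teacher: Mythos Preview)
Your field-theoretic reduction to $\delta^2\in\overline{\bQ(p(u),p(v))}$ is correct and elegant: the finiteness of the two configuration spaces (using rigidity of $G_1$ and bar-joint rigidity of $G_2$) and the intersection-of-algebraic-closures identity for disjoint blocks of a transcendence basis are both valid. The difficulty, as you yourself flag, is entirely in the final step, and here your sketch does not close the gap.

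The assertion that ``$\|p(u)-p(v)\|^2$ is the only element of $A$ that is algebraic over $\bQ(p(u),p(v))$'' is not established. Proposition~\ref{prop:C} does let you follow an equivalent realisation $\hat p$ of $G_2$ smoothly as you perturb $p|_{V(G_2)}$, producing a branch $\alpha(t)=\|\hat p_t(u)-\hat p_t(v)\|^2$; but knowing that the particular number $\alpha(0)$ lies in $\overline{\bQ(p(u),p(v))}$ says nothing about $\alpha(t)$ for $t\neq 0$. The containment is a statement about a single value, not about a function, and you give no mechanism to propagate it. What would actually be needed is an argument of the form: if $\beta\in A\cap B$ then, since $\beta\in B\subseteq\overline{\bQ(p|_{V(G_1)},q)}$ is fixed and $p|_{V(G_2)\sm\{u,v\}}$ is generic over that field, the polynomial condition ``$\beta\in A$'' holds for \emph{every} generic choice of $p|_{V(G_2)\sm\{u,v\}}$; and then that no value other than $\|p(u)-p(v)\|^2$ can persist in $A$ under such variation. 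The first of these can be made precise, but the second is not obvious --- it is essentially where the $3$-connectedness and redundant rigidity of $G_2+uv$ (hence its global rigidity as a bar-joint framework, via Theorem~\ref{thm:bar-joint}) would have to enter, and you neither invoke nor prove this.

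The paper sidesteps this difficulty by arguing inductively on $|E(G)|$. It selects a \emph{minimal} unbalanced $2$-separation $(H_1,H_2)$, possibly at a different pair $\{u',v'\}$ with $u,v\in V(H_1)$, so that $H_2+u'v'$ is $3$-connected. If $H_2+u'v'\neq K_4$ it uses Theorem~\ref{thm:admissible_simple} to find an admissible edge-deletion or $1$-reduction in $H_2+u'v'$ avoiding $u',v'$, applies the induction hypothesis to the reduced graph, and lifts global linkedness back via Lemma~\ref{globlinked1}. The base case $H_2+u'v'=K_4$ is handled directly by Lemma~\ref{globlinked1}, followed (if $\{u,v\}\neq\{u',v'\}$) by induction on $H_1+u'v'$. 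This trades your single algebraic step for a recursive one, but every step is fully justified by earlier results in the paper.
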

\begin{proof}
We use induction on $|E(G)|$. Choose an unbalanced 2-separation $(H_1,H_2)$ of $G$ such that $H_2$ is simple, $u,v\in V(H_1)$ and $|V(H_2)|$ is as small as possible. Let $V(H_1)\cap V(H_2)=\{u',v'\}$. Then $H_2+u'v'$ is $3$-connected.  In addition  $H_1+u'v'$ is rigid, and $H_1+u'v'$ and $H_2+u'v'$ are both $\M_{lc}$-connected by Lemma \ref{lem:feasible1}. 

Suppose $H_2+u'v'\neq K_4$. By Theorem \ref{thm:admissible_simple}, $H_2+u'v'$ has an admissible edge $e$ distinct from $u'v'$ or an admissible node $w$ 
distinct from both $u'$ and $v'$.  Let $H_2'=H_2-e$ in the former case
and otherwise let $H_2'=H_2-w+xy$ be be obtained by performing an admissible 1-reduction at $w$. Then $G'=(H_1\cup H_2')-u'v'$ is rigid
and $\M_{lc}$-connected by Lemma \ref{lem:feasible1}. By induction, $u,v$ are globally linked in every generic realisation of $G'$. This immediately implies that  $u,v$ are globally linked in every generic realisation of $G$ if $G=G'+f$. Hence we may suppose that $G=G'-w+xy$. Since $G$ is redundantly rigid and $w$ is a node of $G$, $G-w$ is rigid. We can now use Lemma \ref{globlinked1} to deduce that $x, y$ are globally linked in $G$. The fact that $u,v$ are globally linked in every generic realisation of $G'$ now implies that $u,v$ are globally linked in every generic realisation of $G$.

It remains to consider the case when  $H_2+u'v'= K_4$. Choose $w\in V(H_2) \sm \{u',v'\}$. Since $G$ is redundantly rigid and $w$ is a node of $G$, $G-w$ is rigid. Lemma \ref{globlinked1} now implies  that $u',v'$ are globally linked in every generic realisation of $G$. If $\{u,v\}=\{u',v'\}$ then we are done so we may assume this is not the case. Then $\{u,v\}$ is an unbalanced 2-seperation of $H_1+u'v'$. Since $H_1+u'v'$ is rigid and $\M_{lc}$-connected by Lemma \ref{lem:feasible1}, we may use induction to deduce that $u,v$ are globally linked in every generic realisation of $H_1+u'v'$. The fact that
$u',v'$ are globally linked in every generic realisation of $G$, now implies that $u,v$ are globally linked in every generic realisation of $G$.
\end{proof}

\begin{proof}[Proof of Theorem \ref{thm:num}]

We use induction on $b(G)$. If $b(G)=0$ then the result follows from Lemma \ref{lem:Mconnected} and Theorem \ref{thm:global_char_con}. Hence we may suppose that $b(G)\geq 1$. Let $(H_1,H_2)$ be an unbalanced 2-separation in $G$ where $H_2$ is loopless and $|V(H_2)|$ is as small as possible. Let $V(H_1)\cap V(H_2)=\{u,v\}$. Then $H_2+uv$ is 3-connected and we have 
$b(G)=b(H_1+uv)+1$ by Lemma \ref{lem:crossing}. In addition, $H_1+uv$ is rigid,  
and $H_1+u'v'$ and $H_2+u'v'$ are both $\M_{lc}$-connected by Lemma \ref{lem:feasible1}. Since $(H_2+uv,p|_{H_2})$ is globally rigid as a bar-joint framework by Theorem \ref{thm:bar-joint} and $u,v$ are globally linked in $(G,p,q)$ by Lemma \ref{lem:globlinked2}, the number of linearly constrained  frameworks which are equivalent to $(G,p,q)$ is exactly twice the number of linearly constrained  frameworks which are equivalent to $(H_1+uv,p|_{H_1},q|_{H_1})$ (each equivalent framework to $(H_1+uv,p|_{H_1},q|_{H_1})$ gives rise to two equivalent frameworks to $(G,p,q)$ which are related by reflecting $H_2$ in the line through $u,v$). We can now use induction to deduce that the number of linearly constrained  frameworks which are equivalent to $(G,p,q)$ is $2\times 2^{b(H_1+uv)}=2^{b(G)}$.
\end{proof}

We close by noting that the problem of counting the number of non-congruent frameworks which are equivalent to a given generic bar-joint framework in $\R^2$ can be converted to that of  counting the number of distinct frameworks which are equivalent to a related linearly constrained  framework in $\R^2$. Given a simple graph $G$ we construct a looped simple graph $G^*$ by choosing an edge $uv$ of $G$ and adding two loops at both $u$ and $v$. It is not difficult to see that the number of distinct linearly constrained frameworks which are equivalent to a generic rigid $(G^*,p,q)$ is exactly twice the number of non-congruent bar-joint framework frameworks which are equivalent to $(G,p)$. In particular, we can use this construction to deduce Theorem \ref{thm:bar-joint} from Theorem \ref{thm:num}.

\subsection*{Acknowledgements.} The second and third authors would like to thank the London Mathematical Society for providing partial financial support for this research through a scheme 4 grant.


\begin{thebibliography}{99}

\bibitem{BJ} {\scshape A. Berg and T. Jordan}, 
\newblock A Proof of Connelly's Conjecture on $3$-connected Circuits of the Rigidity Matroid, 
\newblock {\itshape Journal of Combinatorial Theory, Series B} 
88, (2003), 77--97.

\bibitem{B+J} {\scshape A. R. Berg and T. Jord\'{a}n},
\newblock Algorithms for graph rigidity and scene analysis,
in Algorithms - ESA 2003,
volume 2832 of Lecture Notes in Comput. Sci., pages 78–89. Springer, Berlin, 2003.

\bibitem{Carxiv} {\scshape K. Clinch}, 
\newblock Global rigidity of 2-dimensional direction-length frameworks with connected rigidity matroids , 
\newblock {\itshape arxiv preprint 1608.08559} .


\bibitem{C91} {\scshape R. Connelly},
\newblock  On generic global rigidity,
\newblock Applied geometry and discrete mathematics, 147--155, DIMACS Ser.
Discrete Math. Theoret. Comput. Sci., 4, Amer. Math. Soc.,
Providence, RI, 1991.

\bibitem{C05} {\scshape R. Connelly},
\newblock Generic global rigidity,
\newblock {\itshape Discrete and Computational Geometry}
33 (2005), 549--563. 

\bibitem{C&W} {\scshape R. Connelly and W. Whiteley},
Global rigidity: the effect of coning, {\itshape Discrete and Computational Geometry} 43 4 (2010) 717-735


\bibitem{C&H} {\scshape C.R. Coullard and L. Hellerstein,}
\newblock Independence and port oracles for matroids, with an application to computational learning theory,
\newblock {\itshape Combinatorica}
16 (1996), no. 2, 189--208.

\bibitem{CGJN} {\scshape J. Cruickshank, H. Guler, B. Jackson and A. Nixon},
\newblock Rigidity of linearly constrained frameworks, 
\newblock {\itshape International Mathematics Research Notices}, to appear 2018.

\bibitem{EJNSTW} {\scshape Y. Eftekhari, B. Jackson, A. Nixon, B. Schulze, S. Tanigawa and W. Whiteley},
\newblock Point-hyperplane frameworks, slider joints, and rigidity preserving transformations,
\newblock {\itshape Journal of Combinatorial Theory: Series B}, to appear 2019.

\bibitem{Hen} {\scshape B. Hendrickson,}
\newblock Conditions for unique graph realizations
\newblock {\itshape SIAM Journal on Computing}
21 (1992), no. 1, 65--84.

\bibitem{J&J} {\scshape B. Jackson and T. Jord\'{a}n},
\newblock Connected Rigidity Matroids and Unique Realisations of Graphs,
\newblock {\itshape Journal of Combinatorial Theory: Series B} 94 (2005) 1--29.

\bibitem{JJ} {\scshape B. Jackson and T. Jord\'an},
\newblock The d-Dimensional Rigidity Matroid of Sparse Graphs,
\newblock {\itshape Journal of Combinatorial Theory: Series B}
95, (2005) 118--133.

\bibitem{JJsn} {\scshape B. Jackson and T. Jord\'an}, 
\newblock Graph theoretical techniques in the analysis of uniquely localizable sensor networks,
\newblock in {\itshape Localization Algorithms and Strategies for Wireless Sensor Networks}, Guoqiang Mao and Baris Fidan eds., IGI Global, 2009, 146--173.

\bibitem{JJdl} {\scshape B. Jackson and T. Jord\'an},
\newblock Globally rigid circuits of the direction-length rigidity matroid,
\newblock {\itshape Journal of Combinatorial Theory: Series B}
100, (2010) 1--22.

\bibitem{JJS} {\scshape B. Jackson, T. Jord\'an, and Z. Szabadka},
Globally linked pairs of vertices in equivalent realizations of
graphs, {\itshape Discrete and Computational Geometry}, Vol. 35,
493-512, 2006.


\bibitem{JMN} {\scshape B. Jackson, T. McCourt and A. Nixon},
\newblock
Necessary conditions for the generic global rigidity of frameworks on surfaces,
\newblock
{\itshape Discrete and Computational Geometry}, 52:2 (2014) 344--360.

\bibitem{JNstress} {\scshape B. Jackson and A. Nixon},
\newblock Stress matrices and global rigidity of frameworks on surfaces,
\newblock {\itshape Discrete and Computational Geometry}
54:3 (2015), 586--609.

\bibitem{JKT} {\scshape T. Jord\'{a}n, C. Kiraly and S. Tanigawa},
\newblock Generic global rigidity of body-hinge frameworks,
\newblock {\itshape Journal of Combinatorial Theory: Series B} 117 (2016) 59--76.

\bibitem{L&S} {\scshape A. Lee and I. Streinu},
\newblock
Pebble game algorithms and sparse graphs,
\newblock {\itshape Discrete Mathematics},
308 (2008), no. 8, 1425--1437.

\bibitem{OXL} {\scshape J.G. Oxley},
\newblock {\itshape  Matroid Theory},
\newblock {Oxford Science Publications. The Clarendon Press, Oxford University Press, New York} 1992. xii+532pp.

\bibitem{Sax} {\scshape J. Saxe},
\newblock
Embeddability of weighted graphs in k-space is strongly NP-hard,
\newblock {\itshape In Seventeenth Annual Allerton Conference on Communication,
Control, and Computing, Proceedings of the Conference held in
Monticello, Ill., October 10-12, 1979}.

\bibitem{ST} {\scshape I. Streinu and L. Theran},
\newblock Slider-pinning rigidity: a Maxwell-Laman-type theorem,
\newblock {\itshape Discrete and Computational Geometry}
44:4, (2010) 812--837.

\bibitem{Tetal} {\scshape L. Theran, A. Nixon, E. Ross, M. Sadjadi, B. Servatius and M. Thorpe},
\newblock Anchored boundary conditions for locally isostatic networks,
\newblock {\itshape Physical Review E}
92:5 (2015) 053306.

\end{thebibliography}
\end{document}